\numberwithin{equation}{section}
\newtheorem{thm}{Theorem}[section]
\newtheorem{cor}[thm]{Corollary}
\newtheorem{lma}[thm]{Lemma}
\theoremstyle{definition}
\newtheorem{dfn}[thm]{Definition}
\newtheorem{ex}[thm]{Example}
\theoremstyle{remark}
\newtheorem{rmk}[thm]{Remark}
\numberwithin{equation}{section}
\newcommand{\R}{{\mathbb{R}}}
\newcommand{\Z}{{\mathbb{Z}}}
\DeclareMathOperator{\CZ}{CZ}
\DeclareMathOperator{\crit}{crit}
\DeclareMathOperator{\Skel}{Skel}
\DeclareMathOperator{\ind}{ind}
\DeclareMathOperator*{\colim}{colim}
\DeclareMathOperator*{\hocolim}{hocolim}
\DeclareMathOperator{\id}{id}
\newcommand{\suchthat}{\;\ifnum\currentgrouptype=16 \middle\fi|\;}
\begin{document}

\title{Simplicial descent for Chekanov--Eliashberg dg-algebras}
\author{Johan Asplund}
\address{Department of Mathematics, Columbia University, 2990 Broadway, New York, NY 10027}
\email{asplund@math.columbia.edu}

\begin{abstract}
	We introduce a type of surgery decomposition of Weinstein manifolds we call \emph{simplicial decompositions}. The main result of this paper is that the Chekanov--Eliashberg dg-algebra of the attaching spheres of a Weinstein manifold satisfies a descent (cosheaf) property with respect to a simplicial decomposition. Simplicial decompositions generalize the notion of Weinstein connected sum and we show that there is a one-to-one correspondence (up to Weinstein homotopy) between simplicial decompositions and so-called good sectorial covers. As an application we explicitly compute the Chekanov--Eliashberg dg-algebra of the Legendrian attaching spheres of a plumbing of copies of cotangent bundles of spheres of dimension at least three according to any plumbing quiver. We show by explicit computation that this Chekanov--Eliashberg dg-algebra is quasi-isomorphic to the Ginzburg dg-algebra of the plumbing quiver.
\end{abstract}

\maketitle
\tableofcontents
\section{Introduction}
	\subsubsection{Main results}
	Let $X$ be a $2n$-dimensional Weinstein manifold with ideal contact boundary $\partial X$, and let $(V,\lambda)$ be a $(2n-2)$-dimensional Weinstein manifold. A Weinstein hypersurface is a Weinstein embedding $(V, \lambda_V := \eval[0]{\lambda}_{V}) \hookrightarrow (X \setminus \Skel X, \lambda)$ such that the induced map $V \longrightarrow \partial X$ is an embedding, see \cite{avdek2012liouville,eliashberg2018weinstein,sylvan2019on,ganatra2020covariantly,alvarez2020positive}. Throughout this paper we use the notation $V \hookrightarrow \partial X$ to denote a Weinstein hypersurface. 

	The main construction in this paper is that of a \emph{simplicial decomposition} of a Weinstein manifold $X$, which is a tuple $(C, \boldsymbol V)$ consisting of a simplicial complex $C$ and a set $\boldsymbol V$. The set $\boldsymbol V$ contains Weinstein manifolds and Weinstein hypersurfaces of various dimensions and is in bijection with the set of faces and face inclusions of $C$. Roughly speaking, using each Weinstein manifold $V \in \boldsymbol V$ we construct a \emph{simplicial handle}, such that when glued together according to $C$ gives $X$ up to Weinstein isomorphism, see \cref{sec:intro_simplex_decompositions} for a more precise description. The definition of a simplicial decomposition generalizes the notion of Weinstein connected sum \cite{avdek2012liouville,eliashberg2018weinstein,alvarez2020positive} and if we allow for Weinstein cobordisms with non-empty negative ends, simplicial decompositions may be used to give a surgery description of stopped Weinstein manifolds used in the study of partially wrapped Fukaya categories \cite{ekholm2017duality,sylvan2019on,ganatra2020covariantly,asplund2019fiber,asplund2021chekanov}.

	In this paper we extensively use Weinstein handle decompositions, by which we mean a choice of presentation of a Weinstein manifold $X$ as a subcritical Weinstein manifold $X_0$ together with some number of Legendrian spheres in its boundary. We will denote such a choice of handle decomposition by $h$. A choice of Weinstein handle decomposition of each $V\in \boldsymbol V$, denoted by $\boldsymbol h := \left\{h_V\right\}_{V\in \boldsymbol V}$, determines a Weinstein handle decomposition of $X$ in the surgery presentation given by the simplicial decomposition. The main result of this paper is the following.
	\begin{thm}[\cref{thm:ce_descent}]\label{thm:ce_descent_intro}
		Let $(C, \boldsymbol V)$ be a simplicial decomposition of a Weinstein manifold $X$ and let $\varSigma(\boldsymbol h)$ denote the union of the Legendrian attaching spheres for the Weinstein top handles of $X$ in the surgery presentation given by the simplicial decomposition.

		Then there is a quasi-isomorphism of dg-algebras 
		\[
			CE^\ast(\varSigma(\boldsymbol h);X_0) \cong \colim_{\sigma_k\in C} CE^\ast(\varSigma_{\supset \sigma_k}(\boldsymbol h); X(\sigma_k)_0)\, ,
		\]
		where $X(\sigma_k)$ is a Weinstein cobordism obtained from $X$ by replacing the Weinstein manifolds $V^{2n-2i}_{\sigma_i} \in \boldsymbol V$ with a symplectization of a contactization for every $\sigma_i \not \supset \sigma_k$, see \cref{sec:simplical_descent} for details. The Legendrian submanifold $\varSigma_{\supset \sigma_k}(\boldsymbol h)$ is the union of top attaching Legendrian spheres of $X(\sigma_k)$.
	\end{thm}
	The colimit above should be interpreted as follows: For each $k$-face $\sigma_k \in C$ there are dg-algebras $CE^\ast(\varSigma_{\supset \sigma_k}(\boldsymbol h); X(\sigma_k)_0)$ and $\mathcal A_{\sigma_k}(\boldsymbol h)$ which satisfies the following.
	\begin{enumerate}
		\item There is a quasi-isomorphism $\mathcal A_{\sigma_k}(\boldsymbol h) \cong CE^\ast(\varSigma_{\supset \sigma_k}(\boldsymbol h); X(\sigma_k)_0)$.
		\item For each inclusion of faces $\sigma_k \subset \sigma_{k+1}$ there is a reverse inclusion of dg-algebras
		\[
			\mathcal A_{\sigma_{k+1}}(\boldsymbol h) \subset \mathcal A_{\sigma_k}(\boldsymbol h)\, .
		\]
	\end{enumerate}
	Because all involved dg-algebras are semi-free, the colimit is again a semi-free dg-algebra which is generated by the union of the generators, see \cref{sec:intro_leg_attach_and_sectorial_covers} and \cref{sec:simplical_descent} for more details.
	\begin{rmk}
		When imposing an action bound on the generators and allowing for sufficiently thin handles in the surgery construction of $X$ and $X(\sigma_k)$ outlined in \cref{sec:intro_simplex_decompositions} and \cref{sec:intro_leg_attach_and_sectorial_covers} below, the quasi-isomorphism in \cref{thm:ce_descent_intro} is in fact an isomorphism of dg-algebras on the chain level. This is in line with similar surgery formulas in \cite{bourgeois2012effect,ekholm2017duality,asplund2021chekanov}.
	\end{rmk}
	\begin{rmk}[About coefficient rings]
		Let $\mathbb F$ be a field. The Chekanov--Eliashberg dg-algebra of a spin Legendrian submanifold $\varLambda$ is defined as a dg-algebra over the semi-simple idempotent ring $\boldsymbol k := \bigoplus_{i\in \pi_0(\varLambda)} \mathbb F e_i$ where $\left\{e_i\right\}_{i\in \pi_0(\varLambda)}$ is a set of mutually orthogonal idempotents \cite[Section 4.1]{bourgeois2012effect} \cite[Section 3.2]{ekholm2017duality}. The Legendrian submanifolds $\varSigma_{\supset \sigma_k}(\boldsymbol h) \subset \partial X(\sigma_k)_0$ whose Chekanov--Eliashberg dg-algebras appear in \cref{thm:ce_descent_intro} have different number of components in general, which means that they are dg-algebras over different rings. The colimit in \cref{thm:ce_descent_intro} is thus taken in the category of associative, non-commutative and non-unital dg-algebras over varying non-unital rings, see \cref{sec:simplical_descent} for details. All the dg-algebras appearing in \cref{thm:ce_descent_intro} are unital, but the maps between them are not.
	\end{rmk}
	\subsubsection{Good sectorial covers}
	Let $X = X_1 \cup \cdots \cup X_{m}$ be a sectorial cover of a Weinstein manifold in the sense of Ganatra--Pardon--Shende \cite{ganatra2022sectorial}. We call the sectorial cover \emph{good} if for any subset $\varnothing \neq A \subset \left\{1,\ldots,m\right\}$ we have a Weinstein isomorphism
	\begin{equation}\label{eq:good_sectorial_cover_intro}
		N \left(\bigcap_{i \in A} X_i\right) \cong X^{2n-2k}_A \times T^\ast \R^k\, ,
	\end{equation}
	where $k:= \abs A - 1$, where $X^{2n-2k}_A$ is a $(2n-2k)$-dimensional Weinstein manifold, and where $N$ denotes a neighborhood that is cylindrical with respect to the Liouville vector field on $X$. Furthermore we require \eqref{eq:good_sectorial_cover_intro} to extend the existing Weinstein isomorphisms $N \left(\bigcap_{i \in A} \partial X_i\right) \cong X^{2n-2k}_A \times T^\ast \R^k$ coming from the assumption that we have a sectorial cover, see \cref{dfn:good_sectorial_cover} for details. We prove the following result.

	\begin{thm}[\cref{thm:one-to-one_corr_covers_and_decomps}]\label{thm:one-to-one_corr_covers_and_decomps_intro}
		Let $X$ be a Weinstein manifold. There is a one-to-one correspondence (up to Weinstein homotopy) between good sectorial covers of $X$ and simplicial decompositions of $X$.
	\end{thm}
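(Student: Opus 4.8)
The plan is to construct two maps—one sending a good sectorial cover to a simplicial decomposition, and one in the reverse direction—and then check that they are mutually inverse up to Weinstein homotopy. I would begin with the forward direction. Given a good sectorial cover $X = X_1 \cup \cdots \cup X_m$, the combinatorial input is clear: let $C$ be the nerve of the cover, i.e. the simplicial complex on vertex set $\{1,\ldots,m\}$ whose $k$-faces $\sigma_k = \{i_0,\ldots,i_k\}$ are exactly the subsets $A$ with $\bigcap_{i\in A} X_i \neq \varnothing$. The goodness hypothesis says precisely that a neighborhood of each such intersection splits as $X^{2n-2k}_A \times T^\ast\R^k$; I would take the handle data $\boldsymbol V$ to be the collection $\{X^{2n-2k}_A\}$ together with the induced Weinstein hypersurface data coming from how $X^{2n-2(k+1)}_{A'}$ sits inside $\partial X^{2n-2k}_A$ when $A \subset A'$ (these inclusions of intersections are what furnish the hypersurfaces), and the attaching data $\boldsymbol A$ to be the gluing maps realizing $X$ as the union of the $X_i$ along these neighborhoods. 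The content to verify is that assembling the simplicial handle built from $\boldsymbol V$ and attaching it according to $\boldsymbol A$ reproduces $X$ up to Weinstein isomorphism; this is essentially unwinding the definition of simplicial decomposition from Section 2 against the sectorial gluing, and I expect it to follow from the local model $T^\ast\R^k$ matching the corner structure of a $k$-simplex.

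For the reverse direction, given a simplicial decomposition $(C, \boldsymbol V, \boldsymbol A)$ of $X$, I would produce a sectorial cover indexed by the \emph{vertices} of $C$: for each vertex $i$, let $X_i$ be the sub-Weinstein-sector of $X$ obtained by taking the part of the simplicial handle and the ambient manifold "adjacent to vertex $i$," i.e. the union over all faces $\sigma$ containing $i$ of the corresponding pieces, suitably thickened so that the $X_i$ overlap in sectors. The key point is that by construction of the simplicial handle the intersection $\bigcap_{i\in A}X_i$ over a face $A = \sigma_k$ deformation retracts onto $X(\sigma_k)$-type data, and the local product structure of the simplicial handle over the simplex $\sigma_k$ (which is modeled on a neighborhood of the barycenter of a $k$-face, hence on $\R^k$) gives exactly the splitting $X^{2n-2k}_A \times T^\ast\R^k$ demanded by goodness, with $X^{2n-2k}_A$ recovered as the corresponding member of $\boldsymbol V$. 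One must also check that when $A$ is not a face of $C$ the intersection $\bigcap_{i\in A} X_i$ is empty (or can be arranged to be), so that the nerve of the resulting cover is exactly $C$; this should come from the fact that the simplicial handle has cells only over faces of $C$.

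Finally I would check the two constructions are inverse up to Weinstein homotopy. Starting from a cover, forming the decomposition, and then re-forming the cover should return the $X_i$ up to Weinstein homotopy because both are read off from the same nerve and the same local splittings—the only ambiguity is in the choices of neighborhoods and thickenings, which are unique up to contractible choice and hence up to Weinstein homotopy (here I would invoke the standard uniqueness statements for Weinstein neighborhoods of hypersurfaces and for sectorial collar neighborhoods, as in Ganatra--Pardon--Shende). Going the other way, starting from a decomposition and passing through its cover, one recovers $(C, \boldsymbol V, \boldsymbol A)$ because $C$ is the nerve, $\boldsymbol V$ is recovered from the intersections, and $\boldsymbol A$ is recovered from the sectorial gluing maps, again up to the same contractible choices. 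The main obstacle I anticipate is purely technical bookkeeping: making the "thickening" of each $X_i$ precise so that the overlaps are genuinely sectors in the sense of Ganatra--Pardon--Shende (cylindrical at infinity, with the right convexity along the sectorial boundary) \emph{and} simultaneously compatible with the corner/product structure of the simplicial handle along every face at once—i.e. handling all the faces $\sigma_k$ coherently rather than one at a time. This is where the careful definition of the simplicial handle in Section 2 does the real work, and I would lean on it heavily rather than re-proving the gluing compatibilities from scratch.
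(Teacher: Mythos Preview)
Your outline is correct and matches the paper's strategy: the nerve gives $C$, the $X_A^{2n-2k}$ furnish the handle data, and in the reverse direction one takes vertex-indexed pieces and enlarges. The one point worth flagging is that the ``technical bookkeeping'' you anticipate is not resolved by the simplicial-handle machinery of Section~2 but by two separate constructions in Section~3: a lemma (\cref{lma:construction_sector-with-corners}) that passes between Weinstein manifolds-with-hypersurfaces, Weinstein sectors, and Weinstein \emph{sectors-with-corners} by explicitly introducing or smoothing corners in the local $T^\ast\R^k_{\leq 0}$ models, and a second lemma (\cref{lma:splitting_along_hypersurfaces_with_corners}) that cuts a good sectorial cover into sectors-with-corners along the ``dual spine'' $\varPi^k \subset \varDelta^k$ sitting in the barycentric subdivision. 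Passing through sectors-with-corners is what makes the coherence across all faces automatic rather than something to be arranged by hand, so it is the substantive step rather than just bookkeeping.
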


	\begin{rmk}
		In this correspondence the simplicial complex $C$ is the \v{C}ech nerve of the corresponding good sectorial cover, and therefore \cref{thm:one-to-one_corr_covers_and_decomps_intro} is akin to a version of the nerve theorem for Weinstein manifolds.
	\end{rmk}
	\begin{rmk}
		Like good covers of topological spaces (covers for which any non-empty finite intersection is contractible), good sectorial covers exist in abundance. Any sectorial cover has a refinement that is a good sectorial cover, see \cref{lma:good_sectorial_cover_refinement}.
	\end{rmk}
	\begin{rmk}
		Combining the two main theorems \cref{thm:ce_descent_intro} and \cref{thm:one-to-one_corr_covers_and_decomps_intro} with the Legendrian surgery formula of Bourgeois--Ekholm--Eliashberg \cite{bourgeois2012effect} we recover the following colimit in wrapped Floer cohomology:
		\[
			HW^\ast(C; X) \cong \colim_{\sigma_k\in C} HW^\ast(C_{\sigma_k}; X(\sigma_k)) \, ,
		\]
		where $C_{\sigma_k}$ is the union of cocore disks of the top handles of $X(\sigma_k)$, see \cref{cor:sectorial_descent}. This colimit also follows from the more general sectorial descent result of Ganatra--Pardon--Shende \cite{ganatra2022sectorial} on the level of wrapped Fukaya categories which is the motivating result for the present paper. We note however that the results in \cite{ganatra2022sectorial} \emph{does not} imply our main result \cref{thm:ce_descent_intro}.
	\end{rmk}
	\begin{rmk}
			The idea of decomposing the Chekanov--Eliashberg dg-algebra into smaller pieces and obtaining a Seifert--van Kampen-type result is not new. Such results were proved for one-dimensional Legendrian knots in $\R^3$ by Sivek \cite{sivek2011bordered} and for two-dimensional Legendrian surfaces in a $1$-jet space by Harper and Sullivan \cite{harper2014bordered}. In a series of papers, Rutherford and Sullivan \cite{rutherford2020cellularI,rutherford2019cellularII,rutherford2019cellularIII} define a version of the Chekanov--Eliashberg dg-algebra for two-dimensional Legendrian surfaces in a $1$-jet space which admit a cellular decomposition such that the Chekanov--Eliashberg dg-algebra is obtained as a colimit of dg-subalgebras associated to each cell \cite[Remark 8.1]{rutherford2019cellularII}, see \cref{rmk:unknot_pieces} for further discussion.
		\end{rmk}

	In \cref{sec:intro_simplex_decompositions} we describe the construction of a simplicial decomposition in the two cases of $C$ being a $1$-simplex and a $2$-simplex. In \cref{sec:intro_leg_attach_and_sectorial_covers} we give a geometric description of the dg-subalgebras $CE^\ast(\varSigma_{\supset \sigma_k}(\boldsymbol h); X(\sigma_k)_0)$ appearing in \cref{thm:ce_descent_intro}.

	\subsection{Simplicial decompositions}\label{sec:intro_simplex_decompositions}
		Let $X$ be a Weinstein manifold. Throughout the paper we let $\varDelta^m$ denote an $m$-simplex centered at the origin in $\R^m$. Now we describe how to construct a simplicial decomposition $(\varDelta^m, \boldsymbol V)$ of $X$ in the two cases $m \in \left\{1,2\right\}$.
		\begin{description}
			\item[$m=1$]
				Let
				\[
					\boldsymbol V := \left\{X_1^{2n}, X_2^{2n}, V^{2n-2}, \iota_1 \colon V \hookrightarrow \partial X_1, \iota_2 \colon V \hookrightarrow \partial X_2\right\}
				\]
				where
				\begin{itemize}
					\item $X^{2n}_1$ and $X^{2n}_2$ are Weinstein $2n$-manifolds corresponding to vertices of $\varDelta^1$.
					\item $(V^{2n-2}, \lambda_V)$ is a Weinstein $(2n-2)$-manifold corresponding to the edge of $\varDelta^1$.
					\item $\iota_1, \iota_2$ are Weinstein hypersurfaces corresponding to the inclusion of the vertices into the edge.
				\end{itemize}

				Consider the Weinstein cobordism $(V \times D_\varepsilon T^\ast \varDelta^1, \lambda_V + 2xdy+ydx)$ where $D_\varepsilon T^\ast \varDelta^1$ is the $\varepsilon$-disk cotangent bundle of $\varDelta^1$, $y$ is a coordinate in the $\varDelta^1$-factor and $x$ is a coordinate in the fiber direction. We refer to this Weinstein cobordism as a simplicial handle. The negative end of the simplicial handle is $V \times D_\varepsilon T^\ast(\eval[0]{\varDelta^1}_{\partial \varDelta^1}) = (V \times (-\varepsilon,\varepsilon)) \sqcup (V \times (-\varepsilon,\varepsilon))$. Using the Weinstein hypersurfaces $\iota_1,\iota_2 \in \boldsymbol V$ we attach the simplicial handle $V \times D_\varepsilon T^\ast \varDelta^1$ to $X_1\sqcup X_2$ along $(V \times (-\varepsilon,\varepsilon))\sqcup (V \times (-\varepsilon,\varepsilon))$ and denote the resulting Weinstein manifold by $X_1 \#_V X_2$, see \cref{fig:1-simplex_handle}. If $X \cong X_1 \#_V X_2$, then we say that $(\varDelta^1, \boldsymbol V)$ is a simplicial decomposition of $X$.

				\begin{figure}[!htb]
					\centering
					\includegraphics{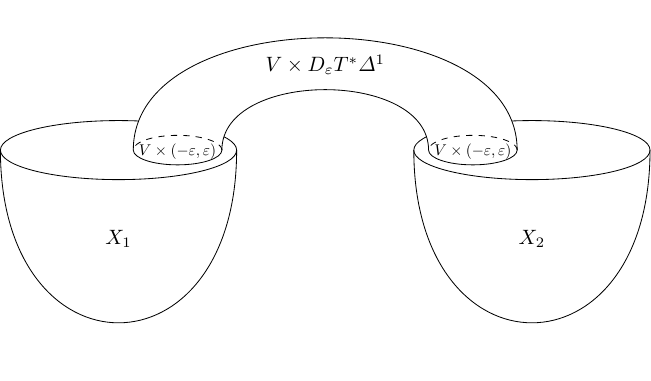}
					\caption{Simplicial decomposition of the Weinstein manifold $X \cong X_1 \#_V X_2$.}
					\label{fig:1-simplex_handle}
				\end{figure}

			\item[$m=2$]
				Let 
				\[
					\boldsymbol V := \{X_1^{2n}, X_2^{2n}, X_3^{2n}, V^{2n-2}_1,V^{2n-2}_2,V^{2n-2}_3,W^{2n-4}, \iota_1, \iota_2, \iota_3, \iota_{12}, \iota_{13}, \iota_{23}\}\, ,
				\]
				\begin{itemize}
					\item $X_1, X_2$ and $X_3$ are Weinstein $2n$-manifolds corresponding to vertices of $\varDelta^2$.
					\item $V_1, V_2$ and $V_3$ are Weinstein $(2n-2)$-manifolds corresponding to edges of $\varDelta^2$.
					\item $(W,\lambda_W)$ is a Weinstein $(2n-4)$-manifold corresponding to the face of $\varDelta^ 2$.
					\item $\iota_i \colon W \hookrightarrow \partial V_i$ is a Weinstein hypersurface for $i\in \left\{1,2,3\right\}$ corresponding to the inclusion of edges into the face.
					\item $\iota_{12} \colon V_1 \#_W V_2 \hookrightarrow \partial X_3$, $\iota_{23} \colon V_2 \#_W V_3 \hookrightarrow \partial X_1$ and $\iota_{31} \colon V_3 \#_W V_1 \hookrightarrow \partial X_2$ are Weinstein hypersurfaces corresponding to the inclusion of vertices into $\varDelta^2$.
				\end{itemize} 

				\begin{figure}[!htb]
					\centering
					\includegraphics{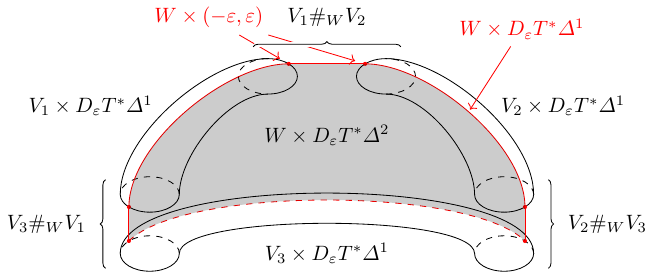}
					\caption{The result $\mathcal H^2(\boldsymbol V)$ after the first gluing step in the construction of a a simplicial decomposition.}
					\label{fig:2-simplex_handle}
				\end{figure}

				Starting off we consider three simplicial handles $V_i \times D_\varepsilon T^\ast \varDelta^1$ for $i\in \left\{1,2,3\right\}$ as in the $m=1$ case above. Using each of the Weinstein hypersurfaces $\iota_i \in \boldsymbol V$ for $i\in \left\{1,2,3\right\}$ we get Weinstein hypersurfaces
				\[
					\iota_i \times \id \colon W \times D_\varepsilon T^\ast \varDelta^1 \hookrightarrow \partial_+(V_i \times D_\varepsilon T^\ast \varDelta^1), \quad i\in \left\{1,2,3\right\}\, .
				\]
				Then consider the simplicial handle $(W \times D_\varepsilon T^\ast \varDelta^2, \lambda_W + 2y_1dx_1 + x_1dy_1 + 2y_2dx_2+x_dy_2)$ where $(y_1,y_2)$ are coordinates in the $\varDelta^2$-factor and $(x_1,x_2)$ are coordinates in the fiber directions. The negative end of this Weinstein cobordism is $W \times D_\varepsilon T^\ast (\eval[0]{\varDelta^2}_{\partial \varDelta^2})$. Now glue $W \times D_\varepsilon T^\ast \varDelta^2$ to $\bigsqcup_{i=1}^3 (V_i \times D_\varepsilon T^\ast \varDelta^1)$ using the Weinstein hypersurfaces $\iota_i \times \id$ as attaching maps and denote the result by $\mathcal H^2(\boldsymbol V)$, see \cref{fig:2-simplex_handle}. The resulting Weinstein cobordism $\mathcal H^2(\boldsymbol V)$ is a Weinstein cobordism with negative end
				\[
					\partial_- \mathcal H^2(\boldsymbol V) = (V_1 \#_W V_2) \sqcup (V_2 \#_W V_3) \sqcup (V_3\#_W V_1)\, .
				\]
				Finally we attach $\mathcal H^2(\boldsymbol V)$ to $\bigsqcup_{i=1}^3 X_i$ using the Weinstein hypersurfaces $\iota_{12},\iota_{23}, \iota_{31} \in \boldsymbol V$ as attaching maps, see \cref{fig:triple_join}. We denote the resulting Weinstein $2n$-manifold by $\# \boldsymbol V$.

				\begin{figure}[!htb]
					\centering
					\includegraphics{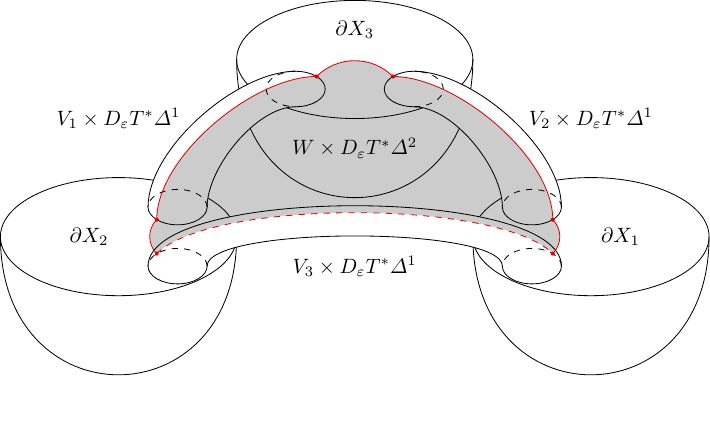}
					\caption{A simplicial decomposition of $X \cong \# \boldsymbol V$.}
					\label{fig:triple_join}
				\end{figure}
				If $X \cong \# \boldsymbol V$, we say that $(\varDelta^2, \boldsymbol V)$ is a simplicial decomposition of $X$.
		\end{description}

		The general construction of a simplicial decomposition of $X$ for any $m$-dimensional simplicial complex $C$ is done in \cref{sec:construction_m_simplex_handle}.

	\subsection{Legendrian attaching spheres and dg-subalgebras}\label{sec:intro_leg_attach_and_sectorial_covers}
		Let $(C, \boldsymbol V)$ be a simplicial decomposition of a Weinstein manifold $X$. Let $\boldsymbol V_0$ be obtained by replacing every Weinstein manifold $V \in \boldsymbol V$ with its subcritical part $V_0$. Let $\varSigma(\boldsymbol h)$ denote the union of the Legendrian attaching $(n-1)$-spheres of $X$ in $\partial X_0$. This union can be decomposed as
		\[
			\varSigma(\boldsymbol h) = \bigcup_{\substack{\sigma_k\in C_k \\ 0 \leq k \leq m}} \varSigma_{\sigma_k}(\boldsymbol h)\, ,
		\]
		where $\varSigma_{\sigma_k}(\boldsymbol h)$ is a collection of Legendrian $(n-1)$-spheres in $\partial X_0$ associated to each $k$-face of $C$. Roughly speaking, $\varSigma_{\sigma_k}(\boldsymbol h)$ lives over the part of $X_0$ corresponding to the $k$-face $\sigma_k$ in the surgery description of $X_0$, and is constructed from the union of top core disks of $V_{\sigma_k}^{2n-2k}$, see \cref{fig:intro_attaching_spheres} and \cref{sec:leg_attaching_data_for_simplex_handles} for details. We use the notation $\varSigma_{\supset \sigma_k}(\boldsymbol h) := \bigcup_{\substack{\sigma_i\in C_i \\ k \leq i \leq m}} \varSigma_{\sigma_i}(\boldsymbol h)$. 

		For each $k$-face $\sigma_k\in C$ we consider the Weinstein cobordism $X(\sigma_k)$ which is obtained from $X$ by replacing the Weinstein manifolds $V^{2n-2i}_{\sigma_i} \in \boldsymbol V$ with a symplectization of a contactization for every $\sigma_i \not \supset \sigma_k$. Then it is the case that $\varSigma_{\supset \sigma_k}(\boldsymbol h)$ is the union of top attaching spheres of $X(\sigma_k)$, see \cref{lma:attaching_spheres_stopped_away_from_sigma_k}.
		\begin{figure}[!htb]
			\centering
			\includegraphics{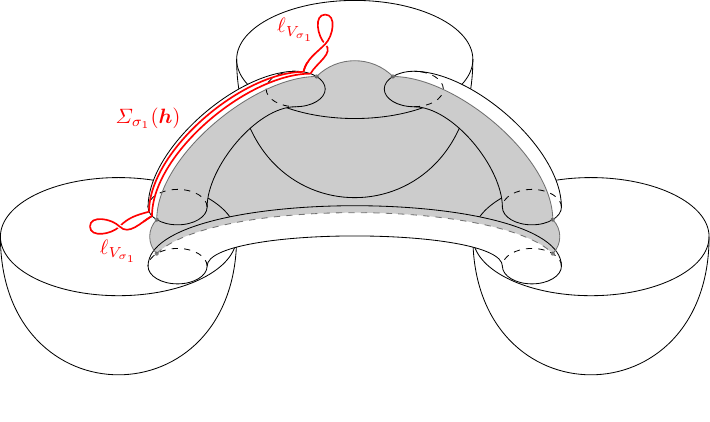}
			\caption{The figure shows the Legendrian sphere $\varSigma_{\sigma_1}(\boldsymbol h)$ in $\partial X_0$ which lives over the face $\sigma_1$ in the special case when $V_{\sigma_1} \in \boldsymbol V$ has exactly one Weinstein top handle whose core disk is denoted by $\ell_{V_{\sigma_1}}$.}
			\label{fig:intro_attaching_spheres}
		\end{figure}
		We denote the Chekanov--Eliashberg dg-algebra of $\varSigma(\boldsymbol h)$ by $CE^\ast(\varSigma(\boldsymbol h); X_0)$, and associated to each $k$-face $\sigma_k\in C$ we have the Chekanov--Eliashberg dg-algebra $CE^\ast(\varSigma_{\supset \sigma_k}(\boldsymbol h); X(\sigma_k)_0)$ appearing in \cref{thm:ce_descent_intro}. The generators of these dg-algebras are Reeb chords, ie trajectories of the Reeb vector field that start and end on the Legendrian submanifold. It is shown in that each Reeb chord in the generating set of $CE^\ast(\varSigma(\boldsymbol h); X_0)$ are located at the ``center'' of each face of the simplicial complex $C$, see \cref{lma:Reeb_chords_one_to_one_correspondence}. In a similar spirit, each Reeb chord in the generating set of $CE^\ast(\varSigma_{\supset \sigma_k}(\boldsymbol h); X(\sigma_k)_0)$ located at the ``center'' of each face $\sigma_i$ that contains the face $\sigma_k$. It is clear from the above description that for an inclusion of faces $\sigma_\ell \subset \sigma_k$, there is an inclusion from the set of generators of $CE^\ast(\varSigma_{\supset \sigma_k}(\boldsymbol h); X(\sigma_k)_0)$ to the set of generators of $CE^\ast(\varSigma_{\supset \sigma_\ell}(\boldsymbol h); X(\sigma_\ell)_0)$. 

		In fact, being a little bit more precise, we can upgrade this statement to an inclusion of the entire Chekanov--Eliashberg dg-algebras. Namely, let $\boldsymbol \varepsilon$ denote a parameter which controls the size of all the handles going into the surgery description of $X$ and $X(\sigma_k)$ coming from the given simplicial decomposition. Let $\mathfrak a > 0$ be a real number and consider the Chekanov--Eliashberg dg-algebra generated by Reeb chords of action bounded from above by $\mathfrak a$. For any $\mathfrak a > 0$, we can shrink the size $\boldsymbol \varepsilon$ a sufficient amount so that we obtain an inclusion of dg-algebras
		\begin{equation}\label{eq:intro_dg-subalg}
			CE^\ast(\varSigma_{\supset \sigma_k}(\boldsymbol h, \boldsymbol \varepsilon); X(\sigma_k)_0^{\boldsymbol \varepsilon}, \mathfrak a) \subset CE^\ast(\varSigma(\boldsymbol h, \boldsymbol \varepsilon); X_0^{\boldsymbol \varepsilon}, \mathfrak a)\, .
		\end{equation}
		Similarly, if $\sigma_\ell \subset \sigma_k$ is inclusion of faces, we have an inclusion of dg-algebras
		\[
			CE^\ast(\varSigma_{\supset \sigma_k}(\boldsymbol h, \boldsymbol \varepsilon); X(\sigma_k)_0^{\boldsymbol \varepsilon}, \mathfrak a) \subset CE^\ast(\varSigma_{\supset \sigma_\ell}(\boldsymbol h, \boldsymbol \varepsilon); X(\sigma_\ell)_0^{\boldsymbol \varepsilon}, \mathfrak a)\, .
		\]
		The key point in proving that we have such an inclusion involves showing that holomorphic curves have controlled behavior with respect to the simplicial decomposition, see \cref{sec:acs_and_hol_curves_in_simplicial_handles} for details.
		
	\addtocontents{toc}{\protect\setcounter{tocdepth}{-5}}
	\subsection*{Outline}
		In \cref{sec:simplicial_handles} we first define the basic building blocks used to inductively build simplicial handles. In \cref{sec:construction_m_simplex_handle} we define simplicial decompositions. In \cref{sec:leg_attaching_data_for_simplex_handles} we geometrically describe the Legendrian attaching spheres for the top Weinstein handles. In \cref{sec:acs_and_hol_curves_in_simplicial_handles} we give an account for almost complex structures and holomorphic disks. In \cref{sec:simplical_descent} we prove the main result.

		\cref{sec:simplicial_decompositions} is devoted to the one-to-one correspondence between simplicial decompositions and good sectorial covers. In \cref{sec:rel_to_sectorial_descent} we show that the main result in addition to the surgery formula of Bourgeois--Ekholm--Eliashberg recovers sectorial descent in wrapped Floer cohomology which is implied by the sectorial descent of Ganatra--Pardon--Shende \cite{ganatra2022sectorial}.

		In \cref{sec:applications} we discuss applications. We show how simplicial decompositions allow for the study of Legendrian submanifolds-with-boundary-and-corners in \cref{sec:relative_version}, and we describe the generators geometrically from a surgery perspective. In \cref{sec:ce_cosheaf} we construct the Chekanov--Eliashberg cosheaf. Moreover in \cref{sec:plumbing} we compute the Chekanov--Eliashberg dg-algebra of the Legendrian top attaching spheres of a plumbing of $T^\ast S^n$ for $n\geq 3$, and show that it is quasi-isomorphic to the Ginzburg dg-algebra.
	\addtocontents{toc}{\protect\setcounter{tocdepth}{2}}
	\subsection*{Acknowledgments}
		The author would like to thank Tobias Ekholm, Georgios Dimitroglou Rizell and Oleg Lazarev for valuable discussions. He is also grateful to Do\u{g}ancan Karaba\c{s} and Zhenyi Chen for their interest in the project and careful reading of an earlier version of this paper. Finally the author thanks the anonymous referee for many suggestions which improved the paper. The author was supported by the Knut and Alice Wallenberg Foundation.
\section{Simplicial decompositions}\label{sec:simplicial_handles}
	In \cref{sec:basic_building_blocks} we define a certain Weinstein cobordism which constitute the basic building block used to construct simplicial handles in \cref{sec:construction_m_simplex_handle}. Further in \cref{sec:construction_m_simplex_handle} we construct simplicial decompositions in which one of the main characters is a simplicial handle. After discussing almost complex structures and holomorphic disks in simplicial handles in \cref{sec:acs_and_hol_curves_in_simplicial_handles} we finally state and prove the main result of the paper in \cref{sec:simplical_descent}.
	\subsection{Basic building blocks for simplicial handles}\label{sec:basic_building_blocks}
		The presentation of the material in this section closely follows and generalizes \cite[Section 2.2]{asplund2021chekanov}.
		
		In this section we fix some integer $m \in \left\{1,\ldots,n\right\}$. Let $(V, \lambda)$ be a Weinstein $(2n-2m)$-manifold with Liouville form $\lambda$, Liouville vector field $z$, handle decomposition $h$ and exhausting Morse function $\phi \colon V \longrightarrow [0,\infty)$ with a single non-degenerate minimum with value $0$. Let $\varDelta^m \subset \R^m$ denote an $m$-simplex centered at the origin. We add the technical assumptions that $c_1(V) = 0$ and $\pi_1(V) = 0$, to make formulas for grading and dimension in this section simpler. They are not needed for any result in this paper, cf \cite[Section 1.5]{eliashberg2000introduction}, \cite[Section 2.2]{ekholm2007legendrian} and \cite[Section 7.3]{ekholm2017symplectic}.

		The basic building block is the Weinstein cobordism $V \times T^\ast \varDelta^m$ with negative end $V \times T^\ast \eval[0]{\varDelta^m}_{\partial \varDelta^m}$. The cobordism $V \times T^\ast \varDelta^m$ furthermore comes with a handle decomposition induced by $h$. We will use an explicit model of the handle that we describe next.

		Consider symplectic $\R^{2m}$ with coordinates $(x_1,\ldots,x_m,y_1,\ldots,y_m)$ and symplectic form $\sum_{i=1}^m dx_i \wedge dy_i$. Consider the product $V \times \R^{2m}$ with symplectic form $\omega = d \lambda + \sum_{i=1}^m dx_i \wedge dy_i$. Then 
		\begin{equation}\label{eq:basic_building_block_Liouville}
			Z = z + \sum_{i=1}^m (2x_i \partial_{x_i} - y_i \partial_{y_i})\, ,
		\end{equation}
		is a Liouville vector field for $\omega$, and $\phi(v) + \sum_{i=1}^m (x_i^2 - \frac 12 y_i^2)$ is an exhausting Morse function on $V \times \R^{2m}$. Let $H^{m}_{\varepsilon }(V) \cong V \times \R^{2m}$ be a Weinstein cobordism with positive and negative ends the contact hypersurfaces
		\begin{equation}\label{eq:building_blocks_contact_boundaries}
			G^{m}_{\pm \varepsilon}(V) = \left\{(v,\boldsymbol x,\boldsymbol y) \in V \times \R^{2m} \suchthat \phi(v) + \sum_{i=1}^m \left(x_i^2 - \frac 12 y_i^2\right) = \pm \varepsilon\right\}\, ,
		\end{equation}
		with induced contact form
		\[
			\alpha_{\pm \varepsilon} = \eval{\left(\lambda + \sum_{i=1}^m (2x_i dy_i + y_i dx_i)\right)}_{G^{m}_{\pm \varepsilon}}\, .
		\]
		The corresponding Reeb vector field is
		\begin{equation}\label{eq:basic_building_block_reeb}
			R_{\pm \varepsilon} = \beta(v,\boldsymbol x, \boldsymbol y) \left(\sum_{i=1}^m (2x_i \partial_{y_i} + y_i \partial_{x_i}) + r_\lambda(v, \boldsymbol x, \boldsymbol y)\right)\, ,
		\end{equation}
		where 
		\[
			\beta(v,\boldsymbol x, \boldsymbol y) = \left(\sum_{i=1}^m (4x_i^2+y_i^2) + \eval[0]{\lambda}_{G^m_{\pm \varepsilon}}(r_\lambda(v,\boldsymbol x, \boldsymbol y))\right)^{-1}\, ,
		\]
		and $r_\lambda$ is a smooth vector field in $\ker \eval[0]{d \lambda}_{G^m_{\pm \varepsilon}}$ at points where $z\neq 0$, and equal to zero where $z = 0$.
		\begin{lma}\label{lma:reeb_vf_over_the_origin}
			The subset $\left\{(v,\boldsymbol 0, \boldsymbol 0) \in V \times \R^{2m} \suchthat \phi(v) = \varepsilon\right\} \subset G^{m}_\varepsilon$ is invariant under the flow of $R_\varepsilon$ and in this subset $R_\varepsilon$ agrees with the Reeb vector field on $(\partial V,\eval[0]{\lambda}_{\partial V})$.
		\end{lma}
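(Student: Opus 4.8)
The plan is to restrict the explicit formula \eqref{eq:basic_building_block_reeb} for $R_\varepsilon$ and its coefficient $\beta$ to the locus $\boldsymbol x=\boldsymbol y=\boldsymbol 0$ and to exploit the fibrewise symmetry of the model; this is the $m$-variable analogue of the corresponding statement in \cite[Section 2.2]{asplund2021chekanov}, and the argument is essentially the same. Put $S:=\{(v,\boldsymbol 0,\boldsymbol 0)\in V\times\R^{2m}:\phi(v)=\varepsilon\}$. Then $S\subset G^m_\varepsilon$, since at such a point $\phi(v)+\sum_{i=1}^m(x_i^2-\tfrac{1}{2}y_i^2)=\phi(v)=\varepsilon$. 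Because $\varepsilon$ is a regular value of $\phi$ --- the only critical value below it being the minimum $0$, which is the case for the small handle size used throughout --- $S$ is the smooth hypersurface $\phi^{-1}(\varepsilon)\times\{\boldsymbol 0\}$, along which $z\neq 0$, and the restriction $\alpha_\varepsilon|_S$ coincides with $\lambda|_{\phi^{-1}(\varepsilon)}$ because the one-forms $2x_i\,dy_i+y_i\,dx_i$ restrict to zero on $S$. I regard $\phi^{-1}(\varepsilon)$ as the ideal contact boundary $(\partial V,\lambda|_{\partial V})$ in the usual way.

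For the invariance statement I would use the fibrewise involution $\tau\colon(v,\boldsymbol x,\boldsymbol y)\mapsto(v,-\boldsymbol x,-\boldsymbol y)$ of $V\times\R^{2m}$. A direct check shows that $\tau$ is a symplectomorphism of $(V\times\R^{2m},\omega)$ which preserves the Liouville vector field $Z$ of \eqref{eq:basic_building_block_Liouville} and the exhausting Morse function $\phi(v)+\sum_{i=1}^m(x_i^2-\tfrac{1}{2}y_i^2)$; hence $\tau$ preserves $G^m_\varepsilon$ together with its contact form $\alpha_\varepsilon=\iota_Z\omega|_{G^m_\varepsilon}$, so it is a contactomorphism of $(G^m_\varepsilon,\alpha_\varepsilon)$ and therefore preserves the Reeb vector field $R_\varepsilon$. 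A $\tau$-invariant vector field is tangent to the fixed locus $V\times\{\boldsymbol 0\}$; since $R_\varepsilon$ is also tangent to $G^m_\varepsilon$, it is tangent to $S=(V\times\{\boldsymbol 0\})\cap G^m_\varepsilon$, which is the claimed invariance. The same conclusion drops out of \eqref{eq:basic_building_block_reeb} directly: at $\boldsymbol x=\boldsymbol y=\boldsymbol 0$ the summand $\sum_{i=1}^m(2x_i\partial_{y_i}+y_i\partial_{x_i})$ vanishes, so $R_\varepsilon$ reduces to $\beta\,r_\lambda$, which over the zero section is tangent to $V\times\{\boldsymbol 0\}$ and to the level sets of $\phi$ by construction.

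To identify $R_\varepsilon|_S$, I would use that on $S$ one also has $\sum_{i=1}^m(4x_i^2+y_i^2)=0$, so there $\beta=(\lambda|_{G^m_\varepsilon}(r_\lambda))^{-1}$ and hence $R_\varepsilon|_S=r_\lambda/\lambda(r_\lambda)$. By the construction of $r_\lambda$ (following \cite[Section 2.2]{asplund2021chekanov}), over the zero section $r_\lambda$ restricts on each regular level set $\phi^{-1}(c)$, $c>0$, to a positive multiple of the Reeb vector field of $\lambda|_{\phi^{-1}(c)}$ --- equivalently to a nonvanishing multiple of the Hamiltonian vector field of $\phi$ on $(V,d\lambda)$ --- so it spans $\ker(d\lambda|_{\phi^{-1}(c)})$ and $\lambda(r_\lambda)>0$ there. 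Consequently, along $S$ the vector field $R_\varepsilon|_S=r_\lambda/\lambda(r_\lambda)$ is tangent to $\phi^{-1}(\varepsilon)$, spans $\ker(d\lambda|_{\phi^{-1}(\varepsilon)})=\ker(d\alpha_\varepsilon|_S)$, and satisfies $\alpha_\varepsilon(R_\varepsilon)=\lambda(r_\lambda)/\lambda(r_\lambda)=1$; therefore $R_\varepsilon|_S$ is exactly the Reeb vector field of $\lambda|_{\phi^{-1}(\varepsilon)}=\lambda|_{\partial V}$.

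I do not expect a genuine obstacle here: once $\boldsymbol x=\boldsymbol y=\boldsymbol 0$ is imposed, every term in the formulas collapses. The two points that need care are (i) the precise behaviour of $r_\lambda$ over the zero section --- in particular that it carries no residual fibre components there that would spoil tangency to $S$ --- which has to be extracted from its definition, and (ii) tracking the regularity of $\varepsilon$, so that $\phi^{-1}(\varepsilon)$ is a smooth level set on which $z$, $r_\lambda$, and $\beta$ are nondegenerate and the divisions above are legitimate.
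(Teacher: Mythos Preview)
Your proof is correct and, at its core, takes the same approach as the paper: both evaluate \eqref{eq:basic_building_block_reeb} at $(\boldsymbol x,\boldsymbol y)=(\boldsymbol 0,\boldsymbol 0)$, obtain $R_\varepsilon|_S = r_\lambda/\lambda(r_\lambda)$, and read off from the definition of $r_\lambda$ that this is the Reeb vector field on $\phi^{-1}(\varepsilon)=\partial V$. Your additional observation that the involution $\tau\colon(v,\boldsymbol x,\boldsymbol y)\mapsto(v,-\boldsymbol x,-\boldsymbol y)$ preserves $\alpha_\varepsilon$ and hence forces $R_\varepsilon$ to be tangent to the fixed locus is a clean symmetry argument the paper does not use; it gives invariance of $S$ without needing to know anything about the $(\boldsymbol x,\boldsymbol y)$-components of $r_\lambda$, which neatly sidesteps the caveat you flag in point~(i).
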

		\begin{proof}
			It follows from explicit expression \eqref{eq:basic_building_block_reeb} of the Reeb vector field on $G^{m}_\varepsilon$. Namely, at points $(v,\boldsymbol 0, \boldsymbol 0)$ we have
			\[
				R_{\varepsilon}(v, \boldsymbol 0, \boldsymbol 0) = \frac{r_\lambda(v,\boldsymbol 0, \boldsymbol 0)}{\eval[0]{\lambda}_{\left\{\phi(v) =  \varepsilon\right\}}(r_\lambda(v, \boldsymbol 0, \boldsymbol 0))}\, ,
			\]
			whose flow keeps $\left\{(v,\boldsymbol 0, \boldsymbol 0) \in V \times \R^{2m} \suchthat \phi(v) = \varepsilon\right\} \subset G^{m}_\varepsilon$ invariant since $r_\lambda(v, \boldsymbol 0,\boldsymbol 0) \in \ker \eval[0]{d \lambda}_{\left\{\phi(v) = \varepsilon\right\}} \subset \ker d \phi$, and this expression of $R_\varepsilon(v, \boldsymbol 0, \boldsymbol 0)$ agrees with the Reeb vector field on $(\partial V, \eval[0]{\lambda}_{\partial V})$.
		\end{proof}
		Let $V_0$ denote the subcritical part of $V$. Let $\ell = \bigcup_{j} \ell_j$ denote the union of the core disks of the top handles $h_j$ in $h$, and similarly let $\partial \ell = \bigcup_{j} \partial \ell_j \subset \partial V_0$ be the union of the attaching $(n-m-1)$-spheres for the top handles $h_j$. Consider
		\begin{equation}\label{eq:dl_trivial_extension}
			\overline{\partial \ell} := \left\{(v,\boldsymbol x, \boldsymbol y) \in V_0 \times \R^{2m} \suchthat \boldsymbol x = \boldsymbol 0, \; v \in \partial \ell \times [0,\infty) ,\; (v,\boldsymbol 0, \boldsymbol y) \in G^{m}_{\varepsilon}(V_0)\right\}\, .
		\end{equation}
		Then $\overline{\partial \ell}$ is diffeomorphic to $\partial \ell \times \R^{m}$ and trivially extends $\partial \ell$ over $V_0 \times \R^{2m}$.

		For a Reeb orbit $\gamma \subset Y^{2n-1}$ we define the \emph{contact homology grading} as
		\begin{equation}\label{eq:contact_homology_grading}
			\abs \gamma := \CZ(\gamma) + (n-3)\, ,
		\end{equation}
		where $\CZ(\gamma)$ is the Conley--Zehnder index as defined in \cite[Section 1.2]{eliashberg2000introduction}. For a Reeb chord $c$ of a Legendrian submanifold $\varLambda^{n-1} \subset Y^{2n-1}$ we define its grading by
		\[
			\abs c := - \CZ(c) + 1\, ,
		\]
		where $\CZ(c)$ is the Conley--Zehnder index of $c$ as defined in \cite[Section 2.2]{ekholm2005contact}. We write $\abs{\gamma}_{Y}$ and $\abs c_Y$ when we want to make the ambient manifold in which the Conley--Zehnder index in either case is computed explicit. Note that this is the cohomological grading for the Chekanov--Eliashberg dg-algebra grading which coincides with the grading used in \cite{ekholm2017duality,asplund2021chekanov} and differs by a sign from the grading used in \cite{bourgeois2012effect}.
		\begin{lma}\label{lma:basic_building_block_reeb}
			The Reeb chords of  $\overline{\partial \ell} \subset G^{m}_{\varepsilon}$ all lie over $(\boldsymbol x, \boldsymbol y) = (\boldsymbol 0, \boldsymbol 0)$ and are in one-to-one grading preserving correspondence with the Reeb chords of $\partial \ell \subset \partial V_0$. Furthermore, the Reeb orbits in $G^{m}_{\varepsilon}$ are in one-to-one correspondence with Reeb orbits in $\partial V_0$, and for a Reeb orbit $\gamma$, we have $\abs{\gamma}_{G^{m}_{\varepsilon}} = \abs{\gamma}_{\partial V_0} + m$.
		\end{lma}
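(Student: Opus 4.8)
The plan is to reduce both statements to the explicit formula \eqref{eq:basic_building_block_reeb} for the Reeb vector field $R_\varepsilon$ on $G^m_\varepsilon(V_0)$ together with \cref{lma:reeb_vf_over_the_origin}, and then to read off the gradings from the product structure $V_0 \times \R^{2m}$ by a model index computation.

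First I would show that every Reeb chord of $\overline{\partial\ell}$ and every closed Reeb orbit in $G^m_\varepsilon$ is contained in the invariant subset of \cref{lma:reeb_vf_over_the_origin}, that is, the locus of points $(v,\boldsymbol 0,\boldsymbol 0)$ with $\phi(v)=\varepsilon$. Since $r_\lambda$ is a lift of a vector field on $V_0$ and $\beta>0$, the $(x_i,y_i)$-components of any Reeb trajectory satisfy $\dot x_i=\beta y_i$ and $\dot y_i=2\beta x_i$, so the quantity $y_i^2-2x_i^2$ is constant along the trajectory. For a Reeb chord both endpoints lie on $\overline{\partial\ell}$, where $\boldsymbol x=\boldsymbol 0$ by \eqref{eq:dl_trivial_extension}, so this conserved quantity equals $y_i(0)^2\geq 0$. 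If $y_i(0)\neq 0$ then $y_i$ never vanishes, hence $x_i$ is strictly monotone and cannot return to $0$, contradicting the second endpoint lying on $\overline{\partial\ell}$. Therefore $y_i(0)=0$ for every $i$, and then $y_i=\pm\sqrt 2\,x_i$ forces $x_i\equiv y_i\equiv 0$; so the chord lies over $(\boldsymbol 0,\boldsymbol 0)$. For a closed orbit, a nonzero value of $y_i^2-2x_i^2$ makes either $x_i$ or $y_i$ strictly monotone, which is incompatible with periodicity, while the value $0$ again forces $x_i\equiv y_i\equiv 0$; so every closed orbit also lies over $(\boldsymbol 0,\boldsymbol 0)$. (Here $\beta>0$ because $r_\lambda$ is normalized where $z\neq 0$, and at the points of $G^m_\varepsilon$ where $z=0$ one has $\phi(v)=0$, so $\sum_i(x_i^2-\tfrac12 y_i^2)=\varepsilon>0$ and thus $\sum_i(4x_i^2+y_i^2)>0$.)

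On this invariant subset \cref{lma:reeb_vf_over_the_origin} identifies $R_\varepsilon$ with the Reeb vector field of $(\partial V_0,\lambda|_{\partial V_0})$, after the Liouville rescaling identifying the level set $\{\phi=\varepsilon\}$ of $V_0$ with $\partial V_0$, and the trace of $\overline{\partial\ell}$ on it is precisely the corresponding copy of $\partial\ell$. This yields the asserted one-to-one correspondences between Reeb chords of $\overline{\partial\ell}$ and of $\partial\ell$, and between closed Reeb orbits of $G^m_\varepsilon$ and of $\partial V_0$. For the gradings, along such a chord or orbit the contact hyperplane of $G^m_\varepsilon$ splits, in a trivialization adapted to the product, as the pullback of the contact hyperplane of $\partial V_0$ together with the standard symplectic $\R^{2m}$, and the linearized Reeb flow respects this splitting. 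In each added $\R^2$-factor the linearized flow is the positive-hyperbolic symplectic flow generated, up to the reparametrization by $\beta$, by $\left(\begin{smallmatrix}0&1\\2&0\end{smallmatrix}\right)$, with real positive eigenvalues $e^{\pm\sqrt 2 t}$ for all $t>0$; a direct crossing computation --- carried out for $m=1$ in \cite[Section~2.2]{asplund2021chekanov} and in \cite{bourgeois2012effect} --- shows that this factor contributes $0$ to the Conley--Zehnder index of both chords and closed orbits, so $\CZ_{G^m_\varepsilon}=\CZ_{\partial V_0}$ in each case. Since the grading of a chord equals $-\CZ+1$ in either ambient manifold we get $\abs{c}_{G^m_\varepsilon}=\abs{c}_{\partial V_0}$, and since the normalization constant in the contact homology grading is $n-3$ for the $(2n-1)$-dimensional $G^m_\varepsilon$ and $(n-m)-3$ for the $(2(n-m)-1)$-dimensional $\partial V_0$ we get $\abs{\gamma}_{G^m_\varepsilon}=\abs{\gamma}_{\partial V_0}+m$.

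The step I expect to be the main obstacle is this last one: choosing symplectic and Legendrian trivializations compatible with the splitting $V_0\times\R^{2m}$ and evaluating the Conley--Zehnder (respectively Robbin--Salamon) contribution of the hyperbolic $\R^{2m}$-summand. This is the evident generalization of the model computations in \cite[Section~2.2]{asplund2021chekanov} and \cite{bourgeois2012effect}, and I would cite those rather than reprove the normalizations here.
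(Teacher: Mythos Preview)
Your proposal is correct and follows essentially the same route as the paper's proof: both reduce to the explicit formula \eqref{eq:basic_building_block_reeb}, show that the $(x_i,y_i)$-dynamics are hyperbolic so chords and orbits must sit over the origin, invoke \cref{lma:reeb_vf_over_the_origin} for the one-to-one correspondence, and then compute the Conley--Zehnder contribution of the $\R^{2m}$-factor. The differences are stylistic: the paper integrates the ODE explicitly via $\cosh/\sinh$ where you use the first integral $y_i^2-2x_i^2$, and the paper writes out the linearized flow as a symplectic shear and checks the rotation direction of $T\overline{\partial\ell}$ directly, whereas you defer to the $m=1$ model in \cite{asplund2021chekanov,bourgeois2012effect}. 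Two small tightenings: in the orbit case with $y_i^2-2x_i^2=0$ you should say periodicity (not the level set alone) forces $x_i\equiv y_i\equiv 0$, since the nonzero branches $y_i=\pm\sqrt 2\,x_i$ carry nonconstant (exponential) trajectories; and your justification that $\beta>0$ via ``$\phi(v)=0$ where $z=0$'' only covers the minimum of $\phi$ --- it is cleaner to note that $\beta>0$ is automatic from $\alpha_\varepsilon(R_\varepsilon)=1$ on the contact hypersurface.
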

		\begin{proof}
			That all Reeb orbits of $G^m_\varepsilon$ and Reeb chords of $\overline{\partial \ell}$ lie over $(\boldsymbol x, \boldsymbol y) = (\boldsymbol 0, \boldsymbol 0)$ follows from the explicit description of the flow of the Reeb vector field $R_\varepsilon$ in \eqref{eq:basic_building_block_reeb}. After changing the speed of $R_\varepsilon$, the $(\boldsymbol x, \boldsymbol y)$-coordinate of the flow of $R_\varepsilon$ is described by the solution of the following system of linear differential equations
			\[
				\begin{cases}
					\dot x_i(t) = y_i(t) \\
					\dot y_i(t) = 2x_i(t)
				\end{cases}\quad i\in \left\{1,\ldots,m\right\}\, .
			\]
			With the initial value $(\boldsymbol x(0), \boldsymbol y(0))$, the solution of this system of linear differential equations is given by
			\begin{equation}\label{eq:Reeb_flow_in_handle}
				\begin{cases}
					x_i(t) = x_i(0) \cosh(\sqrt 2 t) + \frac{y_i(0)}{\sqrt 2} \sinh(\sqrt 2 t) \\
					y_i(t) = y_i(0) \cosh(\sqrt 2 t) + \sqrt 2x_i(0) \sinh(\sqrt 2 t)
				\end{cases} \quad i \in \left\{1,\ldots,m\right\}\, .
			\end{equation}
			Unless $(\boldsymbol x(0), \boldsymbol y(0)) = (\boldsymbol 0, \boldsymbol 0)$, the $(\boldsymbol x, \boldsymbol y)$-coordinate of a flow line moves along the curve
			\[
				\left\{2x_i^2-y_i^2 = 2x_i(0)^2-y_i(0)\right\}\, ,
			\]
			with non-zero speed in each $(x_i,y_i)$-plane for $i\in \left\{1,\ldots,m\right\}$. The $(\boldsymbol x, \boldsymbol y)$-coordinate of flow lines starting at the origin are fixed by the expression in \eqref{eq:Reeb_flow_in_handle}.

			By \cref{lma:reeb_vf_over_the_origin} we have that $R_\varepsilon$ coincides with the Reeb vector field on $(\partial V_0, \eval[0]{\lambda}_{\partial V_0})$ over $(\boldsymbol x, \boldsymbol y) = (\boldsymbol 0, \boldsymbol 0)$. This is enough to conclude that there are one-to-one correspondences of Reeb orbits in $G^m_\varepsilon$, and Reeb chords of $\overline{\partial \ell} \subset G^m_\varepsilon$ with Reeb orbits in $\partial V_0$ and Reeb chords of $\partial \ell \subset \partial V_0$, respectively.

			What is left to show is that these one-to-one correspondences preserves index of Reeb chords, and raises index by $m$ of Reeb orbits, respectively. Since Reeb orbits in $G^m_\varepsilon$ lie over the origin $(\boldsymbol x, \boldsymbol y) = (\boldsymbol 0, \boldsymbol 0)$, and since $R_\varepsilon$ coincides with the Reeb vector field on $(\partial V_0, \eval[0]{\lambda}_{\partial V_0})$ over the origin, the Conley--Zehnder index of $\gamma$ considered as a Reeb orbit in $G^m_\varepsilon$ is unchanged through the one-to-one correspondence. However, since $\dim G^m_\varepsilon = 2n-1$ we have
			\[
				\abs \gamma_{G^m_\varepsilon} = \CZ(\gamma) + (n-3) = \abs \gamma_{\partial V_0} + m\, .
			\]
			The linearized Reeb flow in the $\boldsymbol y$-directions in $G^m_\varepsilon$ is responsible for the potential change in Conley--Zehnder index when comparing $\abs{c}_{G^m_\varepsilon}$ to $\abs{c}_{\partial V_0}$. By \eqref{eq:basic_building_block_reeb}, the Reeb vector field along $\overline{\partial \ell}$ is
			\[
				R_{\varepsilon}(v,\boldsymbol 0, \boldsymbol y) = \beta(v, \boldsymbol 0, \boldsymbol y) \left(\sum_{i=1}^m (y_i \partial_{x_i}) + r_\lambda(v,\boldsymbol 0,\boldsymbol y)\right)\, .
			\]
			After rescaling $R_\varepsilon(v,\boldsymbol 0, \boldsymbol y)$ the linearized Reeb flow is given by
			\[
				d \varPsi^t(v, \boldsymbol 0, \boldsymbol y) = d \psi^t(v) \oplus S(t)\, ,
			\]
			where $S(t) := \begin{pmatrix}\boldsymbol 1_{m \times m} & t \cdot \boldsymbol 1_{m \times m} \\ \boldsymbol 0_{m \times m} & \boldsymbol 1_{m \times m} \end{pmatrix}$ is a symplectic shear in the $(\boldsymbol x, \boldsymbol y)$-planes and $d \psi^t$ is the linearized flow of the Reeb vector field $r_\lambda$ in $\partial V_0$. The tangent space of $\overline{\partial \ell}$ is the $y$-axis, and we see that if $T_{(v,\boldsymbol 0,\boldsymbol y)} \overline{\partial \ell}$ is spanned by $\begin{pmatrix}\boldsymbol 0_{m \times 1} \\ \boldsymbol 1_{m \times 1}\end{pmatrix}$, then  $d \varPsi^1(v,\boldsymbol 0, \boldsymbol y)(T_{(v,\boldsymbol 0,\boldsymbol y)} \overline{\partial \ell})$ is spanned by $\begin{pmatrix}\boldsymbol 1_{m \times 1} \\ \boldsymbol 1_{m \times 1}\end{pmatrix}$ which is obtained from $\begin{pmatrix}\boldsymbol 0_{m \times 1} \\ \boldsymbol 1_{m \times 1}\end{pmatrix}$ by a negative rotation in the $(\boldsymbol x, \boldsymbol y)$-planes. Hence if we take the positive close-up the Conley--Zehnder index is unchanged, and therefore $\abs c_{G^m_\varepsilon} = \abs c_{\partial V_0}$.
		\end{proof}

	\subsubsection{Almost complex structure and holomorphic disks in the basic building block}\label{sec:acs_hol_curves_basic_building_block}
		We first define an almost complex structure on $V$ as follows. Each Weinstein handle $h_V$ of $V$ comes equipped with the Weinstein Morse function $\phi_V$, and we have contact hypersurfaces $\partial_{\pm \delta} h_V := \phi_{h_V}^{-1}(\pm \delta)$. Pick an almost complex structure $J_{V,0}$ along $\partial_\delta h_V$ which takes the Liouville vector field to the Reeb vector field. We extend $J_{V,0}$ as an almost complex structure $J_V$ over $h_V$ that leaves the contact planes in $\partial_{\pm \delta} h_V$ invariant for all $\delta > 0$, and agrees with the Liouville invariant almost complex structure on the symplectization ends built on $\partial_{\pm \delta}h_V$ for any $\delta > 0$. We call such $J_V$ \emph{handle adapted}.

		Let $z$ be the Liouville vector field of $V$, and let $r := J_Vz$. We fix an exact symplectomorphism between $\widetilde H^{m}_{\varepsilon }(V) :=H^m_\varepsilon(V) \setminus (\Skel(V) \times \left\{\boldsymbol x=0\right\})$ and the symplectization of $\partial_+H^m_\varepsilon(V)$. We will define an almost complex structure on $G^m_\varepsilon$ that takes the Liouville vector field $Z$ to the Reeb vector field $R_\varepsilon$ and then extend it to all of $\widetilde H^m_\varepsilon(V)$ by translation along the Liouville vector field.

		Consider the tangent space of $G^m_\varepsilon(V)$. Then the following holds true.
		\begin{itemize}
			\item At $(\boldsymbol x, \boldsymbol y) = (\boldsymbol 0, \boldsymbol 0)$, the tangent space is spanned by $\ker d \phi$, $\partial_{x_i}$ and $\partial_{y_i}$ for $i\in \left\{1,\ldots,m\right\}$. Here we define $J$ as $J_V$ on $TV$, and $J \partial_{x_i} = \partial_{y_i}$ for $i\in \left\{1,\ldots,m\right\}$.
			\item At points $(v, \boldsymbol x, \boldsymbol y) \in G^m_\varepsilon(V)$, where $v \not\in \crit(\phi)$ and $(\boldsymbol x, \boldsymbol y) \neq (\boldsymbol 0, \boldsymbol 0)$, the tangent space is the sum of the $(2n-2m-1)$-dimensional space $\ker d \phi$ and the $2m$-dimensional subspaces spanned by the vector fields $-(4x_i^2+y_i^2)z + d \phi(z)(2x_i \partial_{x_i} - y_i \partial_{y_i})$ and $2x_i \partial_{y_i} + y_i \partial_{x_i}$ for $i\in \left\{1,\ldots,m\right\}$. Here we define $J$ as the restriction of $J_V$ on the contact hyperplanes in $\ker d \phi$, and
			\[
				Jz = \beta(v,\boldsymbol x, \boldsymbol y) r, \quad J(2x_i \partial_{x_i} - y_i \partial_{y_i}) = \beta(v,\boldsymbol x, \boldsymbol y) (2x_i \partial_{y_i}+y_i \partial_{x_i}) \quad i \in \left\{1,\ldots,m\right\}\, .
			\]
			\item At points $(v,\boldsymbol x, \boldsymbol y)$ where $v\in \crit(\phi)$, we define $J$ to be equal to $J_V$ on $TV$ and
			\[
				J(2x_i \partial_{x_i} - y_i \partial_{y_i}) = \frac{1}{\sum_{i=1}^m (4x_i^2 + y_i^2)}(2x_i \partial_{y_i} + y_i \partial_{x_i}) \quad i\in \left\{1,\ldots,m\right\}\, .
			\]
			(Note that $v\in \crit(\phi)$ and $(v, \boldsymbol x, \boldsymbol y) \in G^m_\varepsilon(V)$ imply $(\boldsymbol x, \boldsymbol y) \neq (\boldsymbol 0, \boldsymbol 0)$.)
		\end{itemize}
		Since $J_V$ is handle adapted it follows that $J$ is smooth. We finally extend $J$ by translation along the Liouville vector field. Let $\pi_{i} \colon H^m_\varepsilon(V) \longrightarrow \R^{2}_{(x_i,y_i)}$ be the projection to the $(x_i,y_i)$-plane.
		\begin{lma}\label{lma:complexproj}
			The codimension $2$ hyperplanes $V_{(x_i,y_i)} := \pi^{-1}_i(x_i,y_i)$ are $J$-complex. In particular, any holomorphic disk in the symplectization of $G^m_\varepsilon(V)$ intersecting $V_{(x_i,y_i)}$ is either contained in $V_{(x_i,y_i)}$ or intersects $V_{(x_i,y_i)}$ positively. Moreover the intersection number is locally constant in $(x_i,y_i)$.
		\end{lma}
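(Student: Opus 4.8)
The plan is to exhibit each hyperplane $V_{(x_i,y_i)} = \pi_i^{-1}(x_i,y_i)$ as the preimage of a point under a holomorphic map, which immediately forces the fibers to be $J$-complex and holomorphic disks to meet them positively. Concretely, I would first check that the projection $\pi_i \colon H^m_\varepsilon(V) \longrightarrow \R^2_{(x_i,y_i)}$, after equipping the target with its standard complex structure $j$ (with $j\partial_{x_i} = \partial_{y_i}$), is $(J,j)$-holomorphic on the relevant locus. This is a direct consequence of the explicit formulas for $J$ given in the three bullet points just before the statement: at $(\boldsymbol x,\boldsymbol y) = (\boldsymbol 0,\boldsymbol 0)$ one has $J\partial_{x_i} = \partial_{y_i}$ by definition; at points with $v\notin\crit\phi$ and $(\boldsymbol x,\boldsymbol y)\neq(\boldsymbol 0,\boldsymbol 0)$, the vector field $2x_i\partial_{y_i}+y_i\partial_{x_i}$ is $\beta^{-1}$ times $J$ applied to $2x_i\partial_{x_i}-y_i\partial_{y_i}$, and the conformal factor $\beta$ is irrelevant for the holomorphicity of a map to a Riemann surface; and the case $v\in\crit\phi$ is handled identically with the conformal factor $\big(\sum_i(4x_i^2+y_i^2)\big)^{-1}$. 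In each case the remaining tangent directions ($\ker d\phi$, the other $(x_j,y_j)$-planes, and the Liouville/Reeb directions after the translation extension) are sent by $d\pi_i$ to zero, so $d\pi_i \circ J = j \circ d\pi_i$ holds. One subtlety is smoothness across the hypersurface $\{z=0\}$ and across critical points of $\phi$, but this is already subsumed in the assertion made right before the lemma that $J$ is smooth because $J_V$ is handle adapted, so I can invoke it.

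Once $\pi_i$ is holomorphic, $V_{(x_i,y_i)} = \pi_i^{-1}(\mathrm{pt})$ is a $J$-complex submanifold: its tangent space is $\ker d\pi_i$, which is $J$-invariant since $j\cdot 0 = 0$ forces $J(\ker d\pi_i)\subseteq \ker d\pi_i$ and $J$ is an isomorphism. The positivity statement is then the standard fact (Micallef--White, or the positivity of intersections with holomorphic hypersurfaces as used throughout symplectic field theory) applied to a holomorphic disk $u$ in the symplectization of $G^m_\varepsilon(V)$: either $u$ has image contained in $V_{(x_i,y_i)}$, or the composite $\pi_i \circ u$ is a nonconstant holomorphic map from the disk to $\R^2_{(x_i,y_i)}\cong\C$, and the preimage $(\pi_i\circ u)^{-1}(x_i,y_i)$ is a discrete set of points each contributing a positive local intersection multiplicity. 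Here I extend $\pi_i$ to the symplectization by precomposing with the projection to $G^m_\varepsilon(V)$, which remains holomorphic because $J$ on the symplectization is the Liouville-translation-invariant extension and $\pi_i$ is invariant under the Liouville flow in the $(x_i,y_i)$-plane (the flow \eqref{eq:basic_building_block_Liouville} acts on $(x_i,y_i)$ by $(x_i,y_i)\mapsto(e^{2t}x_i,e^{-t}y_i)$, so $\pi_i$ is not literally invariant—I would instead note that level sets of $\pi_i$ are Liouville-invariant as sets, which is all that is needed, or work on a single slice $G^m_\varepsilon(V)$ and transport).

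For the final sentence, that the intersection number is locally constant in $(x_i,y_i)$: for a fixed holomorphic disk $u$ with boundary on a Legendrian not meeting the relevant region, the total intersection number $u \cdot V_{(x_i,y_i)}$ is computed by the winding number of $\pi_i\circ u|_{\partial}$ around the point $(x_i,y_i)$, which depends only on the connected component of the complement of $\pi_i(u(\partial D))$ in which $(x_i,y_i)$ lies; equivalently, homotoping $(x_i,y_i)$ slightly does not change the algebraic count because $V_{(x_i,y_i')}$ for $(x_i,y_i')$ near $(x_i,y_i)$ is homologous rel the ends and intersections are deformation invariant. The main obstacle I anticipate is not conceptual but bookkeeping: making sure the extension of $J$ by Liouville translation interacts correctly with the projection $\pi_i$ despite $\pi_i$ not being strictly Liouville-invariant, and confirming smoothness of $J$ — hence of $\pi_i$'s holomorphicity — at the loci $z=0$ and $v\in\crit\phi$ where the formulas for $J$ change form. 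Both are addressed by carefully unpacking "handle adapted", so I expect the proof to be short, essentially the two displayed observations above.
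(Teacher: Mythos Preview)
Your central claim---that $\pi_i$ is $(J,j)$-holomorphic---is false, and the dismissal of the conformal factor $\beta$ is where the argument breaks. From the formula $J(2x_i\partial_{x_i}-y_i\partial_{y_i}) = \beta(2x_i\partial_{y_i}+y_i\partial_{x_i})$ together with $J^2=-\id$ one gets $J(2x_i\partial_{y_i}+y_i\partial_{x_i}) = -\beta^{-1}(2x_i\partial_{x_i}-y_i\partial_{y_i})$, so $d\pi_i\circ J$ equals $\beta\, j\circ d\pi_i$ on one basis vector and $\beta^{-1}\, j\circ d\pi_i$ on the other. Since $\beta$ depends on $v$ and on all of $(\boldsymbol x,\boldsymbol y)$, there is no complex structure on the target making $\pi_i$ holomorphic; the ``conformal factors are irrelevant for maps to a Riemann surface'' heuristic applies to rescalings on the \emph{domain} of a map from a Riemann surface, not to a factor appearing in $J$ on a higher-dimensional source. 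Consequently your positivity argument via $\pi_i\circ u$ being a nonconstant holomorphic map to $\C$ collapses. (Your aside that ``level sets of $\pi_i$ are Liouville-invariant as sets'' is also false: the flow $(x_i,y_i)\mapsto(e^{2t}x_i,e^{-t}y_i)$ moves every nonzero point.)

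The paper's proof avoids this entirely by arguing more directly: the splitting $TH^m_\varepsilon(V)=TV\oplus T\R^{2m-2}\oplus T\R^2_{(x_i,y_i)}$ is preserved by the Liouville flow, and $J$ preserves each summand by its very definition (your own computation above shows $J$ preserves the $(x_i,y_i)$-plane, hence also its complement $TV_{(x_i,y_i)}$). That is all that is needed for $V_{(x_i,y_i)}$ to be $J$-complex. Positivity and local constancy then follow from general local intersection theory of holomorphic curves with $J$-complex hypersurfaces, as in \cite{ionel2003relative}; but because the ambient manifold is non-compact with cylindrical ends, one first needs to know that the intersection locus is compact, for which the paper invokes \cite{moreno2019holomorphic,siefringsymplectic}. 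You do not address this compactness issue, and your winding-number argument implicitly assumes it. If you drop the holomorphicity-of-$\pi_i$ strategy and instead (i) verify the splitting is $J$-invariant and Liouville-flow-invariant, and (ii) cite the SFT asymptotic control plus local positivity, you recover the paper's argument.
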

		\begin{proof}
		The splitting $TH^m_\varepsilon(V) = TV \oplus T \R^{2m-2} \oplus T \R^2$ is preserved along the flow lines of the Liouville vector field $Z = z + \sum_{i=1}^m (2x_i \partial_{x_i} - y_i \partial_{y_i})$ and $J$ takes $TV_{(x_i,y_i)}$ to $TV_{(x_i,y_i)}$ by definition.

		By work of Moreno and Siefring \cite{moreno2019holomorphic,siefringsymplectic} it follows that intersections between holomorphic disks and $V_{(x_i,y_i)}$ are contained in a compact set if they exist. By local analysis found in eg \cite[Lemma 3.4]{ionel2003relative} it follows that such a holomorphic disk is either contained in $V_{(x_i,y_i)}$ or intersects $V_{(x_i,y_i)}$ positively.
		\end{proof}
		Before we consider holomorphic disks in $\widetilde H^m_\varepsilon(V_0)$ with boundary on $\R \times \overline{\partial \ell}$ which play a role in the Chekanov--Eliashberg dg-algebra of $\overline{\partial \ell} \subset G^m_\varepsilon$, we touch upon the technical parts of the definition of the Chekanov--Eliashberg dg-algebra of $\partial \ell \subset \partial V_0$ involving anchoring.

		By \cite[Lemma 2.1]{asplund2021chekanov} we have that given a generic Weinstein handle decomposition of $V_0$, it holds that any Reeb orbit in $\partial V_0$ lies in the middle of some standard Weinstein handle of index $k$ (ie the orbit projects to $(\boldsymbol 0,\boldsymbol 0) \in D^k \times D^{2n-2m-k}$). Then fix an \emph{index definite} Weinstein structure on $V_0$. Namely, choose a Liouville form such that near the middle of each Weinstein handle along the contact boundary it is arbitrarily close to the standard contact form, and that it moreover is such that the minimal Conley--Zehnder index of a Reeb orbit is $(n-m-k)+1$, see \cite[Lemma 2.1]{asplund2021chekanov}. For $\dim V_0 = 2n - 2m > 2$ this means that the minimal grading of any Reeb orbit is $1$ (which happens for the critical index $k = n-m$). In the lowest dimensional case, $\dim V_0 = 2n-2m = 2$ there are simple Reeb orbits with grading $0$ and its effects must be taken into account, see \cref{rmk:anchoring} for context regarding the role of anchoring in the Chekanov--Eliashberg dg-algebra. For a complete discussion on these matters, see \cite[Section 2.3]{asplund2021chekanov}.

		\begin{rmk}\label{rmk:anchoring}
			The Chekanov--Eliashberg dg-algebra of a Legendrian $\varLambda \subset \partial X$ was originally defined by Chekanov \cite{chekanov2002differential} and Eliashberg \cite{eliashberg1998invariants} and is part of the more general symplectic field theory package \cite{eliashberg2000introduction}. Its differential is typically defined by counting $J$-holomorphic disks in $\R \times \partial X$ with boundary condition on $\R \times \varLambda$. The fact that $\partial X$ is filled by $X^{\text{in}}$ actually means that we need to count \emph{anchored} $J$-holomorphic disks, see \cref{fig:anchored_disks}. 
			\begin{figure}[!htb]
				\centering
				\includegraphics[scale=0.75]{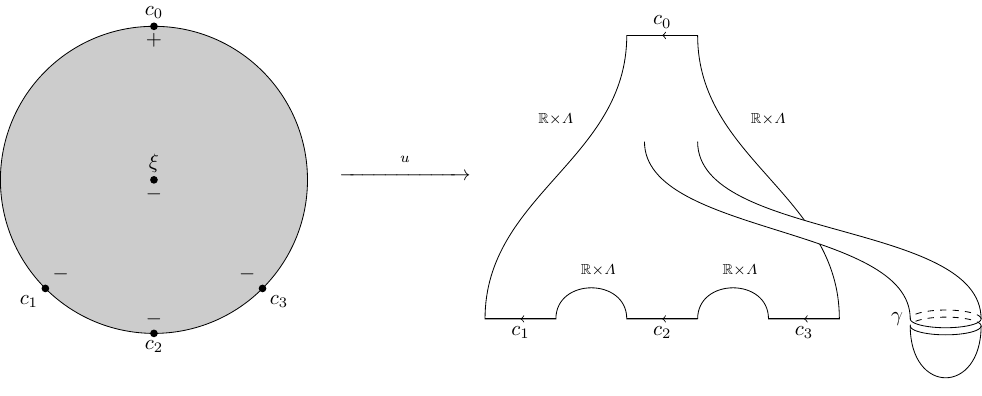}
				\caption{An anchored $J$-holomorphic disk involved in the definition of the differential of the Chekanov--Eliashberg dg-algebra.} \label{fig:anchored_disks}
			\end{figure}
			These are $J$-holomorphic buildings with two levels, with one level in $\R \times \partial X$ and one level in $X^{\text{in}}$. The domain of the map whose target is $\R \times \partial X$ is a disk with boundary punctures and interior punctures near which the map is asymptotic to a cylinder over a contractible Reeb chord near $\left\{-\infty\right\} \times \partial X$. Each component of the map whose target is $X^{\text{in}}$ is a $J$-holomorphic map $S^2 \setminus \left\{\text{pt}\right\} \longrightarrow X^{\text{in}}$ which near the puncture is asymptotic to a contractible Reeb orbit $\gamma$ of $\partial X$. The moduli space of such maps has dimension $\abs \gamma$ using the well-known dimension formula \cite[Theorem A.1]{cieliebak2010compactness}, where $\abs \gamma$ is the contact homology grading \eqref{eq:contact_homology_grading}, and only zero-dimensional moduli spaces are used in the definition of the differential. In particular, if $\abs \gamma \geq 1$, the differential can be defined without anchoring. This is our version of dynamical convexity or index positivity, cf eg \cite[Definition 1.1]{hutchings2016cylindrical} and \cite[Section 9.5]{cieliebak2018symplectic}.
		\end{rmk}

		\begin{lma}\label{lma:middlesubcrit}
			Assume $n-m > 1$. For any $\mathfrak a > 0$ there is some $\varepsilon > 0$ small enough so that any $J$-holomorphic disk in the symplectization $\R \times G^m_\varepsilon$ with boundary on $\R \times \overline{\partial \ell}$ and with positive puncture at a Reeb chord over $(\boldsymbol x, \boldsymbol y) = \boldsymbol 0$ corresponding to a Reeb chord of $\partial \ell$ of action $ < \mathfrak a$, lies entirely over $(\boldsymbol x,\boldsymbol y) = \boldsymbol 0$. Also, there is a one-to-one correspondence between such rigid $J$-holomorphic disks and rigid $J_{V_0}$-holomorphic disks in $\R \times \partial V_0$ with boundary on $\R \times \partial \ell$.
		\end{lma}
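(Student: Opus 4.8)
The plan is to adapt the argument for the $m=1$ case in \cite[Section 2.3]{asplund2021chekanov}, the new input being the $m$ families of $J$-complex hypersurfaces $V_{(x_i,y_i)}$, $i\in\{1,\dots,m\}$, from \cref{lma:complexproj}. Since $n-m>1$ we have $\dim V_0=2n-2m>2$, so (by the discussion preceding the statement) the Chekanov--Eliashberg dg-algebra of $\partial\ell\subset\partial V_0$ needs no anchoring, and by \cref{lma:basic_building_block_reeb} the minimal grading of a Reeb orbit of $G^m_\varepsilon$ is $1+m\geq 2$, so neither side needs anchoring and I work with honest disks throughout. Let $c$ be a Reeb chord of $\overline{\partial\ell}\subset G^m_\varepsilon$ over $(\boldsymbol x,\boldsymbol y)=\boldsymbol 0$, corresponding under \cref{lma:basic_building_block_reeb} to a chord of $\partial\ell$ of action $<\mathfrak a$, and let $u$ be a $J$-holomorphic disk in $\R\times G^m_\varepsilon$ with boundary on $\R\times\overline{\partial\ell}$, positive puncture at $c$, and negative punctures at chords $c_j^-$. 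All punctures lie over the origin by \cref{lma:basic_building_block_reeb}; since $\overline{\partial\ell}$ is Legendrian, the symplectic area of $u$ is $\mathfrak a(c)-\sum_j\mathfrak a(c_j^-)\leq\mathfrak a(c)<\mathfrak a$; and by the results of Moreno--Siefring cited in the proof of \cref{lma:complexproj} together with standard SFT compactness, the image of $u$ is compact, so $\pi_i\circ u$ has bounded image for each $i$.

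First I would show $\boldsymbol x\circ u\equiv\boldsymbol 0$. Fix $i$. By \cref{lma:complexproj}, each $V_{(c,d)}:=\pi_i^{-1}(c,d)$ is $J$-complex, so $u$ is either contained in it or meets it in finitely many interior points with positive local multiplicity, the total intersection number being locally constant in $(c,d)$ off $\pi_i(\partial u)$. Since $\overline{\partial\ell}\subset\{\boldsymbol x=\boldsymbol 0\}$ by \eqref{eq:dl_trivial_extension}, $\pi_i(\partial u)$ lies in the line $\{x_i=0\}$, so each of $\{x_i>0\}$ and $\{x_i<0\}$ is connected and avoids $\pi_i(\partial u)$. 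Compactness gives $u\cdot V_{(c,d)}=0$ for $|c|$ large, hence $u\cdot V_{(c,d)}=0$ for all $c\neq 0$ by local constancy, and positivity of intersection then forces $x_i\circ u\equiv 0$: a positive maximum of $\abs{x_i\circ u}$ would occur at an interior point lying in some $V_{(c,d)}$ with $c\neq 0$, giving $u\cdot V_{(c,d)}\geq 1$ (and $u\not\subset V_{(c,d)}$ as $\partial u\subset\{x_i=0\}$), a contradiction. Doing this for all $i$ gives $\boldsymbol x\circ u\equiv\boldsymbol 0$, so $u$ has image in $\R\times(\{\boldsymbol x=\boldsymbol 0\}\cap G^m_\varepsilon)$, where $\phi(v)=\varepsilon+\tfrac12\abs{\boldsymbol y}^2$.

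The remaining step, that $\boldsymbol y\circ u\equiv\boldsymbol 0$ for $\varepsilon$ small, is the crux. With $\boldsymbol x\circ u\equiv\boldsymbol 0$ this is equivalent to $v\circ u$ mapping into $\{\phi=\varepsilon\}\subset V_0$; moreover, running the previous paragraph with the hypersurfaces $V_{(0,d)}$ shows the $y_i$-range of the image of $u$ already lies in the (bounded, closed) $y_i$-range of $\partial u$, so it suffices to control $\partial u$. I would show, via the explicit Liouville field \eqref{eq:basic_building_block_Liouville} and Reeb flow \eqref{eq:Reeb_flow_in_handle} — notably that $(\boldsymbol x,\boldsymbol y)=\boldsymbol 0$ is the neutral locus of the hyperbolic Reeb dynamics in the $(x_i,y_i)$-planes — together with the monotonicity lemma for $J$-holomorphic curves, that a disk whose boundary exits a fixed neighbourhood of $\{\boldsymbol y=\boldsymbol 0\}$ in $\overline{\partial\ell}$ has symplectic area bounded below by a quantity that tends to $\infty$ as $\varepsilon\to 0$, contradicting the area bound of the first paragraph once $\varepsilon$ is small in terms of $\mathfrak a$. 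This is the $m$-fold version of the estimate in \cite[Section 2.3]{asplund2021chekanov}, the pairs $(x_i,y_i)$ contributing independently; alternatively, a softer argument derives a contradiction from an SFT-compactness limit of such disks as $\varepsilon\to 0$, its components being excluded by the first two steps and \cref{lma:reeb_vf_over_the_origin}.

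Once every relevant $u$ lies over $(\boldsymbol x,\boldsymbol y)=\boldsymbol 0$, the correspondence follows. There $J$ restricts to $J_V$ on $TV$ and to the standard structure on each $(x_i,y_i)$-plane, the contact form and Reeb field restrict to those of $(\partial V_0,\eval[0]{\lambda}_{\partial V_0})$ by \cref{lma:reeb_vf_over_the_origin}, and $\overline{\partial\ell}$ restricts to the trivial extension of $\partial\ell$; hence $u$ is the $(\boldsymbol x,\boldsymbol y)$-independent extension of a $J_{V_0}$-holomorphic disk in $\R\times\partial V_0$ with boundary on $\R\times\partial\ell$, and conversely each such disk extends trivially, giving the bijection. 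It respects rigidity because the linearized $\bar\partial$-operator of $u$ splits as that of the underlying $\partial V_0$-disk plus the $\bar\partial$-operator in the normal $(\boldsymbol x,\boldsymbol y)$-directions, which — after the rescaling producing the shear $S(t)$ of \cref{lma:basic_building_block_reeb} — is a $\bar\partial$-operator on a trivial $\C^m$-bundle over the disk with constant totally real boundary condition and shear asymptotics, of Fredholm index zero and invertible; equivalently $\abs{c}_{G^m_\varepsilon}=\abs{c}_{\partial V_0}$ by \cref{lma:basic_building_block_reeb}, so the moduli spaces have equal expected dimension and transversality transfers. This completes the proof.
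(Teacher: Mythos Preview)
The paper's own proof consists entirely of the sentence ``This is exactly \cite[Lemma 2.7]{asplund2021chekanov}'', so there is no in-paper argument to compare against. Your proposal is a correct and reasonably detailed account of how that $m=1$ argument generalizes: positivity of intersection with the $J$-complex hypersurfaces $V_{(x_i,y_i)}$ from \cref{lma:complexproj}, applied coordinate by coordinate, forces $\boldsymbol x\circ u\equiv\boldsymbol 0$; a monotonicity/action estimate (or an SFT-compactness limit) as $\varepsilon\to 0$ then pins $\boldsymbol y\circ u\equiv\boldsymbol 0$; and the splitting of the linearized $\bar\partial$-operator into the $\partial V_0$-part and a trivial normal part handles the bijection and rigidity. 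This is exactly the strategy one extracts from the cited reference.

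One small imprecision worth fixing: the image of $u$ in the symplectization is not compact (the punctures run to $\pm\infty$ in the $\R$-factor). What you actually need, and what follows from the Moreno--Siefring results together with the fact that every asymptotic Reeb chord sits over $(\boldsymbol x,\boldsymbol y)=\boldsymbol 0$, is that each projection $\pi_i\circ u$ has bounded image in $\R^2_{(x_i,y_i)}$; that is enough for the intersection-number argument to go through.
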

		\begin{proof}
			This is exactly \cite[Lemma 2.7]{asplund2021chekanov}
		\end{proof}
		\begin{cor}\label{cor:dgsubalg}
		For any $\mathfrak a > 0$ there is some $\varepsilon > 0$ small enough so that the Chekanov--Eliashberg dg-algebra of $\overline{\partial \ell} \subset G^m_\varepsilon(V_0)$ generated by Reeb chords of action $< \mathfrak a$ is canonically isomorphic to the Chekanov--Eliashberg dg-algebra of $\partial \ell \subset \partial V_{0}$ generated by Reeb chords of action $< \mathfrak a$.
		\end{cor}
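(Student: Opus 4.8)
The plan is to assemble the claimed isomorphism directly from the two structural results already established for the basic building block, namely \cref{lma:basic_building_block_reeb} (bijection of generators) and \cref{lma:middlesubcrit} (confinement and bijection of rigid disks), and to check that the remaining data --- actions, gradings, coherent orientations, anchoring --- match under these bijections.

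\emph{Generators.} Fix $\mathfrak a>0$. By \cref{lma:basic_building_block_reeb} the Reeb chords of $\overline{\partial\ell}\subset G^m_\varepsilon(V_0)$ all lie over $(\boldsymbol x,\boldsymbol y)=(\boldsymbol 0,\boldsymbol 0)$ and are in grading-preserving bijection with the Reeb chords of $\partial\ell\subset\partial V_0$. Applying \cref{lma:reeb_vf_over_the_origin} to $V_0$, the contact form $\alpha_\varepsilon$ and its Reeb flow restrict, over the origin, to $\eval[0]{\lambda}_{\partial V_0}$ and its Reeb flow; since $\sum_i(2x_i\,dy_i+y_i\,dx_i)$ vanishes along $\{\boldsymbol x=\boldsymbol 0,\ \boldsymbol y=\boldsymbol 0\}$, the bijection preserves the action of each chord. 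Hence it restricts to a bijection between chords of action $<\mathfrak a$ on the two sides, and since $\overline{\partial\ell}\cong\partial\ell\times\R^m$ has the same set of components as $\partial\ell$, it identifies the two underlying semi-free graded algebras over the same idempotent ring $\boldsymbol k$. I would also record here that a disk contributing to the Chekanov--Eliashberg differential has a single positive puncture and positive symplectic area, so the action of its positive asymptotic bounds that of every negative asymptotic; therefore ``generated by Reeb chords of action $<\mathfrak a$'' defines a genuine sub-dg-algebra on each side, and it suffices to match differentials of generators of action $<\mathfrak a$.

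\emph{Differential.} The Chekanov--Eliashberg differential of $\overline{\partial\ell}$ counts rigid $J$-holomorphic disks in $\R\times G^m_\varepsilon(V_0)$ with boundary on $\R\times\overline{\partial\ell}$, one positive puncture and arbitrarily many negative punctures, possibly anchored in $\widetilde H^m_\varepsilon(V_0)$. Given $\mathfrak a$, pick $\varepsilon>0$ small as in \cref{lma:middlesubcrit}. For a positive puncture of action $<\mathfrak a$ that lemma confines every such rigid disk to the slice over $(\boldsymbol x,\boldsymbol y)=\boldsymbol 0$ and gives a bijection with rigid $J_{V_0}$-holomorphic disks in $\R\times\partial V_0$ with boundary on $\R\times\partial\ell$. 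Over the origin $J$ restricts to $J_{V_0}$ on $TV_0$, and by the first paragraph the symplectization data agree, so the two holomorphic curve problems --- and their linearized operators, which differ only by the trivial symplectic shear $S(t)$ in the extra $(\boldsymbol x,\boldsymbol y)$-directions --- coincide; thus the bijection is compatible with coherent orientations and matches the signed counts. Anchoring does not contribute: for $n-m>1$ the index-definite Weinstein structure on $V_0$ has minimal Reeb-orbit grading $1$, which rules out rigid holomorphic planes in $V_0$ (see \cite[Section~2.3]{asplund2021chekanov}), and by the grading shift $\abs{\gamma}_{G^m_\varepsilon}=\abs{\gamma}_{\partial V_0}+m$ of \cref{lma:basic_building_block_reeb} the same holds in $\widetilde H^m_\varepsilon(V_0)$. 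Assembling, the differentials agree under the generator bijection, yielding the asserted isomorphism of dg-algebras; it is canonical since it is induced by the explicit geometric correspondence, with no auxiliary choices. (In the lowest-dimensional case $\dim V_0=2$, i.e.\ $n-m=1$, one instead invokes the anchored refinement of \cref{lma:middlesubcrit} recorded in \cite[Section~2.3]{asplund2021chekanov}.)

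\emph{Main obstacle.} The single genuinely analytic ingredient is the confinement/compactness statement of \cref{lma:middlesubcrit} --- that for $\varepsilon$ small relative to the action window the disks cannot escape the region over the origin --- which is quoted from \cite{asplund2021chekanov}; granted that, the remaining work is bookkeeping of actions, gradings, coherent orientations and the (vacuous) anchoring, as above.
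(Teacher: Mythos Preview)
Your proof is correct and follows the same approach as the paper: generators via \cref{lma:basic_building_block_reeb}, differentials via \cref{lma:middlesubcrit} for $n-m>1$, and the anchored refinement from \cite{asplund2021chekanov} for $n-m=1$. The paper's proof is terser --- it simply cites these three inputs and, for the low-dimensional case, additionally points to \cite{ekholm2015legendrian} --- whereas you spell out the action preservation (via \cref{lma:reeb_vf_over_the_origin}), the action-filtration subalgebra property, and the matching of coherent orientations; these are useful elaborations but not a different argument.
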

		\begin{proof}
			The correspondence of generators follows from \cref{lma:basic_building_block_reeb}. The correspondence of holomorphic disks contributing to the differential follows from \cref{lma:middlesubcrit} in the case $n - m > 1$. In the case $n - m = 1$ it is more technical as one has to take anchored $J$-holomorphic disks into account (see \cref{rmk:anchoring}) and it follows from the corresponding identification of holomorphic disks described in \cite[Remark 2.9]{asplund2021chekanov} and \cite{ekholm2015legendrian}.
		\end{proof}
	\subsection{Construction of simplicial decompositions}\label{sec:construction_m_simplex_handle}
		The main purpose of this section is to define simplicial decompositions of a Weinstein manifold $X$, which gives a surgery presentation of $X$. Along the way we construct simplicial handles by gluing together basic building blocks which are used to define a joining operation that generalizes Weinstein connected sum.
		\subsubsection{Gluing of basic building blocks}\label{sec:gluing_of_basic_building_blocks}
			Let $V^{(k)}$ and $W^{(k+1)}$ be two Weinstein $(2n-2k)$ and $(2n-2k-2)$-dimensional manifolds. Let $0 < \varepsilon_W < \varepsilon_V$ and recall the construction of the basic building blocks $H^{k+1}_{\varepsilon_W}(W)$ and $H^k_{\varepsilon_V}(V)$ from \cref{sec:basic_building_blocks}. Using the given Weinstein hypersurface $\iota \colon W \hookrightarrow \partial V$ we now explain how we glue together $H^{k+1}_{\varepsilon_W}(W)$ and $H^k_{\varepsilon_V}(V)$.

			First note that $\iota$ induces a Weinstein hypersurface
			\[
				\iota \times \id \colon W \times \R^{2k} \hookrightarrow (V \times \R^{2k}) \setminus (\Skel V \times \left\{\boldsymbol x = \boldsymbol 0\right\})\, .
			\]
			Next we consider the negative cylindrical end of the basic building block $H^{k+1}_{\varepsilon_W}(W)$ which is identified with $(W \times \R^{2k+2}) \setminus \left(\Skel W \times \left\{\boldsymbol y = \boldsymbol 0\right\}\right)$. Picking some Weinstein hypersurface $j \colon \R^{2k} \hookrightarrow \R^{2k+2} \setminus \left\{\boldsymbol 0\right\}$ we obtain a Weinstein hypersurface
			\[
				\id \times j \colon W \times \R^{2k} \hookrightarrow (W \times \R^{2k+2}) \setminus \left(\Skel W \times \left\{\boldsymbol y = \boldsymbol 0\right\}\right)\, .
			\]
			Using $\id \times j$ and $\iota \times \id$ we may glue the negative boundary of $H^{k+1}_{\varepsilon_W}(W)$ to the positive boundary of $H^k_{\varepsilon_V}(V)$ along the Weinstein hypersurface $W \times \R^{2k}$, following \cite[Section 2.6.2]{alvarez2020positive}. We will write $\overline \iota$ to mean a choice of $j$ and ``the gluing map'' used to glue the basic building blocks together.
		\subsubsection{Join over a $1$-simplex}\label{sec:1-simplex_handles}
			Denote the vertices and the edge $\varDelta^1$ by $(\sigma_0)_1$, $(\sigma_0)_2$ and $\sigma_1$ respectively, with face inclusions $(\sigma_0)_1, (\sigma_0)_2 \supset \sigma_1$.

			\begin{dfn}[Join over a $1$-simplex]\label{dfn:join_m=1}
				Consider the tuple $(\varDelta^1, \boldsymbol V)$ where
				\[
					\boldsymbol V := \{V_{(\sigma_0)_1}^{(0)}, V_{(\sigma_0)_2}^{(0)}, V^{(1)}_{\sigma_1}, \iota_{(\sigma_0)_1}, \iota_{(\sigma_0)_2}\}\, ,
				\]
				where $\iota_{(\sigma_0)_i} \colon V^{(1)}_{\sigma_1} \hookrightarrow \partial V^{(0)}_{(\sigma_0)_i}$ is a Weinstein hypersurface for $i\in \left\{1,2\right\}$. Let $\boldsymbol \varepsilon := \varepsilon_1$ be a positive real number. Define
				\[
					H_{\boldsymbol \varepsilon}(\boldsymbol V) := \{V_{(\sigma_0)_1}, V_{(\sigma_0)_2}, H^1_{\varepsilon_1}(V_{\sigma_1}), \overline \iota_{(\sigma_0)_1}, \overline \iota_{(\sigma_0)_2}\}\, ,
				\]
				where $\overline \iota_{(\sigma_0)_i}$ are the induced Weinstein hypersurfaces of basic building blocks defined in \cref{sec:gluing_of_basic_building_blocks}. Define $\# \boldsymbol V$ to be the Weinstein $2n$-manifold obtained as the result of gluing $V_{(\sigma_0)_1} \sqcup V_{(\sigma_0)_2}$ to the negative end of $H^1_{\varepsilon_1}(V_{\sigma_1})$ using $\overline \iota_{(\sigma_0)_1}$  and $\overline \iota_{(\sigma_0)_2}$ as attaching maps, see \cref{fig:join_over_1_simplex}.
			\end{dfn}

			\begin{rmk}
				The joining operation for $m=1$ in \cref{dfn:join_m=1} is also known as Weinstein connected sum \cite{avdek2012liouville,eliashberg2018weinstein,alvarez2020positive}. Note that at the topological level this is not necessarily a boundary connected sum of the underlying manifolds with boundary, since $V^{(1)}$ may not be a disk.
			\end{rmk}

			\begin{dfn}
				Let $\boldsymbol V$ be a set of Weinstein manifolds and Weinstein hypersurfaces parametrized by the faces of a simplex, as in \cref{dfn:join_m=1}. For a given $k$-face $\sigma_k$, define $\boldsymbol V_{\supsetneq \sigma_k} \subset \boldsymbol V$ to be the subset consisting of only those Weinstein manifolds $V_{\sigma_i}\in \boldsymbol V$ for which $\sigma_i \supsetneq \sigma_k$, and the corresponding Weinstein hypersurfaces.
			\end{dfn}

			\begin{figure}[!htb]
				\centering
				\includegraphics[scale=0.75]{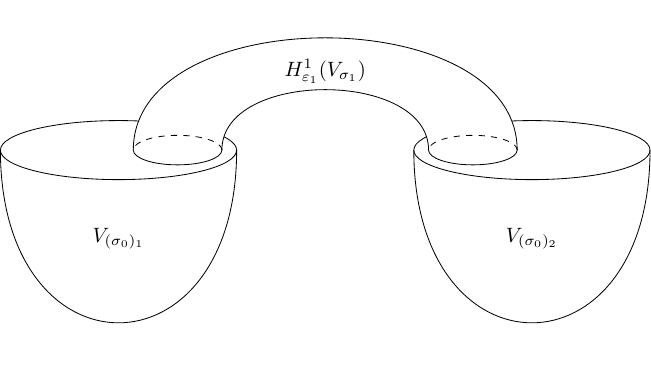}
				\caption{The Weinstein manifold $\# \boldsymbol V$ constructed as the gluing of various Weinstein manifolds over a $1$-simplex.}\label{fig:join_over_1_simplex}
			\end{figure}

		\subsubsection{Join over a $2$-simplex}\label{sec:2-simplex_handles}
			Denote the faces of $\varDelta^2$ by $(\sigma_0)_i$, $(\sigma_1)_j$ and $\sigma_2$ respectively where $i,j\in \left\{1,2,3\right\}$, with the obvious face inclusions.
			\begin{dfn}[Join over a $2$-simplex]\label{dfn:join_m=2}
				Consider the tuple $(\varDelta^2, \boldsymbol V)$ where
				\[
					\boldsymbol V := \{V^{(0)}_{(\sigma_0)_1},V^{(0)}_{(\sigma_0)_2},V^{(0)}_{(\sigma_0)_3}, V^{(1)}_{(\sigma_1)_1},V^{(1)}_{(\sigma_1)_2},V^{(1)}_{(\sigma_1)_3}, V^{(2)}, \iota_{(\sigma_1)_1},\iota_{(\sigma_1)_2},\iota_{(\sigma_1)_2},\iota_{(\sigma_0)_1},\iota_{(\sigma_0)_2},\iota_{(\sigma_0)_3}\},
				\]
				where $\iota_{(\sigma_1)_j} \colon V^{(2)} \hookrightarrow \partial V^{(1)}_{(\sigma_1)_j}$ is a Weinstein hypersurface for $j\in \left\{1,2,3\right\}$, and where 
				\[
					\iota_{(\sigma_0)_i} \colon \# \boldsymbol V_{\supset (\sigma_0)_i} \hookrightarrow \partial V_{(\sigma_0)_i}, \quad i \in \left\{1,2,3\right\}
				\]
				are Weinstein hypersurfaces. Let $\boldsymbol \varepsilon := \left\{0 < \varepsilon_2 < \varepsilon_1\right\}$ be a finite strictly decreasing sequence of positive real numbers. Define
				\[
					H_{\boldsymbol \varepsilon}(\boldsymbol V) := \{V^{(0)}_{(\sigma_0)_i}, H^1_{\varepsilon_1}(V^{(1)}_{(\sigma_1)_j}), H^2_{\varepsilon_2}(V^{(2)}), \overline \iota_{(\sigma_1)_j},\overline \iota_{(\sigma_0)_i}\},
				\]
				where $\overline \iota_{(\sigma_1)_j}$ and $\overline \iota_{(\sigma_0)_i}$ are the induced gluing maps basic building blocks defined in \cref{sec:gluing_of_basic_building_blocks}. 

				Define $\# \boldsymbol V$ to be the Weinstein $2n$-manifold obtained as follows:
				\begin{enumerate}
					\item First glue $V_{(\sigma_1)_1} \sqcup V_{(\sigma_1)_2} \sqcup V_{(\sigma_1)_3}$ to the negative end of $H^2_{\varepsilon_2}(V_{\sigma_2})$ using $\overline \iota_{(\sigma_1)_j}$ as gluing maps. The result is a Weinstein cobordism $X$ with negative ends $(\# \boldsymbol V_{\supsetneq (\sigma_0)_1}) \sqcup (\# \boldsymbol V_{\supsetneq (\sigma_0)_2}) \sqcup (\# \boldsymbol V_{\supsetneq (\sigma_0)_3})$, see \cref{fig:join_over_2_simplex_step1}.
					\item Finally glue $V_{(\sigma_0)_1} \sqcup V_{(\sigma_0)_2} \sqcup V_{(\sigma_0)_3}$ to the negative end of $X$, using $\overline \iota_{(\sigma_0)_i}$ as gluing maps, see \cref{fig:join_over_2_simplex}.
				\end{enumerate}
			\end{dfn}
			\begin{figure}[!htb]
				\centering
				\includegraphics{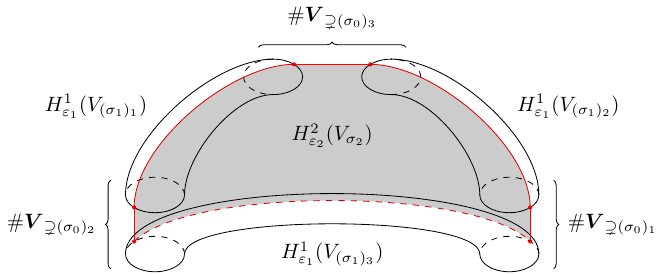}
				\caption{The Weinstein manifold $X$ obtained after gluing the edges to the $2$-face of the $2$-simplex.}\label{fig:join_over_2_simplex_step1}
			\end{figure}
			\begin{figure}[!htb]
				\centering
				\includegraphics{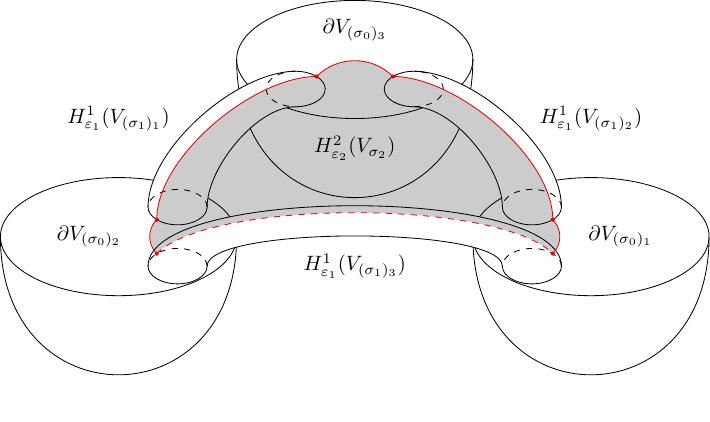}
				\caption{The Weinstein manifold $\# \boldsymbol V$ constructed as the gluing of various Weinstein manifolds over a $2$-simplex.}\label{fig:join_over_2_simplex}
			\end{figure}
		\subsubsection{Simplicial decompositions}\label{sec:simplicial_decompositions}
			We first define the the joining operation inductively. To that end, assume that the joining operation has been has been defined for all $p\in \left\{1,\ldots,m-1\right\}$.
			\begin{dfn}[Join over an $m$-simplex]\label{dfn:join_general_m}
				Consider the tuple $(\varDelta^m, \boldsymbol V)$ where $\boldsymbol V$ consists of
				\begin{enumerate}
					\item A Weinstein manifold $V^{(k)}_{\sigma_k}$ for each $k$-face $\sigma_k\in \varDelta^m$.
					\item A Weinstein hypersurface $\iota_{\sigma_k} \colon \# \boldsymbol V_{\supsetneq \sigma_k} \hookrightarrow \partial V^{(k)}_{\sigma_k}$ for each $k$-face $\sigma_k \in \varDelta^m$.
				\end{enumerate}
				Let $\boldsymbol \varepsilon := \left\{0 < \varepsilon_m < \cdots < \varepsilon_1\right\}$ be a finite strictly decreasing sequence of positive real numbers. Define $H_{\boldsymbol \varepsilon}(\boldsymbol V)$ as the set consisting of 
				\begin{enumerate}
					\item The basic building block $H^k_{\varepsilon_k}(V^{(k)}_{\sigma_k}) \cong V^{(k)}_{\sigma_k} \times D_{\varepsilon_k} T^\ast \varDelta^k$ for each $k$-face $\sigma_k\in \varDelta^m$.
					\item The induced gluing map $\overline \iota_{\sigma_k}$ of basic building blocks as defined in \cref{sec:gluing_of_basic_building_blocks} for each $k$-face $\sigma_k \in \varDelta^m$.
				\end{enumerate}
				Define $\# \boldsymbol V$ to be the Weinstein $2n$-manifold obtained as the successive gluing of basic building blocks using the gluing maps $\overline \iota_{\sigma_k}$, starting with $H^m_{\varepsilon_m}(V^{(m)}_{\sigma_m})$ and decreasing dimension by one for each successive gluing step, until all basic building blocks have been exhausted.
			\end{dfn}
			\begin{dfn}\label{dfn:surgery_parameter}
				Let $\boldsymbol \varepsilon := \left\{0 < \varepsilon_m < \cdots < \varepsilon_1\right\}$ be a finite strictly decreasing sequence of positive real numbers. For $c\in \R$ we define $\boldsymbol \varepsilon < c$ to mean $\varepsilon_1 < c$, and define $\boldsymbol \varepsilon > c$ to mean $\varepsilon_m > c$.
			\end{dfn}
			Before we define simplicial decompositions, let us generalize \cref{dfn:join_general_m} to arbitrary simplicial complexes instead of only simplices. Let $C$ be an $m$-dimensional simplicial complex. Denote by $F$ the set of facets of $C$, ie the set of faces in $C$ which are not contained in any other face in $C$.
			\begin{dfn}
				Let $C$ be a simplicial complex. For a $k$-face $\sigma_k \in C$ and a facet $f\in F$ containing $\sigma_k$, we define $\boldsymbol V_{\supsetneq \sigma_k, f} \subset \boldsymbol V$ to be the subset consisting of only those Weinstein manifolds $V_{\sigma_i}\in \boldsymbol V$ for which $f \supset \sigma_i \supsetneq \sigma_k$, and the corresponding Weinstein hypersurfaces.
			\end{dfn}
			\begin{dfn}[Join over a simplicial complex]\label{dfn:join_over_simplicial_complex}
				Let $(C, \boldsymbol V)$ be a tuple consisting of a simplicial complex and $\boldsymbol V$ as in \cref{dfn:join_general_m}, with the only exception that ``$\# \boldsymbol V_{\supsetneq \sigma_k}$'' should be replaced with ``$\bigsqcup_{f\in F}(\# \boldsymbol V_{\supsetneq \sigma_k,f})$''. Then define $\# \boldsymbol V$ in the same way as in \cref{dfn:join_general_m}.
			\end{dfn}
			\begin{dfn}[Simplicial decomposition]\label{dfn:simplicial_decomposition}
				A \emph{simplicial decomposition} of a Weinstein manifold $X$ is a tuple $(C, \boldsymbol V)$ consisting of a simplicial complex $C$, and a set $\boldsymbol V$ defined the same way as in \cref{dfn:join_general_m}, such that $X \cong \# \boldsymbol V$.
			\end{dfn}
			\begin{rmk}[Generalizations beyond simplicial complexes]\label{rmk:generalizations_beyond_simplicial_complexes}
				In the special case of $\dim C = 1$, we allow $C$ to be an arbitrary graph in the obvious way, which is useful for the application in \cref{sec:plumbing}. Namely, if $\sigma_1 \in C$ is a loop at the vertex $\sigma_0\in C$ the correct Weinstein hypersurface to consider in \cref{dfn:join_m=1} for attaching the $1$-simplex handle is $\iota \colon V^{(1)}_{\sigma_1} \sqcup V^{(1)}_{\sigma_1} \hookrightarrow \partial V^{(0)}_{\sigma_0}$. We point out that for $\dim C > 1$ we may allow $C$ to be a CW complex such that all of its gluing maps are embeddings, and the definition extends in the obvious way although we do not pursue the details at the present time.
			\end{rmk}
		\begin{lma}\label{lma:unique_flow_line}
			Let $(C, \boldsymbol V)$ be a simplicial decomposition. For each $k$-face $\sigma_k\in C$ the Liouville vector field $Z$ in $\# \boldsymbol V$ has only one critical point $(v_k, \boldsymbol 0, \boldsymbol 0) \in V^{(k)}_{\sigma_k} \times D_{\varepsilon_k}T^\ast \varDelta^k$ of Morse index $\ind_{\phi_{\sigma_k}}(v_k) + k$ for each $v_k\in \crit(\phi_{\sigma_k})$.

			Furthermore, for each inclusion of faces $\sigma_{k} \subset \sigma_{k+1}$ and each pair of critical points $v_{k}\in \crit(\phi_{\sigma_{k}})$ and $v_{k+1}\in \crit(\phi_{\sigma_{k+1}})$ there is a unique flow line of the Liouville vector field $Z$ connecting the critical point $(v_{k+1}, \boldsymbol 0, \boldsymbol 0) \in V^{(k+1)}_{\sigma_{k+1}} \times D_{\varepsilon_{k+1}}T^\ast \varDelta^{k+1}$ of $Z$ with the critical point $(v_k, \boldsymbol 0, \boldsymbol 0) \in V^{(k)}_{\sigma_{k}} \times D_{\varepsilon_k}T^\ast \varDelta^k$ of $Z$. The image of such a flow line under the projection $V^{(k)}_{\sigma_{k}} \times D_{\varepsilon_k}T^\ast \varDelta^k \longrightarrow D_{\varepsilon_k}T^\ast \varDelta^k$ belongs to the unique straight line in $C$ connecting the center of $\sigma_{k+1}$ with the center of $\sigma_k$.
		\end{lma}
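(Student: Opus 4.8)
The plan is to build the statement up inductively on the dimension $m$ of the simplex handle, matching the inductive construction of $H^m_{\boldsymbol\varepsilon}(\boldsymbol V)$ in \cref{cns:m-simplex_handle}. The base case $m=1$ is immediate from \eqref{eq:basic_building_block_Liouville}: in the basic building block $H^1_{\varepsilon_1}(V^{(1)}_{\sigma_1})=V^{(1)}_{\sigma_1}\times\R^2$ the Liouville vector field is $Z=z+2x\partial_x-y\partial_y$, which is critical precisely at $(v,0,0)$ with $v\in\crit(\phi_{\sigma_1})$, and at such a point the Hessian of $\phi_{\sigma_1}(v)+x^2-\tfrac12 y^2$ picks up exactly one extra positive and one extra negative eigenvalue relative to the Hessian of $\phi_{\sigma_1}$ — but $x^2-\tfrac12 y^2$ has index $1$, so the Morse index is $\ind_{\phi_{\sigma_1}}(v)+1$, as claimed. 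Here I would note that vertices $\sigma_0$ contribute no critical points of $Z$ inside the handle (the $V^{(0)}_i$ are glued on separately and carry their own skeleta), so the index count only runs over $k$-faces with $k\ge 1$, consistent with the statement.

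For the inductive step, recall that $H^m_{\boldsymbol\varepsilon}(\boldsymbol V)$ is assembled by gluing the basic building blocks $H^k_{\varepsilon_k}(V^{(k)}_{\sigma_k})$ for all $\sigma_k\in\varDelta^m_k$ along the Weinstein hypersurfaces $\overline\iota_{\sigma_k}$ of \eqref{eq:Leg_emb_from_inclusion_of_faces}, and that each such hypersurface embeds into the \emph{symplectization end} of the lower building block, away from its skeleton. The key structural fact I would invoke is that gluing along Weinstein hypersurfaces (as in \cite[Section 2.6.2]{alvarez2020positive} and \cref{sec:gluing_of_basic_building_blocks}) does not create new zeros of the Liouville vector field: the gluing region is a piece of a symplectization where $Z$ is the (nowhere-zero) Liouville direction, so $\crit(Z)$ in $H^m_{\boldsymbol\varepsilon}(\boldsymbol V)$ is exactly the disjoint union over $\sigma_k\in\varDelta^m_k$, $k\ge 1$, of $\crit(Z|_{H^k_{\varepsilon_k}(V^{(k)}_{\sigma_k})})$. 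Applying the $m=1$-type Hessian computation in each building block $H^k_{\varepsilon_k}(V^{(k)}_{\sigma_k})$ — where $Z=z+\sum_{i=1}^k(2x_i\partial_{x_i}-y_i\partial_{y_i})$ and the Morse function acquires the index-$k$ form $\sum_{i=1}^k(x_i^2-\tfrac12 y_i^2)$ — gives the critical point $(v_k,\boldsymbol 0,\boldsymbol 0)$ of Morse index $\ind_{\phi_{\sigma_k}}(v_k)+k$ for each $v_k\in\crit(\phi_{\sigma_k})$, which is the first assertion.

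For the second assertion I would analyze the flow of $Z$ near a face inclusion $\sigma_k\subset\sigma_{k+1}$. By construction the positive end $G^{k+1}_{\varepsilon_{k+1}}(V^{(k+1)}_{\sigma_{k+1}})$ of the higher-index building block is glued, via $\overline\iota_{\sigma_k}$, into the positive end of $H^k_{\varepsilon_k}(V^{(k)}_{\sigma_k})$, and the Weinstein hypersurface $\iota_{\sigma_k}\colon \bighash_{\boldsymbol V_{\sigma_k}}\{V^{(k+1)}_{\sigma_{k+1}}\}\hookrightarrow\partial V^{(k)}_{\sigma_k}$ sends the skeleton (in particular $\crit(\phi_{\sigma_{k+1}})$) to the core of the gluing. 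I would then trace a descending flow line of $Z$ starting at $(v_{k+1},\boldsymbol 0,\boldsymbol 0)$: in the $\R^{2(k+1)}$-directions the unstable manifold of this critical point is the $y$-subspace (on which $Z$ is $-\sum y_i\partial_{y_i}$, contracting), and in the $V^{(k+1)}_{\sigma_{k+1}}$-direction the flow descends along $z$ toward $\partial V^{(k+1)}_{\sigma_{k+1}}$. The combination lands in the negative end of $H^{k+1}$, which the gluing identifies with a neighborhood of $\overline\iota_{\sigma_k}$ inside $H^k$; there $Z$ is the stable-direction flow of $H^k$ converging to $(v_k,\boldsymbol 0,\boldsymbol 0)$. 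Uniqueness follows because at each critical point both the unstable manifold (inside the upper block) and the stable manifold (inside the lower block) meet the gluing locus in a single point-germ — the transversality of the Liouville gluing forces a $1$-dimensional broken-then-smoothed connecting trajectory, and there is exactly one. Finally, since each basic building block $H^k_{\varepsilon_k}(V^{(k)}_{\sigma_k})=V^{(k)}_{\sigma_k}\times\R^{2k}$ is attached over the cotangent model $D_\varepsilon T^\ast\varDelta^m$ along the coordinate $k$-plane through the center of $\sigma_k$, the projection to $\R^{2k}$ (equivalently, to the $\varDelta^m$-base) of such a flow line is the straight segment in $\varDelta^m$ joining the center of $\sigma_{k+1}$ to the center of $\sigma_k$, because $Z$ projects to the radial-type Liouville field $\sum(2x_i\partial_{x_i}-y_i\partial_{y_i})$ whose flow lines between these two critical strata are exactly those lines.

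\medskip
\noindent\textbf{Main obstacle.} The delicate point is the uniqueness of the connecting flow line and the precise behaviour of $Z$ across the gluing region: one must verify that the Weinstein-hypersurface gluing (which modifies the Liouville form by an exact term supported near the hypersurface) does not break the index count or spawn extra zeros or extra connecting trajectories. I expect this to require a careful local normal-form argument for $Z$ near $\overline\iota_{\sigma_k}$, showing that the stable manifold of $(v_k,\boldsymbol 0,\boldsymbol 0)$ inside $H^k$ and the unstable manifold of $(v_{k+1},\boldsymbol 0,\boldsymbol 0)$ inside $H^{k+1}$ intersect the gluing hypersurface cleanly and in complementary dimension — so that, after concatenation, exactly one flow line survives. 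Everything else (the Hessian/index computations in each $H^k_{\varepsilon_k}$, the identification of the $\R^{2k}$-projection with a line in $\varDelta^m$) is a routine consequence of the explicit formulas \eqref{eq:basic_building_block_Liouville} and \eqref{eq:Reeb_flow_in_handle} together with \cref{lma:reeb_vf_over_the_origin}.
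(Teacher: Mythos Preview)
Your proposal is correct, and its core is precisely the paper's argument --- the paper's entire proof is the single sentence ``It follows from the explicit expression the Liouville vector field in each building block, \eqref{eq:basic_building_block_Liouville}.'' Your inductive unpacking and the Hessian/index computation in each block are exactly what that sentence encodes.

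The ``main obstacle'' you flag is not a genuine obstacle, and since it is the only place you hedge it is worth dispelling. The gluing in \cref{sec:gluing_of_basic_building_blocks} takes place entirely in the symplectization end of the lower block, away from $\Skel V^{(k)}_{\sigma_k}\times\{\boldsymbol x=\boldsymbol 0\}$, where $Z$ is nowhere zero; so no new critical points can appear and the index count is untouched. Moreover the $\R^{2k}$-component of $Z$ is unchanged by the gluing (the hypersurface $\overline\iota_{\sigma_k}$ is a trivial extension in those coordinates), so the projected flow on the base is still governed by $\sum_i(2x_i\partial_{x_i}-y_i\partial_{y_i})$. Uniqueness of the connecting trajectory in the base then follows immediately from the explicit formula and the fact that the embedding $j\colon\R^{2k}\hookrightarrow\R^{2k+2}\setminus\{\boldsymbol 0\}$ picks out a single direction in the stable subspace $\{\boldsymbol x=\boldsymbol 0\}$ for each adjacent $(k{+}1)$-face --- no separate transversality argument across the gluing hypersurface is required. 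This is why the paper feels entitled to omit everything you wrote; your version is a faithful expansion, not a different proof.
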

		\begin{proof}
			It follows from the explicit expression the Liouville vector field in each building block, \eqref{eq:basic_building_block_Liouville}.
		\end{proof}
		\begin{rmk}
			The slogan of \cref{lma:unique_flow_line} is that the Liouville vector field in $\# \boldsymbol V$ restricted to the basic building block corresponding to the $k$-face $\sigma_k$ is the gradient of a Morse function on $\varDelta^k$ with one critical point of index $k$, see \cref{fig:liouville_simplex}.
			\begin{figure}[!htb]
				\centering
				\includegraphics{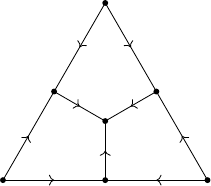}
				\caption{The Liouville vector field in $\# \boldsymbol V$ restricted to a $2$-face.}\label{fig:liouville_simplex}
			\end{figure}
		\end{rmk}
		\subsection{Legendrian attaching spheres}\label{sec:leg_attaching_data_for_simplex_handles}
			For a simplicial decomposition $(C, \boldsymbol V)$ we now describe the top Legendrian attaching spheres.
			
			Assume that for each Weinstein $(2n-2k)$-manifold $V^{(k)}_{\sigma_k}\in \boldsymbol V$ we have chosen a Weinstein handle decomposition $h_{\sigma_k}$, which encodes both the Weinstein handles and the attaching maps. Let $\boldsymbol h := \bigcup_{\substack{\sigma_k\in C\\ 0 \leq k \leq m}} h_{\sigma_k}$. The goal of this section is to describe how $\boldsymbol h$ induces a handle decomposition of $\# \boldsymbol V$.

			Throughout this section and the rest of the paper we denote by $V^{(k)}_{\sigma_k,0}$ the subcritical part of $V^{(k)}_{\sigma_k}$. In a similar fashion we let $\boldsymbol V_0$ denote the set obtained from $\boldsymbol V$ by replacing every $V^{(k)}_{\sigma_k}\in \boldsymbol V$ with its subcritical part $V^{(k)}_{\sigma_k,0}$ and keeping the same Weinstein hypersurfaces. Let $\ell_{\sigma_k} = \bigcup_{j} \ell_{\sigma_k,j}$ denote the union of the core disks of the top handles in $h_{\sigma_k}$. Similarly we let $\partial \ell_{\sigma_k} = \bigcup_{j} \partial \ell_{\sigma_k,j} \subset \partial V^{(k)}_{\sigma_k,0}$ be the union of the attaching $(n-k-1)$-spheres for the top handles in $h_{\sigma_k}$.

			\begin{dfn}
				Let $(C, \boldsymbol V)$ be a simplicial decomposition of $X$. Using the induced gluing maps $\overline \iota_{\sigma_k}$ on basic building blocks as in \cref{dfn:join_general_m} we define
				\begin{align*}
					E_j(\ell_{\sigma_k}) &:= \bigcup_{\sigma_j \subset \sigma_k} \overline \iota_{\sigma_j}((\ell_{\sigma_k} \times \R^{k-j-1}) \times \R^{j}), \quad E(\ell_{\sigma_k}) := \bigcup_{j=0}^{k-1} E_j(\ell_{\sigma_k})\\
					E_j(\partial \ell_{\sigma_k}) &:= \bigcup_{\sigma_j \subset \sigma_k}\overline \iota_{\sigma_j}((\partial \ell_{\sigma_k} \times \R^{k-j-1}) \times \R^{j}), \quad E(\partial \ell_{\sigma_k}) := \bigcup_{j=0}^{k-1} E_j(\partial \ell_{\sigma_k})\, ,
				\end{align*}
				where we regard $\ell_{\sigma_k} \times \R^{k-j-1}$ and $\partial \ell_{\sigma_k} \times \R^{k-j-1}$ as subsets of $V^{(k)}_{\sigma_k} \times \R^{2(k-j-1)} \subset \# \boldsymbol V_{\supsetneq \sigma_j}$ for each $\sigma_j \subset \sigma_k$ as in \eqref{eq:dl_trivial_extension}, see \cref{fig:extending_core_disks}.
			\end{dfn}

				\begin{figure}[!htb]
					\centering
					\includegraphics[scale=1.2]{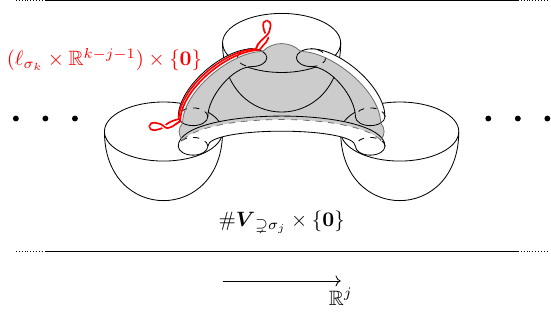}
					\caption{The figure shows the Weinstein hypersurface $\left(\# \boldsymbol V_{\supsetneq \sigma_j}\right) \times \R^{2j} \hookrightarrow \partial_+(V^{(j)}_{\sigma_j} \times \R^{2j})$ in the slice $\left\{\boldsymbol 0\right\} \in \R^{2j}$. The Legendrian submanifold $(\ell_{\sigma_k} \times \R^{k-j-1}) \times \R^j \subset \partial_+(V^{(j)}_{\sigma_j} \times \R^{2j})$ in the slice $\left\{\boldsymbol 0\right\} \in \R^{2j}$ is depicted in red.} \label{fig:extending_core_disks}
				\end{figure}
			\begin{dfn}[Attaching spheres for simplicial handles]\label{dfn:attaching_spheres_simpl_handles}
				Let $(C, \boldsymbol V)$ be a simplicial decomposition of $X$. Define $\varSigma_{\sigma_k}(\boldsymbol h) := \overline{\partial \ell_{\sigma_k}} \sqcup_{E(\partial \ell_{\sigma_k})} E(\ell_{\sigma_k})$ where $\overline{\partial \ell_{\sigma_k}} \subset V^{(k)}_{\sigma_k,0} \times \R^{2k}$ is defined as in \eqref{eq:dl_trivial_extension}. We also define 
				\begin{equation}\label{eq:attach_spheres}
					\varSigma(\boldsymbol h) := \bigcup_{\substack{\sigma_k\in C \\ 0 \leq k \leq m}} \varSigma_{\sigma_k}(\boldsymbol h), \quad \varSigma_{\supset \sigma_k}(\boldsymbol h) := \bigcup_{\substack{\sigma_i \supset \sigma_k \\ k \leq i \leq m}}\varSigma_{\sigma_i}(\boldsymbol h)\, .
				\end{equation}
			\end{dfn}
			\begin{lma}\label{lma:attaching_spheres_simpl_handle}
				We have that $\varSigma_{\sigma_k}(\boldsymbol h)$ is a union of Legendrian $(n-1)$-spheres for each $\sigma_k\in C$ where $k \in \left\{0,\ldots,m\right\}$. Moreover $\varSigma(\boldsymbol h)$ is the union of attaching spheres for $H^m_{\boldsymbol \varepsilon}(\boldsymbol V)$ in $\partial X_0$.
			\end{lma}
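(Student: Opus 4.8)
The plan is to deduce both statements from the explicit description of the Liouville dynamics on $H^m_{\boldsymbol\varepsilon}(\boldsymbol V)$. Throughout I would fix a single top handle of some $V^{(k)}_{\sigma_k}$ and write $\ell = \ell_{\sigma_k,j}\cong D^{n-k}$ with $\partial\ell\cong S^{n-k-1}\subset\partial V^{(k)}_{\sigma_k,0}$; the general statements then follow by taking disjoint unions over the top handles of the various $V^{(k)}_{\sigma_k}$. By \cref{lma:unique_flow_line} the Liouville vector field $Z$ of $H^m_{\boldsymbol\varepsilon}(\boldsymbol V)$ has critical points exactly at the points $(v_k,\boldsymbol 0,\boldsymbol 0)\in V^{(k)}_{\sigma_k}\times\R^{2k}$ with $v_k\in\crit(\phi_{\sigma_k})$, of Morse index $\ind_{\phi_{\sigma_k}}(v_k)+k$. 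Since $\dim V^{(k)}_{\sigma_k} = 2n-2k$, the index-$n$ critical points are precisely those with $\ind_{\phi_{\sigma_k}}(v_k) = n-k$, i.e.\ one for each top handle of each $V^{(k)}_{\sigma_k}$; these are the top handles of $H^m_{\boldsymbol\varepsilon}(\boldsymbol V)$, and they are in bijection with the connected components of $\varSigma(\boldsymbol h) = \bigcup_{\sigma_k}\varSigma_{\sigma_k}(\boldsymbol h)$.

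The key step is to compute, for the index-$n$ critical point $p = (v_k,\boldsymbol 0,\boldsymbol 0)$ whose associated core disk in $V^{(k)}_{\sigma_k}$ is $\ell$, the stable manifold of $p$ with respect to $Z$; this is the core disk of the corresponding top handle of $H^m_{\boldsymbol\varepsilon}(\boldsymbol V)$, and its attaching sphere is the intersection of this core disk with $\partial X_0$. From the explicit formula \eqref{eq:basic_building_block_Liouville}, within the building block $H^k_{\varepsilon_k}(V^{(k)}_{\sigma_k})$ the $\boldsymbol x$-directions are expanding and the $\boldsymbol y$-directions contracting under the forward flow of $Z$, so the stable manifold of $p$ in this block is $\ell\times\{\boldsymbol x=\boldsymbol 0\}\times\R^k_{\boldsymbol y}$, an $n$-disk whose intersection with $\partial X_0$ in the block is the trivial extension $\overline{\partial\ell}$ of \eqref{eq:dl_trivial_extension}. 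The flow lines along which $|\boldsymbol y|\to\infty$ leave the negative end of $H^k_{\varepsilon_k}(V^{(k)}_{\sigma_k})$ and, by \cref{lma:unique_flow_line} together with the gluing of building blocks from \cref{sec:gluing_of_basic_building_blocks}, pass into the blocks $H^{j}_{\varepsilon_{j}}(V^{(j)}_{\sigma_{j}})$ associated to the proper faces $\sigma_{j}\subset\sigma_k$; there, by the definition of the attaching maps $\overline\iota_{\sigma_{j}}$ in \eqref{eq:Leg_emb_from_inclusion_of_faces}, the stable manifold meets $\partial X_0$ exactly in the pieces $E_{j}(\ell)$, glued to $\overline{\partial\ell}$ and to one another along the pieces $E_{j}(\partial\ell)$. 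Hence the attaching sphere of this top handle is $\overline{\partial\ell}\sqcup_{\bigcup_{j}E_{j}(\partial\ell)}\bigcup_{j}E_{j}(\ell) = \varSigma_{\sigma_k,j}(\boldsymbol h)$, and taking the union over all top handles gives the second assertion. Since the attaching sphere of an index-$n$ Weinstein handle is an embedded Legendrian $(n-1)$-sphere, the first assertion follows as well; alternatively the diffeomorphism type can be seen directly, as $\overline{\partial\ell}$ is, over the simplex underlying $\sigma_k$, a copy of $\partial\ell\times\varDelta^k\cong S^{n-k-1}\times D^k$ capped over $\partial\varDelta^k$ by the copies of $\ell\cong D^{n-k}$ constituting $\bigcup_{j}E_{j}(\ell)$, so that $\varSigma_{\sigma_k,j}(\boldsymbol h)\cong\partial(D^{n-k}\times D^k)\cong S^{n-1}$ after smoothing corners.

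The hard part will be the middle step: tracking the stable manifold of the index-$n$ critical point as it exits $H^k_{\varepsilon_k}(V^{(k)}_{\sigma_k})$ and passes through the chain of glued lower-dimensional building blocks, and verifying that each of its boundary strata in $\partial X_0$ matches the combinatorially defined piece $E_{j}(\ell)$ under the corresponding attaching map $\overline\iota_{\sigma_{j}}$. A convenient way to make this precise is by induction on $m = \dim C$: the base case $m=1$ is the standard attaching-sphere description for a Weinstein connected sum (\cref{cns:join_m=1}), where $\overline{\partial\ell}\cong\partial\ell\times[-1,1]$ is capped by a copy of $\ell$ at each end; the inductive step amounts to applying this analysis to the gluing of the top building blocks $H^{\dim f}_{\varepsilon_{\dim f}}(V^{(\dim f)}_{f})$, $f\in F$, onto the lower-dimensional simplicial handles attached along the faces of $C$, using \cref{cns:m-simplex_handle} and \cref{lma:unique_flow_line}.
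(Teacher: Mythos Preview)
Your proposal is correct and follows essentially the same approach as the paper. The paper proves the first statement by the identical topological computation you give at the end of your second paragraph (identifying $\overline{\partial\ell}\cong S^{n-k-1}\times D^k$, $\bigcup_j E_j(\ell)\cong D^{n-k}\times S^{k-1}$, and gluing along $S^{n-k-1}\times S^{k-1}$), and proves the second statement via the Morse function on the building blocks for $m=1$ together with an appeal to the gluing procedure for $m>1$; your stable-manifold tracking is just a more explicit rendering of that same argument, carried out in the reverse order.
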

			\begin{proof}
				For the first statement, gluing together each $\overline \iota_{\sigma_j}((\ell_{\sigma_k} \times \R^{k-j-1})\times \R^j) \cong \ell_{\sigma_k} \times D^{k-1}$ over $\partial \sigma_k\cong S^{k-1}$ gives $E(\ell_{\sigma_k}) \cong \ell_{\sigma_k} \times S^{k-1}$. Similarly $E(\partial \ell_{\sigma_k}) \cong \partial \ell_{\sigma_k} \times S^{k-1}$. Then because $\overline{\partial \ell_{\sigma_k}} \cong \partial \ell_{\sigma_k} \times D^k$, we have
				\[
					\varSigma_{\sigma_k}(\boldsymbol h) \cong (S^{n-k-1} \times D^k) \sqcup_{S^{n-k-1} \times S^{k-1}} (D^{n-k} \times S^{k-1}) \cong S^{n-1}\, .
				\]
				For the second statement start with $m = 1$ and consider $X = X_1 \#_V X_2$. Then we consider an exhausting Morse function $\phi \colon V \longrightarrow [0,\infty)$ with one non-degenerate minimum of value $0$, such that $V_0 = \phi^{-1}([0,\frac 12 \varepsilon])$ and $\phi^{-1}(\frac 32 \varepsilon)$ is the set of critical point of critical index for $V$. The exhausting Morse function in the simplicial handle $V \times \R^{2k}$ is $\psi(v, \boldsymbol x, \boldsymbol y) := \phi(v) + \sum_{i=1}^k \left(x_i^2 - \frac 12 y_i^2\right)$, see \cref{sec:basic_building_blocks}, and we have that $\psi^{-1}(\varepsilon) = G^k_\varepsilon(V_0)$ is the contact boundary of the simplicial handle, which contains the attaching spheres which are $\varSigma(\boldsymbol h)$. The critical points in the core disk of the top handles of the simplicial handle are contained in $\psi^{-1}(\frac 32 \varepsilon)$ and sublevel sets $\psi^{-1}([0,\rho))$ for $\rho > \frac 32 \varepsilon$ give $V \times \R^{2k}$. For $m > 1$ it follows from the gluing procedure in \cref{sec:gluing_of_basic_building_blocks}.
			\end{proof}
			\begin{figure}[!htb]
				\centering
				\includegraphics[scale=0.7]{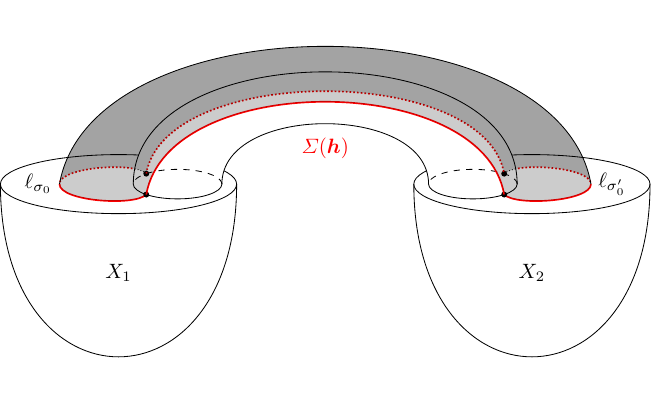}
				\includegraphics[scale=0.7]{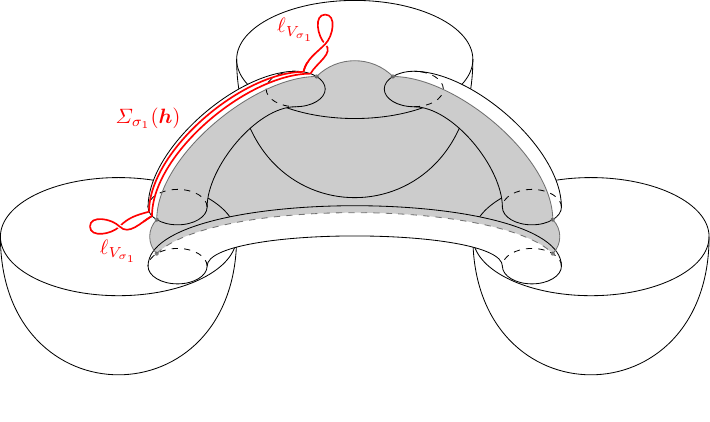}
				\caption{Left: Assuming that $X_1$ and $X_2$ are subcritical, $\varSigma(\boldsymbol h)$ is the Legendrian top attaching sphere of $X_1 \#_V X_2$. Right: The attaching sphere $\varSigma_{\sigma_1}(\boldsymbol h)$ associated to the $1$-face $\sigma_1$.}
			\end{figure}
			Let
			\[
				\pi_{\sigma_k} \colon V^{(k)}_{\sigma_k} \times \R^{2k} \longrightarrow \R^{2k}\, ,
			\]
			be the projection to the $\R^{2k}$-factor, and let $\eval[0]{\pi_{\sigma_k}}_{G^k_\varepsilon}$ be the restriction to the upper contact boundary, see \eqref{eq:building_blocks_contact_boundaries} for the definition of $G^k_\varepsilon$ in the basic building block $V^{(k)}_{\sigma_k} \times \R^{2k}$.

			\begin{dfn}\label{dfn:reeb_chords_k-face}
				\begin{enumerate}
					\item Let $\mathcal R(\sigma_k)$ denote the set of Reeb chords of $\partial \ell_{\sigma_k} \cup \bigcup_{\substack{\sigma_i \supset \sigma_{k} \\ k+1 \leq i \leq m}} (\ell_{\sigma_{i}} \times \R^{i-k-1}) \subset \partial V^{(k)}_{\sigma_k,0}$ for $k \in \left\{0,\ldots,m\right\}$.
					\item Let $\mathcal R(\sigma_k, \mathfrak a)$ denote the set of Reeb chords in $\mathcal R(\sigma_k)$ of action $< \mathfrak a$.
				\end{enumerate}
			\end{dfn}
			\begin{rmk}
				In the definition above, note that $\ell_{\sigma_{i}} \times \R^{i-k-1}$ is considered as a Legendrian submanifold in $\partial V^{(k)}_{\sigma_k,0}$ via the maps $\iota_{\sigma_j}$ (and compositions of such). 
			\end{rmk}
			Let $p_{\sigma_k} = \boldsymbol 0 \in \R^{2k}$ be the unique critical point of the Liouville vector field $\sum_{i=1}^k (2x_i \partial_{x_i} - y_i \partial_{y_i})$ in the building block $V^{(k)}_{\sigma_k} \times \R^{2k}$ corresponding to $\sigma_k \in C_k$, see \cref{lma:unique_flow_line}. We note that for $k = 0$, the definition of the basic building block still makes sense: It is $V^{(0)}_{\sigma_0} \times \left\{0\right\}$.
			\begin{lma}\label{lma:Reeb_chords_one_to_one_correspondence}
				For all $\mathfrak a > 0$ there exists some $\delta > 0$ and an arbitrary small perturbation of $\boldsymbol V_0$ such that for all $0 < \boldsymbol \varepsilon < \delta$ the following holds. The Reeb chords of $\varSigma(\boldsymbol h) \subset \partial \# \boldsymbol V_0$ of action $< \mathfrak a$ are contained in the set
					\[
						\bigcup_{\substack{\sigma_k\in C_k\\ 0 \leq k \leq m}}(\eval[0]{\pi_{\sigma_k}}_{G^k_{\varepsilon_k}})^{-1}(p_{\sigma_k}) \subset \partial \# \boldsymbol V_0\, ,
					\]
					and they are in one-to-one grading preserving correspondence with Reeb chords in the set $\bigcup_{\substack{\sigma_k\in C_k \\ 0 \leq k \leq m}} \mathcal R(\sigma_k, \mathfrak a)$.
			\end{lma}
			\begin{proof}
				First it follows from \cref{sec:basic_building_blocks} and \cref{lma:basic_building_block_reeb} that all Reeb chords are contained in $\bigcup_{\substack{\sigma_k\in C_k \\ 0 \leq k \leq m}}(\eval[0]{\pi_{\sigma_k}}_{G^k_\varepsilon})^{-1}(p_{\sigma_k}) \subset \partial \# \boldsymbol V_0$. Next, after an arbitrary small perturbation of each $V^{(k+1)}_{\sigma_{k+1},0} \subset \partial V^{(k)}_{\sigma_k,0}$, we claim that we can shrink the parameter $\boldsymbol \varepsilon$ (ie assume $\boldsymbol \varepsilon < c$ for $c$ small enough) used in the definition of $\# \boldsymbol V_0$ a sufficient amount so that no Reeb chords of $\varSigma(\boldsymbol h)$ starting in $(\eval[0]{\pi_{\sigma_k}}_{G^k_{\varepsilon_k}})^{-1}(p_{\sigma_k})$ of action $< \mathfrak a$ enters any $V^{(k+1)}_{\sigma_{k+1},0}$. It follows from a dimension argument as follows (cf \cite[Lemma 3.4]{asplund2021chekanov}). Reeb chords of $\varSigma(\boldsymbol h)$ starting in $(\eval[0]{\pi_{\sigma_k}}_{G^k_{\varepsilon_k}})^{-1}(p_{\sigma_k})$ of action $< \mathfrak a$ are in one-to-one correspondence with Reeb chord of 
				\[
					\partial \ell_{\sigma_k} \cup \bigcup_{\substack{\sigma_{i} \supset \sigma_{k} \\ k+1 \leq i \leq m}} (\ell_{\sigma_{i}} \times \R^{i-k-1}) \subset \partial V^{(k)}_{\sigma_k,0}
				\]
				of action $< \mathfrak a$ by \cref{lma:basic_building_block_reeb}. Now, $\dim \partial \ell_{\sigma_k} = n-k-1$ and $V^{(k+1)}_{\sigma_{k+1},0} \subset \partial V^{(k)}_{\sigma_k,0}$ is viewed as a subcritical Weinstein neighborhood of its $(n-k-2)$-dimensional skeleton. Thus, generically, no such Reeb chord meets the skeleton of $V^{(k+1)}_{\sigma_{k+1},0}$, and given that the action is bounded from above by $\mathfrak a$, we can shrink the size of $\boldsymbol \varepsilon$ thus shrink the size of the neighborhood of the skeleton of $V^{(k+1)}_{\sigma_{k+1},0}$ so that no Reeb chord meets this neighborhood of the skeleton.
			\end{proof}
		\subsection{Almost complex structure and holomorphic disks}\label{sec:acs_and_hol_curves_in_simplicial_handles}
		Let $(C, \boldsymbol V)$ be a simplicial decomposition. In this section we consider almost complex structures and holomorphic disks the symplectization of $\# \boldsymbol V$.

		Let $J_{V^{(k)}_{\sigma_k}}$ be a \emph{handle adapted} almost complex structure on $V^{(k)}_{\sigma_k}$, see the beginning of \cref{sec:acs_hol_curves_basic_building_block} for the definition. We fix an exact symplectomorphism between $\R \times \partial \# \boldsymbol V$ and the complement of the skeleton in $\# \boldsymbol V$, which we denote by $\widetilde{\# \boldsymbol V}$. For a $k$-face $\sigma_k\in C$, let $J_{\sigma_k}$ be the cylindrical almost complex structure on $\widetilde H^k_{\varepsilon_k}(V^{(k)}_{\sigma_k})$ defined in \cref{sec:acs_hol_curves_basic_building_block} that takes the Liouville vector field to the Reeb vector field.

		By construction any point in $\# \boldsymbol V$ lies in $V^{(k)}_{\sigma_k} \times \R^{2k}$ for some $k\in \left\{0,\ldots,m\right\}$ and some $\sigma_k\in C$. By using cut-off functions supported in neighborhoods of basic building blocks we obtain a smooth almost complex structure $J(\boldsymbol V)$ in $\widetilde{\# \boldsymbol V}$, that agrees with $J_{\sigma_k}$ over the locus $V^{(k)}_{\sigma_k} \times \R^{2k}$ in $\widetilde{\# \boldsymbol V}$, for $k\in \left\{0,\ldots,m\right\}$ and $\sigma_k\in C_k$. Let $\pi_{\sigma_k,i} \colon V^{(k)}_{\sigma_k} \times \R^{2k} \longrightarrow \R^{2}_{(x_i,y_i)}$ be the projection to the $(x_i,y_i)$-plane. We now have a version of \cref{lma:complexproj} for $\# \boldsymbol V$.
		\begin{lma}\label{lma:complexproj_simplex_handle}
			Let $p\in V^{(k)}_{\sigma_k} \times \R^{2k}$ for some $k\in \left\{0,\ldots, m\right\}$ and $\sigma_k\in C$. The codimension $2$ hyperplanes $V_{\sigma_k,(x_i,y_i)} := \pi^{-1}_{\sigma_k,i}(x_i,y_i)$ are $J(\boldsymbol V)$-complex. In particular, any holomorphic disk in the symplectization of $G^k_{\varepsilon_{k}}(V^{(k)}_{\sigma_k})$ intersecting $V_{\sigma_k,(x_i,y_i)}$ is either contained in $V_{\sigma_k,(x_i,y_i)}$ or intersects $V_{\sigma_k,(x_i,y_i)}$ positively. Moreover the intersection number is locally constant in $(x_i,y_i)$.
		\end{lma}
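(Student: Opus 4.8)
The plan is to reduce the statement for the glued-up simplicial handle to the statement for each basic building block, which is precisely \cref{lma:complexproj}. First I would recall that by construction of $J(\boldsymbol V)$ in the paragraph preceding the lemma, over the locus $V^{(k)}_{\sigma_k}\times\R^{2k}\subset\widetilde H^m_{\boldsymbol\varepsilon}(\boldsymbol V)$ the almost complex structure $J(\boldsymbol V)$ agrees with the cylindrical almost complex structure $J_{\sigma_k}$ on $\widetilde H^k_{\varepsilon_k}(V^{(k)}_{\sigma_k})$ constructed in \cref{sec:acs_hol_curves_basic_building_block}. Since the assertion is entirely local — the hyperplane $V_{\sigma_k,(x_i,y_i)}$, the positivity of intersection, and the local constancy of the intersection number all only involve a neighborhood of a point $p\in V^{(k)}_{\sigma_k}\times\R^{2k}$ — the verification only ever sees $J(\boldsymbol V)$ restricted to that locus, which is $J_{\sigma_k}$.

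Next I would spell out that $V_{\sigma_k,(x_i,y_i)}=\pi_{\sigma_k,i}^{-1}(x_i,y_i)$ inside the single building block $V^{(k)}_{\sigma_k}\times\R^{2k}$ is exactly the codimension $2$ hyperplane $V_{(x_i,y_i)}$ from \cref{lma:complexproj} applied with $V$ replaced by $V^{(k)}_{\sigma_k}$ and $m$ replaced by $k$. That lemma gives that $V_{(x_i,y_i)}$ is $J_{\sigma_k}$-complex, that any holomorphic disk in the symplectization of $G^k_{\varepsilon_k}(V^{(k)}_{\sigma_k})$ meeting it is either contained in it or meets it positively, and that the intersection number is locally constant in $(x_i,y_i)$. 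Transporting these three conclusions back along $J(\boldsymbol V)=J_{\sigma_k}$ on this locus yields the three claims of \cref{lma:complexproj_simplex_handle} verbatim.

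The only genuine point to check is that a holomorphic disk in the symplectization of $G^k_{\varepsilon_k}(V^{(k)}_{\sigma_k})$ which one wishes to intersect with $V_{\sigma_k,(x_i,y_i)}$ may be treated as a disk living in that single building block's symplectization, rather than in the full $\widetilde H^m_{\boldsymbol\varepsilon}(\boldsymbol V)$: one needs that the relevant portion of such a disk does not leak out of the locus $V^{(k)}_{\sigma_k}\times\R^{2k}$ before one can apply the local intersection analysis. This is handled exactly as in the proof of \cref{lma:complexproj}: by the work of Moreno and Siefring (\cite{moreno2019holomorphic,siefringsymplectic}), intersections between a holomorphic disk and the $J(\boldsymbol V)$-complex hypersurface $V_{\sigma_k,(x_i,y_i)}$, if they exist, are confined to a compact subset; the local analysis of \cite[Lemma 3.4]{ionel2003relative} then applies in a neighborhood of each such intersection point, which lies in the locus where $J(\boldsymbol V)=J_{\sigma_k}$, and gives containment or positivity there. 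The hard part is thus not any new analytic input but rather checking that the splitting $TH^m_{\boldsymbol\varepsilon}(\boldsymbol V)\supset T(V^{(k)}_{\sigma_k}\times\R^{2k})=TV^{(k)}_{\sigma_k}\oplus T\R^{2k-2}\oplus T\R^2$ is preserved by $J(\boldsymbol V)$ and by the Liouville flow on this locus — which holds precisely because $J(\boldsymbol V)$ restricts to $J_{\sigma_k}$ there and because the cutoff functions defining $J(\boldsymbol V)$ are supported in neighborhoods of basic building blocks, so the gluing does not disturb $J$ on the building-block locus itself. Everything else is routine once this compatibility is noted.
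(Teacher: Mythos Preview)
Your proposal is correct and takes the same approach as the paper: reduce to \cref{lma:complexproj} by using that $J(\boldsymbol V)=J_{\sigma_k}$ on the locus $V^{(k)}_{\sigma_k}\times\R^{2k}$ by construction. The paper's own proof is in fact a single line (``Immediate consequence of \cref{lma:complexproj} and the construction of $H^m_{\boldsymbol\varepsilon}(\boldsymbol V)$''), so you have simply unpacked the same argument in more detail. One minor remark: your third paragraph worries about whether the holomorphic disk might ``leak out'' of the building block, but the hypothesis of the lemma already places the disk in the symplectization of $G^k_{\varepsilon_k}(V^{(k)}_{\sigma_k})$, so this concern does not arise and that discussion can be dropped.
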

		\begin{proof}
			Immediate consequence of \cref{lma:complexproj} and the construction of $\# \boldsymbol V$ in \cref{dfn:join_over_simplicial_complex}.
		\end{proof}
		We now consider holomorphic disks in $\R \times \partial \# \boldsymbol V_0$ with boundary on $\R \times \varSigma(\boldsymbol h)$ contributing to the differential in the Chekanov--Eliashberg dg-algebra of $\varSigma(\boldsymbol h)$, generalizing \cref{lma:middlesubcrit} and \cref{cor:dgsubalg}.

		Let $p_{\sigma_k} = \boldsymbol 0 \in \R^{2k}$ be the unique critical point of the Liouville vector field $\sum_{i=1}^k (2x_i \partial_{x_i} - y_i \partial_{y_i})$ in the building block corresponding to the $k$-face $\sigma_k\in C$ as in \cref{lma:unique_flow_line}.
		\begin{lma}\label{lma:hol_curves_in_simplex_handle}
			For any $\mathfrak a > 0$ there is some $\delta > 0$ so that for all $0 < \boldsymbol \varepsilon < \delta$ we have that the following holds. Any $J(\boldsymbol V)$-holomorphic disk in the symplectization $\R \times \partial \# \boldsymbol V_0$ with boundary on $\R \times \varSigma(\boldsymbol h)$ and with positive puncture at a Reeb chord over $p_{\sigma_k}$ of action $< \mathfrak a$ either lies entirely over $p_{\sigma_k}$ or is contained over the stable manifold of $p_{\sigma_\ell}$ for some $\sigma_\ell \supset \sigma_k$.
		\end{lma}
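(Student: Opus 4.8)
The plan is to reduce the statement, building block by building block, to the single‑block result \cref{lma:middlesubcrit}, using positivity of intersection with the $J(\boldsymbol V)$‑complex hyperplanes of \cref{lma:complexproj_simplex_handle}, the flow line description of \cref{lma:unique_flow_line}, and an action estimate forcing $\boldsymbol\varepsilon$ to be small, in the spirit of \cite[Lemma~2.7]{asplund2021chekanov}. First, fix $\mathfrak a>0$ and take $\delta>0$ small enough that: \cref{lma:middlesubcrit} applies in every basic building block $V^{(\ell)}_{\sigma_\ell,0}\times\R^{2\ell}$ with action bound $\mathfrak a$; the perturbation of $\boldsymbol V_0$ from \cref{lma:Reeb_chords_one_to_one_correspondence} puts every Reeb chord of $\mathring{\varSigma}(\boldsymbol h)$ of action $<\mathfrak a$ over one of the critical points $p_{\sigma_\ell}$; and a monotonicity estimate for the neck regions of the gluings of \cref{sec:gluing_of_basic_building_blocks} forbids a $J(\boldsymbol V)$‑holomorphic disk whose energy is controlled by $\mathfrak a$ from crossing between non‑adjacent building blocks. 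Let $u$ be such a disk with positive puncture at a chord over $p_{\sigma_k}$ of action $<\mathfrak a$. By \cref{lma:Reeb_chords_one_to_one_correspondence}(1) that chord is a Reeb chord of $\partial\ell_{\sigma_k}\cup\bigcup_{\sigma_i\supset\sigma_k}(\ell_{\sigma_i}\times D^{i-k-1})\subset\partial V^{(k)}_{\sigma_k,0}$, which lies in the part of the attaching data indexed by a single facet $f\ni\sigma_k$ (see \cref{sec:generalization_to_arb_simp_compl}); since $u$ is connected, the monotonicity estimate confines $u$ to the union of the building blocks over the faces of $f$ that it actually meets, and those faces span a single face $\sigma_\ell\subseteq f$ with $\sigma_k\subseteq\sigma_\ell$.

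Next I would show that $u$ lies over $\{\boldsymbol x=\boldsymbol 0\}$ in each building block it meets. Fix such a block $V^{(\ell')}_{\sigma_{\ell'},0}\times\R^{2\ell'}$ and $i\in\{1,\dots,\ell'\}$. By \cref{cns:attaching_spheres_simpl_handles} and \eqref{eq:dl_trivial_extension}, the part of $\mathring{\varSigma}(\boldsymbol h)$ lying in this block — pieces of $\overline{\partial \ell_{\sigma_{\ell'}}}$ and trivial $\R^{2\ell'}$‑extensions of core disks $\ell_{\sigma_i}$ of faces $\sigma_i\supset\sigma_{\ell'}$ — is contained in $\{\boldsymbol x=\boldsymbol 0\}$, so the $x_i$‑coordinate of $\partial u$ vanishes over this block, and every puncture of $u$ over this block sits over $(x_i,y_i)=(0,0)$. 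By \cref{lma:complexproj_simplex_handle} the hyperplane $V_{\sigma_{\ell'},(x_i,y_i)}=\pi_{\sigma_{\ell'},i}^{-1}(x_i,y_i)$ is $J(\boldsymbol V)$‑complex, so either $u$ is contained over $\{(x_i,y_i)=(0,0)\}$ in this block, or $\pi_{\sigma_{\ell'},i}\circ u$ restricts there to a nonconstant conformally holomorphic map $g$ to $\R^2_{(x_i,y_i)}$ that sends $\partial u$ into the imaginary axis and takes the value $0$ at the relevant punctures. After Schwarz reflection across the boundary and using the asymptotics of $g$ at its punctures, the maximum principle — together with positivity and local constancy in $(x_i,y_i)$ of the intersection number $u\cdot V_{\sigma_{\ell'},(x_i,y_i)}$ for generic small values with $x_i\ne0$ — shows that if the image of $g$ ever left the imaginary axis then $u$ would enter the expanding part of some building block and thus escape the union of blocks from the previous paragraph, contradicting $\boldsymbol\varepsilon<\delta$. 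Hence $x_i\equiv0$ on $u$ over this block, and varying the block and the index $i$ we conclude that $u$ lies over $\{\boldsymbol x=\boldsymbol 0\}$, i.e.\ over the closure of the stable manifold of $p_{\sigma_\ell}$.

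To finish: if $u$ never leaves the $\sigma_k$‑block then $\sigma_\ell=\sigma_k$ and $u$ is a $J(\boldsymbol V)$‑holomorphic disk in $\R\times G^k_{\varepsilon_k}(V^{(k)}_{\sigma_k,0})$ with boundary on $\R$ times the trivial extension of $\partial\ell_{\sigma_k}\cup\bigcup_{\sigma_i\supset\sigma_k}(\ell_{\sigma_i}\times D^{i-k-1})$ and positive puncture over $(\boldsymbol x,\boldsymbol y)=\boldsymbol 0$, so \cref{lma:middlesubcrit} (or its anchored analogue when $n-k=1$) improves the conclusion to ``$u$ lies entirely over $p_{\sigma_k}$''; otherwise $u$ is contained over the stable manifold of $p_{\sigma_\ell}$ for the face $\sigma_\ell\supset\sigma_k$ produced above. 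I expect the main obstacle to be the second paragraph: making the maximum‑principle and positivity argument rigorous at the interfaces between building blocks, where the coordinates $(x_i,y_i)$ change and one has to rule out the disk leaking through the necks, together with the companion energy estimate pinning $u$ inside the subcomplex of $\sigma_\ell$ — both handled by choosing $\boldsymbol\varepsilon<\delta$ sufficiently small and invoking \cref{lma:Reeb_chords_one_to_one_correspondence}, as in the single‑block case \cref{lma:middlesubcrit}.
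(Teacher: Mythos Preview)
Your proposal is correct and follows essentially the same approach as the paper, though the paper's proof is far terser: it simply invokes \cref{lma:middlesubcrit} for the first alternative and then asserts that if the disk does not lie entirely over $p_{\sigma_k}$ it has additional negative punctures at Reeb chords over $p_{\sigma_\ell}$ for $\sigma_\ell\supset\sigma_k$, in which case it lies over the stable manifold of $p_{\sigma_\ell}$ via the projection $V^{(\ell)}_{\sigma_\ell}\times\R^{2\ell}\to\R^{2\ell}\cap\{\boldsymbol x=\boldsymbol 0\}$. Your second paragraph --- the Schwarz reflection and maximum principle argument for $x_i\equiv 0$, together with the monotonicity estimate across necks --- supplies detail that the paper leaves implicit in its appeal to \cref{lma:middlesubcrit} and the positivity statement \cref{lma:complexproj_simplex_handle}; the paper does not introduce this extra analytic machinery, so your version is more explicit but not a genuinely different route.
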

		\begin{proof}
			The first part follows from \cref{lma:middlesubcrit}. If such a holomorphic disk does not lie entirely over $p_{\sigma_k}$ it is allowed to have some additional negative punctures at Reeb chords over $p_{\sigma_{\ell}}$ for $\sigma_{\ell} \supset \sigma_k$. In this case the holomorphic disk lies over the stable manifold of $p_{\sigma_\ell}$ of the Liouville vector field through the projection $V^{(\ell)}_{\sigma_\ell} \times \R^{2 \ell} \longrightarrow \R^{2 \ell} \cap \left\{\boldsymbol x = \boldsymbol 0\right\}$.
		\end{proof}
		\begin{dfn}
			We say that a $J(\boldsymbol V)$-holomorphic disk in $\R \times \partial \# \boldsymbol V_0$ with boundary on $\R \times \varSigma(\boldsymbol h)$ is \emph{simplicial} if it has a positive puncture at a Reeb chord of $\varSigma(\boldsymbol h)$ in $\partial_+(V^{(k)}_{\sigma_k,0} \times \R^{2k})$ and any negative puncture of the $J(\boldsymbol V)$-holomorphic disk is at a Reeb chords of $\varSigma(\boldsymbol h)$ in $\partial_+(V^{(\ell)}_{\sigma_\ell,0} \times \R^{2 \ell})$ for some $\sigma_\ell \supset \sigma_k$.
		\end{dfn}
		\begin{cor}\label{cor:no_curve_crosses_handle}
			For any $\mathfrak a > 0$ there is some $\delta > 0$ so that for all $0 < \boldsymbol \varepsilon < \delta$ we have that the following holds. Any $J(\boldsymbol V)$-holomorphic disk in $\R \times \partial \# \boldsymbol V_0$ with boundary on $\R \times \varSigma(\boldsymbol h)$ with positive puncture at a Reeb chord of $\varSigma(\boldsymbol h)$ of action $< \mathfrak a$ is simplicial.
		\end{cor}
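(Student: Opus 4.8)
The plan is to deduce the statement from \cref{lma:hol_curves_in_simplex_handle}, using that the Reeb chords in question all lie over the critical points $p_{\sigma_k}$ of the Liouville vector field in the simplicial handle, plus a short action/energy argument to propagate the bound $\mathfrak a$ to the negative punctures.

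First I would fix $\mathfrak a>0$, choose $\delta>0$ small enough that both \cref{lma:Reeb_chords_one_to_one_correspondence} and \cref{lma:hol_curves_in_simplex_handle} apply, and take $0<\boldsymbol\varepsilon<\delta$. Let $u$ be a $J(\boldsymbol V)$-holomorphic disk in $\R\times\partial X_0$ with boundary on $\R\times\varSigma(\boldsymbol h)$ whose positive puncture is a Reeb chord $c$ of $\varSigma(\boldsymbol h)$ of action $<\mathfrak a$ lying in $\partial_+(V^{(k)}_{\sigma_k,0}\times\R^{2k})$. By \cref{lma:Reeb_chords_one_to_one_correspondence} I may assume that $c$ lies over the critical point $p_{\sigma_k}$. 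Since a holomorphic disk has non-negative $d\alpha$-energy and every negative puncture carries positive action, each negative puncture of $u$ is again a Reeb chord of $\varSigma(\boldsymbol h)$ of action $<\mathfrak a$, and hence, again by \cref{lma:Reeb_chords_one_to_one_correspondence}, lies over some critical point $p_{\sigma_j}$.

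Next I would localise the part of $u$ lying in the simplicial handle. By \cref{cor:attach_spheres_punctures} the Legendrian $\varSigma_{\sigma_k}(\boldsymbol h)$ is obtained from $\mathring\varSigma_{\sigma_k}(\boldsymbol h)$ by attaching the disks $E_0(\ell_{\sigma_k})$, which sit inside the vertex blocks $\partial V^{(0)}_{\sigma_0,0}$, so that near $c$ the disk $u$ lies in the region of $\partial X_0$ identified with $\partial_+H^m_{\boldsymbol\varepsilon}(\boldsymbol V_0)$ and has boundary on $\R\times\mathring\varSigma(\boldsymbol h)$. Applying \cref{lma:hol_curves_in_simplex_handle} to this part of $u$, it either lies entirely over $p_{\sigma_k}$ --- in which case all of its negative punctures lie over $p_{\sigma_k}$, hence in blocks $\partial_+(V^{(j)}_{\sigma_j,0}\times\R^{2j})$ indexed by faces $\sigma_j$ containing $\sigma_k$ --- or it lies over the stable manifold of some $p_{\sigma_\ell}$ with $\sigma_\ell\supset\sigma_k$, in which case its negative punctures lie over $p_{\sigma_k}$ or over critical points $p_{\sigma_\ell}$ with $\sigma_\ell\supset\sigma_k$, again in blocks indexed by faces containing $\sigma_k$. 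In either case $u$ has no negative puncture in a block $\partial_+(V^{(\ell)}_{\sigma_\ell,0}\times\R^{2\ell})$ with $\sigma_\ell\not\supset\sigma_k$, so $u$ does not cross the handle.

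The hard part will be the passage from \cref{lma:hol_curves_in_simplex_handle}, stated for disks in $\R\times\partial_+H^m_{\boldsymbol\varepsilon}(\boldsymbol V_0)$ with boundary on $\R\times\mathring\varSigma(\boldsymbol h)$, to disks in $\R\times\partial X_0$ with boundary on $\R\times\varSigma(\boldsymbol h)$: I must make sure the portion of $u$ contained in the simplicial handle is genuinely governed by holomorphic disks in the symplectization of $\partial_+H^m_{\boldsymbol\varepsilon}(\boldsymbol V_0)$ to which the lemma applies. I expect this to require taking $\boldsymbol\varepsilon$ small enough that the action bound $\mathfrak a$ confines the relevant Reeb chords and holomorphic disks to the prescribed building blocks, together with an SFT compactness (neck-stretching) argument along the separating contact hypersurface $\partial_+H^m_{\boldsymbol\varepsilon}(\boldsymbol V_0)$. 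The dynamical inputs needed here --- that these Reeb chords stay over the points $p_{\sigma_k}$ and that holomorphic disks meet the $J(\boldsymbol V)$-complex hyperplanes $V_{\sigma_k,(x_i,y_i)}$ non-negatively and cannot escape a deeper building block --- are precisely \cref{lma:basic_building_block_reeb} and \cref{lma:complexproj_simplex_handle}, already used in the proof of \cref{lma:hol_curves_in_simplex_handle}, so the argument amounts to upgrading that proof to the global setting of $\partial X_0$.
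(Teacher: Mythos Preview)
Your strategy takes a detour that the paper avoids, and the detour contains a real gap. You try to reduce to \cref{lma:hol_curves_in_simplex_handle}, but that lemma concerns genuine holomorphic disks in $\R\times\partial_+ H^m_{\boldsymbol\varepsilon}(\boldsymbol V_0)$ with boundary on $\R\times\mathring\varSigma(\boldsymbol h)$. The ``part of $u$ lying in the simplicial handle'' is not such a disk: it is a piece of a map whose boundary enters and exits the handle region along $\varSigma(\boldsymbol h)$ in a way not controlled by the lemma. Your proposed fix via SFT neck-stretching along $\partial_+H^m_{\boldsymbol\varepsilon}(\boldsymbol V_0)$ would in principle produce a genuine handle-level disk, but you do not carry it out, and it is heavy machinery for what is needed here.

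The paper's proof is a direct positivity-of-intersection argument using \cref{lma:complexproj_simplex_handle} alone, applied to the original disk $u$ without any cutting or limiting. The point is purely topological: if $u$ crosses the handle in the defined sense, then somewhere inside a building block $V^{(k)}_{\sigma_k}\times\R^{2k}$ the curve must meet the real hypersurface $H_{\sigma_k,i}=\pi_{\sigma_k,i}^{-1}\{y_i=0\}$. This hypersurface is foliated by the $J(\boldsymbol V)$-complex leaves $V_{(x_i,0)}$, so by \cref{lma:complexproj_simplex_handle} either $u$ is contained in a single leaf or it intersects every leaf. Both alternatives are incompatible with $u$ having its positive puncture over $p_{\sigma_k}$ and a negative puncture over $p_{\sigma_\ell}$ with $\sigma_\ell\not\supset\sigma_k$: being contained in a leaf forces the projection to $\R^{2k}\cap\{\boldsymbol x=\boldsymbol 0\}$ to sit in a Liouville flow line or over a critical point, while intersecting every leaf $V_{(x_i,0)}$ for all $x_i$ is impossible for a curve with the given asymptotics. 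No localisation, no compactness argument --- the foliation does all the work. You actually name exactly this input in your last paragraph; the correction is simply to use it directly on $u$ rather than trying to manufacture a disk to which \cref{lma:hol_curves_in_simplex_handle} applies.
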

		\begin{proof}
			Suppose that there exists a $J(\boldsymbol V)$-holomorphic disk that is not simplicial. In $\widetilde{\# \boldsymbol V_0}$ such a curve would intersect the hypersurface $H_{\sigma_k,i} := \pi^{-1}_{\sigma_k,i}\left\{y_i = 0\right\} \subset V^{(k)}_{\sigma_k,0} \times \R^{2k}$ for some $i\in \left\{1,\ldots,k\right\}$ and some $\sigma_k\in C$. The codimension one hypersurface $H_{\sigma_k,i}$ is foliated by the codimension two $J(\boldsymbol V)$-complex submanifolds $V_{(x_i,y_i)}$. So by \cref{lma:complexproj_simplex_handle} the $J(\boldsymbol V)$-holomorphic disk is either contained in one of the leaves $V_{(x_i,0)}$ or it must intersect every leaf over $H_{\sigma_k,i}$ non-trivially. Either case gives a contradiction. In the first case, if the $J(\boldsymbol V)$-holomorphic disk is contained in one of the leaves $V_{(x_i,0)}$, its image in the projection $V^{(k)}_{\sigma_k} \times \R^{2k} \longrightarrow \R^{2k} \cap \left\{\boldsymbol x = \boldsymbol 0\right\}$ must either be contained in a flow line of the Liouville vector field on $H^m_{\boldsymbol \varepsilon}(\boldsymbol V_0)$ or contained over a critical point $p_{\sigma_k} = \boldsymbol 0 \in \R^{2k}$, which means that such a $J(\boldsymbol V)$-holomorphic curve must be simplicial by \cref{lma:unique_flow_line}. If it is contained over a critical point $p_{\sigma_k} = \boldsymbol 0 \in \R^{2k}$, it corresponds to a holomorphic disk in $\R \times \partial V^{(k)}_{\sigma_k,0}$ with boundary on $\R \times \left(\partial \ell_{\sigma_k} \cup \bigcup_{\substack{\sigma_{i} \supset \sigma_{k} \\ k+1 \leq i \leq m}} (\ell_{\sigma_{i}} \times \R^{i-k-1})\right)$ by \cref{lma:middlesubcrit} and can thus only have negative punctures at Reeb chords of $\varSigma(\boldsymbol h)$ in $\partial_+(V^{(k)}_{\sigma_k,0} \times \R^{2k})$, and hence it must again be simplicial.

			In the second case that the $J(\boldsymbol V)$-holomorphic disk intersects every leaf of the foliation over $H_{\sigma_k,i}$ non-trivially, it must be that the $J(\boldsymbol V)$-holomorphic disk must have a positive asymptotic at a Reeb chord of $\varSigma(\boldsymbol h)$ which does not lie over the origin in $V^{(k)}_{\sigma_k,0} \times \R^{2k}$ which yields a contradiction by \cref{lma:basic_building_block_reeb} (cf \cite[Lemma 3.5]{asplund2021chekanov}).
		\end{proof}
	\subsection{Simplicial descent for Chekanov--Eliashberg dg-algebras}\label{sec:simplical_descent}
		Let $(C, \boldsymbol V)$ be a simplicial decomposition of $X$. Let $\varSigma(\boldsymbol h)$ denote the union of the Legendrian attaching $(n-1)$-spheres of $X$ in $\partial X_0$, see \cref{dfn:attaching_spheres_simpl_handles}. In this section we prove the main result of the paper, namely that $CE^\ast(\varSigma(\boldsymbol h); X_0)$ is quasi-isomorphic to a colimit of a diagram of dg-subalgebras.

		Recall from \cref{dfn:join_over_simplicial_complex} that $\boldsymbol \varepsilon$ denotes a sequence of real numbers controlling the size of the various simplicial handles used to construct $X \cong \# \boldsymbol V$. We use the notation $X^{\boldsymbol \varepsilon}$ to make this dependence explicit. Let $\varSigma(\boldsymbol h, \boldsymbol \varepsilon) \subset X_0^{\boldsymbol \varepsilon}$ be the corresponding union of Legendrian attaching spheres.

		Let $\mathbb F$ be a field and let $\boldsymbol k := \bigoplus_{i \in \pi_0(\varSigma(\boldsymbol h))} \mathbb F e_i$ where $\left\{e_i\right\}_{i\in \pi_0(\varSigma(\boldsymbol h))}$ is a set of mutually orthogonal idempotents. For any $\sigma_k\in C$ define $\boldsymbol k_{\sigma_k} := \bigoplus_{i\in \pi_0(\varSigma_{\supset \sigma_k}(\boldsymbol h))} \mathbb F e_i$. We note that $\left\{e_i\right\}_{i\in \pi_0(\varSigma_{\supset \sigma_k}(\boldsymbol h))} \subset \left\{e_i\right\}_{i\in \pi_0(\varSigma(\boldsymbol h))}$ since we have $\pi_0(\varSigma_{\supset \sigma_k}(\boldsymbol h)) \subset \pi_0(\varSigma(\boldsymbol h))$ by construction, see \eqref{eq:attach_spheres}. For any face inclusion $\sigma_\ell \subset \sigma_k$ we have a non-unital ring morphism $\boldsymbol k_{\sigma_k} \longrightarrow \boldsymbol k_{\sigma_\ell}$, $e_i \mapsto e_i$. 

		Let $\mathbf{dga}$ denote the category of associative, non-commutative, non-unital dg-algebras over varying non-unital rings. Concretely, the category $\mathbf{dga}$ consists of the following.
		\begin{description}
			\item[Objects] Pairs $(A,R)$ where $R$ is a non-unital ring and where $A$ is an associative, non-commutative, non-unital dg-algebras over $R$.
			\item[Morphisms] A morphism $(A,R) \longrightarrow (B,S)$ consists of a non-unital ring morphism $\varphi \colon R \longrightarrow S$ and a non-unital morphism $f \colon A \longrightarrow \varphi^\ast B$ of dg-algebras over $R$. The pullback $\varphi^\ast B$ is given by restriction of scalars.
		\end{description}
		More abstractly, the projection functor $(A,R) \longmapsto R$ is both a fibration and a opfibration in the sense of Gray (cf \cite[Example 4.7]{gray1966fibred}). The fiber over $R$ is the category $\mathbf{dga}(R)$ of associative, non-commutative, non-unital dg-algebras over $R$. Since the base and each fiber has colimits, it follows from \cite[Corollary 4.3]{gray1966fibred} that $\mathbf{dga}$ has colimits. Since all dg-algebras appearing in this paper are semi-free, the colimit is again a semi-free dg-algebra generated by the union of the generators.

		\begin{lma}\label{lma:diagram_of_sub_algebras}
			For any $\mathfrak a > 0$ there is some $\delta > 0$ so that for all $0 < \boldsymbol \varepsilon < \delta$ we have that the following holds.
			\begin{enumerate}
				\item For each $k$-face $\sigma_k\in C$ there is a semi-free dg-algebra $\mathcal A_{\sigma_k}(\boldsymbol h; \boldsymbol \varepsilon, \mathfrak a)$ over $\boldsymbol k_{\sigma_k}$ with generating set $\bigcup_{\sigma_i \supset \sigma_k} \mathcal R(\sigma_i, \mathfrak a)$ and differential coinciding with the differential in $CE^\ast(\varSigma(\boldsymbol h,\boldsymbol \varepsilon);X_0^{\boldsymbol \varepsilon})$. In particular there is an inclusion of dg-algebras over $\boldsymbol k$ induced by the inclusion on the set of generators
				\[
					\mathcal A_{\sigma_k}(\boldsymbol h; \boldsymbol \varepsilon, \mathfrak a) \subset CE^\ast(\varSigma(\boldsymbol h,\boldsymbol \varepsilon);X_0^{\boldsymbol \varepsilon})\, ,
				\]
				where $\mathcal A_{\sigma_k}(\boldsymbol h; \boldsymbol \varepsilon, \mathfrak a)$ is viewed as a dg-algebra over $\boldsymbol k$ by restriction of scalars.
				\item For each inclusion of faces $\sigma_k \subset \sigma_{k+1}$ there is an inclusion of dg-algebras over $\boldsymbol k_{\sigma_k}$ induced by the inclusion on the set of generators
				\[
					\mathcal A_{\sigma_{k+1}}(\boldsymbol h; \boldsymbol \varepsilon, \mathfrak a) \subset \mathcal A_{\sigma_k}(\boldsymbol h; \boldsymbol \varepsilon, \mathfrak a)\, ,
				\]
				where $\mathcal A_{\sigma_{k+1}}(\boldsymbol h; \boldsymbol \varepsilon, \mathfrak a)$ is viewed as a dg-algebra over $\boldsymbol k_{\sigma_k}$ by restriction of scalars.
			\end{enumerate}
		\end{lma}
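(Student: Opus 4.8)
The plan is to realize each $\mathcal A_{\sigma_k}(\boldsymbol h; \boldsymbol\varepsilon, \mathfrak a)$ as the sub-tensor-algebra of $CE^\ast(\varSigma(\boldsymbol h, \boldsymbol\varepsilon); X_0^{\boldsymbol\varepsilon})$ spanned by the Reeb chords that lie in the building blocks indexed by faces containing $\sigma_k$, and then to check that this collection of generators is closed under the Chekanov--Eliashberg differential. Fix $\mathfrak a > 0$ and let $\delta > 0$ be small enough that the conclusions of \cref{lma:Reeb_chords_one_to_one_correspondence}, \cref{lma:hol_curves_in_simplex_handle} and \cref{cor:no_curve_crosses_handle} all hold; from now on assume $0 < \boldsymbol\varepsilon < \delta$.

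\emph{Setup.} By \cref{lma:Reeb_chords_one_to_one_correspondence}~(2) the generators of $CE^\ast(\varSigma(\boldsymbol h, \boldsymbol\varepsilon); X_0^{\boldsymbol\varepsilon})$ of action $< \mathfrak a$ are in grading-preserving bijection with $\bigcup_{0 \leq k \leq m}\bigcup_{\sigma_k\in C_k}\mathcal R(\sigma_k, \mathfrak a)$, a chord in $\mathcal R(\sigma_i, \mathfrak a)$ corresponding to a Reeb chord of $\varSigma(\boldsymbol h, \boldsymbol\varepsilon)$ lying over $p_{\sigma_i}$ in the building block $V^{(i)}_{\sigma_i,0} \times \R^{2i}$. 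For a fixed $k$-face $\sigma_k$, every chord in $\bigcup_{\sigma_i \supset \sigma_k}\mathcal R(\sigma_i, \mathfrak a)$ has both endpoints on components of $\varSigma_{\supset \sigma_k}(\boldsymbol h)$: by \cref{dfn:reeb_chords_k-face} its endpoints lie on $\partial\ell_{\sigma_i} \subset \varSigma_{\sigma_i}(\boldsymbol h)$ or on some $\ell_{\sigma_{i'}} \times D^{i'-i-1} \subset \varSigma_{\sigma_{i'}}(\boldsymbol h)$ with $\sigma_{i'} \supset \sigma_i \supset \sigma_k$. Accordingly, define $\mathcal A_{\sigma_k}(\boldsymbol h; \boldsymbol\varepsilon, \mathfrak a)$ to be the free unital tensor algebra over $\boldsymbol k_{\sigma_k}$ on the graded set $\bigcup_{\sigma_i \supset \sigma_k}\mathcal R(\sigma_i, \mathfrak a)$, with idempotent decorations inherited from $CE^\ast(\varSigma(\boldsymbol h, \boldsymbol\varepsilon); X_0^{\boldsymbol\varepsilon})$, and equip it with the operator obtained by restricting the differential $\partial$ of $CE^\ast(\varSigma(\boldsymbol h, \boldsymbol\varepsilon); X_0^{\boldsymbol\varepsilon})$.

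\emph{Closure under the differential (the crux).} One must show that $\partial$ carries each generator in $\bigcup_{\sigma_i \supset \sigma_k}\mathcal R(\sigma_i, \mathfrak a)$ into the subalgebra they generate. Let $c \in \mathcal R(\sigma_i, \mathfrak a)$ with $\sigma_i \supset \sigma_k$, so $c$ corresponds to a Reeb chord over $p_{\sigma_i}$, and let $u$ be a rigid $J(\boldsymbol V)$-holomorphic disk in $\R \times \partial X_0^{\boldsymbol\varepsilon}$ contributing to $\partial c$. Its negative punctures are Reeb chords over various $p_{\sigma_j}$, each of action $< \mathfrak a$ by the action filtration; the claim is that $\sigma_j \supset \sigma_k$ for each. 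Indeed, \cref{cor:no_curve_crosses_handle}, applied with the positive puncture over $p_{\sigma_i}$, rules out $\dim\sigma_j \geq \dim\sigma_i$ with $\sigma_j \not\supset \sigma_i$, so either $\sigma_j \supset \sigma_i \supset \sigma_k$, or $\dim\sigma_j < \dim\sigma_i$; in the latter case the argument behind \cref{cor:no_curve_crosses_handle} via the $J(\boldsymbol V)$-complex hypersurfaces of \cref{lma:complexproj_simplex_handle} confines the image of $u$ to the union of building blocks indexed by faces containing $\sigma_k$, since a disk leaving this region would have to cross one of the $J(\boldsymbol V)$-complex hypersurfaces $\{y_l = 0\}$ separating it from the blocks indexed by faces not containing $\sigma_k$, which is incompatible with the asymptotics of $u$ exactly as in that proof, so $\sigma_j \supset \sigma_k$ here as well. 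Hence $\bigcup_{\sigma_i \supset \sigma_k}\mathcal R(\sigma_i, \mathfrak a)$ spans a subcomplex; since $\partial^2 = 0$ on $CE^\ast(\varSigma(\boldsymbol h, \boldsymbol\varepsilon); X_0^{\boldsymbol\varepsilon})$ and the subcomplex is preserved, $\partial^2 = 0$ on $\mathcal A_{\sigma_k}(\boldsymbol h; \boldsymbol\varepsilon, \mathfrak a)$, which is therefore a semi-free dg-algebra over $\boldsymbol k_{\sigma_k}$, and the inclusion of generating sets defines a (non-unital) injection of dg-algebras over $\boldsymbol k_{\sigma_k}$ into $CE^\ast(\varSigma(\boldsymbol h, \boldsymbol\varepsilon); X_0^{\boldsymbol\varepsilon})$, the latter viewed over $\boldsymbol k_{\sigma_k}$ by restriction of scalars along $e_i \mapsto e_i$. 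This gives (1). I expect this confinement step to be the main obstacle: it amounts to upgrading \cref{cor:no_curve_crosses_handle} from ``no disk crosses the handle'' to ``disks emanating from the blocks over $\sigma_k$ remain over the star of $\sigma_k$'', and if the stated form of \cref{cor:no_curve_crosses_handle} does not literally suffice one must re-run its positivity-of-intersection argument for the hypersurfaces cutting out the star of $\sigma_k$.

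\emph{Compatibility with face inclusions.} For $\sigma_k \subset \sigma_{k+1}$ one has $\bigcup_{\sigma_i \supset \sigma_{k+1}}\mathcal R(\sigma_i, \mathfrak a) \subset \bigcup_{\sigma_i \supset \sigma_k}\mathcal R(\sigma_i, \mathfrak a)$ since $\sigma_i \supset \sigma_{k+1} \supset \sigma_k$, and $\pi_0(\varSigma_{\supset\sigma_{k+1}}(\boldsymbol h)) \subset \pi_0(\varSigma_{\supset\sigma_k}(\boldsymbol h))$ yields the non-unital ring map $\boldsymbol k_{\sigma_{k+1}} \to \boldsymbol k_{\sigma_k}$, $e_i \mapsto e_i$. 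Both differentials are restrictions of $\partial$, hence agree on the common generators, so the inclusion of generating sets extends to a morphism $\mathcal A_{\sigma_{k+1}}(\boldsymbol h; \boldsymbol\varepsilon, \mathfrak a) \hookrightarrow \mathcal A_{\sigma_k}(\boldsymbol h; \boldsymbol\varepsilon, \mathfrak a)$ in $\mathbf{dga}$ lying over $\boldsymbol k_{\sigma_{k+1}} \to \boldsymbol k_{\sigma_k}$, injective because it is so on generators; that $\partial$ preserves $\mathcal A_{\sigma_{k+1}}$ inside $\mathcal A_{\sigma_k}$ is the closure statement above with $\sigma_k$ replaced by $\sigma_{k+1}$. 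This gives (2).
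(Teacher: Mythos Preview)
Your proof is correct and follows the same strategy as the paper: define $\mathcal A_{\sigma_k}$ as the subalgebra generated by Reeb chords over faces containing $\sigma_k$ and verify closure under the differential via the confinement results \cref{lma:hol_curves_in_simplex_handle} and \cref{cor:no_curve_crosses_handle}, with Stokes' theorem ensuring the action bound is preserved at negative punctures. The paper's proof is terser and simply cites these lemmas; note that \cref{lma:hol_curves_in_simplex_handle} already asserts that a disk with positive puncture over $p_{\sigma_i}$ has all negative punctures over $p_{\sigma_j}$ with $\sigma_j \supset \sigma_i$, so your separate treatment of the case $\dim\sigma_j < \dim\sigma_i$ (and the weaker target $\sigma_j \supset \sigma_k$) is unnecessary---invoking that lemma directly streamlines your argument.
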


		\begin{proof}
			\begin{enumerate}
				\item The differential of $\mathcal A_{\sigma_k}(\boldsymbol h; \boldsymbol \varepsilon, \mathfrak a)$ coincides with the differential of $CE^\ast(\varSigma(\boldsymbol h, \boldsymbol \varepsilon); X^{\boldsymbol \varepsilon}_0)$ by definition, so it suffices to prove that there is an inclusion on the set of generators. This follows from item (2) of \cref{lma:Reeb_chords_one_to_one_correspondence} because
				\[
					\bigcup_{\sigma_i \supset \sigma_k} \mathcal R(\sigma_i, \mathfrak a) \subset \bigcup_{\substack{\sigma_k \in C_k \\ 0 \leq k \leq m}} \mathcal R(\sigma_k, \mathfrak a)\, .
				\]
				\item Again, as above, it follows because we have an inclusion on the set of generators
				\[
					\bigcup_{\sigma_i \supset \sigma_{k+1}} \mathcal R(\sigma_i, \mathfrak a) \subset \bigcup_{\sigma_i \supset \sigma_k} \mathcal R(\sigma_i, \mathfrak a)\, .
				\]
			\end{enumerate}
		\end{proof}
		Let $\mathfrak a > 0$ and let $0 < \boldsymbol \varepsilon < \delta$ where $\delta > 0$ is chosen so that \cref{lma:diagram_of_sub_algebras} holds. Let $\left\{\mathcal A_{\sigma_k}(\boldsymbol h; \boldsymbol \varepsilon, \mathfrak a)\right\}_k$ denote the commutative diagram with objects $\mathcal A_{\sigma_k}(\boldsymbol h; \boldsymbol \varepsilon, \mathfrak a)$ for $\sigma_k\in C_k$, $k\in \left\{0,\ldots,m\right\}$ and arrows being the inclusions in \cref{lma:diagram_of_sub_algebras}.
		\begin{lma}\label{lma:cohomology_indep_of_size}
			For any $\boldsymbol \varepsilon > \boldsymbol \varepsilon'$ we have a canonical quasi-isomorphism of dg-algebras over $\boldsymbol k$
			\[
				\varPsi \colon CE^\ast(\varSigma(\boldsymbol h, \boldsymbol \varepsilon); X_0^{\boldsymbol \varepsilon}) \longrightarrow CE^\ast(\varSigma(\boldsymbol h, \boldsymbol \varepsilon'); X_0^{\boldsymbol \varepsilon'})\, .
			\]
		\end{lma}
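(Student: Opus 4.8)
The plan is to exhibit the passage from $\boldsymbol\varepsilon$ to $\boldsymbol\varepsilon'$ as a deformation of the Weinstein pair $(X_0,\varSigma(\boldsymbol h))$ which on the ideal boundary is a Legendrian isotopy, and to let $\varPsi$ be the resulting invariance map of Chekanov--Eliashberg dg-algebras.

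\textbf{Step 1 (the deformation).} In each basic building block $V^{(k)}_{\sigma_k,0}\times\R^{2k}$ the Liouville vector field $Z$ of \eqref{eq:basic_building_block_Liouville} is gradient-like for the exhausting Morse function $\psi=\phi_{\sigma_k}+\sum_i(x_i^2-\tfrac12 y_i^2)$, and it is strictly increasing along $Z$ away from its critical set (indeed $d\psi(Z)=d\phi_{\sigma_k}(z)+\sum_i(4x_i^2+y_i^2)$, which is positive off $\crit(Z)$). Hence flowing along $Z$ identifies the positive contact boundaries $\partial_+H^m_{\boldsymbol\varepsilon}(\boldsymbol V_0)$ and $\partial_+H^m_{\boldsymbol\varepsilon'}(\boldsymbol V_0)$ -- no critical value of the ambient Morse function lies between them for $\boldsymbol\varepsilon,\boldsymbol\varepsilon'$ in the admissible range -- and by \cref{lma:unique_flow_line} the critical points $(v_k,\boldsymbol 0,\boldsymbol 0)$ of $Z$ and their Morse indices do not depend on $\boldsymbol\varepsilon$. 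Gluing these flow-identifications over the building blocks (compatibly with the attaching maps $\overline\iota_{\sigma_k}$ of \cref{cns:m-simplex_handle} and with the fixed pieces $V^{(0)}_{\sigma_0,0}$) produces a Liouville isomorphism $X_0^{\boldsymbol\varepsilon}\to X_0^{\boldsymbol\varepsilon'}$ carrying $\varSigma(\boldsymbol h,\boldsymbol\varepsilon)$ to a Legendrian submanifold of $\partial X_0^{\boldsymbol\varepsilon'}$ that is Legendrian isotopic to $\varSigma(\boldsymbol h,\boldsymbol\varepsilon')$; here one uses that $\overline{\partial\ell_{\sigma_k}}$ and the extensions $E_j(\partial\ell_{\sigma_k})$ of \cref{cns:attaching_spheres_simpl_handles} lie over $\boldsymbol x=\boldsymbol 0$, where the $Z$-flow merely rescales the $\boldsymbol y$-coordinate. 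Since a Legendrian isotopy preserves $\pi_0$, the two Chekanov--Eliashberg dg-algebras in the statement are defined over the same idempotent ring $\boldsymbol k$.

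\textbf{Step 2 (the invariance map).} A Legendrian isotopy induces a quasi-isomorphism of Chekanov--Eliashberg dg-algebras, either by the usual bifurcation argument or, more geometrically, via the dg-algebra map associated to its trace exact Lagrangian cobordism in the symplectization, counting $J(\boldsymbol V)$-holomorphic disks (anchored when $\dim V^{(k)}_{\sigma_k,0}=2$, as in the discussion preceding \cref{lma:middlesubcrit}). We let $\varPsi$ be the composite of this map with the isomorphism induced by the Liouville isomorphism of Step 1. Since the set of admissible sizes $\{0<\varepsilon_m<\cdots<\varepsilon_1<\delta\}$ is convex, hence contractible, any two interpolating paths between $\boldsymbol\varepsilon$ and $\boldsymbol\varepsilon'$ are homotopic rel endpoints, so $\varPsi$ is canonical (independent of choices up to dg-algebra homotopy) and admits as quasi-inverse the map built from the reverse path. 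Its ring component is $\id_{\boldsymbol k}$, so $\varPsi$ is a quasi-isomorphism in $\mathbf{dga}(\boldsymbol k)$, as claimed.

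\textbf{Main obstacle.} The delicate point is Step 1: one must verify that the flow-identifications of the individual building blocks patch to a global Liouville isomorphism and that the induced motion of $\varSigma(\boldsymbol h,\cdot)$ is a genuine Legendrian isotopy -- i.e. that shrinking the simplicial handle never triggers a handle birth/death or slide. This is exactly where the rigidity of the explicit building-block model and \cref{lma:unique_flow_line} are used. As a remark not needed for the present statement: when an action bound $\mathfrak a$ is imposed and $\boldsymbol\varepsilon,\boldsymbol\varepsilon'$ are taken small enough, \cref{lma:Reeb_chords_one_to_one_correspondence} matches the generators of the two truncated algebras, and $\varPsi$ can then be taken to be a chain-level isomorphism.
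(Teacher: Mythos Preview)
Your proposal is correct and follows essentially the same approach as the paper: both treat the passage from $\boldsymbol\varepsilon$ to $\boldsymbol\varepsilon'$ as a Weinstein deformation and invoke the standard invariance of the Chekanov--Eliashberg dg-algebra via cobordism maps, with the inverse deformation furnishing a homotopy inverse. The paper's proof is terser---it simply cites \cite{ekholm2017symplectic,ekholm2015legendrian} and describes $\varPsi$ directly as the cobordism map of the deformation---whereas you first produce an explicit Liouville identification via the flow of $Z$ and then phrase invariance as Legendrian isotopy invariance inside a fixed ambient manifold; these are equivalent packagings of the same argument.
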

		\begin{proof}
			This is the usual invariance argument for the Chekanov--Eliashberg dg-algebra, and we will not repeat the full argument here. It is carried out exactly like in \cite[Section 5.5]{ekholm2017symplectic} \cite[Lemma 5.10 and Appendix A]{ekholm2015legendrian}.

			An overview of the argument is that the map $\varPsi$ is the cobordism map induced by the Weinstein cobordism obtained by deforming $X_0^{\boldsymbol \varepsilon}$ to $X_0^{\boldsymbol \varepsilon'}$.  One proves that $\varPsi$ is in fact a chain homotopy equivalence by constructing the inverse cobordism. The cobordism map of the inverse cobordism is then shown to be a homotopy inverse to $\varPsi$, since the composition of the cobordisms is (up to deformation) a trivial Weinstein cobordism. 
		\end{proof}
		\begin{lma}\label{lma:action_window_limits}
			Let $\left\{\mathfrak a_\ell\right\}_{\ell=1}^\infty$ and $\left\{\delta_\ell\right\}_{\ell=1}^\infty$ be two sequences of positive real numbers such that $\mathfrak a_\ell \to \infty$, $\delta_\ell \to 0$ and so that \cref{lma:diagram_of_sub_algebras} holds for the tuples $(\boldsymbol \varepsilon_\ell, \mathfrak a_\ell)$ where $0 < \boldsymbol \varepsilon_\ell < \delta_\ell$. Then we have a diagram
			\begin{equation}\label{eq:coupled_diagram}
				\begin{tikzcd}[row sep=scriptsize]
					\cdots \rar{\left\{\varphi_{\ell-1,k}\right\}_k} & \left\{\mathcal A_{\sigma_{k}}(\boldsymbol h; \boldsymbol \varepsilon_{\ell}, \mathfrak a_\ell)\right\}_k \rar{\left\{\varphi_{\ell,k}\right\}_k} & \left\{\mathcal A_{\sigma_{k}}(\boldsymbol h; \boldsymbol \varepsilon_{\ell+1}, \mathfrak a_{\ell+1})\right\}_k \rar{\left\{\varphi_{\ell+1,k}\right\}_k} & \cdots
				\end{tikzcd}
			\end{equation}
			where each $\varphi_{\ell,k}$ is a canonical morphism of dg-algebras over $\boldsymbol k_{\sigma_k}$ such that for each inclusion of faces $\sigma_k \subset \sigma_{k+1}$ the following square commutes.
			\[
				\begin{tikzcd}[row sep=scriptsize, column sep=scriptsize]
					\mathcal A_{\sigma_{k+1}}(\boldsymbol h; \boldsymbol \varepsilon_{\ell}, \mathfrak a_\ell) \rar{\varphi_{\ell,k+1}} \dar[phantom,description][rotate=90]{\subset} & \mathcal A_{\sigma_{k+1}}(\boldsymbol h; \boldsymbol \varepsilon_{\ell+1}, \mathfrak a_{\ell+1}) \dar[phantom,description][rotate=90]{\subset}\\
					\mathcal A_{\sigma_{k}}(\boldsymbol h; \boldsymbol \varepsilon_{\ell}, \mathfrak a_\ell) \rar{\varphi_{\ell,k}} & \mathcal A_{\sigma_{k}}(\boldsymbol h; \boldsymbol \varepsilon_{\ell+1}, \mathfrak a_{\ell+1})
				\end{tikzcd}
			\]
		\end{lma}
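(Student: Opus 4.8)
The plan is to realize each $\varphi_{\ell,k}$ as a composite of a handle-shrinking map, taking $\boldsymbol\varepsilon_\ell$ to $\boldsymbol\varepsilon_{\ell+1}$, and an action-enlargement map, taking $\mathfrak a_\ell$ to $\mathfrak a_{\ell+1}$, routed through the intermediate algebra $\mathcal A_{\sigma_k}(\boldsymbol h;\boldsymbol\varepsilon_{\ell+1},\mathfrak a_\ell)$. Before starting I would make two harmless reductions: passing to a cofinal subsequence in $\ell$ I may assume $\{\mathfrak a_\ell\}$ strictly increasing and $\{\boldsymbol\varepsilon_\ell\}$ strictly decreasing (possible since $\mathfrak a_\ell\to\infty$ and $0<\boldsymbol\varepsilon_\ell<\delta_\ell\to 0$), and, shrinking the $\delta_\ell$, I may assume that \cref{lma:diagram_of_sub_algebras}, \cref{lma:hol_curves_in_simplex_handle} and \cref{cor:no_curve_crosses_handle} all hold for $(\boldsymbol\varepsilon,\mathfrak a_\ell)$ whenever $0<\boldsymbol\varepsilon<\delta_\ell$. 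Since a size bound valid for an action bound $\mathfrak a$ is a fortiori valid for every smaller action bound, $\mathcal A_{\sigma_k}(\boldsymbol h;\boldsymbol\varepsilon_{\ell+1},\mathfrak a_\ell)$ is indeed a well-defined object of the diagram. I would also record the key bookkeeping point that the generating set $\bigcup_{\sigma_i\supset\sigma_k}\mathcal R(\sigma_i,\mathfrak a)$ consists of Reeb chords of fixed Legendrians in the fixed contact manifolds $\partial V^{(i)}_{\sigma_i,0}$ (see \cref{dfn:reeb_chords_k-face}), so it is literally independent of $\boldsymbol\varepsilon$; only the differential, which is the restriction of the differential of $CE^\ast(\varSigma(\boldsymbol h,\boldsymbol\varepsilon);X_0^{\boldsymbol\varepsilon})$, depends on $\boldsymbol\varepsilon$.

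The action-enlargement map is the easy factor: because $\mathfrak a_\ell<\mathfrak a_{\ell+1}$ one has $\mathcal R(\sigma_i,\mathfrak a_\ell)\subset\mathcal R(\sigma_i,\mathfrak a_{\ell+1})$ for all $\sigma_i$, and since the Chekanov--Eliashberg differential is action-decreasing by Stokes' theorem, the inclusion of generating sets induces a morphism of semi-free dg-algebras over $\boldsymbol k_{\sigma_k}$
\[
	j_{\sigma_k}\colon \mathcal A_{\sigma_k}(\boldsymbol h;\boldsymbol\varepsilon_{\ell+1},\mathfrak a_\ell)\hookrightarrow \mathcal A_{\sigma_k}(\boldsymbol h;\boldsymbol\varepsilon_{\ell+1},\mathfrak a_{\ell+1})\,,
\]
which is manifestly compatible with the face inclusions of \cref{lma:diagram_of_sub_algebras}(2), these being the inclusions $\bigcup_{\sigma_i\supset\sigma_{k+1}}\mathcal R(\sigma_i,\mathfrak a_\ell)\subset\bigcup_{\sigma_i\supset\sigma_k}\mathcal R(\sigma_i,\mathfrak a_\ell)$, stable under enlarging $\mathfrak a$. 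For the handle-shrinking factor I would use the invariance quasi-isomorphism $\varPsi\colon CE^\ast(\varSigma(\boldsymbol h,\boldsymbol\varepsilon_\ell);X_0^{\boldsymbol\varepsilon_\ell})\to CE^\ast(\varSigma(\boldsymbol h,\boldsymbol\varepsilon_{\ell+1});X_0^{\boldsymbol\varepsilon_{\ell+1}})$ of \cref{lma:cohomology_indep_of_size}, and show it restricts, compatibly in $\sigma_k$, to a morphism of dg-algebras over $\boldsymbol k_{\sigma_k}$
\[
	\varPsi_{\sigma_k}\colon \mathcal A_{\sigma_k}(\boldsymbol h;\boldsymbol\varepsilon_\ell,\mathfrak a_\ell)\to \mathcal A_{\sigma_k}(\boldsymbol h;\boldsymbol\varepsilon_{\ell+1},\mathfrak a_\ell)\,.
\]
Then $\varphi_{\ell,k}:=j_{\sigma_k}\circ\varPsi_{\sigma_k}$ is canonical (since $\varPsi$ is, and $j_{\sigma_k}$ is an inclusion), is a morphism of dg-algebras over $\boldsymbol k_{\sigma_k}$, and the square in the statement is the horizontal pasting of the two face-compatibility squares for $\varPsi_{(-)}$ and for $j_{(-)}$, hence commutes; the diagram \eqref{eq:coupled_diagram} is the resulting chain of morphisms of diagrams.

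The heart of the argument — and the step I expect to take real work — is showing that $\varPsi$ restricts to the subalgebras while respecting both the face-poset filtration and the action filtration. Here I would recall that $\varPsi$ is the cobordism map of a Weinstein cobordism interpolating $X_0^{\boldsymbol\varepsilon_\ell}$ with $X_0^{\boldsymbol\varepsilon_{\ell+1}}$, and I would arrange this interpolation inside the simplicial-handle construction of \cref{cns:m-simplex_handle} so that it changes only the sizes $\varepsilon_k$, preserves the product structure $V^{(k)}_{\sigma_k}\times\R^{2k}$ of every basic building block, and keeps every occurring size below $\delta_\ell$. Then the codimension-two hypersurfaces $V_{\sigma_k,(x_i,y_i)}$ of \cref{lma:complexproj_simplex_handle} remain $J$-complex throughout the cobordism, so the positivity-of-intersection statement of \cref{lma:complexproj,lma:complexproj_simplex_handle} still applies to holomorphic disks in the cobordism, and the curve-localization arguments of \cref{lma:hol_curves_in_simplex_handle,cor:no_curve_crosses_handle} go through essentially verbatim in this non-translation-invariant setting: no disk of action $<\mathfrak a_\ell$ crosses a handle, and a disk with positive puncture at a chord in $\mathcal R(\sigma_k,\mathfrak a_\ell)$ has negative punctures only at chords in $\mathcal R(\sigma_i,\mathfrak a_\ell)$ with $\sigma_i\supset\sigma_k$. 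This yields that $\varPsi$ preserves the filtration by $(\sigma_k,\mathfrak a_\ell)$ and hence the desired restriction $\varPsi_{\sigma_k}$; that $\varPsi_{\sigma_k}$ is a dg-algebra morphism defined over $\boldsymbol k_{\sigma_k}$ follows from \cref{lma:cohomology_indep_of_size} by restriction of scalars (the interpolating cobordism, being an isotopy trace, induces a bijection on $\pi_0$), and the compatibility of $\varPsi_{\sigma_{k+1}}$ with $\varPsi_{\sigma_k}$ is automatic, as both are restrictions of the single map $\varPsi$ to sub-selections of its generating set. Beyond this localization input, everything else is a routine adaptation of the Chekanov--Eliashberg invariance argument already invoked in the proof of \cref{lma:cohomology_indep_of_size}.
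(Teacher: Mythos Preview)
Your proposal is correct and takes essentially the same approach as the paper: factor $\varphi_{\ell,k}$ through the intermediate algebra $\mathcal A_{\sigma_k}(\boldsymbol h;\boldsymbol\varepsilon_{\ell+1},\mathfrak a_\ell)$, with the handle-shrinking step given by restricting the invariance map $\varPsi$ of \cref{lma:cohomology_indep_of_size} to the dg-subalgebra (the paper calls this restriction $\psi_{\ell,k}$), followed by the action-window inclusion. The paper simply asserts that $\varPsi$ restricts to the subalgebras, whereas you supply the justification via a cobordism version of the curve-localization arguments in \cref{lma:hol_curves_in_simplex_handle} and \cref{cor:no_curve_crosses_handle}; this extra detail is reasonable and fills in what the paper leaves implicit.
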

		\begin{proof}
			The maps $\varphi_\ell$ are defined by studying the following diagram.
			\begin{equation}\label{eq:diagram_action_size_of_handles}
				\begin{tikzcd}[row sep=scriptsize, column sep=scriptsize]
					& \vdots  \dar[phantom,description][rotate=-90]{\subset} & \vdots  \dar[phantom,description][rotate=-90]{\subset} & \\
					\cdots \rar{\left\{\psi_{\ell-1,k}\right\}_k} & \left\{\mathcal A_{\sigma_{k}}(\boldsymbol h; \boldsymbol \varepsilon_{\ell}, \mathfrak a_\ell)\right\}_k \rar{\left\{\psi_{\ell,k}\right\}_k} \drar[dashed]{\left\{\varphi_{\ell,k}\right\}_k} \dar[phantom,description][rotate=-90]{\subset} & \left\{\mathcal A_{\sigma_{k}}(\boldsymbol h; \boldsymbol \varepsilon_{\ell+1}, \mathfrak a_{\ell})\right\}_k \dar[phantom,description][rotate=-90]{\subset} \rar{\left\{\psi_{\ell+1,k}\right\}_k} & \cdots\\
					\cdots \rar{\left\{\psi_{\ell-1,k}\right\}_k} & \left\{\mathcal A_{\sigma_{k}}(\boldsymbol h; \boldsymbol \varepsilon_{\ell}, \mathfrak a_{\ell+1})\right\}_k \rar{\left\{\psi_{\ell,k}\right\}_k} \dar[phantom,description][rotate=-90]{\subset} & \left\{\mathcal A_{\sigma_{k}}(\boldsymbol h; \boldsymbol \varepsilon_{\ell+1}, \mathfrak a_{\ell+1})\right\}_k \dar[phantom,description][rotate=-90]{\subset} \rar{\left\{\psi_{\ell+1,k}\right\}_k} & \cdots \\
					& \vdots & \vdots &
				\end{tikzcd}
			\end{equation}
			In each horizontal arrow $\psi_{\ell,k}$ is the restriction of the quasi-isomorphism $\varPsi$ in \cref{lma:cohomology_indep_of_size} to the dg-subalgebra $\mathcal A_{\sigma_k}(\boldsymbol h; \boldsymbol \varepsilon_\ell, \mathfrak a_\ell)$ for each $k$. In particular it follows that for each inclusion of faces $\sigma_k \subset \sigma_{k+1}$ the following square commutes.
			\[
				\begin{tikzcd}[row sep=scriptsize, column sep=scriptsize]
					\mathcal A_{\sigma_{k+1}}(\boldsymbol h; \boldsymbol \varepsilon_{\ell}, \mathfrak a_\ell) \rar{\psi_{\ell,k+1}} \dar[phantom,description][rotate=90]{\subset} \ar[rr,bend left, looseness=0.3, "\varphi_{\ell,k+1}"] & \mathcal A_{\sigma_{k+1}}(\boldsymbol h; \boldsymbol \varepsilon_{\ell+1}, \mathfrak a_{\ell}) \dar[phantom,description][rotate=90]{\subset} \rar[phantom, description]{\subset} & A_{\sigma_{k+1}}(\boldsymbol h; \varepsilon_{\ell+1}, \mathfrak a_{\ell+1}) \dar[phantom,description][rotate=90]{\subset}\\
					\mathcal A_{\sigma_{k}}(\boldsymbol h; \boldsymbol \varepsilon_{\ell}, \mathfrak a_\ell) \rar{\psi_{\ell,k}} \ar[rr,bend right, looseness=0.3, swap, "\varphi_{\ell,k}"] & \mathcal A_{\sigma_{k}}(\boldsymbol h; \boldsymbol \varepsilon_{\ell+1}, \mathfrak a_{\ell}) \rar[phantom, description]{\subset} & A_{\sigma_{k+1}}(\boldsymbol h; \varepsilon_{\ell}, \mathfrak a_{\ell+1})
				\end{tikzcd}\, .
			\]
		\end{proof}
		Let $\mathcal A_{\sigma_k}(\boldsymbol h) := \colim_{\ell\to \infty}\mathcal A_{\sigma_k}(\boldsymbol h; \boldsymbol \varepsilon_\ell, \mathfrak a_\ell)$ be the colimit of the diagram \eqref{eq:coupled_diagram} for a fixed $\sigma_k\in C_k$. We also let $\mathcal A_{\sigma_k}(\boldsymbol h, \boldsymbol \varepsilon) := \colim_{\ell\to \infty} \mathcal A_{\sigma_k}(\boldsymbol h; \boldsymbol \varepsilon, \mathfrak a_\ell)$, be the colimit of the diagram of inclusions in wider action windows, where $\boldsymbol \varepsilon$ is independent of $\ell$. Both of these colimits are taken in the category $\mathbf{dga}(\boldsymbol k_{\sigma_k})$.
		\begin{cor}\label{cor:subalgebras_diagram_no_actions}
			For each inclusion of faces $\sigma_k \subset \sigma_{k+1}$ there is an inclusion of dg-algebras over $\boldsymbol k_{\sigma_k}$ induced by the inclusion on the set of generators
			\[
				\mathcal A_{\sigma_{k+1}}(\boldsymbol h) \subset \mathcal A_{\sigma_k}(\boldsymbol h)\, .
			\]
			Moreover, for each $\sigma_k \in C_k$ we have a canonical quasi-isomorphism of dg-algebras over $\boldsymbol k_{\sigma_k}$
			\[
				\mathcal A_{\sigma_k}(\boldsymbol h; \boldsymbol \varepsilon') \cong \mathcal A_{\sigma_k}(\boldsymbol h)\, .
			\]
		\end{cor}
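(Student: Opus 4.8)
The plan is to deduce both claims by passing to the sequential colimit in the $\mathbb{N}$-indexed diagram \eqref{eq:coupled_diagram} of \cref{lma:action_window_limits}, using that a sequential (filtered) colimit in $\mathbf{dga}(\boldsymbol k_{\sigma_k})$ is computed on underlying graded modules, is exact, and commutes with homology.

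For the first claim, fix a face inclusion $\sigma_k\subset\sigma_{k+1}$. For each $\ell$, \cref{lma:diagram_of_sub_algebras}(2) furnishes the inclusion $\mathcal A_{\sigma_{k+1}}(\boldsymbol h;\boldsymbol\varepsilon_\ell,\mathfrak a_\ell)\subset\mathcal A_{\sigma_k}(\boldsymbol h;\boldsymbol\varepsilon_\ell,\mathfrak a_\ell)$, induced on generators by $\bigcup_{\sigma_i\supset\sigma_{k+1}}\mathcal R(\sigma_i,\mathfrak a_\ell)\subset\bigcup_{\sigma_i\supset\sigma_k}\mathcal R(\sigma_i,\mathfrak a_\ell)$ and covering the non-unital ring morphism $\boldsymbol k_{\sigma_{k+1}}\to\boldsymbol k_{\sigma_k}$. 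Applying the commuting squares of \cref{lma:action_window_limits} to $\sigma_k$ and to $\sigma_{k+1}$ shows that these inclusions intertwine the structure maps $\varphi_{\ell,k+1}$ and $\varphi_{\ell,k}$, so together they form a morphism of $\mathbb{N}$-indexed diagrams. Taking $\colim_{\ell\to\infty}$ (in $\mathbf{dga}$, over the fixed rings $\boldsymbol k_{\sigma_{k+1}}$ and $\boldsymbol k_{\sigma_k}$) and using that filtered colimits preserve injections of the underlying complexes yields the claimed inclusion $\mathcal A_{\sigma_{k+1}}(\boldsymbol h)\subset\mathcal A_{\sigma_k}(\boldsymbol h)$, which on generators is the inclusion $\bigcup_{\sigma_i\supset\sigma_{k+1}}\mathcal R(\sigma_i)\subset\bigcup_{\sigma_i\supset\sigma_k}\mathcal R(\sigma_i)$.

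For the second claim, fix $\sigma_k$. Both $\mathcal A_{\sigma_k}(\boldsymbol h)=\colim_\ell\mathcal A_{\sigma_k}(\boldsymbol h;\boldsymbol\varepsilon_\ell,\mathfrak a_\ell)$ and $\mathcal A_{\sigma_k}(\boldsymbol h,\boldsymbol\varepsilon')=\colim_\ell\mathcal A_{\sigma_k}(\boldsymbol h;\boldsymbol\varepsilon',\mathfrak a_\ell)$ are sequential colimits, so it is enough to relate the two directed systems by (restrictions of) continuation maps and appeal once more to the fact that $H^\ast$ commutes with sequential colimits. Given $\ell$, choose $\ell'=\ell'(\ell)$ increasing with $\boldsymbol\varepsilon_{\ell'}<\boldsymbol\varepsilon'$ and with $\boldsymbol\varepsilon_{\ell'}$ below the threshold $\delta$ of \cref{lma:diagram_of_sub_algebras} for action bound $\mathfrak a_\ell$; by \cref{lma:cohomology_indep_of_size} there is a canonical continuation quasi-isomorphism $\varPsi\colon CE^\ast(\varSigma(\boldsymbol h,\boldsymbol\varepsilon');X_0^{\boldsymbol\varepsilon'})\to CE^\ast(\varSigma(\boldsymbol h,\boldsymbol\varepsilon_{\ell'});X_0^{\boldsymbol\varepsilon_{\ell'}})$. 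Exactly as for the maps $\psi_{\ell,k}$ in \cref{lma:action_window_limits}, $\varPsi$ restricts to a morphism $\mathcal A_{\sigma_k}(\boldsymbol h;\boldsymbol\varepsilon',\mathfrak a_\ell)\to\mathcal A_{\sigma_k}(\boldsymbol h;\boldsymbol\varepsilon_{\ell'},\mathfrak a_\ell)$ of dg-subalgebras; building the inverse cobordism (and enlarging $\ell'$) shows the restriction is a quasi-isomorphism, and it is compatible with the structure maps of the two diagrams on the nose, or at worst up to chain homotopy, which suffices after passing to the colimit. Taking $\colim_{\ell\to\infty}$ therefore gives the canonical quasi-isomorphism $\mathcal A_{\sigma_k}(\boldsymbol h,\boldsymbol\varepsilon')\cong\mathcal A_{\sigma_k}(\boldsymbol h)$, canonicity being inherited from that of $\varPsi$.

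The step I expect to be the main obstacle is justifying that the continuation map $\varPsi$ \emph{restricts} to a quasi-isomorphism on the dg-subalgebras $\mathcal A_{\sigma_k}(\boldsymbol h;-,\mathfrak a)$, not merely to a morphism: this requires that the Weinstein deformation realizing $\varPsi$ (and its homotopy inverse) can be chosen to respect the face-indexed block decomposition of the Reeb chords, so that the relevant parametrized moduli spaces satisfy the analogue of the ``no holomorphic disk crosses the handle'' property of \cref{cor:no_curve_crosses_handle} with the action bound preserved. Granting this compatibility --- which is implicitly used already in \cref{lma:action_window_limits} for the maps $\varphi_{\ell,k}$ --- the rest is a formal manipulation of sequential colimits.
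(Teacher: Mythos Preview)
Your proposal is correct and follows essentially the same route as the paper. For the first claim the paper simply says ``follows immediately from \cref{lma:action_window_limits}'', which is exactly your argument of passing the levelwise inclusions of \cref{lma:diagram_of_sub_algebras}(2) through the sequential colimit. For the second claim the paper writes down the ladder diagram
\[
\begin{tikzcd}[row sep=scriptsize, column sep=scriptsize]
\cdots \rar & \mathcal A_{\sigma_k}(\boldsymbol h; \boldsymbol \varepsilon_\ell, \mathfrak a_\ell) \rar \dar{\cong} & \mathcal A_{\sigma_k}(\boldsymbol h; \boldsymbol \varepsilon_{\ell+1}, \mathfrak a_{\ell+1}) \rar \dar{\cong} & \cdots \\
\cdots \rar[phantom,description]{\subset}& \mathcal A_{\sigma_k}(\boldsymbol h; \boldsymbol \varepsilon', \mathfrak a_\ell) \rar[phantom, description]{\subset} & \mathcal A_{\sigma_k}(\boldsymbol h;\boldsymbol \varepsilon', \mathfrak a_{\ell+1}) \rar[phantom,description]{\subset}& \cdots
\end{tikzcd}
\]
with vertical arrows the restrictions of $\varPsi$ from \cref{lma:cohomology_indep_of_size}, and then takes the colimit; this is your argument, only the paper compares the two systems directly rather than via an auxiliary cofinal subsystem indexed by $\ell'(\ell)$.

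The point you flag as the main obstacle --- that the restricted $\varPsi$ is a \emph{quasi-isomorphism} on each $\mathcal A_{\sigma_k}(\boldsymbol h;-,\mathfrak a)$, not merely a chain map --- is handled in the paper exactly as you anticipate: it is asserted (the vertical arrows are labeled $\cong$) on the grounds that the cobordism realizing $\varPsi$ and its homotopy inverse respect the face-indexed block structure, i.e.\ the parametrized analogue of \cref{cor:no_curve_crosses_handle} holds. The paper does not spell this out further than you do.
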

		\begin{proof}
			The first statement follows immediately from \cref{lma:action_window_limits}, and the second statement follows from the diagram
			\[
				\begin{tikzcd}[row sep=scriptsize, column sep=scriptsize]
					\cdots \rar & \mathcal A_{\sigma_k}(\boldsymbol h; \boldsymbol \varepsilon_\ell, \mathfrak a_\ell) \rar \dar{\cong} & \mathcal A_{\sigma_k}(\boldsymbol h; \boldsymbol \varepsilon_{\ell+1}, \mathfrak a_{\ell+1}) \rar \dar{\cong} & \cdots \\
					\cdots \rar[phantom,description]{\subset}& \mathcal A_{\sigma_k}(\boldsymbol h; \boldsymbol \varepsilon', \mathfrak a_\ell) \rar[phantom, description]{\subset} & \mathcal A_{\sigma_k}(\boldsymbol h;\boldsymbol \varepsilon', \mathfrak a_{\ell+1}) \rar[phantom,description]{\subset}& \cdots 
				\end{tikzcd}\, ,
			\]
			where $(\boldsymbol \varepsilon_\ell, \mathfrak a_\ell)$ satisfies the conditions of \cref{lma:diagram_of_sub_algebras} for each $\ell\in \Z_{\geq 1}$. The vertical arrows are restrictions of the quasi-isomorphisms $\varPsi$ in \cref{lma:cohomology_indep_of_size} to the dg-subalgebras $\mathcal A_{\sigma_k}(\boldsymbol h; \boldsymbol \varepsilon, \mathfrak a)$.
		\end{proof}
		\begin{thm}\label{thm:ce_descent_first}
			There is a quasi-isomorphism of dg-algebras over $\boldsymbol k$
			\[
				CE^\ast(\varSigma(\boldsymbol h); X_0) \cong \colim_{\sigma_k\in C_k} \mathcal A_{\sigma_k}(\boldsymbol h)\, .
			\]
		\end{thm}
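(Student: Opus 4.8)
The plan is to reduce to a fixed action bound and a fixed handle size, prove the statement there by a formal colimit argument, and then pass to the limit using the machinery already set up. So fix $\mathfrak a > 0$ and $0 < \boldsymbol\varepsilon < \delta$ with $\delta$ as in \cref{lma:diagram_of_sub_algebras}, and write $CE^\ast(\varSigma(\boldsymbol h, \boldsymbol\varepsilon); X_0^{\boldsymbol\varepsilon}, \mathfrak a)$ for the dg-subalgebra of $CE^\ast(\varSigma(\boldsymbol h, \boldsymbol\varepsilon); X_0^{\boldsymbol\varepsilon})$ generated by Reeb chords of action $<\mathfrak a$; this is a genuine dg-subalgebra since the differential strictly decreases action. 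I would first show that, as an object of $\mathbf{dga}$,
\[
	CE^\ast(\varSigma(\boldsymbol h, \boldsymbol\varepsilon); X_0^{\boldsymbol\varepsilon}, \mathfrak a) \;\cong\; \colim_{\sigma_k \in C_k} \mathcal A_{\sigma_k}(\boldsymbol h; \boldsymbol\varepsilon, \mathfrak a)\,.
\]
By \cref{lma:Reeb_chords_one_to_one_correspondence} the left-hand side is the semi-free dg-algebra over $\boldsymbol k$ on the generating set $\bigsqcup_{\sigma_i}\mathcal R(\sigma_i, \mathfrak a)$ (disjoint union over all faces of $C$), and each $\mathcal A_{\sigma_k}(\boldsymbol h;\boldsymbol\varepsilon,\mathfrak a)$ is the sub-dg-algebra on the subset $\bigsqcup_{\sigma_i \supseteq \sigma_k}\mathcal R(\sigma_i,\mathfrak a)$, which is closed under the differential by \cref{lma:diagram_of_sub_algebras} (equivalently, by \cref{cor:no_curve_crosses_handle} no holomorphic disk with positive puncture in $\mathcal R(\sigma_k,\mathfrak a)$ has a negative puncture outside $\bigcup_{\sigma_i\supseteq\sigma_k}\mathcal R(\sigma_i,\mathfrak a)$). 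These subalgebras jointly generate the left-hand side, since each chord lies in exactly one $\mathcal R(\sigma_i,\mathfrak a)$ and hence in $\mathcal A_{\sigma_i}(\boldsymbol h;\boldsymbol\varepsilon,\mathfrak a)$. Then for any cocone $\{f_{\sigma_k}\colon \mathcal A_{\sigma_k}(\boldsymbol h;\boldsymbol\varepsilon,\mathfrak a)\to B\}$ over the diagram, the assignment $c \mapsto f_{\sigma_i}(c)$ on a generator $c\in\mathcal R(\sigma_i,\mathfrak a)$ is the only possible induced map; it is well-defined by compatibility of the cocone with the inclusions $\mathcal A_{\sigma_i}(\boldsymbol h;\boldsymbol\varepsilon,\mathfrak a)\subset\mathcal A_{\sigma_j}(\boldsymbol h;\boldsymbol\varepsilon,\mathfrak a)$ for $\sigma_j\subseteq\sigma_i$, and it is a chain map because $\partial c$ already lies in $\mathcal A_{\sigma_i}(\boldsymbol h;\boldsymbol\varepsilon,\mathfrak a)$ with its own differential. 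The analogous bookkeeping with idempotents gives $\boldsymbol k = \colim_{\sigma_k}\boldsymbol k_{\sigma_k}$, because $\pi_0(\varSigma(\boldsymbol h)) = \bigcup_{\sigma_k}\pi_0(\varSigma_{\supseteq\sigma_k}(\boldsymbol h))$, so the colimit is correctly computed in the fibered category $\mathbf{dga}$; this yields the displayed isomorphism.

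To pass to the limit, I would take sequences $\mathfrak a_\ell\to\infty$, $\delta_\ell\to 0$ and $0<\boldsymbol\varepsilon_\ell<\delta_\ell$ as in \cref{lma:action_window_limits}, shrinking the $\boldsymbol\varepsilon_\ell$ further if necessary so that the invariance quasi-isomorphism $\varPsi$ of \cref{lma:cohomology_indep_of_size} moves action by a controlled (summable) amount at each step. The isomorphisms of the first step then assemble into an isomorphism between the diagram \eqref{eq:coupled_diagram} (in both the $\ell$-direction and the face-inclusion direction) and the corresponding diagram $\{CE^\ast(\varSigma(\boldsymbol h,\boldsymbol\varepsilon_\ell);X_0^{\boldsymbol\varepsilon_\ell},\mathfrak a_\ell)\}_\ell$ of action-truncated Chekanov--Eliashberg dg-algebras. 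Taking colimits, using that colimits commute with colimits, and using $\mathcal A_{\sigma_k}(\boldsymbol h)=\colim_\ell \mathcal A_{\sigma_k}(\boldsymbol h;\boldsymbol\varepsilon_\ell,\mathfrak a_\ell)$, gives
\[
	\colim_\ell CE^\ast(\varSigma(\boldsymbol h,\boldsymbol\varepsilon_\ell);X_0^{\boldsymbol\varepsilon_\ell},\mathfrak a_\ell) \;\cong\; \colim_{\sigma_k\in C_k}\colim_\ell \mathcal A_{\sigma_k}(\boldsymbol h;\boldsymbol\varepsilon_\ell,\mathfrak a_\ell) \;=\; \colim_{\sigma_k\in C_k}\mathcal A_{\sigma_k}(\boldsymbol h)\,.
\]
Finally I would identify the left-hand side with $CE^\ast(\varSigma(\boldsymbol h);X_0)$ up to quasi-isomorphism by the usual argument that the full Chekanov--Eliashberg dg-algebra is the direct limit of its action-truncated subalgebras, combined with \cref{lma:cohomology_indep_of_size}: since homology commutes with filtered colimits and the action windows $\mathfrak a_\ell$ exhaust all Reeb chords, the map $CE^\ast(\varSigma(\boldsymbol h,\boldsymbol\varepsilon_1);X_0^{\boldsymbol\varepsilon_1})\cong CE^\ast(\varSigma(\boldsymbol h);X_0)$ into this colimit is a quasi-isomorphism. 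Composing with the displayed isomorphism completes the proof.

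The geometric heavy lifting — confinement of holomorphic disks, and hence of differentials, to the relevant sub-generating-sets — is already packaged in \cref{lma:hol_curves_in_simplex_handle,cor:no_curve_crosses_handle,lma:diagram_of_sub_algebras}, which I invoke directly. Within the argument above the genuinely delicate point is algebraic: checking that the colimit of the diagram of dg-subalgebras, taken in the category $\mathbf{dga}$ of non-unital dg-algebras over varying non-unital rings, is really computed by the naive ``freely amalgamate along the shared generators'' recipe. This is where the non-unitality of all the structure maps and the bookkeeping with the coefficient rings $\boldsymbol k_{\sigma_k}$ must be handled carefully, and I expect it to be the main obstacle; the interchange of the simplicial colimit with the handle-size/action colimit and the final exhaustion argument are then routine.
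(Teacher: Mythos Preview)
Your proposal is correct and follows essentially the same route as the paper: first establish the chain-level isomorphism $CE^\ast(\varSigma(\boldsymbol h,\boldsymbol\varepsilon);X_0^{\boldsymbol\varepsilon},\mathfrak a)\cong\colim_{\sigma_k}\mathcal A_{\sigma_k}(\boldsymbol h;\boldsymbol\varepsilon,\mathfrak a)$ at fixed action and handle size (the paper states this ``by definition'' and by \cref{lma:Reeb_chords_one_to_one_correspondence}, while you spell out the universal-property verification), and then pass to the limit by interchanging the simplicial colimit with the action/handle-size colimit. The only cosmetic difference is that the paper's final chain keeps $\boldsymbol\varepsilon$ fixed and lets $\mathfrak a\to\infty$, invoking \cref{cor:subalgebras_diagram_no_actions} for the identification $\mathcal A_{\sigma_k}(\boldsymbol h;\boldsymbol\varepsilon)\cong\mathcal A_{\sigma_k}(\boldsymbol h)$, whereas you work directly with the coupled sequence $(\boldsymbol\varepsilon_\ell,\mathfrak a_\ell)$; both lead to the same conclusion.
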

		\begin{proof}
			For any collection of pairs $(\boldsymbol \varepsilon_\ell, \mathfrak a_\ell)$ such that \cref{lma:diagram_of_sub_algebras} holds for each $\ell\in \Z_{\geq 1}$, we have a canonical isomorphism of dg-algebras over $\boldsymbol k$ already on the chain level
			\[
				CE^\ast(\varSigma(\boldsymbol h, \boldsymbol \varepsilon_\ell); X_0^{\boldsymbol \varepsilon_\ell}, \mathfrak a_\ell) \cong \colim_{\sigma_k\in C_k} \mathcal A_{\sigma_k}(\boldsymbol h; \boldsymbol \varepsilon_\ell, \mathfrak a_\ell)\, ,
			\]
			by the definition of $\mathcal A_{\sigma_k}(\boldsymbol h; \boldsymbol \varepsilon, \mathfrak a)$ and by \cref{lma:Reeb_chords_one_to_one_correspondence}. The colimit in the right hand side is taken in the category $\mathbf{dga}$. By combining \cref{lma:cohomology_indep_of_size} and \cref{cor:subalgebras_diagram_no_actions} we obtain
			\begin{align*}
				CE^\ast(\varSigma(\boldsymbol h); X_0) &\cong \colim_{\mathfrak a \to \infty}CE^\ast(\varSigma(\boldsymbol h, \boldsymbol \varepsilon); X_0^{\boldsymbol \varepsilon}, \mathfrak a) \cong \colim_{\mathfrak a \to \infty}\colim_{\sigma_k\in C_k} \mathcal A_{\sigma_k}(\boldsymbol h; \boldsymbol \varepsilon, \mathfrak a) \\
				&\cong \colim_{\sigma_k\in C_k} \colim_{\mathfrak a \to \infty} \mathcal A_{\sigma_k}(\boldsymbol h; \boldsymbol \varepsilon, \mathfrak a) \cong \colim_{\sigma_k\in C_k} \mathcal A_{\sigma_k}(\boldsymbol h)\, .
			\end{align*}
		\end{proof}
		\subsubsection{Localizing dg-subalgebras}\label{sec:localizing}
			Next we give geometric meaning to the dg-subalgebras $\mathcal A_{\sigma_k}(\boldsymbol h)$ for $k\in \left\{0,\ldots,m\right\}$. We recall the following notation from \cref{dfn:attaching_spheres_simpl_handles}.
			\begin{align*}
				\varSigma_{\supset \sigma_k}(\boldsymbol h) = \bigcup_{\substack{\sigma_i \supset \sigma_k \\ k \leq i \leq m}}\varSigma_{\sigma_i}(\boldsymbol h)\, .
			\end{align*}
			For each $k$-face $\sigma_k \in C$ define
			\begin{equation}\label{eq:stopped_handle_data}
				\widetilde V^{(i)}_{\sigma_i}(\sigma_k) := \begin{cases}
					V^{(i)}_{\sigma_i}& \text{if } \sigma_i \supset \sigma_k \\
					(-\infty,0] \times \bigsqcup_{f\in F}(\# \widetilde{\boldsymbol V}_{\supsetneq \sigma_i,f}(\sigma_k)) \times \R & \text{otherwise}
				\end{cases}
			\end{equation}
			where $\widetilde{\boldsymbol V}_{\supsetneq \sigma_i,f}(\sigma_k)$ is the set of Weinstein manifolds $\widetilde V^{(\ell)}_{\sigma_{\ell}}(\sigma_k)$ for $f \supset \sigma_\ell \supsetneq \sigma_i$ with the same Weinstein hypersurfaces as in $\boldsymbol V$, and the Weinstein hypersurfaces induced by the inclusion $\{0\} \times (W \times \left\{0\right\}) \hookrightarrow (-\infty,0] \times (W \times \R)$ for those $\widetilde V^{(\ell)}_{\sigma_\ell}(\sigma_k)$ of the form as in the bottom row of \eqref{eq:stopped_handle_data}. Finally define $\widetilde{\boldsymbol V}(\sigma_k) := \bigcup_{\substack{f\in F \\ \sigma_i \subset f}} \widetilde{\boldsymbol V}_{\supsetneq \sigma_i,f}(\sigma_k)$ and
			\begin{equation}\label{eq:x_stopped_away_from_sigma_k}
				X(\sigma_k) := \# \widetilde{\boldsymbol V}(\sigma_k)\, .
			\end{equation}
			\begin{rmk}
				\begin{enumerate}
					\item We call $X(\sigma_k)$ ``$X$ stopped away from $\sigma_k$''. This is because we replace all $V^{(i)}_{\sigma_i}$ with a half symplectization of a contactization, where $\sigma_i \not \supset \sigma_k$ and this is exactly how stopped Weinstein manifolds were constructed in \cite{asplund2021chekanov}, Also see \cite{asplund2019fiber,ekholm2017duality} for a similar geometric construction of stopped Weinstein manifolds.
					\item By stopping a Weinstein manifold away from $\sigma_k\in C$ the Chekanov--Eliashberg dg-algebra $CE^\ast(\varSigma_{\supset \sigma_k}(\boldsymbol h); X(\sigma_k)_0)$ is thought of as being the ``localization'' of $CE^\ast(\varSigma(\boldsymbol h); X_0)$ at the $k$-face $\sigma_k$ and its subfaces. More precisely, the only Reeb chords and holomorphic disks that exist are confined to building blocks in $X$ associated to $\sigma_k$ and its subfaces.
					\item Equivalently, stopping $X$ away from $\sigma_k$ can be viewed as a form of completion of $V^{(k)}_{\sigma_k} \times \R^{2k}$ stopped at the Weinstein hypersurface $\left(\bigsqcup_{f\in F} \# \boldsymbol V_{\supsetneq \sigma_k,f}\right) \times \R^{2k}$ by negative ends. This is in spirit similar to convexification as in \cite[Remark 2.30]{ganatra2020covariantly} and more precisely described in \cite[Figure 2.1 and Remark 2.10]{eliashberg2018weinstein}.
				\end{enumerate}
			\end{rmk}
			We now describe how a choice of handle decomposition of $V^{(i)}_{\sigma_i}$ for $\sigma_i \supset \sigma_k$ induces a handle decomposition of $X(\sigma_k)$. By the same construction as in \cref{dfn:attaching_spheres_simpl_handles} we consider $\varSigma_{\supset \sigma_k}(\boldsymbol h)$ as a Legendrian submanifold in $\partial X(\sigma_k)_0$.
			\begin{lma}\label{lma:attaching_spheres_stopped_away_from_sigma_k}
				The Legendrian submanifold $\varSigma_{\supset \sigma_k}(\boldsymbol h) \subset \partial X(\sigma_k)_0$ is the union of the Legendrian attaching spheres for $X(\sigma_k)$.
			\end{lma}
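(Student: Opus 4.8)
The plan is to recognise $X(\sigma_k)$ as itself being a simplicial decomposition and then invoke \cref{lma:attaching_spheres_simpl_handle}. By \eqref{eq:x_stopped_away_from_sigma_k}, $X(\sigma_k)$ is by definition the result of the joining operation of \cref{cns:join_general_m} (in the form generalised to $C$, see \cref{sec:generalization_to_arb_simp_compl}) applied to the handle data $\widetilde{\boldsymbol V}(\sigma_k)$; that is, $(C, \widetilde{\boldsymbol V}(\sigma_k), \widetilde{\boldsymbol A})$ is a simplicial decomposition of $X(\sigma_k)$ in the sense of \cref{dfn:simplicial_decomposition}, where $\widetilde{\boldsymbol A}$ records the Weinstein manifolds $\widetilde V^{(0)}_{\sigma_0}(\sigma_k)$ together with the attaching Weinstein hypersurfaces. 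Choosing the handle decomposition $h_{\sigma_i}$ on each $V^{(i)}_{\sigma_i}$ with $\sigma_i \supset \sigma_k$, and the handle-free decomposition on each half-symplectization $\widetilde V^{(i)}_{\sigma_i}(\sigma_k)$ with $\sigma_i \not\supset \sigma_k$, gives a handle decomposition $\widetilde{\boldsymbol h}$ of $X(\sigma_k)$. Then \cref{lma:attaching_spheres_simpl_handle}, applied to this simplicial decomposition, shows that the associated union $\varSigma(\widetilde{\boldsymbol h})$ is precisely the union of Legendrian attaching spheres for the top Weinstein handles of $X(\sigma_k)$ in $\partial X(\sigma_k)_0$.

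It therefore suffices to identify $\varSigma(\widetilde{\boldsymbol h})$ with $\varSigma_{\supset \sigma_k}(\boldsymbol h)$, where the latter sits in $\partial X(\sigma_k)_0$ via the construction of \cref{cns:attaching_spheres_simpl_handles} as in the sentence preceding the statement. Decompose $\varSigma(\widetilde{\boldsymbol h}) = \bigcup_{\sigma_i} \varSigma_{\sigma_i}(\widetilde{\boldsymbol h})$. For $\sigma_i \not\supset \sigma_k$ the building block $\widetilde V^{(i)}_{\sigma_i}(\sigma_k)$ is a half-symplectization of a contactization, whose Liouville vector field has no zeros and which therefore carries no Weinstein handles; hence $\ell_{\sigma_i} = \varnothing$ and $\varSigma_{\sigma_i}(\widetilde{\boldsymbol h}) = \varnothing$. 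For $\sigma_i \supset \sigma_k$ we have $\widetilde V^{(i)}_{\sigma_i}(\sigma_k) = V^{(i)}_{\sigma_i}$ with its original handle decomposition $h_{\sigma_i}$, so the core disks $\ell_{\sigma_i}$, the attaching spheres $\partial \ell_{\sigma_i}$, and the trivial extension $\overline{\partial \ell_{\sigma_i}}$ coincide with those entering $\varSigma_{\sigma_i}(\boldsymbol h)$. Granting that the extensions $E_j(\ell_{\sigma_i})$ and $E_j(\partial \ell_{\sigma_i})$ are unaffected by the stopping (addressed below), we conclude $\varSigma_{\sigma_i}(\widetilde{\boldsymbol h}) = \varSigma_{\sigma_i}(\boldsymbol h)$ and, summing over $\sigma_i \supset \sigma_k$, that $\varSigma(\widetilde{\boldsymbol h}) = \varSigma_{\supset \sigma_k}(\boldsymbol h)$.

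The main obstacle is precisely the parenthetical above: for $\sigma_i \supset \sigma_k$ the extensions $E_j(\ell_{\sigma_i})$ and $E_j(\partial \ell_{\sigma_i})$ are built from the Weinstein hypersurfaces $\overline \iota_{\sigma_j}$ over the building blocks $\widetilde V^{(j)}_{\sigma_j}(\sigma_k)$ for all subfaces $\sigma_j \subset \sigma_i$, and some of these satisfy $\sigma_j \not\supset \sigma_k$, so the relevant building block has been replaced by a half-symplectization of a contactization. One must check that this does not change the extended Legendrian: this is exactly why \eqref{eq:stopped_handle_data} comes equipped with the Weinstein hypersurface induced by the inclusion $\{0\} \times (W \times \{0\}) \hookrightarrow (-\infty,0] \times (W \times \R)$, which has the same contact germ as $\iota_{\sigma_j}$ in $\boldsymbol V$, while $E_j(\ell_{\sigma_i})$ lives in a collar of this hypersurface away from the skeleton of the building block, the part left untouched by passage to the half-symplectization, as in the stopped constructions of \cite{asplund2021chekanov}. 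Thus $\varSigma_{\sigma_i}(\boldsymbol h)$ is the same Legendrian whether computed in $\partial X_0$ or in $\partial X(\sigma_k)_0$, which finishes the identification and hence the proof.
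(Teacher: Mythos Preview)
Your proposal is correct and follows essentially the same approach as the paper: the paper's proof is the single line ``The proof is the same as the proof of the second part of \cref{lma:attaching_spheres_simpl_handle}'', and you have carefully unpacked what that entails, including the observation that the half-symplectizations carry no top handles and that the extensions $E_j$ are unaffected by stopping. Your account is more explicit than the paper's, but the underlying idea is identical.
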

			\begin{proof}
				The proof is the same as the proof of the second part of \cref{lma:attaching_spheres_simpl_handle}.
			\end{proof}

			\begin{lma}\label{lma:correspondence_generators_ce_sphere}
				For all $\mathfrak a > 0$ there exists some $\delta > 0$ and an arbitrary small perturbation of $\boldsymbol V_0$ such that for all $0 < \boldsymbol \varepsilon < \delta$ we have that the Reeb chords of $\varSigma_{\supset \sigma_k}(\boldsymbol h, \boldsymbol \varepsilon) \subset \partial X(\sigma_k)_0^{\boldsymbol \varepsilon}$ are in one-to-one correspondence with the generators of $\mathcal A_{\sigma_k}(\boldsymbol h; \boldsymbol \varepsilon, \mathfrak a)$.
			\end{lma}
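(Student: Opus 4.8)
The plan is to combine the Reeb chord count in \cref{lma:Reeb_chords_one_to_one_correspondence} with the definition of the stopped manifold $X(\sigma_k)$ in \eqref{eq:x_stopped_away_from_sigma_k}, exactly paralleling the proof of part~(1) of \cref{lma:Reeb_chords_one_to_one_correspondence} but now performed inside $X(\sigma_k)$ rather than $X$. First I would recall that $\mathcal A_{\sigma_k}(\boldsymbol h;\boldsymbol\varepsilon,\mathfrak a)$ is by construction (\cref{lma:diagram_of_sub_algebras}) the semi-free dg-algebra over $\boldsymbol k_{\sigma_k}$ whose generating set is $\bigcup_{\sigma_i\supset \sigma_k}\mathcal R(\sigma_i,\mathfrak a)$. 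So the content of the lemma is that the Reeb chords of $\varSigma_{\supset\sigma_k}(\boldsymbol h,\boldsymbol\varepsilon)\subset \partial X(\sigma_k)_0^{\boldsymbol\varepsilon}$ of action $<\mathfrak a$ are, after a small perturbation of $\boldsymbol V_0$ and for sufficiently small $\boldsymbol\varepsilon$, in bijection with $\bigcup_{\sigma_i\supset\sigma_k}\mathcal R(\sigma_i,\mathfrak a)$.

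The key steps are as follows. (i) By \cref{lma:attaching_spheres_stopped_away_from_sigma_k}, $\varSigma_{\supset\sigma_k}(\boldsymbol h)$ is the union of Legendrian attaching spheres for $X(\sigma_k)$, and its components are indexed by the top handles of the $V^{(i)}_{\sigma_i}$ with $\sigma_i\supset\sigma_k$. (ii) By the definition \eqref{eq:stopped_handle_data}, every building block $V^{(i)}_{\sigma_i}\times\R^{2i}$ with $\sigma_i\not\supset\sigma_k$ has been replaced by a half-symplectization of a contactization $(-\infty,0]\times(\cdots)\times\R$; such a half-symplectization carries \emph{no} Reeb chords of the relevant Legendrian in its interior, so all Reeb chords of $\varSigma_{\supset\sigma_k}(\boldsymbol h,\boldsymbol\varepsilon)$ are confined to the building blocks indexed by $\sigma_i\supset\sigma_k$. (iii) Inside each such surviving building block $V^{(i)}_{\sigma_i}\times\R^{2i}$, run the same argument as in the proof of \cref{lma:Reeb_chords_one_to_one_correspondence}(1): after an arbitrarily small perturbation of $V^{(i+1)}_{\sigma_{i+1},0}\subset \partial V^{(i)}_{\sigma_i,0}$ and after shrinking $\boldsymbol\varepsilon$ enough, no Reeb chord of action $<\mathfrak a$ escapes the locus $(\eval[0]{\pi_{\sigma_i}}_{G^i_{\varepsilon_i}})^{-1}(p_{\sigma_i})$ into a deeper building block, and by \cref{lma:basic_building_block_reeb} the Reeb chords over $p_{\sigma_i}$ are in grading-preserving bijection with the Reeb chords of $\partial\ell_{\sigma_i}\cup\bigcup_{\sigma_j\supset\sigma_i,\,j>i}(\ell_{\sigma_j}\times D^{j-i-1})\subset\partial V^{(i)}_{\sigma_i,0}$, which is precisely $\mathcal R(\sigma_i,\mathfrak a)$ by \cref{dfn:reeb_chords_k-face}. (iv) Taking the union over all $\sigma_i\supset\sigma_k$ gives the claimed bijection with $\bigcup_{\sigma_i\supset\sigma_k}\mathcal R(\sigma_i,\mathfrak a)$, which is the generating set of $\mathcal A_{\sigma_k}(\boldsymbol h;\boldsymbol\varepsilon,\mathfrak a)$; one also checks the action and $\delta$ can be chosen uniformly over the finitely many faces $\sigma_i\supset\sigma_k$ by taking a minimum.

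The main obstacle I anticipate is step~(ii): verifying cleanly that the half-symplectization-of-a-contactization replacement genuinely kills all low-action Reeb chords in the replaced region and does not introduce new ones near the interface where a surviving block $V^{(i)}_{\sigma_i}\times\R^{2i}$ (with $\sigma_i\supset\sigma_k$) is glued to a replaced block. This requires knowing that the gluing in \cref{sec:gluing_of_basic_building_blocks}, applied with one factor a half-symplectization, does not create short Reeb chords straddling the interface — which should follow from the explicit Reeb flow in \eqref{eq:basic_building_block_reeb}–\eqref{eq:Reeb_flow_in_handle} (flow lines that leave the locus $\{(\boldsymbol x,\boldsymbol y)=\boldsymbol 0\}$ move with nonzero, hence bounded-below, speed along $2x_i^2-y_i^2=\text{const}$, so any chord entering the replaced region has action bounded below by a positive constant that we can make exceed $\mathfrak a$ by shrinking $\boldsymbol\varepsilon$) together with the same geometric reasoning already invoked in the construction of stopped Weinstein manifolds in \cite{asplund2021chekanov}. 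The rest is a routine repackaging of \cref{lma:Reeb_chords_one_to_one_correspondence} and \cref{lma:basic_building_block_reeb}.
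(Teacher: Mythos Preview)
Your proposal is correct and matches the paper's own argument essentially step for step: identify the generating set of $\mathcal A_{\sigma_k}(\boldsymbol h;\boldsymbol\varepsilon,\mathfrak a)$ as $\bigcup_{\sigma_i\supset\sigma_k}\mathcal R(\sigma_i,\mathfrak a)$, observe via \cref{lma:Reeb_chords_one_to_one_correspondence} that these are exactly the Reeb chords in the unchanged loci $\partial_+(V^{(i)}_{\sigma_i,0}\times\R^{2i})$ for $\sigma_i\supset\sigma_k$, and argue that the replaced loci (for $\sigma_i\not\supset\sigma_k$) carry no Reeb chords because the contact manifold there has the form $P\times\R$ with the Legendrian contained in the $P$-factor. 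Your anticipated obstacle in step~(ii) is dispatched in the paper by exactly the observation you suggest---the contactization structure forces the Reeb flow to be $\R$-translation, so a Legendrian sitting in a fixed $\R$-slice has no Reeb chords there---and no separate interface analysis is written out.
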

			\begin{proof}
				Below the action bound and for sufficiently thin handles the set of generators of $\mathcal A_{\sigma_k}(\boldsymbol h; \boldsymbol \varepsilon, \mathfrak a)$ is equal to the set $\bigcup_{\sigma_i \supset \sigma_k} \mathcal R(\sigma_i, \mathfrak a)$ and is geometrically described in \cref{lma:Reeb_chords_one_to_one_correspondence}. Namely, they correspond to Reeb chords of $\varSigma_{\supset \sigma_k}(\boldsymbol h,\boldsymbol \varepsilon) \subset \partial X_0^{\boldsymbol \varepsilon}$ appearing in the loci $\partial_+(V^{(i)}_{\sigma_i,0} \times \R^{2i})$ for $\sigma_i \supset \sigma_k$ in $\partial X_0^{\boldsymbol \varepsilon}$.

				Thus, considering $\varSigma_{\supset \sigma_k}(\boldsymbol h,\boldsymbol \varepsilon) \subset \partial X(\sigma_k)_0^{\boldsymbol \varepsilon}$, we see that there are Reeb chords appearing in the loci $\partial_+(V^{(i)}_{\sigma_i,0} \times \R^{2i})$ for $\sigma_i \supset \sigma_k$, since these loci are unchanged in the definition of $\widetilde{\boldsymbol V}(\sigma_k)$, see \eqref{eq:stopped_handle_data}. The previously existing Reeb chords of $\varSigma_{\supset \sigma_k}(\boldsymbol h,\boldsymbol \varepsilon) \subset \partial X_0^{\boldsymbol \varepsilon}$ lying over critical points in loci corresponding to $i$-faces $\sigma_i$ such that $\sigma_i \not \supset \sigma_k$ disappear when passing to $\partial X(\sigma_k)_0^{\boldsymbol \varepsilon}$, since the contact manifold in these loci are of the form $P \times \R$ (see \eqref{eq:stopped_handle_data}) where the Legendrian is contained in the $P$-factor.
			\end{proof}
			\begin{lma}\label{lma:ce_spheres}
				There is a quasi-isomorphism of dg-algebras over $\boldsymbol k_{\sigma_k}$
				\[
					\mathcal A_{\sigma_k}(\boldsymbol h) \cong CE^\ast(\varSigma_{\supset \sigma_k}(\boldsymbol h); X(\sigma_k)_0)
				\]
				for $k\in \left\{0,\ldots,m\right\}$.
			\end{lma}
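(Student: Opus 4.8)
The plan is to first produce, for a fixed action window and sufficiently thin handles, a canonical chain-level isomorphism
$\mathcal A_{\sigma_k}(\boldsymbol h;\boldsymbol\varepsilon,\mathfrak a)\cong CE^\ast(\varSigma_{\supset\sigma_k}(\boldsymbol h,\boldsymbol\varepsilon);X(\sigma_k)_0^{\boldsymbol\varepsilon},\mathfrak a)$ of dg-algebras over $\boldsymbol k_{\sigma_k}$, and then to pass to the colimit over action windows exactly as in the proof of \cref{thm:ce_descent_first}. Fix $\mathfrak a>0$ and $0<\boldsymbol\varepsilon<\delta$ with $\delta$ small enough that \cref{lma:diagram_of_sub_algebras}, \cref{lma:correspondence_generators_ce_sphere}, \cref{lma:hol_curves_in_simplex_handle} and \cref{cor:no_curve_crosses_handle} all apply, and recall from \cref{lma:attaching_spheres_stopped_away_from_sigma_k} that $\varSigma_{\supset\sigma_k}(\boldsymbol h)$ is the union of Legendrian attaching spheres of $X(\sigma_k)$. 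The correspondence of generators is already \cref{lma:correspondence_generators_ce_sphere}: the Reeb chords of $\varSigma_{\supset\sigma_k}(\boldsymbol h,\boldsymbol\varepsilon)\subset\partial X(\sigma_k)_0^{\boldsymbol\varepsilon}$ of action $<\mathfrak a$ are in grading-preserving bijection with the generating set $\bigcup_{\sigma_i\supset\sigma_k}\mathcal R(\sigma_i,\mathfrak a)$ of $\mathcal A_{\sigma_k}(\boldsymbol h;\boldsymbol\varepsilon,\mathfrak a)$, and since this bijection preserves the loci $\partial_+(V^{(i)}_{\sigma_i,0}\times\R^{2i})$ it is compatible with the partition of $\pi_0$ defining $\boldsymbol k_{\sigma_k}$.

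Next I would match the differentials. By definition the differential of $\mathcal A_{\sigma_k}(\boldsymbol h;\boldsymbol\varepsilon,\mathfrak a)$ is the restriction to $\bigcup_{\sigma_i\supset\sigma_k}\mathcal R(\sigma_i,\mathfrak a)$ of the differential of $CE^\ast(\varSigma(\boldsymbol h,\boldsymbol\varepsilon);X_0^{\boldsymbol\varepsilon})$. By \cref{cor:no_curve_crosses_handle}, a $J(\boldsymbol V)$-holomorphic disk with boundary on $\R\times\varSigma(\boldsymbol h)$ whose positive puncture lies over $p_{\sigma_i}$ with $\sigma_i\supset\sigma_k$ does not cross the handle, so all of its negative punctures lie over $p_{\sigma_j}$ with $\sigma_j\supset\sigma_k$ and, by \cref{lma:hol_curves_in_simplex_handle}, the whole disk is confined to the loci $V^{(j)}_{\sigma_j,0}\times\R^{2j}$ with $\sigma_j\supset\sigma_k$. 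These are precisely the building blocks left unchanged in the definition \eqref{eq:stopped_handle_data}--\eqref{eq:x_stopped_away_from_sigma_k} of $X(\sigma_k)$, and $J(\widetilde{\boldsymbol V}(\sigma_k))$ may be chosen to agree with $J(\boldsymbol V)$ there, so the very same disks compute the differential on these generators in $CE^\ast(\varSigma_{\supset\sigma_k}(\boldsymbol h,\boldsymbol\varepsilon);X(\sigma_k)_0^{\boldsymbol\varepsilon},\mathfrak a)$. It then remains to check that stopping away from $\sigma_k$ introduces no extra disks, i.e. that no $J(\widetilde{\boldsymbol V}(\sigma_k))$-holomorphic disk with boundary on $\varSigma_{\supset\sigma_k}(\boldsymbol h)$ and positive puncture of action $<\mathfrak a$ enters the replaced regions $\widetilde V^{(i)}_{\sigma_i}(\sigma_k)$ with $\sigma_i\not\supset\sigma_k$. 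There the contact manifold is of the form $P\times\R$ with $\varSigma_{\supset\sigma_k}(\boldsymbol h)$ contained in the $P$-factor and with no Reeb chords (cf. the end of the proof of \cref{lma:correspondence_generators_ce_sphere}), so such a disk would violate either positivity of intersection with the $J$-complex hypersurfaces $V_{\sigma_k,(x_i,y_i)}$ of \cref{lma:complexproj_simplex_handle} or a maximum-principle argument in the added negative ends, just as in the construction of the Chekanov--Eliashberg dg-algebra of a stopped Weinstein manifold in \cite{asplund2021chekanov}. This yields the desired chain-level isomorphism for thin handles and bounded action.

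Finally I would take the colimit. Running the Chekanov--Eliashberg invariance argument of \cref{lma:cohomology_indep_of_size} with $X(\sigma_k)$ in place of $X$ produces canonical quasi-isomorphisms between $CE^\ast(\varSigma_{\supset\sigma_k}(\boldsymbol h,\boldsymbol\varepsilon);X(\sigma_k)_0^{\boldsymbol\varepsilon})$ for varying $\boldsymbol\varepsilon$, and under the chain-level isomorphisms just constructed these restrict to the structure maps $\varphi_{\ell,k}$ of the coupled diagram in \cref{lma:action_window_limits}. Passing to the colimit over $\mathfrak a_\ell\to\infty$, $\boldsymbol\varepsilon_\ell\to0$ and using $CE^\ast(\varSigma_{\supset\sigma_k}(\boldsymbol h);X(\sigma_k)_0)\cong\colim_{\mathfrak a\to\infty}CE^\ast(\varSigma_{\supset\sigma_k}(\boldsymbol h,\boldsymbol\varepsilon);X(\sigma_k)_0^{\boldsymbol\varepsilon},\mathfrak a)$ (as in the proof of \cref{thm:ce_descent_first} and \cref{cor:subalgebras_diagram_no_actions}) gives the asserted quasi-isomorphism $\mathcal A_{\sigma_k}(\boldsymbol h)\cong CE^\ast(\varSigma_{\supset\sigma_k}(\boldsymbol h);X(\sigma_k)_0)$ of dg-algebras over $\boldsymbol k_{\sigma_k}$.

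The hard part will be the claim that stopping $X$ away from $\sigma_k$ neither creates nor destroys the holomorphic disks counted on the surviving generators: destruction is essentially immediate from \cref{cor:no_curve_crosses_handle} and \cref{lma:hol_curves_in_simplex_handle}, but ruling out new disks that dip into the half-symplectization-of-contactization regions $\widetilde V^{(i)}_{\sigma_i}(\sigma_k)$ requires carefully using their product structure together with the positivity-of-intersection input of \cref{lma:complexproj_simplex_handle} and an SFT-compactness argument controlling the behaviour of disks in the newly added negative ends. Everything else is bookkeeping with the colimits already assembled in \cref{sec:simplical_descent}.
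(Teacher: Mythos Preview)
Your proposal is correct and follows essentially the same approach as the paper: establish a chain-level isomorphism $\mathcal A_{\sigma_k}(\boldsymbol h;\boldsymbol\varepsilon,\mathfrak a)\cong CE^\ast(\varSigma_{\supset\sigma_k}(\boldsymbol h,\boldsymbol\varepsilon);X(\sigma_k)_0^{\boldsymbol\varepsilon},\mathfrak a)$ using \cref{lma:correspondence_generators_ce_sphere} for generators and \cref{lma:hol_curves_in_simplex_handle}, \cref{cor:no_curve_crosses_handle} for differentials, then pass to the colimit exactly as in the proof of \cref{thm:ce_descent_first}. The paper's proof is terser---it simply cites these lemmas for the disk correspondence without separating the two directions---whereas you are more explicit about why stopping away from $\sigma_k$ neither destroys nor creates disks; this extra care is reasonable but not a different argument.
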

			\begin{proof}
				With a given action bound and for sufficiently thin handles there is a one-to-one correspondence of generators by \cref{lma:correspondence_generators_ce_sphere}. The correspondence between the holomorphic curves counted by the differential below a given action bound for sufficiently thin handles is given by \cref{lma:hol_curves_in_simplex_handle} and \cref{cor:no_curve_crosses_handle}. Therefore we have a canonical isomorphism of dg-algebras over $\boldsymbol k_{\sigma_k}$ already on the chain level 
				\[
					\mathcal A_{\sigma_k}(\boldsymbol h; \boldsymbol \varepsilon, \mathfrak a) \cong CE^\ast(\varSigma_{\supset \sigma_k}(\boldsymbol h, \boldsymbol \varepsilon); X(\sigma_k)_0^{\boldsymbol \varepsilon}, \mathfrak a)\, .
				\]
				Repeating the proof of \cref{thm:ce_descent_first} verbatim now proves the result.
			\end{proof}
			Summarizing \cref{sec:simplical_descent} we now obtain our main result.
			\begin{thm}\label{thm:ce_descent}
				There is a quasi-isomorphism of dg-algebras over $\boldsymbol k$
				\[
					CE^\ast(\varSigma(\boldsymbol h);X_0) \cong \colim_{\sigma_k\in C_k} CE^\ast(\varSigma_{\supset \sigma_k}(\boldsymbol h); X(\sigma_k)_0)\, .
				\]
			\end{thm}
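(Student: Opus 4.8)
The plan is to assemble the two halves that are already in place. By \cref{thm:ce_descent_first} there is a quasi-isomorphism $CE^\ast(\varSigma(\boldsymbol h);X_0)\cong\colim_{\sigma_k\in C_k}\mathcal A_{\sigma_k}(\boldsymbol h)$, and by \cref{lma:ce_spheres} a quasi-isomorphism $\mathcal A_{\sigma_k}(\boldsymbol h)\cong CE^\ast(\varSigma_{\supset\sigma_k}(\boldsymbol h);X(\sigma_k)_0)$ for each face $\sigma_k$. The only thing left to verify is that the latter quasi-isomorphisms are natural in $\sigma_k$, i.e. compatible with the face-inclusion maps, so that they descend to an equivalence of colimits over the face poset of $C$.

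First I would fix, as in the proofs of \cref{thm:ce_descent_first} and \cref{lma:ce_spheres}, a sequence of pairs $(\boldsymbol\varepsilon_\ell,\mathfrak a_\ell)$ with $\mathfrak a_\ell\to\infty$ and $\boldsymbol\varepsilon_\ell\to 0$ for which \cref{lma:diagram_of_sub_algebras} applies. Below the action bound $\mathfrak a_\ell$ and for handles of size $\boldsymbol\varepsilon_\ell$ the generating sets on the two sides agree — both equal $\bigcup_{\sigma_i\supset\sigma_k}\mathcal R(\sigma_i,\mathfrak a_\ell)$ by \cref{lma:correspondence_generators_ce_sphere} — and the rigid holomorphic disks counted by the two differentials are matched by \cref{lma:hol_curves_in_simplex_handle} together with \cref{cor:no_curve_crosses_handle}, giving a canonical chain-level isomorphism
\[
\mathcal A_{\sigma_k}(\boldsymbol h;\boldsymbol\varepsilon_\ell,\mathfrak a_\ell)\cong CE^\ast(\varSigma_{\supset\sigma_k}(\boldsymbol h,\boldsymbol\varepsilon_\ell);X(\sigma_k)_0^{\boldsymbol\varepsilon_\ell},\mathfrak a_\ell)
\]
of dg-algebras over $\boldsymbol k_{\sigma_k}$. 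Since for an inclusion of faces $\sigma_k\subset\sigma_{k+1}$ the transition maps on both sides are the literal inclusions on generating sets, the square
\[
\begin{tikzcd}[row sep=scriptsize, column sep=scriptsize]
\mathcal A_{\sigma_{k+1}}(\boldsymbol h;\boldsymbol\varepsilon_\ell,\mathfrak a_\ell)\rar\dar{\cong} & \mathcal A_{\sigma_k}(\boldsymbol h;\boldsymbol\varepsilon_\ell,\mathfrak a_\ell)\dar{\cong}\\
CE^\ast(\varSigma_{\supset\sigma_{k+1}}(\boldsymbol h,\boldsymbol\varepsilon_\ell);X(\sigma_{k+1})_0^{\boldsymbol\varepsilon_\ell},\mathfrak a_\ell)\rar & CE^\ast(\varSigma_{\supset\sigma_k}(\boldsymbol h,\boldsymbol\varepsilon_\ell);X(\sigma_k)_0^{\boldsymbol\varepsilon_\ell},\mathfrak a_\ell)
\end{tikzcd}
\]
commutes strictly on the chain level. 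Taking the colimit over $\ell$ — over the diagram \eqref{eq:coupled_diagram} and its evident analogue for the $X(\sigma_k)$ — then produces the corresponding commuting square whose vertical arrows are exactly the quasi-isomorphisms of \cref{lma:ce_spheres}.

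Second I would pass to the colimit over $C$. The two assignments $\sigma_k\mapsto\mathcal A_{\sigma_k}(\boldsymbol h)$ and $\sigma_k\mapsto CE^\ast(\varSigma_{\supset\sigma_k}(\boldsymbol h);X(\sigma_k)_0)$ define diagrams over the face poset of $C$ with values in $\mathbf{dga}$, connected by the levelwise quasi-isomorphisms of \cref{lma:ce_spheres}, which by the previous step commute with the transition maps. Hence the induced map on colimits — taken in $\mathbf{dga}$, over the varying rings $\boldsymbol k_{\sigma_k}$ — is again a quasi-isomorphism, and combining with \cref{thm:ce_descent_first} gives
\[
CE^\ast(\varSigma(\boldsymbol h);X_0)\cong\colim_{\sigma_k\in C_k}\mathcal A_{\sigma_k}(\boldsymbol h)\cong\colim_{\sigma_k\in C_k}CE^\ast(\varSigma_{\supset\sigma_k}(\boldsymbol h);X(\sigma_k)_0)\, ,
\]
which is the claim.

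The main obstacle is precisely the homological-algebra point that is already absorbed into the lemmas invoked above: that a poset colimit of levelwise quasi-isomorphisms of semi-free dg-algebras over \emph{varying} non-unital idempotent rings is again a quasi-isomorphism. Because every transition map is an inclusion of a sub-dg-algebra that splits off a set of free generators, each such colimit is a colimit of injections and is therefore exact; and since the rings $\boldsymbol k_{\sigma_k}$ are semisimple, restriction of scalars along $\boldsymbol k_{\sigma_k}\to\boldsymbol k_{\sigma_\ell}$ preserves quasi-isomorphisms, so working in $\mathbf{dga}$ rather than over a fixed ring causes no trouble. The only genuinely delicate geometric input — the identification of generators and of rigid holomorphic disks for sufficiently thin handles and bounded action — is exactly \cref{lma:correspondence_generators_ce_sphere}, \cref{lma:hol_curves_in_simplex_handle} and \cref{cor:no_curve_crosses_handle}, so no new curve analysis is required at this stage.
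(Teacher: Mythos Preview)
Your proposal is correct and follows exactly the paper's approach: the paper's proof is the single sentence ``This is an immediate consequence of \cref{thm:ce_descent_first} and \cref{lma:ce_spheres},'' and you have simply unpacked that sentence, making explicit the naturality of the identifications in \cref{lma:ce_spheres} with respect to face inclusions (which the paper records in the introduction as the commuting square involving $\mathcal A_{\sigma_{k+1}}(\boldsymbol h)\subset\mathcal A_{\sigma_k}(\boldsymbol h)$). Your extra paragraph on why the colimit of levelwise quasi-isomorphisms is again a quasi-isomorphism is more than the paper spells out, but it is consistent with how the paper interprets the colimit on the right-hand side---namely as the colimit of the $\mathcal A_{\sigma_k}(\boldsymbol h)$, with the $CE^\ast$ terms serving as names for those via \cref{lma:ce_spheres}.
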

			\begin{proof}
				This is an immediate consequence of \cref{thm:ce_descent_first} and \cref{lma:ce_spheres}.
			\end{proof}
\section{Good sectorial covers}\label{sec:simplicial_decompositions}
	In \cref{sec:simplicial_decompositions_and_good_sectorial_covers} we show that there is a one-to-one correspondence (up to deformation) between simplicial decompositions with so-called good sectorial covers of $X$. In \cref{sec:rel_to_sectorial_descent} we use this one-to-one correspondence and show that it allows us to recover the sectorial descent result of Ganatra--Pardon--Shende on the level of wrapped Floer cohomology.
	\subsection{Simplicial decompositions and good sectorial covers}\label{sec:simplicial_decompositions_and_good_sectorial_covers}
		We give a brief account of the definition of Liouville (and Weinstein) sectors and sectorial covers, following \cite{ganatra2020covariantly,ganatra2022sectorial}, and refer the reader to loc.\@ cit.\@ for more details.
		\begin{dfn}[Liouville sector {\cite[Definition 1.1]{ganatra2020covariantly}}]
			A Liouville sector is a Liouville manifold-with-boundary $(X, \lambda, Z)$ for which there is a function $I \colon \partial X \longrightarrow \R$ such that:
			\begin{itemize}
				\item $I$ is \emph{linear at infinity}, meaning $ZI = I$ outside a compact set, where $Z$ denotes the Liouville vector field.
				\item The Hamiltonian vector field $X_I$ of $I$ is outward pointing along $\partial X$.
			\end{itemize}
		\end{dfn}
		For every Liouville sector $X$, one can modify the Liouville form to obtain a Liouville pair $(\overline X, F)$ called the \emph{convexification} of $X$, see \cite[Section 2.7]{ganatra2020covariantly}. Moreover, up to a contractible choice, there is a one-to-one correspondence between Liouville sectors and Liouville pairs \cite[Lemma 2.32]{ganatra2020covariantly}.
		\begin{dfn}[Weinstein sector]
			A \emph{Weinstein sector} is a Liouville sector $X$ such that its convexification $(\overline X, F)$ is a Weinstein pair up to deformation.
		\end{dfn}
		\begin{rmk}
			An alternative definition of Weinstein sector is given in \cite[Definition 2.7]{chantraine2017geometric}, which does not involve convexification.
		\end{rmk}
		\begin{dfn}[Enlargement of Liouville sectors]\label{dfn:enlargement}
			Let $(X, \lambda, I)$ be a Liouville sector. An \emph{enlargement} of $X$ is defined by $X^+ := \varphi^\varepsilon(X)$ where $\varphi^\varepsilon$, is the time-$\varepsilon$ flow of the Hamiltonian vector field $X_I$, for some $\varepsilon > 0$.
		\end{dfn}
		\begin{dfn}[Sectorial cover {\cite[Definition 12.2 and Definition 12.19]{ganatra2022sectorial}}]
			Let $X$ be a Liouville sector. Suppose $X = X_1 \cup \cdots \cup X_n$, where each $X_i$ is a manifold-with-corners with precisely two faces $\partial^1 X_i := X_i \cap \partial X$ and the point set topological boundary $\partial^2 X_i$ of $X_i \subset X$, meeting along the corner locus $\partial X \cap \partial^2 X_i = \partial^1 X_i \cap \partial^2 X_i$. Such a covering $X = X_1 \cup \cdots \cup X_n$ is called \emph{sectorial} iff $\forall i \in \left\{1,\ldots, n\right\}$ there are functions $I_i \colon N^Z(\partial^2 X_i) \longrightarrow \R$ (where $N^Z$ denotes a neighborhood which is cylindrical with respect to the Liouville vector field $Z$) which is linear at infinity such that:
			\begin{itemize}
				\item $X_{I_i}$ is outward pointing along $\partial^2 X_i$.
				\item $X_{I_i}$ is tangent to $\partial^2 X_j$ along $\partial^2 X_i \cap \partial^2 X_j$ for $i\neq j$.
				\item $[X_{I_i},X_{I_j}] = 0$ along $N^Z(\partial^2 X_i) \cap N^Z(\partial^2 X_j)$.
			\end{itemize}
		\end{dfn}
		\begin{lma}[{\cite[Lemma 12.11]{ganatra2022sectorial}}]
			Let $X$ be a Liouville sector and suppose $X = X_1 \cup \cdots \cup X_n$ is a sectorial cover. For any $\varnothing \neq A \subset \left\{1,\ldots,n\right\}$ we have a Liouville isomorphism
			\begin{equation}\label{eq:coords_nghd_bdry}
				N^Z \left(\bigcap_{i\in A} \partial^2 X_i\right) \cong (X_A^{(k)} \times T^\ast \R^k, \lambda_{X_A^{(k)}} + \lambda_{T^\ast \R^k} + df)
			\end{equation}
			where $k := \abs A - 1$,  $X^{(k)}_{A}$ is a $(2n-2k)$-dimensional Weinstein manifold and $f$ is a real-valued function on $X^{(k)}_A \times T^\ast \R^k$ with support in $K \times T^\ast \R^k$ for some compact $K \subset X^{(k)}_{A}$.
		\end{lma}

		\begin{dfn}[Good sectorial cover]\label{dfn:good_sectorial_cover}
			Let $X$ be a Weinstein manifold, and suppose $X=X_1 \cup \cdots \cup X_m$ is a sectorial cover. We say that the sectorial cover is \emph{good} if for every $\varnothing \neq	A \subset \{1,\ldots,m\}$ we have a Weinstein isomorphism
			\begin{equation}\label{eq:coordinates_intersections}
				N^Z\left(\bigcap_{i\in A} X_i\right) \cong (X^{(k)}_{A} \times T^\ast \R^k,\lambda_{X^{(k)}_{A}} + \lambda_{T^\ast \R^k} + df)\, ,
			\end{equation}
			that extends the isomorphism \eqref{eq:coords_nghd_bdry}, where $k := \abs A - 1$,  $X^{(k)}_{A}$ is a $(2n-2k)$-dimensional Weinstein manifold and $f$ is a real-valued function on $X^{(k)}_A \times T^\ast \R^k$ with support in $K \times T^\ast \R^k$ for some compact $K \subset X^{(k)}_{A}$.
		\end{dfn}
		\begin{rmk}\label{rmk:good_sectorial_cover_hypersurfaces}
			In view of \cite[Lemma 12.8 and Lemma 12.10]{ganatra2022sectorial} we have that the condition of $X_1 \cup \cdots \cup X_m$ being a good sectorial cover is equivalent to the existence of a number of sectorial hypersurfaces $H_1, \ldots, H_{m'}$ such that a neighborhood of any $(k+1)$-fold intersection $X_{i_1} \cap \cdots \cap X_{i_{k+1}}$ is Weinstein isomorphic to a neighborhood of the $k$-fold intersection of a subcollection of the sectorial hypersurfaces $H_{1}, \ldots, H_{m'}$. In the coordinates \eqref{eq:coordinates_intersections}, each sectorial hypersurface $H_{i_j}$ is defined as the preimage of the $j$-th coordinate plane in the projection $X^{(k)}_A \times T^\ast \R^k \longrightarrow \R^k$.

			Alternatively, we might take the preimage of the first orthant of the $j$-th coordinate plane to get a sectorial hypersurfaces with boundary and corners. Then the condition of $X_1 \cup \cdots \cup X_m$ being a good sectorial cover is equivalent to the same as above, except that the hypersurfaces $H_i$ are allowed to have boundaries and corners. This gives a configuration of hypersurfaces with boundary and corners which split $X$ into a number of sectors-with-corners, see \cref{lma:splitting_along_hypersurfaces_with_corners}.
		\end{rmk}
		
		We first consider a straightforward generalization of \cite[Construction 12.17]{ganatra2022sectorial}.
		\begin{lma}\label{lma:construction_sector-with-corners}
			Let $(C, \boldsymbol V)$ be a simplicial decomposition. Then there is a one-to-one correspondence (up to Weinstein homotopy) between objects of the following three kinds.
			\begin{itemize}
				\item A Weinstein $2n$-manifold $X$ together with a Weinstein hypersurface $\# \boldsymbol V \hookrightarrow \partial X$.
				\item A Weinstein $2n$-sector $X'$ with symplectic boundary expressed as $\# \boldsymbol V$.
				\item A Weinstein $2n$-sector-with-corners $X''$ with codimension $2k$ symplectic boundaries $V^{(k)}_{\sigma_{k-1}}$ which in turn have codimension $2(\ell-k)$ symplectic boundaries $V^{(\ell)}_{\sigma_{\ell-1}}$ for every inclusion of faces $\sigma_{\ell-1} \supset \sigma_{k-1}$ in $C$ for $k \in \left\{1,\ldots,m\right\}$.
			 \end{itemize} 
		\end{lma}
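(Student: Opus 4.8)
The plan is to reduce the whole statement to the single--Weinstein--hypersurface case and then iterate it along the face poset of $C$. The single--hypersurface case is precisely a mild generalization of \cite[Construction 9.17]{ganatra2018sectorial}, which I would first recall in the following shape: for a Weinstein $2n$-manifold $Y$ and a Weinstein hypersurface $W \hookrightarrow \partial Y$, a neighborhood of $W$ in $\partial Y$ is contactomorphic to a neighborhood of $W\times\{0\}$ in the contactization $W\times\R$, and deleting such a standard neighborhood while interpolating the Liouville form (\emph{digging a trench} along $W$) produces a Weinstein $2n$-sector $Y'$ whose symplectic boundary is $W$; conversely, attaching a convexifying collar of the form $W\times T^\ast[0,\infty)$ onto the sectorial boundary of a Weinstein $2n$-sector $Y'$ with symplectic boundary $W$ produces a Weinstein $2n$-manifold $Y$ with $W\hookrightarrow\partial Y$. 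These two operations are mutually inverse up to Weinstein homotopy, and both are local near $W$, hence compatible with disjoint unions and with the gluings defining the connected-sum operation $\bighash$.

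Granting this, the first step is to pass between the item of the first kind and the item of the second kind by applying the recollection with $Y=X$ and $W=\bighash_{\boldsymbol V}\{V^{(1)}_{\sigma_0}\}_{\sigma_0\in C_0}$: digging a trench along $W$ turns the pair $(X, W\hookrightarrow\partial X)$ into a Weinstein $2n$-sector $X'$ with symplectic boundary $W$, and convexification inverts this, yielding the desired correspondence up to Weinstein homotopy.

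The second step is to pass between the item of the second kind and the item of the third kind by iterating the same move one codimension at a time down the face poset of $C$. By \cref{dfn:handle_data} and its generalization in \cref{sec:generalization_to_arb_simp_compl}, every Weinstein manifold occurring in the handle data $\boldsymbol V$ comes equipped with a Weinstein hypersurface in its boundary whose domain is again a connected sum of the next-lower pieces of $\boldsymbol V$. So, starting from the codimension-$2$ symplectic boundary $W$ of $X'$ -- whose constituent pieces are the $V^{(1)}_{\sigma_0}$ indexed by the vertices of $C$ -- I would convexify $W$ along the Weinstein hypersurfaces sitting inside the boundaries of the pieces $V^{(1)}_{\sigma_0}$; this introduces codimension-$4$ corners whose strata are the next-level pieces $V^{(2)}_{\sigma_1}$, glued together over the boundaries of the edges of $C$ exactly as in the proof of \cref{lma:attaching_spheres_simpl_handle}. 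Iterating this convexification along the face poset of $C$ produces a Weinstein $2n$-sector-with-corners $X''$ whose codimension-$2k$ strata are the pieces $V^{(k)}_{\sigma_{k-1}}$ and in which $V^{(\ell)}_{\sigma_{\ell-1}}$ is a codimension-$2(\ell-k)$ symplectic boundary of $V^{(k)}_{\sigma_{k-1}}$ whenever $\sigma_{\ell-1}\supset\sigma_{k-1}$, for $k\in\{1,\ldots,m\}$; rounding the corners from the highest codimension downward recovers $X'$, and the two constructions are inverse up to Weinstein homotopy. Composing the two steps gives the correspondence between items of the first and third kinds.

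The hard part will be the combinatorial bookkeeping in the second step: verifying that iterated convexification yields precisely the claimed stratification indexed by the face poset of $C$ -- in particular that the codimension-$2k$ strata are glued together in the correct pattern over the faces of $C$ rather than in some other way. I would settle this by induction on $m$, exploiting the recursive structure of handle data (\cref{dfn:handle_data}, which expresses the handle data attached to a face as handle data for a lower-dimensional simplicial handle) and mirroring the gluing computation already carried out in the proof of \cref{lma:attaching_spheres_simpl_handle}. A secondary, more routine point is that convexification and corner-rounding are mutually inverse up to Weinstein homotopy and commute suitably with disjoint unions and with the connected-sum gluings defining $\bighash$; this follows from the uniqueness of Weinstein collar and corner neighborhoods together with the explicit local models, as in \cite[Section 9]{ganatra2018sectorial}.
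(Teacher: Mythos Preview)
Your overall architecture matches the paper's: the equivalence between items of the first and second kind is exactly the convexification/trench-digging correspondence (the paper cites \cite[Lemma 2.32]{ganatra2020covariantly} and \cite[Section 2.5]{alvarez2020positive} for this), and the passage between the second and third kinds is handled by induction on $m=\dim C$ with base case \cite[Construction 9.17]{ganatra2018sectorial}, exploiting the recursive structure of handle data.

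There is, however, a genuine confusion in your description of the step from $X'$ to $X''$. You write that you ``convexify $W$ along the Weinstein hypersurfaces,'' but convexification runs in the wrong direction: it turns a sector into a closed Weinstein manifold and does not create corners. Moreover, whatever you do to $W$ alone does not introduce corners in the ambient sector $X'$. The operation the paper actually performs is local in $X'$, not in $W$: near each deepest piece $V^{(m)}$ of the join one has product coordinates $V^{(m)}\times T^\ast(\R^{m-1}\times\R_{\leq 0})\hookrightarrow X'$ coming from the simplicial-handle structure of $W$ together with the sectorial collar of $X'$; one then modifies the base $\R^{m-1}\times\R_{\leq 0}$ to a model with an $(m{+}1)$-fold corner near the origin (the paper's Figures~\ref{fig:2corner}--\ref{fig:3corner}), and checks that the coordinate axes of this corner align with the unique Liouville flow lines of \cref{lma:unique_flow_line}. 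This is what produces the stratification $V^{(k)}_{\sigma_{k-1}}\times T^\ast\R_{\leq 0}^{k}\hookrightarrow X''$ indexed by faces. Your proposal does not supply this mechanism, and the cross-reference to \cref{lma:attaching_spheres_simpl_handle} does not help, since that lemma concerns Legendrian attaching spheres rather than sectorial corner models. Once you replace ``convexify $W$'' with ``introduce corners in $X'$ via the explicit product coordinates near each $V^{(k)}$,'' your inductive outline becomes the paper's proof.
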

		\begin{proof}
			The correspondence between items of the first and second kind is exactly the content of \cite[Lemma 2.32]{ganatra2020covariantly} and also \cite[Section 2.5.1 and Section 2.5.3]{alvarez2020positive}.

			Consider an item of the third kind. We will construct an item of the second kind by induction on the dimension $m = \dim C$. For simplicity we assume $C = \varDelta^m$. If not, we perform the construction facet-by-facet.
			\begin{description}
				\item[$m=1$] This is the content of \cite[Construction 12.17]{ganatra2022sectorial} which we recall for the sake of completeness.  The boundary of $X''$ is of the form $\partial X'' = H_1 \cup H_2$ where $H_1 = V^{(1)}_1 \times \R$, $H_2 = V^{(1)}_2 \times \R$ and $H_1 \cap H_2 = V^{(2)} \times \R^2$. We first choose a compatible collection of coordinates by \cite[Lemma 12.8 and Lemma 12.12]{ganatra2022sectorial} in the sense that they strictly respect Liouville forms
				\begin{align}
					V^{(1)}_1 \times T^\ast \R_{\leq 0} &\hookrightarrow X'' \label{eq:coord1} \\
					V^{(1)}_2 \times T^\ast \R_{\leq 0} &\hookrightarrow X'' \label{eq:coord2} \\
					V^{(2)} \times T^\ast \R_{\leq 0}^2 &\hookrightarrow X'' \label{eq:coord3} \\
					V^{(2)} \times T^\ast \R_{\leq 0} &\hookrightarrow V^{(1)}_1 \label{eq:coord4} \\
					V^{(2)} \times T^\ast \R_{\leq 0} &\hookrightarrow V^{(1)}_2 \label{eq:coord5} \, .
				\end{align}
				After convexification of $V^{(1)}_1$ and $V^{(1)}_2$ we have that the coordinates \eqref{eq:coord4} and \eqref{eq:coord5} give Weinstein hypersurfaces $V^{(2)} \hookrightarrow \partial V^{(1)}_i$ for $i\in \left\{1,2\right\}$, which allows us to construct $V^{(1)}_1 \#_{V^{(2)}} V^{(1)}_2$, see \cref{dfn:join_m=1}. Then after smoothing the corners of $X''$, \eqref{eq:coord1}--\eqref{eq:coord3} give coordinates 
				\begin{align}
					V^{(1)}_1 \times T^\ast \R_{\leq 0} &\hookrightarrow X' \label{eq:coord1_smoothing} \\
					V^{(1)}_2 \times T^\ast \R_{\leq 0} &\hookrightarrow X' \label{eq:coord2_smoothing} \\
					V^{(2)} \times T^\ast (\R \times \R_{\leq 0}) &\hookrightarrow X' \label{eq:coord3_smoothing}\, ,
				\end{align}
				where $X'$ is equal to the smoothing of $X''$. These together give us coordinates
				\[
					V^{(1)}_1 \#_{V^{(2)}} V^{(1)}_2 \times T^\ast \R_{\leq 0} \hookrightarrow X'\, .
				\]
				near the boundary of $X'$, which tells us exactly that the symplectic boundary of $X'$ is $V^{(1)}_1 \#_{V^{(2)}} V^{(1)}_2$.
				\item[$m > 1$] Now assume by induction that the construction has been done for all $C$ of dimension $ < m$. For each $\sigma_k \in \varDelta^m$ for $k\in \left\{0,\ldots,m-1\right\}$ we can pick a compatible collection of coordinates 
				\[
					V^{(\ell+1)}_{\sigma_{\ell}} \times T^\ast \R_{\leq 0}^{\ell-k} \hookrightarrow V^{(k+1)}_{\sigma_{k}}, \quad \sigma_\ell \supset \sigma_k\, ,
				\]
				near the codimension $2k$ symplectic boundary face corresponding to $\sigma_k \in \varDelta^m_k$. The induction hypothesis then gives us coordinates
				\begin{equation}\label{eq:joined_symplectic_boundary_k_face}
					\# \boldsymbol V_{\supsetneq \sigma_k} \times T^\ast \R_{\leq 0} \hookrightarrow {V'}^{(k+1)}_{\sigma_k}, \quad \sigma_k\in \varDelta^m_k\, ,
				\end{equation}
				near the boundary of the smoothing of the codimension $2k$ symplectic boundary face, where ${V'}^{(k+1)}_{\sigma_k}$ is equal to the smoothing of $V^{(k+1)}_{\sigma_k}$. Using \eqref{eq:joined_symplectic_boundary_k_face} we construct the set $\boldsymbol V$ and then we construct $\# \boldsymbol V$, see \cref{sec:construction_m_simplex_handle}. We may finally smooth $X''$ to get coordinates
				\[
					\# \boldsymbol V \times T^\ast \R_{\leq 0} \hookrightarrow X'\, .
				\]
				near the boundary of $X'$, which which tells us exactly that the symplectic boundary of $X'$ is $\# \boldsymbol V$.
			\end{description}
			\begin{figure}[!htb]
				\centering
				\includegraphics{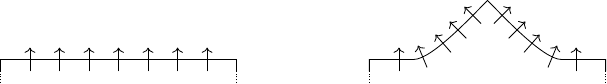}
				\caption{Modifying $\R \times \R_{\leq 0}$ (left) to $A$ (right) by introducing a corner near the origin.}\label{fig:2corner}
			\end{figure}
			\begin{figure}[!htb]
				\centering
				\includegraphics{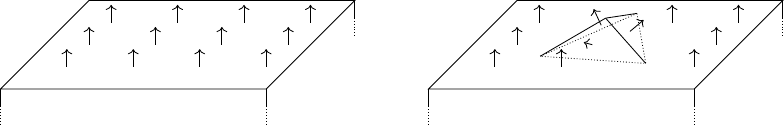}
				\caption{Modifying $\R^2 \times \R_{\leq 0}$ (left) by introducing a corner near the origin.}\label{fig:3corner}
			\end{figure}
			Let us now start with an item of the second kind. We aim to introduce a corner structure dictated by the simplicial complex $C$, which turns it into an item of the third kind. The case $C = \varDelta^1$ and the symplectic boundary is of the form $Q = V^{(1)}_1 \#_{V^{(2)}}V^{(1)}_2$ is done in \cite[Construction 12.17]{ganatra2022sectorial}. Namely, in $Q$ there are coordinates $V^{(2)} \times T^\ast \R \hookrightarrow Q$ near $V^{(2)}$, and so we get coordinates $V^{(2)} \times T^\ast (\R \times \R_{\leq 0}) \hookrightarrow X'$. We introduce a corner near the origin in $\R \times \R_{\leq 0}$ as shown in \cref{fig:2corner}, and so we replace $T^\ast(\R \times \R_{\leq 0})$ with $T^\ast A$ which near the origin gives coordinates $V^{(2)} \times T^\ast \R_{\leq 0}^2 \hookrightarrow X''$. We treat the case $C = \varDelta^m$ similarly. The symplectic boundary is of the form $Q = \# \boldsymbol V$ and near $V^{(m)}$ we have coordinates
			\begin{equation}\label{eq:introducing_corners}
				V^{(m)} \times T^\ast(\R^m \times \R_{\leq 0})\hookrightarrow X'\, .
			\end{equation}
			We introduce a corner structure near the origin in $\R^m \times \R_{\leq 0}$ as follows. By \cref{lma:unique_flow_line} there are exactly $m$ rays we denote by $\gamma_1,\ldots,\gamma_m$, each of which emanates from the origin in the (base of the second factor of the) basic building block $V^{(m)}_{\sigma_m} \times T^\ast \R^m$ and ends at the origin in the (base of the second factor of the) basic building block $V^{(m-1)}_{\sigma_{m-1}} \times T^\ast \R^{m-1}$ for $\sigma_m \supset \sigma_{m-1}$. As shown in \cref{fig:3corner} in the case $m=2$ we introduce a corner structure such that near the origin in the factor $\R^m \times \R_{\leq 0}$ in the domain of \eqref{eq:introducing_corners} there are coordinates $\R_{\leq 0}^{m+1}$ which are picked such that the $i$-th coordinate axis coincides with $\gamma_i \subset \R^m \times \left\{0\right\} \subset \R^m \times \R_{\leq 0}$ if one would undo the introduction of the corner structure by smoothing. After introducing a corner structure, denote the resulting Weinstein sector-with-corners by $X''$. Near $V^{(k+1)}_{\sigma_k}$ for any $\sigma_k \in \varDelta^m_k$ we have coordinates 
			\[
				V^{(k+1)}_{\sigma_k} \times T^\ast \R_{\leq 0}^{k+1} \hookrightarrow X'', \quad \sigma_k \in \varDelta^m_k\, ,
			\]
			and near $V^{(\ell+1)}_{\sigma_\ell}$ in $V^{(k+1)}_{\sigma_k}$ for $\sigma_\ell \subset \sigma_k$ we have coordinates
			\[
				V^{(\ell+1)}_{\sigma_\ell} \times T^\ast \R_{\leq 0}^{\ell-k} \hookrightarrow V^{(k+1)}_{\sigma_k}, \quad \sigma_\ell \supset \sigma_k\, ,
			\]
			and we have an item of the third kind as desired.
		\end{proof}
		We now give a characterization of the existence of a good sectorial cover in terms of sectorial hypersurfaces with boundary and corners.
		\begin{lma}\label{lma:splitting_along_hypersurfaces_with_corners}
			Let $X$ be a Weinstein manifold. There is a collection of sectorial hypersurfaces with boundary and corners in $X$ which splits $X$ into $m$ Weinstein sectors-with-corners if and only if $X$ admits a good sectorial cover $X = X_1 \cup \cdots \cup X_m$.
		\end{lma}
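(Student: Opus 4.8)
The plan is to prove the two implications separately. Both come down to the local product models of \cref{rmk:good_sectorial_cover_hypersurfaces} along the corner strata of the hypersurface configuration, and the argument is a routine globalization of the $m=2$ case treated in \cite[Construction 9.17 and Lemma 9.10]{ganatra2018sectorial}.

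First I would treat the direction ``$\Rightarrow$''. Suppose $\{H_\alpha\}$ is a collection of sectorial hypersurfaces with boundary and corners cutting $X$ into Weinstein sectors-with-corners $X''_1,\ldots,X''_m$. For each $i$ let $X_i$ be an open neighborhood of $X''_i$ obtained by pushing every symplectic boundary face of $X''_i$ (of some codimension $2\ell$, $\ell\geq 1$) outward through its standard neighborhood $F\times T^\ast(\R_{\leq 0})^\ell$, replacing $T^\ast(\R_{\leq 0})^\ell$ by $T^\ast\R^\ell$; using the compatible collections of coordinates along the corner strata \cite[Lemma 9.8 and Lemma 9.12]{ganatra2018sectorial} these thickenings can be chosen to agree on overlaps. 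Then $X=X_1\cup\cdots\cup X_m$ is a cover by Weinstein sectors, and for nonempty $A\subset\{1,\ldots,m\}$ with $k:=\abs A-1$ the intersection $\bigcap_{i\in A}X_i$ is nonempty exactly when the sectors-with-corners $X''_i$, $i\in A$, fit together around a common corner locus; along that locus $X$ is modeled on $X^{(k)}_A\times T^\ast\R^k$ for a $(2n-2k)$-dimensional Weinstein manifold $X^{(k)}_A$, with each $X''_i$ occupying one of the chambers cut out by the $k$ adjacent walls, so that the union of the thickened neighborhoods gives $N\!\left(\bigcap_{i\in A}X_i\right)\cong X^{(k)}_A\times T^\ast\R^k$, i.e.\ \eqref{eq:coordinates_intersections}. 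The sectorial compatibility of the cover is read off from these same product models, so $X=X_1\cup\cdots\cup X_m$ is a good sectorial cover.

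For the converse ``$\Leftarrow$'', start from a good sectorial cover $X=X_1\cup\cdots\cup X_m$. By \cref{rmk:good_sectorial_cover_hypersurfaces}, in each product neighborhood $X^{(k)}_A\times T^\ast\R^k$ of a $(k+1)$-fold intersection I take, for $j=1,\ldots,k$, the hypersurface $H_{A,j}$ to be the preimage of the first orthant of the $j$-th coordinate plane under the projection $X^{(k)}_A\times T^\ast\R^k\to\R^k$; these are sectorial hypersurfaces with boundary and corners, and the coherence of the coordinates required in \cref{dfn:good_sectorial_cover} guarantees that the pieces defined over different $A$ agree on overlaps, so that they assemble into a global collection $\{H_\alpha\}$. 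Cutting $X$ along $\bigcup_\alpha H_\alpha$ produces $m$ pieces, one for each $X_i$ equipped with the corner structure induced by the walls it meets; by \cref{lma:construction_sector-with-corners}, applied facet-by-facet to the handle data read off from the local models, each such piece is a Weinstein sector-with-corners, since near every corner locus it is modeled on a convex corner $X^{(k)}_A\times T^\ast(\R_{\leq 0})^k$. This yields the desired collection of hypersurfaces splitting $X$ into $m$ Weinstein sectors-with-corners.

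The main obstacle is the global bookkeeping in either direction: the locally defined product neighborhoods along the various corner strata must be glued into honest neighborhoods $N\!\left(\bigcap_{i\in A}X_i\right)$ of the form \eqref{eq:coordinates_intersections}, respectively the locally defined first-orthant hypersurfaces must be glued into a single configuration that genuinely separates the $X_i$; this is where the existence of compatible collections of coordinates \cite[Lemma 9.8 and Lemma 9.12]{ganatra2018sectorial} and the combinatorics of the nerve of the cover enter. Everything else is an unwinding of the stratification, and as in \cite{ganatra2018sectorial} all the constructions involved (smoothing corners, introducing corners, thickening) are Weinstein homotopies.
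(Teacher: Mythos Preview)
Your direction ``$\Rightarrow$'' is essentially the paper's argument: enlarge each sector-with-corners and read the product model at a $k$-fold corner intersection off \cite[Lemma 9.12]{ganatra2018sectorial} to obtain \eqref{eq:coordinates_intersections}. That is fine.

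The gap is in ``$\Leftarrow$''. In a neighborhood of a $(k+1)$-fold intersection you propose the $k$ half-hyperplanes given by ``the first orthant of the $j$-th coordinate plane'' in $\R^k$. But those $k$ half-hyperplanes do not cut $\R^k$ into $k+1$ chambers corresponding to the $k+1$ sectors $X_{i}$, $i\in A$: already for $k=2$ the two rays $\{y_1=0,\ y_2\geq 0\}$ and $\{y_2=0,\ y_1\geq 0\}$ separate $\R^2$ into only two regions, not three. More generally, the coordinate-hyperplane picture is cubical (it produces $2^k$ orthants, or two regions if you take only first orthants), whereas the combinatorics of a good sectorial cover is simplicial (you need $k+1$ chambers). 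Your appeal to ``coherence of coordinates'' cannot repair this, because the local model itself is wrong before any gluing is attempted; and the assertion that ``each $X''_i$ occupies one of the chambers cut out by the $k$ adjacent walls'' fails for the same reason.

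The paper's fix is exactly this missing idea: identify $\R^k$ with the interior of the standard simplex $\varDelta^k$, arrange the product coordinates along the face lattice of the \v{C}ech nerve so that the projections to subfaces commute with orthogonal projection in $\varDelta^k$ (the diagram \eqref{eq:coordinates_facet_good_cover}), and then take the hypersurface to be the preimage of the \emph{dual spine} $\varPi^k\subset \varDelta^k$, i.e.\ the subcomplex of the barycentric subdivision spanned by all barycenters of positive-dimensional faces (Figure~\ref{fig:dual_spines}). Removing $\varPi^k$ from $\varDelta^k$ leaves exactly $k+1$ contractible pieces, one containing each vertex, which is precisely what is needed to separate the $k+1$ sectors meeting there; and the compatibility diagram guarantees that these pieces glue across facets of the nerve to a global collection of sectorial hypersurfaces with boundary and corners.
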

		\begin{proof}
			\begin{description}
				\item[$\Rightarrow$]
					If there are such a collection of sectorial hypersurfaces with boundary and corners $\left\{H_j\right\}_j$, we split $X$ into $m$ connected components. Taking the closure of each connected component gives $m$ Weinstein sectors-with-corners $X_1,\ldots, X_m$. After slight enlargement of each $X_i$ (see \cref{dfn:enlargement}), we obtain a sectorial cover $X = X_1^+ \cup \cdots \cup X_m^+$ such that for any $\varnothing \neq I \subset \left\{1,\ldots,m\right\}$ we have $N \left(\bigcap_{i\in I} X_i^+\right) = N(H_{i_1} \cap \cdots \cap H_{i_k})$, which by \cite[Lemma 12.12]{ganatra2022sectorial} gives us coordinates in a neighborhood of each intersection
					\[
						N \left(\bigcap_{i\in I} X_i^+\right) \cong X^{(k)}_I \times T^\ast \R^k\, ,
					\]
					respecting the Liouville forms, where $X^{(k)}_I$ is a $(2n-2k)$-dimensional Weinstein manifold. After smoothing corners of each Weinstein sector-with-corners $X_i^+$, we obtain a $(2n-2k)$-dimensional Weinstein sector and denoting the result by $X_i'$ by \cref{lma:construction_sector-with-corners}, which gives the good sectorial cover $X = X_1' \cup \cdots \cup X_m'$.
				\item[$\Leftarrow$]
					Let $C$ be the \v{C}ech nerve of the cover $X_1 \cup \cdots \cup X_m$ and fix an identification between $2^{\left\{1,\ldots,m\right\}}$ and all faces of $C$. For each facet $f\in F$ of $C$, we consider a neighborhood of the intersection
					\[
						N \left(\bigcap_{\sigma_0 \subset f} X_{\sigma_0}\right) \cong X^{(k)}_f \times T^\ast \R^k \, ,
					\]
					where $\dim f = k+1$, and the projection 
					\[
						\pi_0 \colon X^{(k)}_f \times T^\ast \R^k \longrightarrow \R^k\, ,
					\]
					to the zero section of the second factor. Identify $\R^k$ with the interior of a $k$-simplex centered at the origin. For any set of $\ell+1$ vertices $A \subset f_0$, where $f_0$ is the set of vertices of $f$, we choose coordinates such that the following diagram commutes
					\begin{equation}\label{eq:coordinates_facet_good_cover}
						\begin{tikzcd}[row sep=scriptsize, column sep=scriptsize]
							X^{(k)}_f \times T^\ast \R^k \rar[hook] \dar{\pi_0} & X^{(\ell)}_A \times T^\ast \R^{\ell} \dar{\pi_0} \\
						\R^k \rar{\pi_A} & \R^{\ell}
						\end{tikzcd}\, ,
					\end{equation}
					where $\pi_A$ is orthogonal projection on (the interior of) one of the $\ell$-faces determined by its vertices in $\varDelta^k$ using the standard inner product on $\R^k$. Let $B \varDelta^k$ denote the barycentric subdivision of $\varDelta^k$. Let $\varPi^k \subset \varDelta^k$ be the simplicial subcomplex generated by the set of vertices in $B \varDelta^k_0 \setminus \varDelta^k_0$, see \cref{fig:dual_spines}.
					\begin{figure}[!htb]
						\centering
						\includegraphics{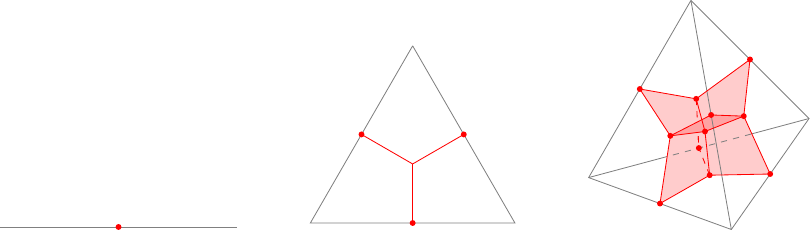}
						\caption{The subset $\varPi^k \subset \varDelta^k$ drawn in red for $k = 1,2,3$.}\label{fig:dual_spines}
					\end{figure}
					Using the coordinates as in \eqref{eq:coordinates_facet_good_cover}, we can define a projection
					\[
						\pi_f \colon \bigcup_{\sigma_0 \subset f} X_{\sigma_0} \longrightarrow \varDelta^k\, ,
					\]
					using the projections $\pi_0$ to the zero section of the second factor. Then define
					\[
						H_f := \pi_f^{-1}(\varPi^k)\, .
					\]
					By definition it is a union of sectorial hypersurfaces with boundary and corners which splits the facet $f$ into $k+1$ pieces. Finally, we let $H := \bigcup_{f \in F} H_f$. Again, this is a union of sectorial hypersurfaces with boundary and corners, and splitting $X$ along $H$ gives one Weinstein sector-with-corners for each vertex $\sigma_0\in C_0$.
			\end{description}
		\end{proof}
		\begin{thm}\label{thm:one-to-one_corr_covers_and_decomps}
			Let $X$ be a Weinstein manifold. There is a one-to-one correspondence (up to Weinstein homotopy) between good sectorial covers of $X$ and simplicial decompositions of $X$.
		\end{thm}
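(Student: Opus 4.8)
The plan is to route the correspondence through the intermediate notion of a \emph{Weinstein sector-with-corners cut out by sectorial hypersurfaces with boundary and corners}, using \cref{lma:splitting_along_hypersurfaces_with_corners} to pass between such a configuration and a good sectorial cover, and \cref{lma:construction_sector-with-corners} to pass between such a configuration and a simplicial decomposition; afterwards one checks that the round trips are the identity up to Weinstein homotopy.

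First I would build a simplicial decomposition out of a good sectorial cover $X = X_1 \cup \cdots \cup X_m$. Let $C$ be the \v{C}ech nerve of the cover, with a vertex $\sigma_0$ for each $X_i$ and a $k$-face $\sigma_k$ for each nonempty $(k+1)$-fold intersection. By \cref{lma:splitting_along_hypersurfaces_with_corners} the cover is witnessed by a collection of sectorial hypersurfaces with boundary and corners splitting $X$ into $m$ Weinstein sectors-with-corners $X''_{\sigma_0}$, one per vertex, whose codimension-$2k$ symplectic boundary faces are indexed by the faces $\sigma_k \supset \sigma_0$ of $C$ and are cut out from the coordinate data $N(\bigcap_{i\in A}X_i)\cong X^{(k)}_A \times T^\ast \R^k$ of \cref{dfn:good_sectorial_cover}, chosen as a compatible collection. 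Each $X''_{\sigma_0}$ is then an item of the third kind in \cref{lma:construction_sector-with-corners} (with respect to the faces of $C$ containing $\sigma_0$), so that lemma produces an item of the first kind: a Weinstein $2n$-manifold $V^{(0)}_{\sigma_0}$, obtained by smoothing the corners of $X''_{\sigma_0}$ and convexifying the outward boundary, together with a Weinstein hypersurface $\bighash_{\boldsymbol V_{\sigma_0}}\{V^{(1)}_{\sigma_1}\}_{\sigma_1 \supset \sigma_0}\hookrightarrow \partial V^{(0)}_{\sigma_0}$. Setting $V^{(k)}_{\sigma_k}:=X^{(k)}_{A(\sigma_k)}$ and collecting the induced Weinstein hypersurfaces assembles handle data $\boldsymbol V$ and attaching data $\boldsymbol A$, and reassembling the sectors-with-corners along their shared boundary hypersurfaces is, by \cref{lma:construction_sector-with-corners} run in reverse, precisely the join construction, so $X \cong \bighash_{\boldsymbol V}\{V^{(0)}_{\sigma_0}\}_{\sigma_0\in C_0}$.

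Conversely, starting from a simplicial decomposition $(C,\boldsymbol V,\boldsymbol A)$ with $X\cong \bighash_{\boldsymbol V}\{V^{(0)}_{\sigma_0}\}_{\sigma_0\in C_0}$, I would endow $X$ with a corner structure dictated by $C$. This is exactly the last part of the proof of \cref{lma:construction_sector-with-corners}: starting from the item of the second kind associated to $X$ (working facet-by-facet when $C\neq\varDelta^m$) one introduces corners near the deepest symplectic boundary, the placement of the new coordinate hyperplanes being guided by the $m$ rays of the Liouville flow inside the simplicial handle identified in \cref{lma:unique_flow_line}, equivalently by the subcomplex $\varPi^k\subset\varDelta^k$ appearing in the proof of \cref{lma:splitting_along_hypersurfaces_with_corners}, which is the descending-manifold configuration of the Morse function on $\varDelta^k$ from the remark following \cref{lma:unique_flow_line}. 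The result is a collection of sectorial hypersurfaces with boundary and corners splitting $X$ into $m$ Weinstein sectors-with-corners, and \cref{lma:splitting_along_hypersurfaces_with_corners} converts this into a good sectorial cover $X = X_1\cup\cdots\cup X_m$ whose \v{C}ech nerve is $C$ by construction.

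Finally I would verify that the two assignments are mutually inverse up to Weinstein homotopy. Each elementary move above — smoothing corners, convexifying a symplectic boundary, and re-introducing corners along a prescribed configuration — is inverse to its counterpart up to Weinstein homotopy by the deformation results cited in the proof of \cref{lma:construction_sector-with-corners}, and the simplicial complex is transported to the \v{C}ech nerve on both sides, so the indexing matches. The main obstacle I expect is precisely this last verification: one must perform the corner-introductions and smoothings simultaneously along all symplectic boundary faces of all codimensions in a mutually compatible way, and check that the coordinate choices in \cref{dfn:good_sectorial_cover} and in \cref{lma:construction_sector-with-corners} (canonical only up to contractible choice) do not obstruct the round trips returning the original data within the allowed homotopy. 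The Morse-theoretic identification of $\varPi^k$ with the Liouville flow in the simplicial handle from \cref{lma:unique_flow_line} is what makes the corner structure produced from a simplicial decomposition coincide with the one extracted from the resulting good cover, and is the geometric heart of the argument.
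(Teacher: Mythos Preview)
Your proposal is correct and follows essentially the same approach as the paper, routing the correspondence through \cref{lma:construction_sector-with-corners} and \cref{lma:splitting_along_hypersurfaces_with_corners} in both directions. One minor difference: for the direction from simplicial decomposition to good sectorial cover, the paper proceeds more directly by applying \cref{lma:construction_sector-with-corners} to each $V^{(k)}_{\sigma_k}$ to obtain sectors ${V'}^{(k)}_{\sigma_k}$, enlarging the top-dimensional ones, and then verifying the good-cover condition by inspecting the intersections, rather than first introducing a full corner structure on $X$ and invoking \cref{lma:splitting_along_hypersurfaces_with_corners} as you do. Your phrase ``the item of the second kind associated to $X$'' is slightly imprecise since $X$ is a closed Weinstein manifold rather than a sector; what you actually mean is to perform the corner-introduction locally near each top-dimensional face of the simplicial handle. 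Like the paper, you flag but do not fully execute the round-trip verification; the paper also leaves this implicit.
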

		\begin{proof}
			First consider a simplicial decomposition $(C, \boldsymbol V)$ of $X$. Applying \cref{lma:construction_sector-with-corners} to each $V^{(k)}_{\sigma_k} \in \boldsymbol V$ gives a Weinstein $(2n-2k)$-sector ${V'}^{(k)}_{\sigma_k}$ for each $k$-face $\sigma_k\in C_k$, $k\in \left\{0,\ldots,m\right\}$ with symplectic boundary expressed as $\bigsqcup_{f\in F} \# \boldsymbol V_{\supsetneq \sigma_k,f}$, and each ${V'}^{(k+1)}_{\sigma_{k+1}}$ is a Weinstein subsector of the symplectic boundary of ${V'}^{(k)}_{\sigma_k}$ for $\sigma_{k+1} \supset \sigma_k$. Let $({V'}^{(0)}_{\sigma_0})^+$ be a slight enlargement of each ${V'}^{(0)}_{\sigma_0}$ (see \cref{dfn:enlargement}). We then claim that
			\[
				X = \bigcup_{\sigma_0\in C_0} ({V'}^{(0)}_{\sigma_0})^+\, ,
			\]
			is a good sectorial cover. To see this, let $A \subset C_0$ be a subset of vertices of $C$ and consider $\bigcap_{\sigma_0\in A} ({V'}^{(0)}_{\sigma_0})^+$. We can without loss of generality assume that every vertex in $A$ all lie in the same facet of $C$, since otherwise we can write
			\begin{equation}\label{eq:wlog_vertices_in_same_facet}
				\bigcap_{\sigma_0\in A} ({V'}^{(0)}_{\sigma_0})^+ = \bigsqcup_{f\in F} \left(\bigcap_{\substack{\sigma_0 \in A \\ \sigma_0 \subset f}} ({V'}^{(0)}_{\sigma_0})^+\right)\, ,
			\end{equation}
			and then look at each intersection $\bigcap_{\substack{\sigma_0 \in A \\ \sigma_0 \subset f}} ({V'}^{(0)}_{\sigma_0})^+$ separately. Now, by construction we have that $N \left(\bigcap_{\sigma_0\in A} ({V'}^{(0)}_{\sigma_0})^+\right)$ is a small neighborhood of the Weinstein sector ${V'}^{(k)}_{\sigma_k}$ whose symplectic boundary is of the form $\bigsqcup_{f\in F} \# \boldsymbol V_{\supsetneq \sigma_k,f}$, where $\abs A = k+1$. After applying \cref{lma:construction_sector-with-corners} to ${V'}^{(k)}_{\sigma_k}$ for each $\sigma_k\in C_k$, $k\in \left\{1,\ldots,m\right\}$ we obtain a Weinstein $(2n-2k)$-manifold together with a Weinstein hypersurface 
			\[
				\bigsqcup_{f\in F} \# \boldsymbol V_{\supsetneq \sigma_k,f} \hookrightarrow \partial V^{(k)}_{\sigma_k}\, ,
			\]
			By this construction we get
			\[
				N \left(\bigcap_{\sigma_0\in A} ({V'}^{(0)}_{\sigma_0})^+\right) \cong V^{(k)}_{\sigma_k} \times T^{\ast} \R^k\, .
			\]
			Conversely, starting with a good sectorial cover $X = X_1 \cup \cdots \cup X_{m+1}$, we first let $C$ be the \v{C}ech nerve of the cover. Next apply \cref{lma:splitting_along_hypersurfaces_with_corners}, and after splitting along these sectorial hypersurfaces with boundary and corners we obtain a collection of Weinstein sectors-with-corners $\{X''_{\sigma_0}\}_{\sigma_0\in C_0}$. By construction $X''_{\sigma_0}$ has codimension $2k$ symplectic boundaries ${V''}^{(k)}_{\sigma_{k}}$ which in turn have codimension $2(\ell-k)$ symplectic boundaries ${V''}^{(\ell)}_{\sigma_{\ell}}$ for every inclusion of faces $\sigma_{\ell} \supset \sigma_{k}$ in $C$. Then by \cref{lma:construction_sector-with-corners} we get a Weinstein $(2n-2k)$-manifold $V^{(k)}_{\sigma_k}$ together with a Weinstein hypersurface
			\[
				\iota_{\sigma_k} \colon \bigsqcup_{f\in F} \# \boldsymbol V_{\supsetneq \sigma_k,f} \hookrightarrow \partial V^{(k)}_{\sigma_k}\, ,
			\]
			for each $\sigma_k\in C_k$ for $k\in \left\{0,\ldots,m\right\}$. Letting $\boldsymbol V := (\bigcup_{f\in F}\bigcup_{\substack{\sigma_k \in C_k \\ 0 \leq k \leq m}} \boldsymbol V_{\sigma_k,f}) \cup \{V^{(0)}_{\sigma_0}, \iota_{\sigma_0}\}_{\sigma_0\in C_0}$ we have $X \cong \# \boldsymbol V$ and we obtain a simplicial decomposition $(C, \boldsymbol V)$ of $X$.
		\end{proof}
		\begin{rmk}
			This correspondence in the case for good sectorial covers $X = X_1 \cup X_2$ with exactly two sectors in the cover is the content of \cite[Construction 12.18]{ganatra2022sectorial}.
		\end{rmk}
		We now prove that good sectorial covers exist in abundance.
		\begin{lma}\label{lma:good_sectorial_cover_refinement}
			Let $X = X_1 \cup \cdots \cup X_m$ be any sectorial cover of the Weinstein manifold $X$. Let $X_1' := X_1$ and $X_k' := X_k \setminus \bigcup_{j=1}^{k-1} \mathring{X_j}$ for $k > 1$. Then 
			\[
				X = (X_1')^+ \cup \cdots \cup (X_m')^+\, ,
			\]
			is a good sectorial cover of $X$, where $X^+$ denotes a small enlargement of the Weinstein sector $X$, see \cref{dfn:enlargement}.
		\end{lma}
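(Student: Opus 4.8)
The plan is to reduce the statement to \cref{lma:splitting_along_hypersurfaces_with_corners} by exhibiting $\{X_k'\}_{k=1}^m$ as a splitting of $X$ along sectorial hypersurfaces with boundary and corners. First I would record the elementary combinatorics. The sets $X_k'$ cover $X$: given $p\in X$, let $k$ be the smallest index with $p\in X_k$; then $p\notin X_j$, hence $p\notin\mathring X_j$, for every $j<k$, so $p\in X_k\setminus\bigcup_{j<k}\mathring X_j=X_k'$. Their interiors are pairwise disjoint: if $j<k$ then $X_k'\subseteq X\setminus\mathring X_j$, while $\mathring X_k'\subseteq\mathring X_j$ is disjoint from that. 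Thus $\{X_k'\}$ decomposes $X$ into $m$ codimension-zero pieces with disjoint interiors, each obtained from the Weinstein sector-with-corners $X_k$ by deleting the open sectors $\mathring X_j$, $j<k$; its boundary hypersurfaces are pieces of $\partial X_k$ and of the $\partial X_j$ with $j<k$. (If some $X_k'$ is disconnected one argues component by component, exactly as in the proof of \cref{lma:construction_sector-with-corners}.)

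Next I would identify the interface $H:=\bigcup_k\partial X_k'\subseteq\bigcup_{i=1}^m\partial X_i$ as a union of sectorial hypersurfaces with boundary and corners. The only input here is that $X=X_1\cup\cdots\cup X_m$ is a \emph{sectorial} cover, which means the boundary hypersurfaces $\{\partial X_i\}$ form a sectorial collection wherever they meet; consequently the hypersurfaces cut out of $\partial X_k$ by the $\partial X_j$ ($j<k$), together with the corners they create inside $X_k$, are again sectorial, and the codimension-$2\ell$ symplectic boundary strata of $X_k'$ are transverse intersections of members of this collection. Hence $H$ splits $X$ into the $m$ Weinstein sectors-with-corners $X_1',\dots,X_m'$. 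Applying the implication ``$\Rightarrow$'' of \cref{lma:splitting_along_hypersurfaces_with_corners} to $H$ then yields, after the small enlargements, precisely the good sectorial cover $X=(X_1')^+\cup\cdots\cup(X_m')^+$; the product models for the neighborhoods of intersections required by \cref{dfn:good_sectorial_cover} come from \cite[Lemma 9.12]{ganatra2018sectorial} applied to the multi-fold intersections of the members of $H$, exactly as in that proof.

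The only genuinely technical point — and the step I expect to be the main obstacle — is the claim in the second paragraph that removing open sectors from a member of a sectorial cover produces sectorial corners: one must verify that the new hypersurfaces and corners satisfy the transversality and characteristic-foliation conditions of a sectorial collection, uniformly over all faces $A$. This is a local computation near the strata $\partial X_{i_1}\cap\cdots\cap\partial X_{i_k}$; note that a point of $\bigcap_{i\in A}X_i'$ with $A=\{i_1<\cdots<i_{k+1}\}$ automatically lies in $\partial X_{i_1}\cap\cdots\cap\partial X_{i_k}$, since membership in $X_{i_{k+1}}'$ forces it off $\mathring X_{i_1},\dots,\mathring X_{i_k}$ while membership in $X_{i_1}',\dots,X_{i_k}'$ keeps it in $X_{i_1},\dots,X_{i_k}$. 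The verification is carried out by the same corner-rounding and coordinate bookkeeping used in \cref{lma:construction_sector-with-corners} and \cite[Section 9]{ganatra2018sectorial}; it is routine but notationally heavy.
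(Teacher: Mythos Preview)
Your proposal is correct and follows essentially the same approach as the paper: the key computation is that an $\ell$-fold intersection $\bigcap_{k\in K} X_k'$ lies in the intersection of the $\ell-1$ sectorial hypersurfaces (with boundary and corners) $\partial X_{k_1},\ldots,\partial X_{k_{\ell-1}}$, after which one invokes \cite[Lemma 9.8 and Lemma 9.10]{ganatra2018sectorial} to obtain product coordinates. The only cosmetic difference is that the paper verifies \cref{dfn:good_sectorial_cover} directly from this computation rather than routing through \cref{lma:splitting_along_hypersurfaces_with_corners}, and it does not treat the ``technical point'' you flag as requiring separate verification---it is immediate from the original cover being sectorial together with the cited GPS lemmas.
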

		\begin{proof}
			By construction we have that for any subcollection of numbers $K := \left\{k_1,\ldots,k_\ell\right\} \subset \left\{1,\ldots,m\right\}$ with $k_1 < \cdots < k_\ell$ that
			\begin{align*}
				\bigcap_{k\in K} X_k' &= \bigcap_{k\in K} \left(X_k \setminus \bigcup_{j=1}^{k-1} \mathring{X_j}\right) = \left(X_{k_1} \cap \left(X_{k_2} \setminus \mathring{X_{k_1}}\right) \cap \cdots \cap \left(X_{k_\ell} \setminus \bigcup_{j=1}^{\ell-1} X_{k_j}\right)\right) \setminus \bigcup_{k\not\in K} \mathring{X_k} \\
				&= \left(X_{k_\ell} \cap \partial X_{k_1} \cap \left(\partial X_{k_2} \setminus \mathring{X_{k_1}}\right) \cap \cdots \cap \left(\partial X_{k_{\ell-1}} \setminus \bigcup_{j=1}^{\ell-2} \mathring{X_{k_j}}\right)\right) \setminus \bigcup_{k\not \in K}\mathring{X_k}
			\end{align*}
			is the intersection of a collection of $\ell-1$ sectorial hypersurfaces with boundary and corners in $X$, which means that $\bigcap_{k\in K} (X_k')^+$ is a neighborhood of such an intersection of sectorial hypersurfaces with boundary and corners. By \cite[Lemma 12.8 and Lemma 12.10]{ganatra2022sectorial} (see also \cref{rmk:good_sectorial_cover_hypersurfaces}) we can thus find coordinates as in \eqref{eq:coordinates_intersections}.
		\end{proof}
		We end this section by working out some simple examples.
		\begin{ex}\label{ex:simplicial_decomposition_cotangent}
			Let $M$ be an $n$-dimensional manifold (possibly with boundary). Let $M = M_1 \cup \cdots \cup M_m$ be a good cover of $M$ of codimension zero manifolds (possibly with boundary). Assume that every intersection among members of the collection $\left\{\partial M\right\} \cup \left\{\partial M_i\right\}_{i=1}^m$ is transverse. Then $\left\{T^\ast M_i\right\}_{i=1}^m$ is a good sectorial cover. Indeed by contractibility, we have
				\[
					N(T^\ast M_{i_1} \cap \cdots \cap T^\ast M_{i_{k+1}}) = N(T^\ast(M_{i_1} \cap \cdots \cap M_{i_{k+1}})) \cong T^\ast \R^n \cong T^\ast \R^{n-k} \times T^\ast \R^k\, .
				\]
		\end{ex}
		\begin{ex}\label{ex:correspondence}
		Consider the good sectorial cover $T^\ast (0,1) = T^\ast \left(0, \frac 34\right] \cup T^\ast \left[\frac 14, 1\right)$. The cotangent fiber $T^\ast_{1/2}(0,1) \subset T^\ast \left(0, \frac 34\right] \cap T^\ast \left[\frac 14, 1\right)$ is a sectorial hypersurface. Splitting along this hypersurface (and taking closures of each piece) gives two Weinstein sectors $X_1 = T^\ast \left(0, \frac 12\right]$ and $X_2 = T^\ast \left[\frac 12, 1\right)$ with common symplectic boundary $\left\{\frac 12\right\}$. After convexification of $X_1$ and $X_2$ we obtain two copies of $(D^2, \frac 12(xdy-ydx))$ together with a Weinstein hypersurface $\iota \colon \left\{\frac 12\right\} \hookrightarrow \partial D^2$ such that $T^\ast (0,1) \cong D^2 \#_{N(\left\{\frac 12\right\})} D^2$, see \cref{fig:ex_interval_1} and \cref{fig:ex_interval_2}. In this case $C = \varDelta^1$ is the $1$-simplex, and we obtain a simplicial decomposition of $T^\ast(0,1)$ as $(C, \boldsymbol V)$, where
		\[
			\boldsymbol V = \left\{D^2_1,D^2_2,N \left(\left\{\frac 12\right\}\right), \iota_1, \iota_2\right\}\, .
		\]
			\begin{figure}[!htb]
				\centering
				\includegraphics{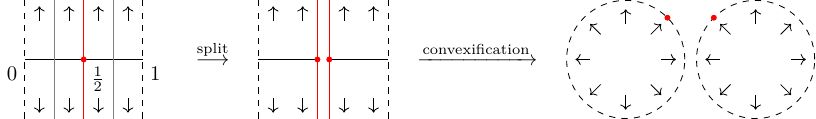}
				\caption{Splitting of the good sectorial cover $T^\ast (0,1) = T^\ast \left(0, \frac 34\right] \cup T^\ast \left[\frac 14, 1\right)$ along the hypersurface $T^\ast_{1/2}(0,1)$, followed by convexification.}\label{fig:ex_interval_1}
			\end{figure}
			\begin{figure}[!htb]
				\centering
				\includegraphics{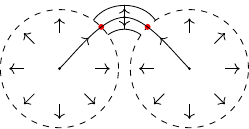}
				\caption{The result of gluing two copies of $D^2$ together using a simplicial handle $N(\text{pt}) \times D_\varepsilon T^\ast \varDelta^1$ along two copies of the Weinstein hypersurface $N(\text{pt}) \hookrightarrow \partial D^2$.}
				\label{fig:ex_interval_2}
			\end{figure}
			\end{ex}
			\begin{ex}\label{ex:correspondence2}
				Let $S^n = D^n_1 \cup D^n_2$ where $D^n_1 \cap D^n_2 \cong S^{n-1} \times (-\delta,\delta)$ for some $\delta > 0$. Then $T^\ast S^n = T^\ast D^n_1 \cup T^\ast D^n_2$ is a good sectorial covering, since
				\[
					T^\ast D^n_1 \cap T^\ast D^n_2 \cong T^\ast S^{n-1} \times T^\ast(-\delta,\delta)\, .
				\]
				Similar to the example in (1), there is a sectorial hypersurface $T^\ast S^{n-1} \times T^\ast_0 (-\delta,\delta) \subset T^\ast S^n$ which splits $T^\ast S^n$ into exactly two parts $X_1$ and $X_2$. The closure of each $X_i$ is the cotangent bundle of a closed $n$-disk. Via convexification, each $X_i$ corresponds to the Weinstein pair $(D^{2n}, N(S^{n-1}))$ where $\iota \colon N(S^{n-1}) \hookrightarrow \partial D^{2n}$ is the Weinstein hypersurface that is a cotangent neighborhood of the standard unknot in $D^{2n}$.
				
				The corresponding simplicial decomposition of $T^\ast S^n$ corresponding to the good sectorial cover of $T^\ast S^n$ is given by the following.
				\begin{itemize}
					\item The simplicial complex $C = \varDelta^1$.
					\item The set $\boldsymbol V = \left\{D^{2n}_1, D^{2n}_2,N(S^{n-1}), \iota_1, \iota_2\right\}$, where $N(S^{n-1}) \cong D_\varepsilon T^\ast S^{n-1}$ is a small cotangent neighborhood of the Legendrian unknot $S^{n-1} \subset \partial D^{2n}$, $D^{2n}_1$ and $D^{2n}_2$ are two copies of the standard Weinstein $2n$-ball
					\[
						\left(D^{2n}, \frac 12 \left(\sum_{i=1}^n (x_i dy_i + y_i dx_i)\right)\right)\, .
					\]
					Finally $\iota_1$ and $\iota_2$ are two copies of the Weinstein hypersurface $N(S^{n-1}) \hookrightarrow \partial D^{2n}$.
				\end{itemize}
				It is evident that gluing the simplicial handle $H^1_\varepsilon(N(S^{n-1}))$ to $D^{2n} \sqcup D^{2n}$ yields the standard Weinstein $T^\ast S^n$.
			\end{ex}
		\begin{ex}\label{ex:correspondence3}
			Let $X = T^\ast \mathring{\varDelta}^2$ where $\mathring{\varDelta}^2$ is the interior of a $2$-simplex centered at the origin in $\R^2$ with vertices $(0,1)$ and $\left(\pm \frac{\sqrt 3}2, - \frac 12\right)$. Equip $X$ with the radial Liouville vector field $Z = \frac 12 (x_1 \partial_{x_1}+x_2 \partial_{x_2}+y_1 \partial_{y_1}+y_2 \partial_{y_2})$. Consider the good sectorial cover
			\[
				T^\ast \mathring{\varDelta}^2 = T^\ast (B_1 \cap \mathring{\varDelta}^2) \cup T^\ast (B_2 \cap \mathring{\varDelta}^2) \cup T^\ast (B_3 \cap \mathring{\varDelta}^2)\, ,
			\]
			where
			\begin{align*}
				B_1 := B_{1+\varepsilon} \left(0,1\right), \qquad B_2 := B_{1+\varepsilon} \left(- \frac{\sqrt 3}{2}, - \frac 12\right), \qquad B_3 := B_{1+\varepsilon} \left(\frac{\sqrt 3}{2}, - \frac 12\right)\, ,
			\end{align*}
			are closed balls of radius $1+\varepsilon$ centered at the vertices of $\mathring{\varDelta}^2$, for some small $\varepsilon > 0$.
			\begin{figure}[!htb]
				\centering
				\includegraphics{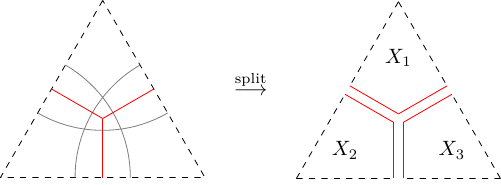}
				\caption{The cover $\mathring{\varDelta}^2 = \bigcup_{i=1}^3 (B_i \cap \mathring{\varDelta}^2)$ together with projection of three sectorial hypersurfaces with boundary (in red) having common boundary projecting to $(0,0)$.}\label{fig:splitting_along_hypersurfaces}
			\end{figure}
			Consider the three lines 
			\[
				h_1 := \left\{\left(x, \frac x{\sqrt 3}\right) \suchthat x \geq 0\right\} \cap \mathring{\varDelta}^2, \quad h_2 := \left\{\left(x, -\frac x{\sqrt 3}\right) \suchthat x \leq 0\right\} \cap \mathring{\varDelta}^2, \quad h_3 := \left\{(0, y) \suchthat y \leq 0\right\} \cap \mathring{\varDelta}^2\, .
			\]
			Then each $H_i := T^\ast h_i$ is a sectorial hypersurface with boundary in $T^\ast \mathring{\varDelta}^2$ for each $i \in \left\{1,2,3\right\}$. The $H_i$ have common horizontal boundary being $T^\ast_{(0,0)}\mathring{\varDelta}^2$. Then $H_1 \cup H_2 \cup H_3$ splits $T^\ast \mathring{\varDelta}^2$ into exactly three parts whose closures are denoted by $X_1$, $X_2$ and $X_3$ respectively, see \cref{fig:splitting_along_hypersurfaces}. Each $X_i$ is a Weinstein sector-with-corners. It has exactly two codimension two symplectic boundary strata $H_i$ and $H_j$, which meet in the codimension 4 symplectic boundary stratum $\left\{(0,0)\right\}$.

			By \cref{lma:construction_sector-with-corners} we first smooth the corners of each $X_i$ to obtain a Weinstein sector. The convexification of the smoothing of $X_i$ is a Weinstein manifold $X'_i$ together with a Weinstein hypersurface $\iota_{i,j,k} \colon H_i \#_{N(\left\{(0,0)\right\})} H_j \hookrightarrow \partial X'_k$ where $\left\{i,j,k\right\} = \left\{1,2,3\right\}$. This gives us a simplicial decomposition $(\varDelta^2, \boldsymbol V)$ of $T^\ast \mathring{\varDelta}^2$ where $\boldsymbol V = \left(\bigcup_{n=1}^3 \boldsymbol V_n\right) \cup \left\{X_1',X_2',X_3', \iota_{i,j,k}\right\}$ where
			\[
				\boldsymbol V_k = \left\{N(\left\{(0,0)\right\}), \iota_i, \iota_j\right\}\, ,
			\]
			for $\left\{i,j,k\right\} = \left\{1,2,3\right\}$.
			where $\iota_i \colon N(\left\{(0,0)\right\}) \hookrightarrow \partial H_i$ is a Weinstein hypersurface
		\end{ex}
		\subsection{Relation to sectorial descent}\label{sec:rel_to_sectorial_descent}
			One of the motivating results for this paper is the sectorial descent formula of Ganatra--Pardon--Shende for wrapped Fukaya categories, see \eqref{eq:gps_descent} below. Using the correspondence between good sectorial covers and simplicial decompositions in \cref{thm:one-to-one_corr_covers_and_decomps} we now describe the relation between our main theorem \cref{thm:ce_descent} and the sectorial descent result.

			Let $(C, \boldsymbol V)$ be a simplicial decomposition of a Weinstein manifold $X$. In \cref{sec:simplical_descent} we proved that there is a quasi-isomorphism
			\[
				CE^\ast(\varSigma(\boldsymbol h);X_0) \cong \colim_{\sigma_k\in C_k}CE^\ast(\varSigma_{\supset \sigma_k}(\boldsymbol h); X(\sigma_k)_0)\, .
			\]
			Now we relate each term in this diagram to wrapped Floer cohomology.
			\begin{lma}\label{lma:surgery_gives_wrapped_floer_of_intersection}
				There is a quasi-isomorphism of $A_\infty$-algebras
				\[
					CE^\ast(\varSigma_{\supset \sigma_k}(\boldsymbol h); X(\sigma_k)_0) \cong CW^\ast \left(C_{\sigma_k}; X(\sigma_k)\right)\, ,
				\]
				where $C_{\sigma_k}$ denotes the union of the cocore disks of the Weinstein handles attached to $\varSigma_{\supset \sigma_k}(\boldsymbol h)$.
			\end{lma}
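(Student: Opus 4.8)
The plan is to recognize this statement as the Legendrian surgery formula of Bourgeois--Ekholm--Eliashberg \cite{bourgeois2012effect}, applied to the stopped Weinstein manifold $X(\sigma_k)$. By \cref{lma:attaching_spheres_stopped_away_from_sigma_k} the chosen Weinstein handle decompositions $h_{\sigma_i}$ for $\sigma_i \supset \sigma_k$ induce a handle decomposition of $X(\sigma_k)$ whose top ($n$-)handles are attached precisely along the Legendrian $\varSigma_{\supset \sigma_k}(\boldsymbol h) \subset \partial X(\sigma_k)_0$, and $C_{\sigma_k}$ is by definition the union of the cocores of exactly these handles. So the claim to be proven is that the Chekanov--Eliashberg dg-algebra of the top-handle attaching Legendrian of $X(\sigma_k)$ computes the (partially) wrapped Floer $A_\infty$-algebra generated by the corresponding cocores.

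First I would note that $X(\sigma_k)$ is a Weinstein cobordism with in general non-empty negative end: the half-symplectizations $(-\infty,0]\times(\cdots)\times\R$ appearing in \eqref{eq:stopped_handle_data} for $\sigma_i \not\supset \sigma_k$ carry no critical points of the Liouville vector field and hence contribute no Weinstein handles, so the only index-$n$ critical points are the cores of the handles glued along $\varSigma_{\supset \sigma_k}(\boldsymbol h)$; this follows from \cref{lma:unique_flow_line} and \cref{lma:attaching_spheres_simpl_handle} applied to $X(\sigma_k)$ in place of $X$. The standing assumption $c_1 = 0$ for all members of $\boldsymbol V$ and $\boldsymbol A$ is preserved by the half-symplectization construction and by surgery, so $c_1(X(\sigma_k)) = 0$ and the grading conventions of \cref{sec:basic_building_blocks} apply unchanged. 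I would then invoke the surgery formula in the stopped setting: for a stopped Weinstein manifold $W$ obtained from its subcritical part $W_0$ by attaching top handles along a Legendrian $\varLambda \subset \partial W_0$, there is a quasi-isomorphism of $A_\infty$-algebras $CE^\ast(\varLambda; W_0) \cong CW^\ast(\mathrm{cocores}; W)$. In the closed case this is \cite{bourgeois2012effect}, upgraded to a chain-level $A_\infty$-statement in \cite{ekholm2017duality}; in the stopped setting it is established in \cite{asplund2021chekanov} (compare also \cite{asplund2019fiber, ganatra2018sectorial}). Applying this with $W = X(\sigma_k)$, $W_0 = X(\sigma_k)_0$ and $\varLambda = \varSigma_{\supset \sigma_k}(\boldsymbol h)$ yields the asserted quasi-isomorphism.

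The only point requiring care is that $X(\sigma_k)$ is stopped rather than closed, so the surgery formula must be available in that generality; I expect this bookkeeping to be the main obstacle rather than any new analysis. It is handled by the same localization mechanism used throughout \cref{sec:simplical_descent}: by construction the stopped regions are of the form $P\times\R$ with the relevant Legendrian contained in the $P$-factor, as in the proof of \cref{lma:correspondence_generators_ce_sphere}, and by \cref{cor:no_curve_crosses_handle} no holomorphic disk with a positive puncture at a Reeb chord of $\varSigma_{\supset \sigma_k}(\boldsymbol h)$ enters them. Hence both the Chekanov--Eliashberg differential and the curve counts defining the wrapped $A_\infty$-structure on $CW^\ast(C_{\sigma_k}; X(\sigma_k))$ are confined to the building blocks associated to faces $\sigma_i \supset \sigma_k$, where the surgery formula applies verbatim as in the closed case. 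One then removes the auxiliary action bound and handle-size restriction by the same limiting argument as in the proofs of \cref{thm:ce_descent_first} and \cref{lma:ce_spheres}.
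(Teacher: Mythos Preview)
Your proposal is correct and follows essentially the same approach as the paper: invoke \cref{lma:attaching_spheres_stopped_away_from_sigma_k} to identify $\varSigma_{\supset \sigma_k}(\boldsymbol h)$ as the union of top-handle attaching spheres for $X(\sigma_k)$, and then apply the Bourgeois--Ekholm--Eliashberg surgery formula. The paper's proof is considerably terser---it simply cites \cite{bourgeois2012effect,ekholm2019hol} without your additional discussion of the stopped setting---but the substance is the same.
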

			\begin{proof}
				By \cref{lma:attaching_spheres_stopped_away_from_sigma_k}, $\varSigma_{\supset \sigma_k}(\boldsymbol h)$ is the union of the attaching spheres for $X(\sigma_k)$ in $X(\sigma_k)_0$. The result then follows by applying the surgery formula \cite{bourgeois2012effect,ekholm2019hol}.
			\end{proof}
			One of the main results in \cite{ganatra2022sectorial} says that there is a functor $A_\infty$-categories
			\begin{equation}\label{eq:gps_descent}
				W(X) \longrightarrow \hocolim_{\varnothing \neq I \subset \left\{1,\ldots,m\right\}} \mathcal W \left(\bigcap_{i\in I} X_i\right)\, ,
			\end{equation}
			which induces a quasi-equivalence on triangulated envelopes. Using our main result \cref{thm:ce_descent} and \cref{lma:surgery_gives_wrapped_floer_of_intersection} we now have the following weaker version of \eqref{eq:gps_descent}.
			\begin{cor}\label{cor:sectorial_descent}
				Let $(C, \boldsymbol V)$ be a simplicial decomposition of a Weinstein manifold $X$. Then there is an isomorphism of $A_\infty$-algebras
				\[
					HW^\ast(C; X) \cong \colim_{\sigma_k\in C_k} HW^\ast(C_{\sigma_k}; X(\sigma_k)) \, ,
				\]
				where $C_{\sigma_k}$ denotes the union of the cocore disks of the Weinstein handles attached to $\varSigma_{\supset \sigma_k}(\boldsymbol h)$.
			\end{cor}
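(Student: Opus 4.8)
The plan is to derive \cref{cor:sectorial_descent} from \cref{thm:ce_descent} together with the Legendrian surgery formula of Bourgeois--Ekholm--Eliashberg \cite{bourgeois2012effect,ekholm2019hol}, applied both globally to $X$ and termwise to the stopped pieces $X(\sigma_k)$. First I would recall that $X_0$ is the subcritical part of $X$ and that, by \cref{lma:attaching_spheres_simpl_handle}, $\varSigma(\boldsymbol h)\subset\partial X_0$ is the union of the Legendrian attaching spheres of the top handles of $X$, with $C$ the corresponding union of cocore disks; the surgery formula then yields a quasi-isomorphism of $A_\infty$-algebras $CW^\ast(C;X)\cong CE^\ast(\varSigma(\boldsymbol h);X_0)$. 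Applied to $X(\sigma_k)$ and combined with \cref{lma:attaching_spheres_stopped_away_from_sigma_k} this is exactly \cref{lma:surgery_gives_wrapped_floer_of_intersection}, so $CW^\ast(C_{\sigma_k};X(\sigma_k))\cong CE^\ast(\varSigma_{\supset\sigma_k}(\boldsymbol h);X(\sigma_k)_0)$ for every $\sigma_k\in C_k$.

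Next I would check that these surgery-formula quasi-isomorphisms are natural with respect to the maps in the diagram of \cref{thm:ce_descent}. For a face inclusion $\sigma_k\subset\sigma_{k+1}$, the connecting map $CE^\ast(\varSigma_{\supset\sigma_{k+1}}(\boldsymbol h);X(\sigma_{k+1})_0)\to CE^\ast(\varSigma_{\supset\sigma_k}(\boldsymbol h);X(\sigma_k)_0)$ is the inclusion of the sub-dga generated by the corresponding subset of Reeb chords, by \cref{cor:subalgebras_diagram_no_actions} and \cref{lma:correspondence_generators_ce_sphere}. Under the surgery formula this should correspond to the map on wrapped invariants induced by the inclusion of cocores $C_{\sigma_{k+1}}\subset C_{\sigma_k}$ together with the Viterbo-type transfer map attached to stopping $X(\sigma_k)$ further to obtain $X(\sigma_{k+1})$. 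Granting this compatibility, the surgery-formula quasi-isomorphisms assemble into a quasi-isomorphism between the diagram $\{CE^\ast(\varSigma_{\supset\sigma_k}(\boldsymbol h);X(\sigma_k)_0)\}$ and the diagram $\{CW^\ast(C_{\sigma_k};X(\sigma_k))\}$ of $A_\infty$-algebras over the face poset of $C$.

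Finally I would pass to the colimit. By the proof of \cref{thm:ce_descent_first}, the colimit $\colim_{\sigma_k\in C_k} CE^\ast(\varSigma_{\supset\sigma_k}(\boldsymbol h);X(\sigma_k)_0)$ is realized already on the chain level as a semi-free dg-algebra on $\bigcup_{\sigma_k}\mathcal R(\sigma_k,\mathfrak a)$ whose connecting maps are inclusions of sub-dgas onto subsets of generators; this diagram is cofibrant, so its strict colimit computes the homotopy colimit and, by \cref{thm:ce_descent}, is identified with $CE^\ast(\varSigma(\boldsymbol h);X_0)\cong CW^\ast(C;X)$. Transporting along the quasi-isomorphism of diagrams from the previous step and passing to cohomology then produces the desired isomorphism $HW^\ast(C;X)\cong\colim_{\sigma_k\in C_k} HW^\ast(C_{\sigma_k};X(\sigma_k))$, where the right-hand colimit is taken in $A_\infty$-algebras and inherits its $A_\infty$-structure from the $A_\infty$-quasi-isomorphisms of \cref{lma:surgery_gives_wrapped_floer_of_intersection}. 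The main obstacle I anticipate is the naturality step in the second paragraph: identifying the sub-dga inclusions of \cref{thm:ce_descent} with the correct wrapped maps (cocore inclusion together with the stopping transfer) is not formal and requires a direct comparison of the holomorphic-curve moduli spaces defining the two sides, in the spirit of the functoriality discussions in \cite{ekholm2019hol,asplund2021chekanov}.
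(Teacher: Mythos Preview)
Your approach is essentially the same as the paper's: combine \cref{thm:ce_descent} with the surgery formula (packaged as \cref{lma:surgery_gives_wrapped_floer_of_intersection}) termwise and globally, then pass to cohomology. The paper's own proof is considerably shorter---it simply takes the $CE^\ast$ diagram, passes to cohomology, and invokes the termwise surgery isomorphisms---so the naturality check you flag as the main obstacle is left implicit there as well.
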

			\begin{proof}
				We consider the diagram $\left\{CE^\ast(\varSigma_{\supset \sigma_k}(\boldsymbol h) ; X(\sigma_k)_0)\right\}_{\sigma_k\in C}$ and pass to cohomology. From \cref{lma:surgery_gives_wrapped_floer_of_intersection} we get isomorphisms in cohomology which gives the desired result.
			\end{proof}
			\begin{rmk}
				The reason \cref{cor:sectorial_descent} follows from \eqref{eq:gps_descent} is due to stop removal and convexification. First apply stop removal to $X(\sigma_k)$ in the sense of \cite[Section 5.2]{asplund2021chekanov}, ie replace Weinstein manifolds of the form $(-\infty,0] \times (W \times \R)$ in $\boldsymbol V(\sigma_k)$ with $W \times B^2$, where $B^2$ is equipped with the Liouville vector field $\frac 12(x \partial_x + y \partial_y)$. The resulting manifold is Weinstein homotopic via some Weinstein handle cancelations to the convexification of the Weinstein sector $\bigcap_{i\in I} X_i$.
			\end{rmk}
	\section{Applications}\label{sec:applications}
			In \cref{sec:relative_version} we describe how our definition of a simplicial decomposition allows for the study of Legendrian submanifolds-with-boundary-and-corners. In \cref{sec:ce_cosheaf} we define the Chekanov--Eliashberg cosheaf and finally in \cref{sec:plumbing} we prove by direct computation that the Chekanov--Eliashberg dg-algebra of the Legendrian top attaching spheres of a plumbing of a collection of $T^\ast S^n$ for $n\geq 3$ is quasi-isomorphic to the Ginzburg dg-algebra.
		\subsection{Legendrian submanifolds-with-boundary-and-corners}\label{sec:relative_version}
			Suppose $(C, \boldsymbol V)$ is a simplicial decomposition of a Weinstein manifold $X$.
			\begin{dfn}
				A Legendrian submanifold \emph{relative to a simplicial decomposition} is a collection $\boldsymbol \varLambda$ consisting of one embedded Legendrian $(n-k-1)$-submanifold-with-boundary-and-corners $\varLambda_{\sigma_k} \subset \partial V^{(k)}_{\sigma_k}$ for each $\sigma_k\in C$, such that 
				\[
					\partial \varLambda_{\sigma_k} = \# \boldsymbol \varLambda_{\supsetneq \sigma_k} \subset \partial \# \boldsymbol V_{\supsetneq \sigma_k}\, ,
				\]
				where $\boldsymbol \varLambda_{\supsetneq \sigma_k} := \left\{\varLambda_{\sigma_i}\right\}_{\sigma_i \supsetneq \sigma_k}$, and $\# \boldsymbol \varLambda_{\supsetneq \sigma_k}$ is defined recursively in analogy to the Weinstein manifold $\# \boldsymbol V_{\supsetneq \sigma_k}$, see \cref{sec:construction_m_simplex_handle}. 
			\end{dfn}
			We point out that Legendrian submanifolds-with-boundary was studied from the point of view of sutured contact manifolds by Dattin \cite{dattin2022wrapped}.
			\begin{dfn}[Completion]\label{dfn:completion_of_relative_leg}
				By the same procedure as in \cref{dfn:attaching_spheres_simpl_handles}, we extend the various Legendrian submanifolds-with-boundary-and-corners in $\boldsymbol \varLambda$ over the various simplicial handles making up $X$. After gluing and smoothing it yields a smooth compact Legendrian $(n-1)$-submanifold in $\partial X$ which we denote by $\overline \varLambda$ and call the \emph{completion of $\boldsymbol \varLambda$}.
			\end{dfn}
			\begin{dfn}[Partial completion]
				Fix some $k$-face $\sigma_k\in C$ and consider $X(\sigma_k)$ (see \eqref{eq:x_stopped_away_from_sigma_k}). Following \cref{dfn:completion_of_relative_leg} but only using the Legendrians in $\boldsymbol \varLambda_{\supset \sigma_k} := \left\{\varLambda_{\sigma_i}\right\}_{\sigma_i \supset \sigma_k}$ yields a Legendrian $(n-1)$-submanifold with smooth boundary. The smooth boundary is partitioned into pieces, each of which lies in a contact manifold of the form $((\left\{0\right\} \times \partial W^{(\ell+1)}) \times \left\{0\right\}) \times T^\ast \varDelta^\ell$ for some $\ell$. For each such piece we take the Legendrian lift of the positive cone which now belongs to $((\left\{0\right\} \times W^{(\ell+1)}) \times \R) \times T^\ast \varDelta^\ell$. After possibly smoothing we denote the result by $\overline \varLambda_{\supset \sigma_k}$, which is a non-compact Legendrian $(n-1)$-submanifold, see \cref{fig:partial_completion}.
			\end{dfn}
			\begin{figure}[!htb]
				\centering
				\includegraphics{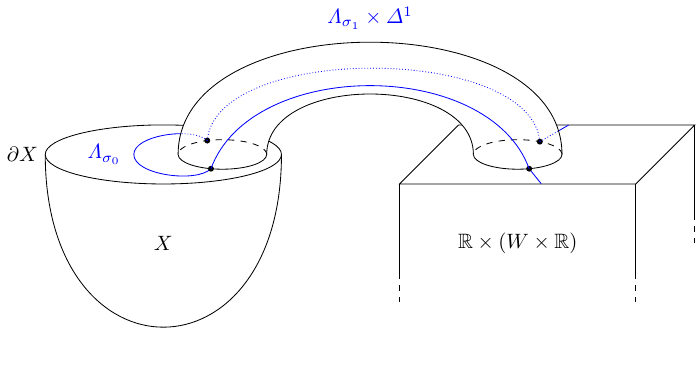}
				\caption{The partial completion $\overline \varLambda_{\supset \sigma_0}$. The left hand boundary of $\varLambda_{\sigma_1} \times \varDelta^1$ is $\varLambda_{\sigma_1} \subset \left\{0\right\} \times (\partial W \times \left\{0\right\})$ and the Legendrian lift of its positive cone is drawn.}\label{fig:partial_completion}
			\end{figure}
			In analogy with \cref{lma:diagram_of_sub_algebras} and \cref{lma:ce_spheres} we have the following.
			\begin{lma}
				For all $\mathfrak a > 0$ there exists some $\delta > 0$ and an arbitrary small perturbation of $\boldsymbol V_0$ such that for all $0 < \boldsymbol \varepsilon < \delta$ the following holds.
				\begin{enumerate}
					\item For each $k$-face $\sigma_k\in C_k$ there is an inclusion of dg-algebras induced by the inclusion on the set of generators
					\[
						CE^\ast(\overline \varLambda_{\supset \sigma_k}(\boldsymbol \varepsilon); X(\sigma_k)^{\boldsymbol \varepsilon}, \mathfrak a) \subset CE^\ast(\overline \varLambda(\boldsymbol \varepsilon); X^{\boldsymbol \varepsilon}, \mathfrak a)\, .
					\]
					\item For each inclusion of faces $\sigma_k \subset \sigma_{k+1}$ there is an inclusion of dg-algebras induced by the inclusion on the set of generators
					\[
						CE^\ast(\overline \varLambda_{\supset \sigma_{k+1}}(\boldsymbol \varepsilon); X(\sigma_{k+1})^{\boldsymbol \varepsilon},\mathfrak a) \subset CE^\ast(\overline \varLambda_{\supset \sigma_k}(\boldsymbol \varepsilon); X(\sigma_k)^{\boldsymbol \varepsilon}, \mathfrak a)\, .
					\]
				\end{enumerate}
			\end{lma}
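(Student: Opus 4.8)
The plan is to run the arguments of \cref{lma:Reeb_chords_one_to_one_correspondence}, \cref{lma:hol_curves_in_simplex_handle}, \cref{cor:no_curve_crosses_handle} and \cref{lma:diagram_of_sub_algebras} essentially verbatim, with the completion $\overline\varLambda$ of \cref{cns:completion_of_relative_leg} playing the role of the union of attaching spheres $\varSigma(\boldsymbol h)$, and with the stopped manifolds of \eqref{eq:stopped_handle_data} playing the role of $X(\sigma_k)$. The only properties of $\varSigma(\boldsymbol h)$ used by those arguments are: inside each basic building block $V^{(k)}_{\sigma_k}\times\R^{2k}$ the Legendrian is, away from the core disks, a trivial extension in the $\R^{2k}$-directions of a Legendrian in $\partial V^{(k)}_{\sigma_k,0}$; and it is compatible with the $J(\boldsymbol V)$-complex coordinate hypersurfaces $V_{\sigma_k,(x_i,y_i)}$ of \cref{lma:complexproj_simplex_handle}. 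Both hold for $\overline\varLambda$ by construction of the completion, which extends each $\varLambda_{\sigma_k}\subset\partial V^{(k)}_{\sigma_k}$ trivially in the $\R^{2k}$-directions over the corresponding building block exactly as in \cref{cns:attaching_spheres_simpl_handles}.

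First I would establish the relative analogue of \cref{lma:Reeb_chords_one_to_one_correspondence}: after an arbitrarily small perturbation of $\boldsymbol V_0$ and for $\boldsymbol\varepsilon$ small enough, every Reeb chord of $\overline\varLambda(\boldsymbol\varepsilon)\subset\partial X^{\boldsymbol\varepsilon}$ of action $<\mathfrak a$ lies over one of the critical points $p_{\sigma_k}\in\R^{2k}$ and is in grading-preserving bijection with a Reeb chord of $\varLambda_{\sigma_k}\cup\bigcup_{\sigma_i\supset\sigma_k}(\varLambda_{\sigma_i}\times D^{i-k-1})\subset\partial V^{(k)}_{\sigma_k,0}$, via \cref{lma:basic_building_block_reeb}. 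I will say such a generator is \emph{assigned to} $\sigma_k$ and write $\mathcal R_{\overline\varLambda}(\sigma_k,\mathfrak a)$ for the set of generators assigned to $\sigma_k$. Next I would record the relative versions of \cref{lma:hol_curves_in_simplex_handle} and \cref{cor:no_curve_crosses_handle}: for thin enough handles a $J(\boldsymbol V)$-holomorphic disk with boundary on $\R\times\overline\varLambda(\boldsymbol\varepsilon)$ and positive puncture at a generator assigned to $\sigma_k$ either lies over $p_{\sigma_k}$ or over the stable manifold of some $p_{\sigma_\ell}$ with $\sigma_\ell\supset\sigma_k$, and in particular no such disk crosses a handle. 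The proofs are identical: positivity of intersection with the $J(\boldsymbol V)$-complex leaves $V_{\sigma_k,(x_i,y_i)}$ foliating $\{y_i=0\}$ forces the disk either inside one leaf --- hence to project to a gradient trajectory or a critical point of the Liouville field on $\R^{2k}$ --- or to meet every leaf, neither of which is compatible with the prescribed asymptotics.

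With these in hand the two claims follow by bookkeeping. For (1): by \eqref{eq:stopped_handle_data}, passing from $X^{\boldsymbol\varepsilon}$ to $X(\sigma_k)^{\boldsymbol\varepsilon}$ replaces every locus associated to a face $\sigma_i\not\supset\sigma_k$ with a piece of the form $P\times\R$ in which $\overline\varLambda(\boldsymbol\varepsilon)$ is contained in the $P$-factor, exactly as in the proof of \cref{lma:correspondence_generators_ce_sphere}. Hence the generators of $CE^\ast(\overline\varLambda(\boldsymbol\varepsilon);X(\sigma_k)^{\boldsymbol\varepsilon},\mathfrak a)$ are precisely $\bigcup_{\sigma_i\supset\sigma_k}\mathcal R_{\overline\varLambda}(\sigma_i,\mathfrak a)$, a subset of the generators of $CE^\ast(\overline\varLambda(\boldsymbol\varepsilon);X^{\boldsymbol\varepsilon},\mathfrak a)$; and by the no-crossing statement the differential of a generator assigned to some $\sigma_i\supset\sigma_k$ computed in $X^{\boldsymbol\varepsilon}$ involves only generators assigned to faces containing $\sigma_i$ (hence containing $\sigma_k$) and is computed by disks confined to the unchanged loci, so it agrees with the differential computed in $X(\sigma_k)^{\boldsymbol\varepsilon}$. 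This gives the first inclusion of dg-algebras, induced by the inclusion of generating sets. For (2): since $\sigma_k\subset\sigma_{k+1}$, the condition $\sigma_i\supset\sigma_{k+1}$ implies $\sigma_i\supset\sigma_k$, so $X(\sigma_{k+1})$ is obtained from $X(\sigma_k)$ by stopping further; thus $\bigcup_{\sigma_i\supset\sigma_{k+1}}\mathcal R_{\overline\varLambda}(\sigma_i,\mathfrak a)$ is a subset of $\bigcup_{\sigma_i\supset\sigma_k}\mathcal R_{\overline\varLambda}(\sigma_i,\mathfrak a)$ and the differentials agree by the same argument, yielding the second inclusion.

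The main obstacle is organizational rather than analytic: one must verify that the completion procedure of \cref{cns:completion_of_relative_leg} applied to a Legendrian-with-boundary-and-corners really produces a smooth Legendrian that, within each building block, is a trivial $\R^{2k}$-extension and meets the complex coordinate hypersurfaces as required, including a careful treatment near the corners of the $\varLambda_{\sigma_k}$ where the gluing takes place. Once this is checked, \cref{lma:complexproj_simplex_handle} together with the action and compactness estimates apply exactly as for $\varSigma(\boldsymbol h)$, and no new input is needed.
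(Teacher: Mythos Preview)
Your proposal is correct and follows essentially the same approach as the paper, which simply says that both statements follow from the same proof as in \cref{lma:diagram_of_sub_algebras} combined with \cref{lma:ce_spheres}; you have unpacked precisely what that means. One small slip: since $\overline\varLambda$ lives in $\partial X$ rather than $\partial X_0$, the Reeb chords assigned to $\sigma_k$ correspond to chords of $\varLambda_{\sigma_k}\cup\bigcup_{\sigma_i\supset\sigma_k}(\varLambda_{\sigma_i}\times D^{i-k-1})$ in $\partial V^{(k)}_{\sigma_k}$ (not $\partial V^{(k)}_{\sigma_k,0}$), but this does not affect the argument.
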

			\begin{proof}
				Both statements follows from the same proof as in \cref{lma:diagram_of_sub_algebras} combined with \cref{lma:ce_spheres}.
			\end{proof}
			By general position we may assume that $\varLambda_{\sigma_k} \subset \partial V^{(k)}_{\sigma_k,0}$ belongs to the contact boundary of the subcritical part of $V^{(k)}_{\sigma_k}$, disjoint from the attaching spheres. Then we can describe $CE^\ast(\overline \varLambda; X)$ through the description of the Legendrian surgery isomorphism \cite{bourgeois2012effect}. Recall from \cref{sec:leg_attaching_data_for_simplex_handles} the definition of the union of attaching spheres $\varSigma(\boldsymbol h)$ of $X_0$. We use the following notation. 
			\begin{itemize}
				\item $\boldsymbol \varLambda \to \varSigma(\boldsymbol h)$ denotes a Reeb chord from $\varLambda_{\sigma_k}$ to $\partial \ell_{\sigma_k} \cup \bigcup_{\substack{\sigma_{k} \subset \sigma_{i} \\ k+1 \leq i \leq m}} (\ell_{\sigma_{i}} \times \R^{i-k-1})$ in $\partial V^{(k)}_{\sigma_k,0}$, for some $\sigma_k\in C_k$.
				\item $\boldsymbol \varLambda_{\supset \sigma_k} \to \varSigma_{\supset \sigma_k}(\boldsymbol h)$ denotes a Reeb chord from $\varLambda_{\sigma_j}$ to $\partial \ell_{\sigma_j} \cup \bigcup_{\substack{\sigma_j \subset \sigma_i \\ j+1 \leq i \leq m}} (\ell_{\sigma_{i}} \times \R^{i-j-1})$ in $\partial V^{(j)}_{\sigma_j,0}$, for some $\sigma_j \supset \sigma_k$.
				\item The following notations are used in an analogous way as above:
				\begin{itemize}[$\circ$]
					\item $\varSigma(\boldsymbol h) \to \varSigma(\boldsymbol h)$
					\item $\varSigma(\boldsymbol h) \to \boldsymbol \varLambda$
					\item $\boldsymbol \varLambda \to \boldsymbol \varLambda$
					\item $\varSigma_{\supset \sigma_k}(\boldsymbol h) \to \varSigma_{\supset \sigma_k}(\boldsymbol h)$
					\item $\varSigma_{\supset \sigma_k}(\boldsymbol h) \to \boldsymbol \varLambda_{\supset \sigma_k}$
					\item $\boldsymbol \varLambda_{\supset \sigma_k} \to \boldsymbol \varLambda_{\supset \sigma_k}$
				\end{itemize}
			\end{itemize}
			Without loss of generality, assume from now on that $V^{(0)}_{\sigma_0} \in \boldsymbol V$ is subcritical (see \cref{rmk:not_subcrit}). 
			\begin{lma}\label{lma:relative_version_surgery_desc_of_generators}
				For all $\mathfrak a > 0$ there exists some $\delta > 0$ and an arbitrary small perturbation of $\boldsymbol V_0$ such that for all $0 < \boldsymbol \varepsilon < \delta$ such that the following holds.
				\begin{enumerate}
					\item The generators of $CE^\ast(\overline \varLambda(\boldsymbol \varepsilon); X^{\boldsymbol \varepsilon})$ are in one-to-one correspondence with composable words of Reeb chords of action $< \mathfrak a$ which are the form $\boldsymbol \varLambda(\boldsymbol \varepsilon) \to \boldsymbol \varLambda(\boldsymbol \varepsilon)$ or
				\[
					\boldsymbol \varLambda(\boldsymbol \varepsilon) \to \varSigma(\boldsymbol h,\boldsymbol \varepsilon) \to \cdots \to \varSigma(\boldsymbol h, \boldsymbol \varepsilon) \to \boldsymbol \varLambda(\boldsymbol \varepsilon)\, .
				\]
				The differential is induced by the differential in the Chekanov--Eliashberg dg-algebra of $\overline \varLambda(\boldsymbol \varepsilon) \cup \varSigma(\boldsymbol h,\boldsymbol \varepsilon) \subset \partial X_0^{\boldsymbol \varepsilon}$.
				\item For each $\sigma_k \in C$ the generators of the dg-subalgebra $CE^\ast(\overline \varLambda_{\supset \sigma_k}(\boldsymbol \varepsilon); X(\sigma_k)^{\boldsymbol \varepsilon})$ are in one-to-one correspondence with composable words of Reeb chords of action $< \mathfrak a$ which are of the form $\boldsymbol \varLambda_{\supset \sigma_k}(\boldsymbol \varepsilon) \to \boldsymbol \varLambda_{\supset \sigma_k}(\boldsymbol \varepsilon)$ or
				\[
					\boldsymbol \varLambda_{\supset \sigma_k}(\boldsymbol \varepsilon) \to \varSigma_{\supset \sigma_k}(\boldsymbol h,\boldsymbol \varepsilon) \to \cdots \to \varSigma_{\supset \sigma_k}(\boldsymbol h,\boldsymbol \varepsilon) \to \boldsymbol \varLambda_{\supset \sigma_k}(\boldsymbol \varepsilon)\, .
				\]
				The differential is induced by the differential in the Chekanov--Eliashberg dg-algebra of $\overline \varLambda_{\supset \sigma_k}(\boldsymbol \varepsilon) \cup \varSigma(\boldsymbol h,\boldsymbol \varepsilon) \subset \partial X(\sigma_k)_0^{\boldsymbol \varepsilon}$.
				\end{enumerate}
			\end{lma}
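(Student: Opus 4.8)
The plan is to apply the Legendrian surgery formula of Bourgeois--Ekholm--Eliashberg \cite{bourgeois2012effect} (see also \cite{ekholm2019hol}) to the pair $(X_0, \overline{\varLambda} \cup \varSigma(\boldsymbol h))$ in part (1) and to $(X(\sigma_k)_0, \overline{\varLambda} \cup \varSigma_{\supset \sigma_k}(\boldsymbol h))$ in part (2), and then to localize using the holomorphic disk analysis of \cref{sec:acs_and_hol_curves_in_simplicial_handles}.

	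For part (1), recall that by general position we may assume $\varLambda_{\sigma_k} \subset \partial V^{(k)}_{\sigma_k,0}$ is disjoint from the attaching spheres of the top handles, so that the completion $\overline{\varLambda}$ is disjoint from $\varSigma(\boldsymbol h)$ in $\partial X_0$. The Legendrian surgery formula then identifies $CE^\ast(\overline{\varLambda}(\boldsymbol \varepsilon); X^{\boldsymbol \varepsilon})$, below the action bound $\mathfrak a$ and for sufficiently thin handles, with the dg-algebra whose generators are composable words of Reeb chords of $\overline{\varLambda}(\boldsymbol \varepsilon) \cup \varSigma(\boldsymbol h, \boldsymbol \varepsilon) \subset \partial X_0^{\boldsymbol \varepsilon}$ that begin and end with a chord having an endpoint on $\overline{\varLambda}(\boldsymbol \varepsilon)$ and whose intermediate chords are internal to $\varSigma(\boldsymbol h, \boldsymbol \varepsilon)$; these are exactly the words of the form $\boldsymbol \varLambda(\boldsymbol \varepsilon) \to \boldsymbol \varLambda(\boldsymbol \varepsilon)$ and $\boldsymbol \varLambda(\boldsymbol \varepsilon) \to \varSigma(\boldsymbol h, \boldsymbol \varepsilon) \to \cdots \to \varSigma(\boldsymbol h, \boldsymbol \varepsilon) \to \boldsymbol \varLambda(\boldsymbol \varepsilon)$ in the statement, with differential induced by the differential in $CE^\ast(\overline{\varLambda}(\boldsymbol \varepsilon) \cup \varSigma(\boldsymbol h, \boldsymbol \varepsilon); \partial X_0^{\boldsymbol \varepsilon})$. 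The choice of $\delta$ and the perturbation of $\boldsymbol V_0$ are made as in \cref{lma:diagram_of_sub_algebras} and \cref{lma:correspondence_generators_ce_sphere}, applied now to the Legendrian $\overline{\varLambda} \cup \varSigma(\boldsymbol h)$.

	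For part (2), apply the same surgery formula to $X(\sigma_k)$: by \cref{lma:attaching_spheres_stopped_away_from_sigma_k} the top handles of $X(\sigma_k)$ are attached along $\varSigma_{\supset \sigma_k}(\boldsymbol h)$, so $CE^\ast(\overline{\varLambda}(\boldsymbol \varepsilon); X(\sigma_k)^{\boldsymbol \varepsilon})$ is computed by composable words of Reeb chords of $\overline{\varLambda}(\boldsymbol \varepsilon) \cup \varSigma_{\supset \sigma_k}(\boldsymbol h, \boldsymbol \varepsilon) \subset \partial X(\sigma_k)_0^{\boldsymbol \varepsilon}$ of the asserted shape. It remains to match this with the dg-subalgebra of $CE^\ast(\overline{\varLambda}(\boldsymbol \varepsilon); X^{\boldsymbol \varepsilon})$ from the previous lemma. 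This follows by the localization argument in the proof of \cref{lma:correspondence_generators_ce_sphere}: passing from $\partial X_0^{\boldsymbol \varepsilon}$ to $\partial X(\sigma_k)_0^{\boldsymbol \varepsilon}$, the Reeb chords with an endpoint on $\varLambda_{\sigma_j}$ for $\sigma_j \not\supset \sigma_k$ disappear because the contact manifold in the corresponding loci becomes a product $P \times \R$ with the relevant Legendrian contained in the $P$-factor, while the chords over the critical points $p_{\sigma_i}$ with $\sigma_i \supset \sigma_k$ are unaffected; hence the surviving composable words are precisely those of the form $\boldsymbol \varLambda_{\supset \sigma_k}(\boldsymbol \varepsilon) \to \boldsymbol \varLambda_{\supset \sigma_k}(\boldsymbol \varepsilon)$ and $\boldsymbol \varLambda_{\supset \sigma_k}(\boldsymbol \varepsilon) \to \varSigma_{\supset \sigma_k}(\boldsymbol h, \boldsymbol \varepsilon) \to \cdots \to \varSigma_{\supset \sigma_k}(\boldsymbol h, \boldsymbol \varepsilon) \to \boldsymbol \varLambda_{\supset \sigma_k}(\boldsymbol \varepsilon)$. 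By \cref{cor:no_curve_crosses_handle} together with \cref{lma:hol_curves_in_simplex_handle} no holomorphic disk contributing to the differential crosses the simplicial handle below the action bound, so the differential on both sides agrees and is induced by the one in $CE^\ast(\overline{\varLambda}(\boldsymbol \varepsilon) \cup \varSigma(\boldsymbol h, \boldsymbol \varepsilon); \partial X(\sigma_k)_0^{\boldsymbol \varepsilon})$, compatibly with the inclusions of the previous lemma. The one point requiring care is the simultaneous use of the surgery formula in the presence of the extra Legendrian component $\overline{\varLambda}$ --- that is, that the relevant moduli of holomorphic disks are still cut out transversally and that the action and thinness bounds can be chosen uniformly; this is handled exactly as in the proofs of \cref{lma:diagram_of_sub_algebras} and \cref{lma:ce_spheres}.
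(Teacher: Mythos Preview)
Your proposal is correct and takes essentially the same approach as the paper: both proofs invoke the Bourgeois--Ekholm--Eliashberg surgery formula \cite[Theorem 5.10]{bourgeois2012effect} as the key input. The paper's own proof is a one-line citation to that result, whereas you supply considerably more detail---in particular the localization argument for part (2) explaining why only chords of the $\supset \sigma_k$ type survive in $\partial X(\sigma_k)_0$---but this is elaboration of the same idea rather than a different route.
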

			\begin{proof}
				This follows from the surgery description comparing generators before and after attachment of critical handles, see \cite[Theorem 5.10]{bourgeois2012effect}.
			\end{proof}
			Assuming no Reeb chord $\boldsymbol \varLambda \to \boldsymbol \varLambda$ in $\partial X$ passes through any top Weinstein handle we obtain the dg-algebra $CE^\ast(\overline \varLambda; X)$ as a colimit, similar to the main theorem \cref{thm:ce_descent}.
			\begin{thm}\label{thm:colimit_of_dgas_relative_case}
				Suppose $(C, \boldsymbol V)$ is a simplicial decomposition of $X$ such that every $V^{(0)}_{\sigma_0} \in \boldsymbol V$ is subcritical and that $\boldsymbol \varLambda$ is a Legendrian submanifold relative to $(C, \boldsymbol V)$. If there are no Reeb chords $\boldsymbol \varLambda \to \varSigma(\boldsymbol h)$ or if there are no Reeb chords $\varSigma(\boldsymbol h) \to \boldsymbol \varLambda$ then we have a quasi-isomorphism of dg-algebras over $\boldsymbol k$
				\[
					CE^\ast(\overline \varLambda; X) \cong \colim_{\sigma_k\in C} CE^\ast(\overline \varLambda_{\supset \sigma_k}; X(\sigma_k))\, .
				\]
			\end{thm}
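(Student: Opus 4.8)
The plan is to repeat the proofs of \cref{thm:ce_descent_first} and \cref{thm:ce_descent} in the relative setting, with \cref{lma:relative_version_surgery_desc_of_generators} playing the role of \cref{lma:Reeb_chords_one_to_one_correspondence}. Under the standing assumption that no Reeb chord $\boldsymbol \varLambda \to \boldsymbol \varLambda$ in $\partial X$ passes through a top handle, together with the hypothesis that there are no Reeb chords $\boldsymbol \varLambda \to \varSigma(\boldsymbol h)$ (the case of no Reeb chords $\varSigma(\boldsymbol h) \to \boldsymbol \varLambda$ being symmetric), \cref{lma:relative_version_surgery_desc_of_generators} tells us that the generators of $CE^\ast(\overline \varLambda(\boldsymbol \varepsilon); X^{\boldsymbol \varepsilon}, \mathfrak a)$ are precisely the Reeb chords $\boldsymbol \varLambda(\boldsymbol \varepsilon) \to \boldsymbol \varLambda(\boldsymbol \varepsilon)$. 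Moreover, since $\overline \varLambda$ and $\varSigma(\boldsymbol h)$ are disjoint and there are no chords $\boldsymbol \varLambda \to \varSigma(\boldsymbol h)$, reading off the boundary word of a disk contributing to the differential of such a generator shows that every such disk has all of its boundary on $\R \times \overline \varLambda(\boldsymbol \varepsilon)$ and all of its punctures at $\boldsymbol \varLambda \to \boldsymbol \varLambda$ chords. Finally, since by general position $\varLambda_{\sigma_k}$ may be taken to lie in $\partial V^{(k)}_{\sigma_k,0}$ disjoint from $\partial \ell_{\sigma_k}$, each generator is localized in the building block of a unique face $\sigma_k\in C_k$.

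Next I would run, for each $\sigma_k \in C_k$, the construction of \cref{lma:diagram_of_sub_algebras}: the sub-dg-algebras $CE^\ast(\overline \varLambda(\boldsymbol \varepsilon); X(\sigma_k)^{\boldsymbol \varepsilon}, \mathfrak a) \subset CE^\ast(\overline \varLambda(\boldsymbol \varepsilon); X^{\boldsymbol \varepsilon}, \mathfrak a)$ and the inclusions $CE^\ast(\overline \varLambda(\boldsymbol \varepsilon); X(\sigma_{k+1})^{\boldsymbol \varepsilon}, \mathfrak a) \subset CE^\ast(\overline \varLambda(\boldsymbol \varepsilon); X(\sigma_k)^{\boldsymbol \varepsilon}, \mathfrak a)$ for face inclusions $\sigma_k \subset \sigma_{k+1}$ are already provided by the lemma immediately preceding \cref{lma:relative_version_surgery_desc_of_generators}. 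The essential input is the relative analogue of \cref{lma:hol_curves_in_simplex_handle} and \cref{cor:no_curve_crosses_handle}: for sufficiently thin handles, a $J(\boldsymbol V)$-holomorphic disk with boundary on $\R \times \overline \varLambda(\boldsymbol \varepsilon)$ with positive puncture at a chord associated to $\sigma_k$ has no negative puncture associated to a face $\sigma_i \not \supset \sigma_k$. By \cref{cns:completion_of_relative_leg} the extension of $\boldsymbol \varLambda$ over a simplicial handle is assembled, exactly as $\varSigma(\boldsymbol h)$ is in \cref{cns:attaching_spheres_simpl_handles}, from trivial extensions of the $\varLambda_{\sigma_i}$ in the $\{\boldsymbol x = \boldsymbol 0\}$-directions as in \eqref{eq:dl_trivial_extension}; hence the codimension-two $J(\boldsymbol V)$-complex hypersurfaces $V_{\sigma_k,(x_i,y_i)}$ of \cref{lma:complexproj_simplex_handle} still foliate the walls $\pi^{-1}_{\sigma_k,i}\{y_i = 0\}$ away from the relevant boundary, and the positivity-of-intersection argument of \cref{cor:no_curve_crosses_handle} applies verbatim. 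Combined with the generator correspondence of the previous paragraph, this gives, for each admissible pair $(\boldsymbol \varepsilon_\ell, \mathfrak a_\ell)$ as in \cref{lma:action_window_limits}, a chain-level isomorphism of dg-algebras over $\boldsymbol k$
\[
	CE^\ast(\overline \varLambda(\boldsymbol \varepsilon_\ell); X^{\boldsymbol \varepsilon_\ell}, \mathfrak a_\ell) \cong \colim_{\sigma_k \in C_k} CE^\ast(\overline \varLambda(\boldsymbol \varepsilon_\ell); X(\sigma_k)^{\boldsymbol \varepsilon_\ell}, \mathfrak a_\ell)\, ,
\]
with the colimit taken in $\mathbf{dga}$.

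It remains to pass to the limit over action windows and to commute the two colimits, which proceeds exactly as in \cref{lma:action_window_limits}, \cref{cor:subalgebras_diagram_no_actions} and the proof of \cref{thm:ce_descent_first}: the invariance map $\varPsi$ of \cref{lma:cohomology_indep_of_size} restricts to each sub-dg-algebra, yielding $CE^\ast(\overline \varLambda; X(\sigma_k)) \cong \colim_\ell CE^\ast(\overline \varLambda(\boldsymbol \varepsilon_\ell); X(\sigma_k)^{\boldsymbol \varepsilon_\ell}, \mathfrak a_\ell)$ for each $\sigma_k$, and commuting $\colim_{\mathfrak a \to \infty}$ with $\colim_{\sigma_k \in C_k}$ then gives
\[
	CE^\ast(\overline \varLambda; X) \cong \colim_{\mathfrak a \to \infty} \colim_{\sigma_k \in C_k} CE^\ast(\overline \varLambda(\boldsymbol \varepsilon); X(\sigma_k)^{\boldsymbol \varepsilon}, \mathfrak a) \cong \colim_{\sigma_k \in C_k} CE^\ast(\overline \varLambda; X(\sigma_k))\, ,
\]
as claimed.

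The step I expect to be the main obstacle is the relative no-crossing statement. One must rule out holomorphic disks that evade the complex-hypersurface argument because their boundary runs along $\overline \varLambda$ through several building blocks. The hypothesis that there are no chords $\boldsymbol \varLambda \to \varSigma(\boldsymbol h)$ (or none $\varSigma(\boldsymbol h) \to \boldsymbol \varLambda$) is precisely what eliminates the mixed words of \cref{lma:relative_version_surgery_desc_of_generators} that would otherwise couple different building blocks; once those are gone, the disks in play have punctures only at $\boldsymbol \varLambda \to \boldsymbol \varLambda$ chords and the proof of \cref{cor:no_curve_crosses_handle} transfers. The one point requiring care is to check that the trivial extension of each $\varLambda_{\sigma_k}$ in the $\{\boldsymbol x = \boldsymbol 0\}$-directions keeps the boundary of such a disk from entering a leaf $V_{\sigma_k,(x_i,y_i)}$ tangentially away from $(\boldsymbol x, \boldsymbol y) = (\boldsymbol 0, \boldsymbol 0)$, which follows from the explicit model \eqref{eq:dl_trivial_extension} together with \cref{lma:basic_building_block_reeb}.
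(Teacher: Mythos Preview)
Your proposal is correct and follows essentially the same approach as the paper: use \cref{lma:relative_version_surgery_desc_of_generators} together with the hypothesis to reduce the generators to pure $\boldsymbol \varLambda \to \boldsymbol \varLambda$ chords, and then repeat the proof of \cref{thm:ce_descent_first} verbatim. The paper's own proof is terser than yours, simply citing \cref{lma:relative_version_surgery_desc_of_generators} and then declaring that ``the proof strategy is the same as for the proof of \cref{thm:ce_descent_first}''; your elaboration of the no-crossing step and the passage to limits is a faithful unpacking of that sentence.
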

			\begin{proof}
				By the surgery description of the generators of $CE^\ast(\overline \varLambda; X)$ in \cref{lma:relative_version_surgery_desc_of_generators}, the only Reeb chords of $\boldsymbol \varLambda(\boldsymbol \varepsilon)$ of action $< \mathfrak a$ are $\boldsymbol \varLambda(\boldsymbol \varepsilon) \to \boldsymbol \varLambda(\boldsymbol \varepsilon)$ in $\partial X_0^{\boldsymbol \varepsilon}$. Then the proof strategy is the same as for the proof of \cref{thm:ce_descent_first}.
			\end{proof}
			\begin{rmk}
				We note in particular that if $\boldsymbol V$ only consists of subcritical Weinstein manifolds, then the conditions in \cref{thm:colimit_of_dgas_relative_case} hold because $\varSigma(\boldsymbol h) = \varnothing$.
			\end{rmk}
			\begin{rmk}\label{rmk:not_subcrit}
				In case there is some $V^{(0)}_{\sigma_0}\in \boldsymbol V$ which is not subcritical, both \cref{lma:relative_version_surgery_desc_of_generators} and \cref{thm:colimit_of_dgas_relative_case} can still be made to hold the minor modification that we should replace $X_0$ (and correspondingly $X(\sigma_k)_0$) with $X'_0 := \# \boldsymbol V'_0$ where $\boldsymbol V'_0$ is obtained from $\boldsymbol V$ by replacing every $V^{(k)}_{\sigma_k}\in \boldsymbol V$ with its subcritical part $V^{(k)}_{\sigma_k,0}$ for $1\leq k\leq m$ and keeping the same Weinstein hypersurfaces. In other words, we do not remove the top Weinstein handles from the critical Weinstein manifolds $V^{(0)}_{\sigma_0}$, and we let $\varSigma(\boldsymbol h)$ be the union of those top attaching spheres of the top handles we do remove.
			\end{rmk}
			\begin{ex}\label{ex:unknot_pieces}
			Let $n\geq 4$ and consider the standard Weinstein $2n$-ball $(B^{2n}, \lambda)$ where $\lambda = \frac 12 \sum_{i=1}^n x_i dy_i - y_idx_i$. We consider a simplicial decomposition of $B^{2n}$ given by $(\varDelta^2, \boldsymbol V)$ where
				\begin{align*}
					\boldsymbol V &= \left\{B_1^{2n-2},B_2^{2n-2},B_3^{2n-2}, B^{4n-2}, \left\{\iota_i \colon B^{4n-2} \hookrightarrow \partial B_i^{2n-2}\right\}_{i=1}^3\right\} \\
					& \qquad \cup \left\{B_1^{2n}, B_2^{2n}, B_3^{2n}, \left\{\iota_{\#,k} \colon B_i^{2n-2} \#_{B^{2n-4}} B_j^{2n-2} \hookrightarrow \partial B_k^{2n}\right\}_{\left\{i,j\right\} = \left\{1,2,3\right\} \setminus \left\{k\right\}}\right\}
				\end{align*}
				see \cref{fig:decomposing_unknot} for an illustration with the $2$-simplex handle $B^{2n-4} \times D_\varepsilon T^\ast \varDelta^2$ hidden.
				\begin{figure}[!htb]
					\centering
					\includegraphics[scale=0.65]{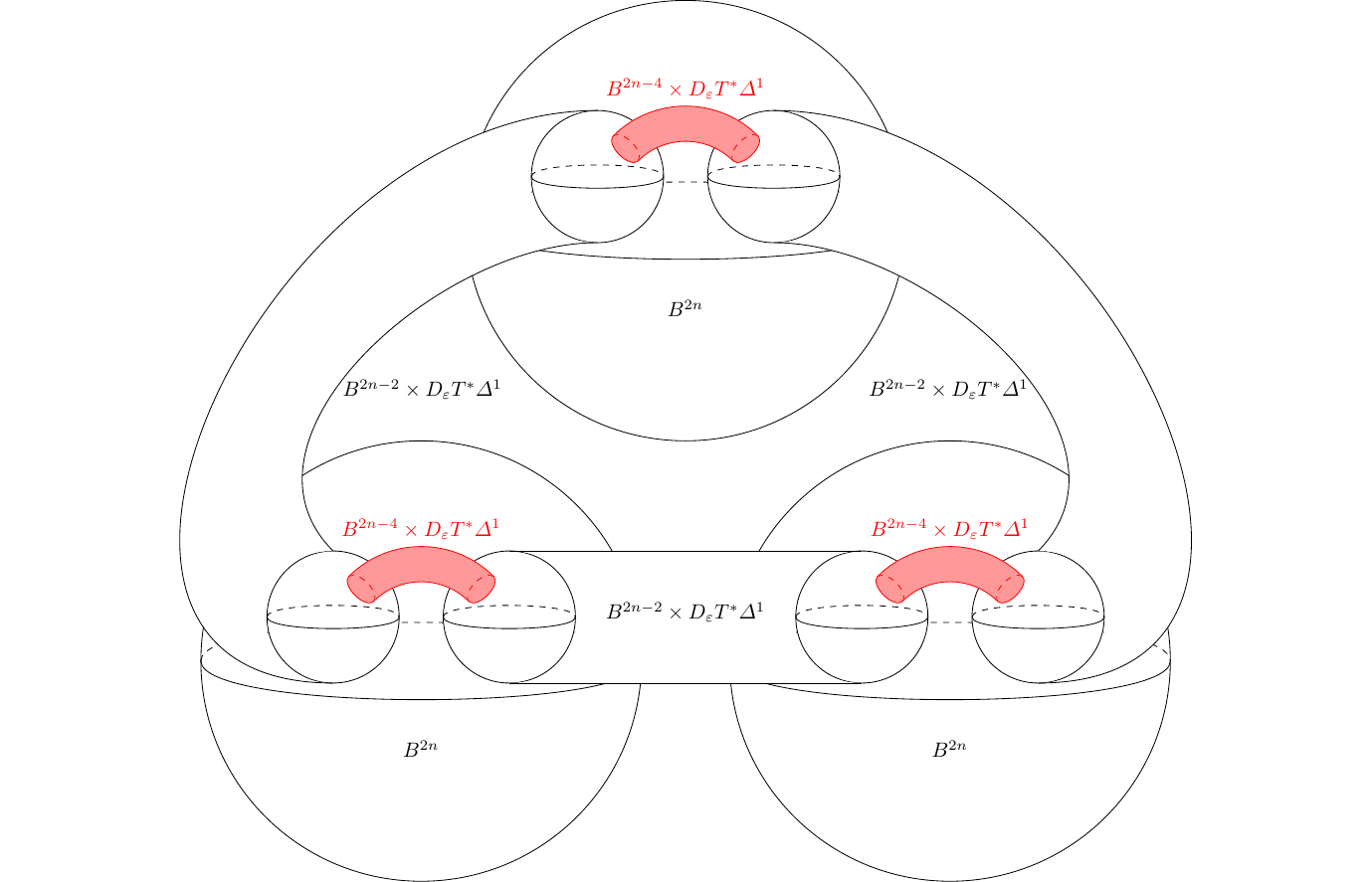}
					\caption{A simplicial decomposition $(\varDelta^2, \boldsymbol V)$, for the standard Weinstein $2n$-ball, with the $2$-simplex handle $B^{2n-4} \times D_\varepsilon T^\ast \varDelta^2$ hidden, to increase visual clarity.}\label{fig:decomposing_unknot}
				\end{figure}
				Now define the Legendrian submanifold $\boldsymbol \varLambda$ relative to $(C, \boldsymbol V)$ as follows, see \cref{fig:decomposing_unknot_piece}.
				\begin{itemize}
					\item Let $\varLambda_{\sigma_2} \subset \partial B^{2n-4}$ be the standard Legendrian $(n-3)$-unknot.
					\item Let $\varLambda_{\sigma_1} \subset \partial B^{2n-2}$ be the standard Legendrian $(n-2)$-unknot with one disk removed such that $\partial \varLambda_{\sigma_1} = \varLambda_{\sigma_2}$, for every $\sigma_1\in \varDelta^2_1$.
					\item Let $\varLambda_{\sigma_0} \subset \partial B^{2n}$ be be the standard Legendrian $(n-1)$-unknot with two disks removed such that $\partial \varLambda_{\sigma_0} = \varLambda_{\sigma_1} \#_{\varLambda_{\sigma_2}} \varLambda_{\sigma_1'}$, for every $\sigma_0\in \varDelta^2_0$ where $\sigma_1$ and $\sigma_1'$ are the two adjacent $1$-faces to $\sigma_0$.
				\end{itemize}
				\begin{figure}[!htb]
					\centering
					\includegraphics{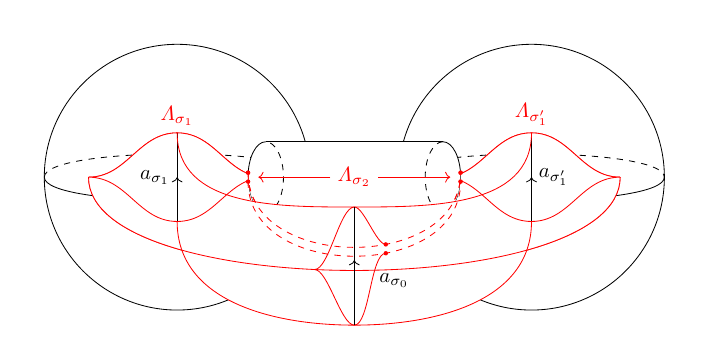}
					\caption{A piece of $\boldsymbol \varLambda$ in $\partial B^{2n}$ showing that $\partial \varLambda_{\sigma_0} = \varLambda_{\sigma_1} \#_{\varLambda_{\sigma_2}} \varLambda_{\sigma_1'}$ and moreover $\partial \varLambda_{\sigma_1} = \partial \varLambda_{\sigma_1'} = \varLambda_{\sigma_2} \subset \partial B^{2n-4}$.}\label{fig:decomposing_unknot_piece}
				\end{figure}
				Let $X := \# \boldsymbol V$, and let $\overline \varLambda \subset \partial X$ be the completion of $\boldsymbol \varLambda$, see \cref{dfn:completion_of_relative_leg}. Since every Weinstein manifold contained in $\boldsymbol V$ is subcritical, it follows from \cref{thm:colimit_of_dgas_relative_case} that $CE^\ast(\overline \varLambda ;X)$ is given by the following colimit
				\begin{equation}\label{eq:unknot_colim}
					CE^\ast(\overline \varLambda;X) \cong \colim\left(
					\begin{tikzcd}[row sep=scriptsize, column sep=tiny,ampersand replacement=\&]
						\&\&\mathcal A_{(\sigma_0)_1}\&\& \\
						\&\mathcal A_{(\sigma_1)_3} \urar \dlar\& \mathcal A_{\sigma_2} \lar \rar \dar \&\mathcal A_{(\sigma_1)_2} \ular \drar\& \\
						\mathcal A_{(\sigma_0)_2}\&\&\mathcal A_{(\sigma_1)_1} \ar[ll] \ar[rr]\&\&\mathcal A_{(\sigma_0)_3}
					\end{tikzcd}\right)\, ,
				\end{equation}
				where
				\begin{align*}
					\mathcal A_{\sigma_2} &:= \left\langle a_{\sigma_2}\right\rangle, \quad \partial a_{\sigma_2} = 0 \\
					\mathcal A_{\sigma_1} &:= \left\langle a_{\sigma_1}, a_{\sigma_2}\right\rangle, \quad \partial a_{\sigma_1} = a_{\sigma_2}, \quad \partial a_{\sigma_2} = 0 \\
					\mathcal A_{\sigma_0} &:= \left\langle a_{\sigma_0}, a_{\sigma_1}, a_{\sigma_1'}, a_{\sigma_2}\right\rangle, \quad \partial a_{\sigma_0} = a_{\sigma_1} - a_{\sigma_1'}, \quad \partial a_{\sigma_1} = \partial a_{\sigma_1'} = a_{\sigma_2}, \quad \partial a_{\sigma_2} = 0 \, .
				\end{align*}
				It is true by construction that $X \cong B^{2n}$ and that $\overline \varLambda$ is Legendrian isotopic to the standard unknot in $\partial X$. We can also algebraically construct a quasi-isomorphism between $CE^\ast(\overline \varLambda;X)$ and the dg-algebra $\mathbb F[a]$ where $\abs a = 1-n$ and $\partial a = 0$. This is done similarly to \cite[Section 8.4]{chekanov2002differential}, \cite[Section 3.1]{kalman2005contact} and \cref{sec:relating_ce_and_ginzburg}.
			\end{ex}
			\begin{rmk}\label{rmk:unknot_pieces}
				The significance of \cref{ex:unknot_pieces} is that the dg-subalgebras $\mathcal A_{\sigma_0}$ of $CE^\ast(\overline \varLambda; X)$ have geometric meaning. Namely, they are Chekanov--Eliashberg dg-algebras of unknots with boundary and corners, see \cref{fig:decomposing_unknot_piece}.

				This type of decomposition bears resemblance with the cellular Legendrian contact homology dg-algebra defined by Rutherford--Sullivan \cite{rutherford2020cellularI,rutherford2019cellularII,rutherford2019cellularIII} with one key exception that our resulting diagrams appearing in the main theorem \cref{thm:ce_descent}, \cref{thm:colimit_of_dgas_relative_case} and \eqref{eq:unknot_colim} behaves contravariantly with respect to a simplicial decomposition. Namely, any inclusion of faces $\sigma_k \subset \sigma_{k+1}$ gives a reverse inclusion of dg-subalgebras $\mathcal A_{\sigma_{k+1}}(\boldsymbol h) \subset \mathcal A_{\sigma_k}(\boldsymbol h)$, see \cref{cor:subalgebras_diagram_no_actions}. The cellular Legendrian contact homology dg-algebra of Rutherford--Sullivan on the other hand behaves covariantly with respect to a cellular decompositions, see \cite[Remark 8.1]{rutherford2019cellularII}.
			\end{rmk}
		\subsection{The Chekanov--Eliashberg cosheaf}\label{sec:ce_cosheaf}
			Let $(C, \boldsymbol V)$ be a simplicial decomposition of a Weinstein manifold $X$. Let $\boldsymbol C$ be the category with one object $\sigma_k$ for each $\sigma_k \in C$ and a unique morphism $\sigma_k \longrightarrow \sigma_\ell$ for each inclusion of faces $\sigma_k \subset \sigma_\ell$ in $C$. Recall that $\mathbf{dga}$ is the category of associative, non-commutative, non-unital dg-algebras over varying non-unital rings, see \cref{sec:simplical_descent} for details.
			\begin{dfn}[Chekanov--Eliashberg cosheaf]\label{dfn:ce_cosheaf}
				Let $(C, \boldsymbol V)$ be a simplicial decomposition of the Weinstein manifold $X$. Define the \emph{Chekanov--Eliashberg cosheaf} of $(C, \boldsymbol V)$ to be the following functor.
				\begin{align*}
					\mathcal{CE} \colon \boldsymbol C^{\text{op}} &\longrightarrow \textbf{dga} \\
					\sigma_k &\longmapsto (CE^\ast(\varSigma_{\supset \sigma_k}(\boldsymbol h); X(\sigma_k)_0), \boldsymbol k_{\sigma_k}) \\
					(\sigma_k \to \sigma_\ell) &\longmapsto ((CE^\ast(\varSigma_{\supset \sigma_\ell}(\boldsymbol h); X(\sigma_\ell)_0), \boldsymbol k_{\sigma_\ell}) \to (CE^\ast(\varSigma_{\supset \sigma_k}(\boldsymbol h); X(\sigma_k)_0)), \boldsymbol k_{\sigma_k})
				\end{align*}
			\end{dfn}
			The following is now a reformulation of the main theorem \cref{thm:ce_descent}.
			\begin{thm}[Cosheaf property]\label{thm:cosheaf_property}
				The functor $\mathcal{CE}$ satisfies that there is a quasi-isomorphism of dg-algebras over $\boldsymbol k$
				\[
					CE^\ast(\varSigma(\boldsymbol h); X_0) \cong \colim \mathcal{CE}
				\]
			\end{thm}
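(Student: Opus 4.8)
The plan is to deduce \cref{thm:cosheaf_property} as a direct repackaging of \cref{thm:ce_descent}, the only work being to check that the colimit of the functor $\mathcal{CE}$ is literally the colimit appearing on the right-hand side of \cref{thm:ce_descent}.

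First I would unwind the indexing category. The category $\boldsymbol C$ has one object per face $\sigma_k \in C_k$ and a unique arrow $\sigma_k \to \sigma_\ell$ for each face inclusion $\sigma_k \subset \sigma_\ell$, so $\boldsymbol C^{\mathrm{op}}$ has a unique arrow $\sigma_\ell \to \sigma_k$ for each such inclusion. On objects $\mathcal{CE}(\sigma_k) = (CE^\ast(\varSigma_{\supset \sigma_k}(\boldsymbol h); X(\sigma_k)_0), \boldsymbol k_{\sigma_k})$, and on the arrow $\sigma_\ell \to \sigma_k$ of $\boldsymbol C^{\mathrm{op}}$ (i.e.\ $\sigma_k \subset \sigma_\ell$) it is the pair consisting of the non-unital ring map $\boldsymbol k_{\sigma_\ell} \to \boldsymbol k_{\sigma_k}$ and the dg-algebra map $CE^\ast(\varSigma_{\supset \sigma_\ell}(\boldsymbol h); X(\sigma_\ell)_0) \to CE^\ast(\varSigma_{\supset \sigma_k}(\boldsymbol h); X(\sigma_k)_0)$. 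Under the quasi-isomorphisms of \cref{lma:ce_spheres} the latter corresponds to the inclusion of dg-subalgebras $\mathcal A_{\sigma_\ell}(\boldsymbol h) \subset \mathcal A_{\sigma_k}(\boldsymbol h)$, obtained by composing the elementary inclusions from \cref{cor:subalgebras_diagram_no_actions} along a chain $\sigma_k \subset \sigma_{k+1} \subset \cdots \subset \sigma_\ell$. This makes $\mathcal{CE}$ a well-defined functor: functoriality amounts to the fact that these inclusions, being induced by inclusions of generating sets, compose strictly, and the commutative squares of \cref{cor:subalgebras_diagram_no_actions} together with \cref{lma:ce_spheres} show that the quasi-isomorphisms $\mathcal A_{\sigma_k}(\boldsymbol h) \cong CE^\ast(\varSigma_{\supset \sigma_k}(\boldsymbol h); X(\sigma_k)_0)$ assemble into a natural quasi-isomorphism of $\boldsymbol C^{\mathrm{op}}$-diagrams in $\mathbf{dga}$.

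I would then observe that this diagram is exactly the one whose colimit is taken in \cref{thm:ce_descent}: as explained following \cref{thm:ce_descent_intro}, that colimit $\colim_{\sigma_k \in C_k} CE^\ast(\varSigma_{\supset \sigma_k}(\boldsymbol h); X(\sigma_k)_0)$ is the colimit over faces equipped with the reverse-inclusion maps, formed in the category $\mathbf{dga}$ of dg-algebras over varying non-unital rings, which is precisely the target of $\mathcal{CE}$. Hence $\colim \mathcal{CE} = \colim_{\sigma_k \in C_k} CE^\ast(\varSigma_{\supset \sigma_k}(\boldsymbol h); X(\sigma_k)_0)$, and \cref{thm:ce_descent} supplies the asserted quasi-isomorphism $CE^\ast(\varSigma(\boldsymbol h); X_0) \cong \colim \mathcal{CE}$ over $\boldsymbol k$.

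There is no substantive obstacle: the content of \cref{thm:cosheaf_property} is entirely contained in \cref{thm:ce_descent}, and the only point needing a line of care is the purely formal verification that $\mathcal{CE}$ is functorial rather than merely defined on the generating morphisms of $\boldsymbol C^{\mathrm{op}}$ --- which, as above, follows from the fact that all the structure maps are induced by inclusions of generators and are therefore compatible with composition.
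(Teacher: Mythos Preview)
Your proposal is correct and takes the same approach as the paper: the paper itself simply states that \cref{thm:cosheaf_property} is a reformulation of \cref{thm:ce_descent} and gives no further argument. Your extra lines verifying functoriality and matching the diagram with that of \cref{thm:ce_descent} are fine (and more detailed than the paper), but nothing beyond this is needed.
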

			\begin{rmk}
				\begin{enumerate}
					\item With the terminology used in \cite{shepard1985cellular} and \cite[Section 4]{curry2014sheaves} $\mathcal{CE}$ is a \emph{cellular cosheaf}.
					\item Let $X$ be a Weinstein manifold and consider a category with objects being Weinstein subsectors of $X$. Equip this category with a coverage consisting of good sectorial covers, which turns it into a site $\textbf{S}(X)$. Then we could define $\mathcal{CE}$ as a precosheaf $\mathcal{CE} \colon \textbf{S}(X)^{\text{op}} \longrightarrow \textbf{dga}$, which indeed is a cosheaf on the site $\textbf{S}(X)$ in the ordinary sense by \cref{thm:ce_descent} and \cref{thm:one-to-one_corr_covers_and_decomps}.
				\end{enumerate}
			\end{rmk}

			It is sensible to study the full Chekanov--Eliashberg cosheaf $\mathcal{CE}$, ie keeping track of the entire diagram $\left\{CE^\ast(\varSigma_{\supset \sigma_k}(\boldsymbol h); X(\sigma_k)_0)\right\}_{\sigma_k\in C}$ rather than just presenting $CE^\ast(\varSigma(\boldsymbol h);X)$ as a colimit. This is because each dg-subalgebra bears geometric significance as observed for instance in the following situations.
			\begin{itemize}
				\item If $C = \varDelta^1$ and $\boldsymbol V = \left\{V^{(1)},X_1^{2n},X_2^{2n}, V^{(1)} \hookrightarrow \partial X_i\right\}$ then \cref{thm:cosheaf_property} specializes to the quasi-isomorphism
				\[
					CE^\ast(\varSigma(\boldsymbol h); (X_1 \#_V X_2)_0) \cong \colim\left(\begin{tikzcd}[row sep=scriptsize, column sep=1mm]
						CE^\ast(\varSigma_{\supset \sigma_1}; X(\sigma_1)_0) \rar \dar & CE^\ast(\varSigma_{\supset (\sigma_0)_2}; X((\sigma_0)_2)_0) \\
						CE^\ast(\varSigma_{\supset (\sigma_0)_1}; X((\sigma_0)_1)_0) & 
					\end{tikzcd}\right)\, .
				\]
				In this situation the dg-subalgebras reveal the definition of the Chekanov--Eliashberg dg-algebra for singular Legendrians in \cite{asplund2021chekanov}. Namely, the dg-subalgebras $CE^\ast(\varSigma_{\supset (\sigma_0)_i}; X((\sigma_0)_i)_0)$ were denoted by $CE^\ast((V,h);X_i)$ in loc.\@ cit.\@ and defined as the Chekanov--Eliashberg dg-algebra of $\Skel V \subset \partial X_i$ thought of as a singular Legendrian.
				\item In the situation of \cref{ex:unknot_pieces}, each dg-subalgebra $CE^\ast(\varLambda_{\sigma_0}; X(\sigma_0)_0)$ has the geometric meaning that it is the Chekanov--Eliashberg dg-algebra of a Legendrian submanifold-with-boundary-and-corners.
			\end{itemize}
		\subsection{Plumbing of cotangent bundles of spheres}\label{sec:plumbing}
			Let $Q$ be a quiver (without potential) with vertex set $Q_0$ and arrow set $Q_1$. A path in $Q$ is a sequence of arrows $a_n \cdots a_1$ such that the head of $a_{i+1}$ is the tail of $a_i$ for $i\in \left\{1,\ldots,n-1\right\}$, and the path algebra of $Q$ is the free module with basis consisting of all paths in $Q$, including a trivial path $e_v$ of length $0$ for each $v\in Q_0$.

			We now review the definition of the Ginzburg dg-algebra $\mathscr G^n_Q$ from \cite{ginzburg2006calabi} following \cite[Section 6.2]{keller2011deformed} and \cite[Section 2]{lekili2020homological}. Given a quiver $Q$, consider a graded quiver $\overline Q$ with vertex set $Q_0$ and arrows consisting of the three following kinds.
			\begin{itemize}
				\item An arrow $g \colon v\to w$ in degree $0$ for each $g \colon v\to w$ in $Q_1$.
				\item An arrow $g^\ast \colon w \to v$ in degree $2-n$ for each $g \colon v\to w$ in $Q_1$.
				\item An arrow $h_v \colon v\to v$ in degree $1-n$ for each $v \in Q_0$.
			\end{itemize}
			We define $\mathscr G^n_Q$ to be the path algebra of $\overline Q$ equipped with the differential	$d$ given on arrows by
			\[
				d g = dg^\ast = 0, \qquad dh_v = \sum_{g \colon v\to \bullet} g g^\ast - \sum_{g \colon \bullet \to v} g^\ast g, \quad v\in Q_0\, ,
			\]
			and extended to the whole path algebra by linearity and the Leibniz rule. The Ginzburg dg-algebra $\mathscr G^n_Q$ is quasi-isomorphic to the so-called $n$-Calabi--Yau completion of the path algebra of $Q$, and hence it is homologically smooth and $n$-Calabi--Yau \cite[Theorem 5.2 and Theorem 6.3]{keller2011deformed}.

			 Etgü and Lekili \cite{etgu2019fukaya} computed the Chekanov--Eliashberg dg-algebra of the Legendrian attaching spheres of a plumbing of copies of $T^\ast S^n$ and showed by direct computation that it is quasi-isomorphic to $\mathscr G^n_Q$ for $n = 2$ where $Q$ is a Dynkin quiver of type $A$ and $D$. The result for the remaining Dynkin cases $Q = E_6,E_7,E_8$ was proven by Lekili and Ueda \cite[Section 5]{lekili2020homological}.

			Moreover, in the case $n \geq 3$, Lekili and Ueda \cite[Section 3]{lekili2020homological} used Koszul duality \cite[Theorem 68]{ekholm2017duality} to prove that there is a quasi-isomorphism
			\begin{equation}\label{eq:ce_ginzburg}
				CE^\ast(\varLambda^{n-1}_Q) \cong \mathscr G^n_Q\, ,
			\end{equation}
			for Dynkin quivers $Q$, where $\varLambda^{n-1}_Q$ is the union of the Legendrian attaching spheres of a plumbing of copies of $T^\ast S^n$ according to the plumbing quiver $Q$. A direct computation in case $n = 3$ and for Dynkin quivers of type A was done by Li \cite[Proposition 7.2]{li2019koszul} using the cellular dg-algebra of Rutherford--Sullivan \cite{rutherford2020cellularI,rutherford2019cellularII,rutherford2019cellularIII}.

			In this section we generalize \eqref{eq:ce_ginzburg} by direct computation for any $n\geq 3$ and any quiver (not only Dynkin ones) using simplicial decompositions.

			Before delving into computations and the proof of \eqref{eq:ce_ginzburg} we give a brief outline of the strategy.
			\begin{enumerate}
				\item Find a simplicial decomposition with $\dim C = 1$ of the plumbing of copies of $T^\ast S^n$.
				\item Compute the Chekanov--Eliashberg dg-algebra associated to each piece of the simplicial decomposition. The Legendrian submanifold in each piece is either an unknot (with boundary) or a Hopf link (with boundary).
				\item After taking colimits, the result is almost already quasi-isomorphic to the Ginzburg algebra by inspection, except that there are too many generators. After getting rid of the extra generators via a certain chain homotopy equivalence the result follows.
			\end{enumerate}
			\subsubsection{From a plumbing graph to a simplicial decomposition}\label{sec:from_plumb_to_simplicial_complex}
				Let $n \geq 3$ and let $Q$ be a quiver. Let $C$ be the graph which is obtained by taking the underlying graph of $Q$, and adding one extra vertex in the middle of each edge. For convenience, we consider a partition $C_0 = V \sqcup E$ where $V$ corresponds to elements in $Q_0$, and $E$ corresponds to elements in $Q_1$ and are the newly added vertices, see \cref{fig:graph_simpl_complex}.

				\begin{figure}[!htb]
					\centering
					\includegraphics[scale=1.5]{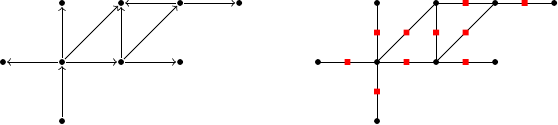}

					\vspace{5mm}

					\includegraphics[scale=1.5]{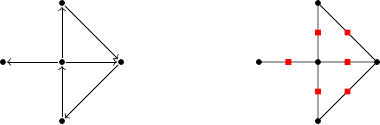}
					\caption{Left: Plumbing quivers $Q$. Right: The corresponding graph $C$ with vertex set $V \sqcup E$. The vertices represented by black dots $\bullet$ belong to $V$, and the vertices represented by red squares $\textcolor{red}{\blacksquare}$ belong to $E$.}\label{fig:graph_simpl_complex}
				\end{figure}

				We describe the plumbing of $T^\ast S^n$ as a simplicial decomposition $(C, \boldsymbol V)$ of the standard Weinstein $2n$-dimensional ball $B^{2n}$ with a number of Weinstein $1$-handles attached, together with a Legendrian $\boldsymbol \varLambda$ relative to $(C, \boldsymbol V)$, whose completion $\overline \varLambda$ is the union of Legendrian attaching spheres $\varLambda^{n-1}_Q$. The set $\boldsymbol V$ consists of
				\begin{enumerate}
					\item One copy of $V^{(1)}_{\sigma_1} := B^{2n-2}$ with the standard Liouville form $\lambda = \frac 12 \sum_{i=1}^{n-1} (x_i dy_i - y_i dx_i)$ for each edge $\sigma_1\in C_1$.
					\item One standard Weinstein ball $(B^{2n},\frac 12 \sum_{i=1}^{n} (x_i dy_i - y_i dx_i))$ for each vertex $\sigma_0\in C_0$.
					\item A Weinstein hypersurface 
					\[
						\bigsqcup_{\sigma_1 \supset \sigma_0} V^{(1)}_{\sigma_1} \hookrightarrow \partial B^{2n}\, ,
					\]
					for each vertex $\sigma_0\in C_0$.
				\end{enumerate}
				The Legendrian $\boldsymbol \varLambda$ relative to $(C, \boldsymbol V)$ consists of the following.
				\begin{enumerate}
					\item For each $\sigma_0 \in E$ let $\varLambda_{\sigma_0}$ be two linked Legendrian $(n-1)$-disks $\varLambda_1 \cup \varLambda_2$ such that $\partial \varLambda_i \subset \partial V^{(1)}_{(\sigma_1)_i}$ is the standard Legendrian $(n-2)$-unknot, see \cref{fig:locally_at_edge_vertex}.
					\item For each $\sigma_0\in V$ let $\varLambda_{\sigma_0}$ be the standard Legendrian $(n-1)$-unknot with $k$ open disks removed, where $k$ is the valency of the vertex $\sigma_0 \in C_0$. Each component of $\partial \varLambda_{\sigma_0}$ is a standard Legendrian $(n-2)$-unknot in $\partial V^{(1)}_{\sigma_1}$ for $\sigma_1 \supset \sigma_0$, see \cref{fig:locally_at_vertex_vertex}.
				\end{enumerate}
				\begin{figure}[!htb]
					\centering
					\includegraphics[scale=1.5]{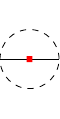}
					\hspace{15mm}
					\includegraphics{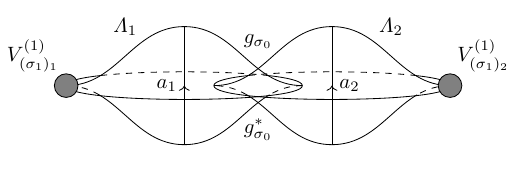}
					\includegraphics{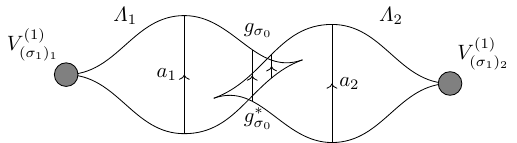}
					\caption{Left: Local depiction of a vertex $\sigma_0\in E$. Right: Two copies of $B^{2n-2}$ with $\varLambda_{\sigma_0}$ being two linked disks with boundary on $B^{2n-2} \sqcup B^{2n-2}$ in a Darboux chart. Bottom: A slice of $\varLambda_{\sigma_0}$ in generic position.}\label{fig:locally_at_edge_vertex}
				\end{figure}
				\begin{figure}[!htb]
					\centering
					\includegraphics[scale=1.5]{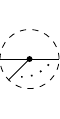}
					\hspace{30mm}
					\includegraphics{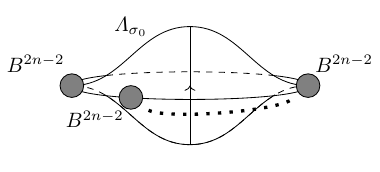}
					\caption{Left: Local depiction of a vertex $\sigma_0\in V$ with valency $k$. Right: $k$ copies of $B^{2n-2}$ with $\varLambda_{\sigma_0}$ being a punctured Legendrian $(n-1)$-unknot with boundary on $\bigsqcup_{i=1}^k B^{2n-2}$ in a Darboux chart.}\label{fig:locally_at_vertex_vertex}
				\end{figure}
				\begin{figure}[!htb]
					\centering
					\includegraphics{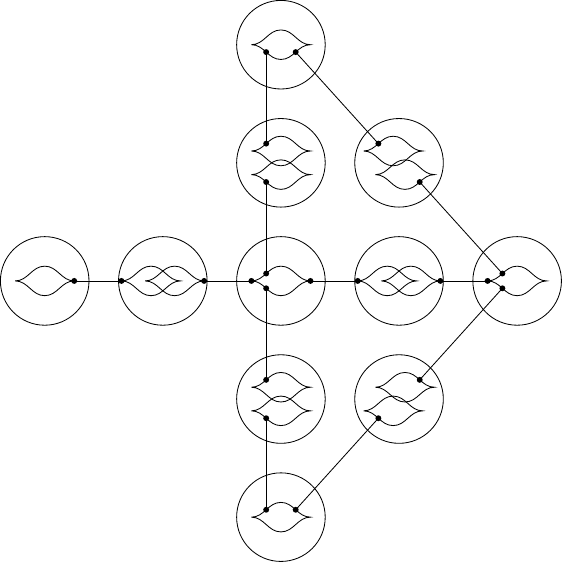}
					\caption{The simplicial decomposition $(C, \boldsymbol V)$ together with a Legendrian $\boldsymbol \varLambda$ corresponding to the plumbing of $T^\ast S^n$ according to the plumbing quiver shown in the bottom left of \cref{fig:graph_simpl_complex}.}\label{fig:globally_simplicial_decomp_plumbing}
				\end{figure}
			Let $\varLambda^{n-1}_Q := \overline \varLambda$ be the completion of $\boldsymbol \varLambda$ as defined in \cref{dfn:completion_of_relative_leg}. The Weinstein manifold $X := \# \boldsymbol V_0$ is Weinstein isomorphic to a standard Weinstein $2n$-ball with $c$ $1$-handles attached (the number of simple cycles in $C$). The Legendrian $\varLambda^{n-1}_Q$ is a union of Legendrian spheres in $\partial X \overset{\text{diffeo.}}{\cong} \#^c(S^1 \times S^{2n-2})$ (regular topological connected sum) being the attaching spheres for the Weinstein top handles of the plumbing of copies of $T^\ast S^n$ according to the plumbing quiver $Q$.

			\subsubsection{Computation of the Chekanov--Eliashberg dg-algebra}\label{sec:computation_of_ce_dga}
				Our main object of study is the Chekanov--Eliashberg dg-algebra of $\varLambda^{n-1}_Q$. By \cref{thm:colimit_of_dgas_relative_case}, $CE^\ast(\varLambda^{n-1}_Q; X)$ is the colimit of a diagram consisting of dg-subalgebras associated to each face of $C$ since $X$ is subcritical. Using standard representations of Legendrian spheres and disks in Darboux charts in standard contact spheres, we describe a quasi-isomorphic dg-algebra $\mathcal A \cong CE^\ast(\varLambda^{n-1}_Q;X)$. The dg-algebra $\mathcal A$ is given by the colimit of a diagram consisting of $\mathcal A_{\sigma_0}$ and $\mathcal A_{\sigma_1}$ for each $\sigma_0,\sigma_1 \in C$ together with inclusions induced by the inclusion on the set of generators $\mathcal A_{\sigma_1} \hookrightarrow \mathcal A_{\sigma_0}$ for each $\sigma_1 \supset \sigma_0$. Each dg-subalgebra $\mathcal A_{\sigma_1}$ is generated by a single generator $b_{\sigma_1}$ in degree $2-n$, and the differential is given by $\partial b_{\sigma_1} = 0$.

				Next we describe each dg-subalgebra $\mathcal A_{\sigma_0}$ for $\sigma_0\in C_0 = V \sqcup E$.
				\begin{description}
					\item[$\sigma_0\in V$] The dg-subalgebra $\mathcal A_{\sigma_0}$ is generated by $a_{\sigma_0}$ in degree $1-n$ and $b_{\sigma_1}$ in degree $2-n$ for each $\sigma_1 \supset \sigma_0$. The differential is given on the generators by
					\[
						\partial a_{\sigma_0} = \sum_{\sigma_1 \supset \sigma_0} b_{\sigma_1}, \qquad \partial b_{\sigma_1} = 0\, .
					\]

					Using the local picture in a Darboux ball as shown in \cref{fig:locally_at_vertex_vertex} we now give geometric meaning to the dg-subalgebra using standard representatives in Darboux charts. The generator $a_{\sigma_0}$ is the unique Reeb chord of $\varLambda_{\sigma_0}$ in $\partial B^{2n}$ and each $b_{\sigma_1}$ corresponds to the unique Reeb chord of $\partial \varLambda_{\sigma_0}$ in $\partial B^{2n-2}$. The differentials are found via counting Morse flow trees \cite{ekholm2007morse}.
					\item[$\sigma_0\in E$] The dg-subalgebra $\mathcal A_{\sigma_0}$ is generated by $a_1$ and $a_2$ both in degree $1-n$, $b_1$ and $b_2$ both in degree $2-n$, $g_{\sigma_0}$ in degree $0$ and $g^\ast_{\sigma_0}$ in degree $2-n$. The differential is given on the generators by
					\[
						\partial a_1 = b_1 - g_{\sigma_0}g^\ast_{\sigma_0}, \qquad \partial a_2 = b_2 + g^\ast_{\sigma_0} g_{\sigma_0}, \qquad \partial b_1 = \partial b_2 = \partial g_{\sigma_0} = \partial g^\ast_{\sigma_0} = 0\, .
					\]

					\begin{figure}[!htb]
						\centering
						\includegraphics{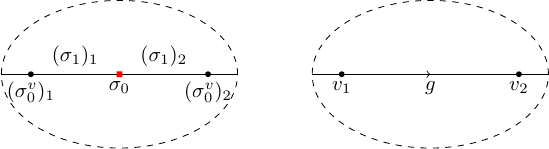}
						\caption{Labeling of the generators of $\mathcal A_{\sigma_0}$ depending on the orientation of the underlying quiver}\label{fig:edge_ori}
					\end{figure}

					Note that we use the following convention depending on the orientation of the edge $g\in Q_1$ corresponding to $\sigma_0\in E$. The vertices adjacent to $\sigma_0$ are denoted by $(\sigma_0^v)_1$ and $(\sigma_0^v)_2$, respectively, and they correspond to vertices $v_1,v_2 \in Q_0$. We enumerate them such that $g \colon v_1 \longrightarrow v_2$ is oriented from $v_1$ to $v_2$, see \cref{fig:edge_ori}.

					Using the local picture in a Darboux ball as shown in \cref{fig:locally_at_edge_vertex} we now give geometric meaning to the dg-subalgebra using standard representatives in Darboux charts. The generators $a_1$ and $a_2$ are the unique Reeb chords of $\varLambda_1$ and $\varLambda_2$ in $\partial B^{2n}$, respectively. The generators $b_1$ and $b_2$ are the unique Reeb chords of $\partial \varLambda_1$ and $\partial \varLambda_2$ in $\partial B^{2n-2}$, respectively. The generator $g_{\sigma_0}$ is the unique mixed Reeb chord going from $\varLambda_2$ to $\varLambda_1$ and $g^\ast_{\sigma_0}$ is the unique mixed Reeb chord going from $\varLambda_1$ to $\varLambda_2$. The differentials are found via counting Morse flow trees \cite{ekholm2007morse}.
				\end{description}
			\subsubsection{Relating the Chekanov--Eliashberg dg-algebra with the Ginzburg dg-algebra}\label{sec:relating_ce_and_ginzburg}
				Fix $\sigma_1\in C_1$ which is adjacent to $\sigma_0^v\in V$ and $\sigma_0^e \in E$. Then we may write $\varLambda^{n-1}_Q = \bigcup_{\sigma_0^v \in V} \varSigma_{\sigma_0^v}$ where $\varSigma_{\sigma_0^v} \subset \partial X$ is a Legendrian $(n-1)$-sphere for each $\sigma_0^v\in V$. The main simple observation is that $\varSigma_{\sigma_0^v}$ is a standard Legendrian $(n-1)$-unknot after performing a number of ``Reidemeister II moves'', see \cref{fig:reidemeister_ii_canceling}. We now relate $\mathcal A$ with the Ginzburg dg-algebra by ``undoing the Reidemeister II moves''. We show that $\mathcal A$ is chain homotopy equivalent to $\mathscr G^n_Q$.

				\begin{figure}[!htb]
					\centering
					\includegraphics{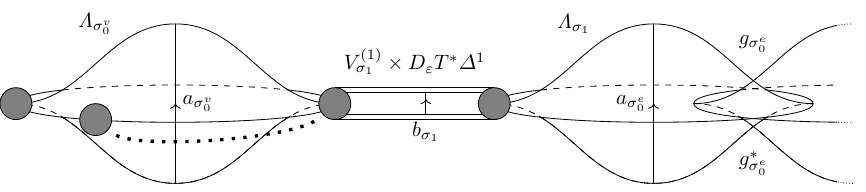}
					\caption{Local picture of $\varSigma_{\sigma_0^v}$ in a Darboux chart near $\sigma_1 \in C_1$ adjacent to $\sigma_0^v \in V$ and $\sigma_0^e \in E$. The left part of the figure lives in $\partial V^{(0)}_{\sigma_0^v} = \partial B^{2n}$, the right part of the figure lives in $\partial V^{(0)}_{\sigma_0^e} = \partial B^{2n}$, and they are connected by the $1$-simplex handle $V^{(1)}_{\sigma_1} \times D_\varepsilon T^\ast \varDelta^1$.}\label{fig:reidemeister_ii_canceling}
				\end{figure}

				A part of a standard representative of $\varSigma_{\sigma_0^v}$ in a Darboux chart is depicted in \cref{fig:reidemeister_ii_canceling}. Using this representation we have from \cref{sec:computation_of_ce_dga} that the differential on the generators of $\varSigma_{\sigma_0^v}$ is given by
				\[
					\partial a_{\sigma_0^v} = \sum_{\sigma_1 \supset \sigma_0^v} b_{\sigma_1}, \qquad\partial a_{\sigma_0^e} = b_{\sigma_1} + \boldsymbol g_{\sigma_0^e}, \qquad \partial b_{\sigma_1} = \partial g_{\sigma_0^e} = \partial g^\ast_{\sigma_0^e} = 0\, ,
				\]
				where we define
				\[
					\boldsymbol g_{\sigma_0^e} := \begin{cases}
						-g_{\sigma_0^e}g^\ast_{\sigma_0^e} &\text{if $g$ is oriented away from $\sigma_0^v$} \\
						g^\ast_{\sigma_0^e}g_{\sigma_0^e} &\text{if $g$ is oriented towards $\sigma_0^v$}
					\end{cases}\, ,
				\]
				where $g\in Q_1$ is the arrow corresponding to $\sigma_0^e \in E$. Then we define the following map on generators.
				\begin{align}
					\tau_{\sigma_1} \colon \mathcal A &\longrightarrow \mathcal A'_{\sigma_1} \label{eq:tau_edge} \\
					a_{\sigma_0^e} &\longmapsto 0 \nonumber \\
					b_{\sigma_1} &\longmapsto -\boldsymbol g_{\sigma_0^e} \nonumber\\
					x &\longmapsto x \quad \text{for any other generator}\nonumber
				\end{align}
				and extend it to the whole of $\mathcal A$ by linearity and multiplicativity. Here we define $\mathcal A'_{\sigma_1}$ to be the dg-algebra freely generated by all generators of $\mathcal A$ after removing $a_{\sigma_0^e}$ and $b_{\sigma_1}$, and $\mathcal A'_{\sigma_1}$ is equipped with the differential that is defined to be the same on generators as in $\mathcal A$ with the only exception that
				\[
					\partial a_{\sigma_0^v} = -\boldsymbol g_{\sigma_0^e} + \sum_{\substack{\sigma_1' \supset \sigma_0^v \\ \sigma_1' \neq \sigma_1}} b_{\sigma_1}\, .
				\]
				The dg-algebra $\mathcal A'_{\sigma_1}$ is obtained by ``undoing the Reidemeister II move'' at $\sigma_1\in C_1$ via the map $\tau_{\sigma_1}$. It follows from \cite[Section 8.4 and Lemma 2.2]{chekanov2002differential} that $\tau_{\sigma_1}$ is in fact a chain homotopy equivalence. We give a direct proof of this fact following \cite[Section 3.1]{kalman2005contact} by constructing a homotopy inverse.
				\begin{lma}\label{lma:edge_canceling_quiso}
					The map $\tau_{\sigma_1}$ is chain homotopy equivalence.
				\end{lma}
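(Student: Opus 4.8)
The plan is to realise $\tau_{\sigma_1}$ as the composite of a tame isomorphism of semi-free dg-algebras with an algebraic destabilisation, and then to write down an explicit dg-algebra homotopy inverse of the destabilisation, in the spirit of \cite[Section 3.1]{kalman2005contact} and \cite[Lemma 2.2 and Section 8.4]{chekanov2002differential}.

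First I would perform a triangular, degree- and idempotent-preserving change of the generating set of $\mathcal A$, supported near $\sigma_1$: replace $b_{\sigma_1}$ by $\widehat b_{\sigma_1} := \partial a_{\sigma_0^e} = b_{\sigma_1} + \boldsymbol g_{\sigma_0^e}$ and replace $a_{\sigma_0^v}$ by $\widehat a_{\sigma_0^v} := a_{\sigma_0^v} - a_{\sigma_0^e}$. Since $\boldsymbol g_{\sigma_0^e}$ is a word in $g_{\sigma_0^e},g^\ast_{\sigma_0^e}$ (hence involves neither $a_{\sigma_0^e}$ nor $b_{\sigma_1}$) and $a_{\sigma_0^e},a_{\sigma_0^v}$ have the same degree $1-n$, these substitutions are invertible and assemble into an isomorphism $\Phi$ from $\mathcal A$ onto the semi-free dg-algebra $\widehat{\mathcal A}$ on the new generators, for which one computes $\partial\widehat b_{\sigma_1}=0$, $\partial a_{\sigma_0^e}=\widehat b_{\sigma_1}$, and $\partial\widehat a_{\sigma_0^v}=\sum_{\sigma_1'\supset\sigma_0^v,\,\sigma_1'\neq\sigma_1}b_{\sigma_1'}-\boldsymbol g_{\sigma_0^e}$. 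The crucial point to verify is that in $\widehat{\mathcal A}$ no generator other than $a_{\sigma_0^e}$ has $a_{\sigma_0^e}$ or $\widehat b_{\sigma_1}$ occurring in its differential: from the differentials read off in \cref{sec:computation_of_ce_dga} and \cref{sec:relating_ce_and_ginzburg}, the only differentials in $\mathcal A$ involving $b_{\sigma_1}$ are $\partial a_{\sigma_0^e}$ and $\partial a_{\sigma_0^v}$ (as $b_{\sigma_1}$ has degree $2-n$ and the only generators of degree $1-n$ are the $a_\bullet$, whose differentials are explicit sums of $b_\bullet$'s and $gg^\ast$-terms), while $a_{\sigma_0^e}$, having degree $1-n$, cannot appear in any differential because there are no generators in degree $-n$ for $n\geq 3$. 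Hence $\widehat{\mathcal A}$ is exactly the algebraic stabilisation of $\mathcal A'_{\sigma_1}$ obtained by freely adjoining the cancelling pair $(a_{\sigma_0^e},\widehat b_{\sigma_1})$, and under $\Phi$ the map $\tau_{\sigma_1}$ is carried to the canonical projection $q$ that kills $a_{\sigma_0^e}$ and $\widehat b_{\sigma_1}$.

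It then remains to homotopy-invert $q$. The inclusion $\iota_0\colon\mathcal A'_{\sigma_1}\hookrightarrow\widehat{\mathcal A}$ is a dg-algebra map (its image is $\partial$-closed by the previous step) and $q\circ\iota_0=\mathrm{id}$, so I would exhibit a homotopy $\iota_0\circ q\simeq\mathrm{id}$ as the $(\iota_0 q,\mathrm{id})$-derivation $K$ determined on generators by $K(\widehat b_{\sigma_1}):=-a_{\sigma_0^e}$ and $K=0$ on all other generators, then check $\partial K+K\partial=\iota_0\circ q-\mathrm{id}$ by evaluating on generators: only $a_{\sigma_0^e}$ and $\widehat b_{\sigma_1}$ are nontrivial, while on the remaining generators both sides vanish because $\partial$ preserves $\mathcal A'_{\sigma_1}$ and $K$ annihilates it, and both sides are $(\iota_0 q,\mathrm{id})$-derivations so agreement on generators suffices. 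Conjugating back by $\Phi$ produces the dg-algebra homotopy inverse $\Phi^{-1}\circ\iota_0$ of $\tau_{\sigma_1}$ together with the explicit homotopy $\Phi^{-1}\circ K\circ\Phi$, proving the lemma.

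I expect the only genuine work to lie in the bookkeeping of the first step: verifying that the two substitutions are really triangular (so that $\Phi$ is an isomorphism) and, more importantly, that they fully decouple the pair $(a_{\sigma_0^e},\widehat b_{\sigma_1})$ from the differential of every other generator of the global dg-algebra $\mathcal A$. This is precisely where the local normal forms of the Legendrians near an edge vertex (Figures~\ref{fig:locally_at_edge_vertex} and \ref{fig:reidemeister_ii_canceling}) and the standing hypothesis $n\geq 3$ are used. Pinning down the Koszul signs in the definition of $K$ and in $\partial K+K\partial=\iota_0 q-\mathrm{id}$ is then a routine matter, carried out exactly as in \cite[Section 3.1]{kalman2005contact}.
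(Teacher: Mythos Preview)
Your proposal is correct and is essentially the same argument as the paper's proof, only organised differently: you first make the tame change of generators $\Phi$ explicit and then apply the standard destabilisation homotopy, whereas the paper writes down the homotopy inverse $\varphi_{\sigma_1}\colon a_{\sigma_0^v}\mapsto a_{\sigma_0^v}-a_{\sigma_0^e}$ and the $(\id,\varphi_{\sigma_1}\tau_{\sigma_1})$-derivation $K_{\sigma_1}(b_{\sigma_1})=a_{\sigma_0^e}$ directly and checks the homotopy identity on generators. Your $\Phi^{-1}\circ\iota_0$ and $\Phi^{-1}\circ K\circ\Phi$ coincide (up to the sign convention in the homotopy) with the paper's $\varphi_{\sigma_1}$ and $K_{\sigma_1}$.
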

				\begin{proof}
					First we check that it is a dg-algebra map. The only non-trivial check is for the generator $a_{\sigma_0^e}$.
					\[
						\tau_{\sigma_1}(\partial a_{\sigma_0^e}) = \tau_{\sigma_1}(b_{\sigma_1}+\boldsymbol g_{\sigma_0^e}) = - \boldsymbol g_{\sigma_0^e} + \boldsymbol g_{\sigma_0^e} = 0 = \partial \tau_{\sigma_1}(a_{\sigma_0^e})
					\]
					Next, we prove that it is chain homotopy equivalence by defining the following dg-algebra map on generators:
					\begin{align*}
						\varphi_{\sigma_1} \colon \mathcal A'_{\sigma_1} &\longrightarrow \mathcal A \\
						a_{\sigma_0^v} & \longmapsto a_{\sigma_0^v} - a_{\sigma_0^e} \\
						x &\longmapsto x \quad \text{for any other generator}
					\end{align*}
					and extend $\varphi_{\sigma_1}$ to all of $\mathcal A'_{\sigma_1}$ by linearity and multiplicativity. Indeed, it is a dg-algebra map since
					\[
						\partial(\varphi_{\sigma_1}(a_{\sigma_0^v})) = \partial(a_{\sigma_0^v} - a_{\sigma_0^e}) = - \boldsymbol g_{\sigma_0^e} + \sum_{\substack{\sigma_1' \supset \sigma_0^v \\ \sigma_1' \neq \sigma_1}} b_{\sigma_1'} = \varphi_{\sigma_1}(\partial a_{\sigma_0^v})\, .
					\]
					We now define the following map on generators
					\begin{align*}
						K_{\sigma_1} \colon \mathcal A &\longrightarrow \mathcal A \\
						b_{\sigma_1} &\longmapsto a_{\sigma_0^e} \quad \\
						x &\longmapsto 0 \quad \text{for any other generator}
					\end{align*}
					and extend it by linearity to a $(\id, \varphi_{\sigma_1} \tau_{\sigma_1})$-derivation, meaning that $K_{\sigma_1}(xy) = K_{\sigma_1}(x)(\varphi_{\sigma_1} \tau_{\sigma_1})(y) + (-1)^{\abs x} x K_{\sigma_1}(y)$. Then we can check that $K_{\sigma_1}$ is a chain homotopy between $\varphi_{\sigma_1} \tau_{\sigma_1}$ and $\id$, meaning that $K_{\sigma_1} \partial + \partial K_{\sigma_1} = \id - \varphi_{\sigma_1} \tau_{\sigma_1}$. For any generator $x$ such that $(\varphi \circ \tau)(x) = x$ and $K_{\sigma_1}(x) = 0$, we easily see that this relation holds. For other generators we check it directly.
					\begin{align*}
						(K_{\sigma_1} \partial + \partial K_{\sigma_1})(a_{\sigma_0^e}) &= K(b_{\sigma_1} + \boldsymbol g_{\sigma_0^e}) = a_{\sigma_0^e} = (\id - \varphi_{\sigma_1} \tau_{\sigma_1})(a_{\sigma_0^e}) \\
						(K_{\sigma_1} \partial + \partial K_{\sigma_1})(b_{\sigma_1}) &= \partial(a_{\sigma_0^e}) = b_{\sigma_1} + \boldsymbol g_{\sigma_0^e} = b_{\sigma_1} - \varphi_{\sigma_1}(- \boldsymbol g_{\sigma_0^e}) = (\id - \varphi_{\sigma_1} \tau_{\sigma_1})(b_{\sigma_1}) \\
						(K_{\sigma_1} \partial + \partial K_{\sigma_1})(a_{\sigma_0^v}) &= K \left(\sum_{\sigma_1' \supset \sigma_0^v} b_{\sigma_1'}\right) = a_{\sigma_0^e} = a_{\sigma_0^v} - \varphi_{\sigma_1}(a_{\sigma_0^v}) = (\id - \varphi_{\sigma_1} \tau_{\sigma_1})(a_{\sigma_0^v})\, .
					\end{align*}
				\end{proof}
				We now define $\tau := \prod_{\sigma_1 \in C_1} \tau_{\sigma_1}$ which should be understood as concatenation of the $\tau_{\sigma_1}$ with respect to some total order on $C_1$. By its definition in \eqref{eq:tau_edge} we see that each map $\tau_{\sigma_1}$ can be defined on any $\mathcal A'_{\sigma_1'}$, and furthermore that $\tau_{\sigma_1} \circ \tau_{\sigma_1'} = \tau_{\sigma_1'} \circ \tau_{\sigma_1}$. Therefore we may consider the dg-algebra map
				\begin{alignat}{2}\label{eq:tau}
					\tau \colon \mathcal A &\longrightarrow \mathcal A'\\
					a_{\sigma_0^e} &\longmapsto 0 \quad &&\text{for all $\sigma_0^e \in E$} \nonumber \\
					b_{\sigma_1} &\longmapsto -\boldsymbol g_{\sigma_0^e} \quad &&\text{for all $\sigma_1\in C_1$} \nonumber\\
					x &\longmapsto x &&\text{for any other generator}\nonumber
				\end{alignat}
				where the resulting dg-algebra $\mathcal A'$ is generated by $a_{\sigma_0^v}$ in degree $1-n$ for each $\sigma_0^v \in V$, $g_{\sigma_0^e}$ in degree $0$ for each $\sigma_0^e \in E$ and $g^\ast_{\sigma_0^e}$ in degree $2-n$ for each $\sigma_0^e \in E$. The differential on $\mathcal A'$ is given by
					\[
						\partial a_{\sigma_0^v} = \sum_{\sigma_1 \supset \sigma_0^v} - \boldsymbol g_{\sigma_0^e} = \sum_{g \colon \sigma_0^v \to \bullet} g_{\sigma_0^e}g^\ast_{\sigma_0^e} - \sum_{g \colon \bullet \to \sigma_0^v} g^\ast_{\sigma_0^e}g_{\sigma_0^e}, \qquad \partial g_{\sigma_0^e} = \partial g^\ast_{\sigma_0^e} = 0\, .
					\]
				\begin{thm}\label{thm:ginzburg}
					There is a chain homotopy equivalence of dg-algebras $\mathcal A \simeq \mathscr G^n_Q$.
				\end{thm}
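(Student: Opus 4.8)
The plan is to deduce the theorem from the constructions already made in this section, so that essentially nothing new is required beyond assembling the pieces. First I would observe that the dg-algebra $\mathcal A'$ produced by the map $\tau$ in \eqref{eq:tau} is \emph{literally isomorphic} to the Ginzburg dg-algebra $\mathscr G^n_Q$. Under the identification $a_{\sigma_0^v} \leftrightarrow h_v$, $g_{\sigma_0^e} \leftrightarrow g$, $g^\ast_{\sigma_0^e} \leftrightarrow g^\ast$ the gradings agree ($1-n$, $0$ and $2-n$ respectively), the underlying associative algebra is the path algebra of $\overline Q$ in both cases, and — unpacking the definition of $\boldsymbol g_{\sigma_0^e}$ as recorded just after \eqref{eq:tau} — the differentials coincide: $\partial a_{\sigma_0^v} = \sum_{g\colon v \to \bullet} g g^\ast - \sum_{g\colon \bullet \to v} g^\ast g$ and $\partial g_{\sigma_0^e} = \partial g^\ast_{\sigma_0^e} = 0$, which is exactly $d h_v$ together with $dg = dg^\ast = 0$. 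Hence it suffices to prove that $\tau \colon \mathcal A \longrightarrow \mathcal A'$ is a chain homotopy equivalence of dg-algebras, and then the composite $\mathcal A \xrightarrow{\tau} \mathcal A' \cong \mathscr G^n_Q$ gives the claim (and, recalling $\mathcal A \cong CE^\ast(\varLambda^{n-1}_Q; X)$ from \cref{sec:computation_of_ce_dga}, also the quasi-isomorphism \eqref{eq:ce_ginzburg}).

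To show $\tau$ is a chain homotopy equivalence I would induct on the edge set $C_1$, cancelling one edge at a time. The base case of a single edge $\sigma_1$ is exactly \cref{lma:edge_canceling_quiso}, which supplies an explicit dg-algebra homotopy inverse $\varphi_{\sigma_1}$ together with the $(\id,\varphi_{\sigma_1}\tau_{\sigma_1})$-derivation homotopy $K_{\sigma_1}$. The crucial structural facts — both already noted in the paragraph preceding \eqref{eq:tau} — are that each elementary map $\tau_{\sigma_1}$ is defined on every intermediate dg-algebra $\mathcal A'_{\sigma_1'}$ (it only modifies the generators $a_{\sigma_0^e}, b_{\sigma_1}$, which are untouched by the other cancellations) and that the $\tau_{\sigma_1}$ pairwise commute. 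So I would fix a total order on $C_1$, write $\tau$ as the corresponding composite of the $\tau_{\sigma_1}$, and at each stage transport the homotopy inverse $\varphi_{\sigma_1}$ and the homotopy $K_{\sigma_1}$ through the cancellations performed so far, then compose with the data accumulated previously. Since chain homotopy equivalences of dg-algebras compose, this yields that $\tau$ is a chain homotopy equivalence $\mathcal A \to \mathcal A'$, completing the proof.

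The main obstacle I anticipate is bookkeeping rather than conceptual content: one must check that the homotopies $K_{\sigma_1}$ of \cref{lma:edge_canceling_quiso} survive transport through the successive edge cancellations — that the derivation rule $K_{\sigma_1}(xy) = K_{\sigma_1}(x)(\varphi_{\sigma_1}\tau_{\sigma_1})(y) + (-1)^{\abs x} x K_{\sigma_1}(y)$ is preserved, and that the commutation $\tau_{\sigma_1}\circ\tau_{\sigma_1'} = \tau_{\sigma_1'}\circ\tau_{\sigma_1}$ upgrades to a compatibility of the full homotopy data — with the Koszul signs tracked carefully across the composition. The Reidemeister-II picture of \cref{fig:reidemeister_ii_canceling} makes the geometric meaning of each cancellation transparent, which is a useful sanity check, but the algebraic verification of the composite should still be written out explicitly.
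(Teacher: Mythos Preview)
Your proposal is correct and follows essentially the same route as the paper: the paper's proof simply notes that $\tau$ is a chain homotopy equivalence by \cref{lma:edge_canceling_quiso} (applied to each factor in the composite $\tau = \prod_{\sigma_1 \in C_1} \tau_{\sigma_1}$, using the commutation already recorded) and then observes the obvious isomorphism $\mathcal A' \cong \mathscr G^n_Q$ via $a_{\sigma_0^v} \mapsto h_v$, $g_{\sigma_0^e} \mapsto g$, $g^\ast_{\sigma_0^e} \mapsto g^\ast$. Your inductive bookkeeping concern is legitimate but not something the paper addresses explicitly, since compositions of chain homotopy equivalences are chain homotopy equivalences by a standard argument.
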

				\begin{proof}
					The map $\tau$ defined in \eqref{eq:tau} is a chain homotopy equivalence by \cref{lma:edge_canceling_quiso}. Finally it is clear that we have an isomorphism of dg-algebras $\mathcal A' \cong \mathscr G^n_Q$ via $a_{\sigma_0^v} \longmapsto h_v$, $g_{\sigma_0^e} \longmapsto g$ and $g^\ast_{\sigma_0^e} \longmapsto g^\ast$ which finishes the proof.
				\end{proof}
				\begin{cor}\label{cor:ce_dga_ginzburg}
					There is a quasi-isomorphism of dg-algebras $CE^\ast(\varLambda^{n-1}_Q;X) \cong \mathscr G^n_Q$
				\end{cor}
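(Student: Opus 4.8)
The plan is to chain together two identifications already set up in this section, so that the corollary becomes an immediate consequence of \cref{thm:ginzburg}. First I would invoke \cref{thm:colimit_of_dgas_relative_case}: since every Weinstein manifold and Weinstein hypersurface in $\boldsymbol V \cup \boldsymbol A$ is subcritical, the union of attaching spheres $\varSigma(\boldsymbol h)$ for the simplicial handle is empty, so the hypothesis of that theorem holds vacuously and $CE^\ast(\varLambda^{n-1}_Q; X)$ is the colimit over $C$ of the dg-subalgebras $CE^\ast(\overline \varLambda; X(\sigma_k))$ indexed by faces $\sigma_k\in C$, with the contravariant inclusion maps of \cref{cor:subalgebras_diagram_no_actions}.

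Second, I would identify each of these dg-subalgebras with the explicit dg-algebra $\mathcal A_{\sigma_k}$ described in \cref{sec:computation_of_ce_dga}. The geometric input here is that every Legendrian piece appearing in the relative Legendrian $\boldsymbol\varLambda$ --- a punctured standard Legendrian $(n-1)$-unknot at a vertex $\sigma_0\in V$, two linked Legendrian $(n-1)$-disks at a vertex $\sigma_0\in E$, and a single $(n-2)$-unknot at an edge $\sigma_1\in C_1$ --- has a standard model in a Darboux chart whose Chekanov--Eliashberg dg-algebra is computed by the classical unknot computation and its higher-dimensional analogue. Counting the Reeb chords (one per unknot piece, plus the two mixed chords $g_{\sigma_0}$, $g^\ast_{\sigma_0}$ between the linked disks at each $\sigma_0\in E$) and the rigid holomorphic disks recovers the differentials stated in \cref{sec:computation_of_ce_dga}. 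The matching of generators face-by-face follows from \cref{lma:relative_version_surgery_desc_of_generators} and \cref{lma:correspondence_generators_ce_sphere}, and the matching of differentials follows because no holomorphic disk crosses the simplicial handle (\cref{cor:no_curve_crosses_handle}) and the disks over each building block are confined there for sufficiently thin handles (\cref{lma:hol_curves_in_simplex_handle}); this yields $\mathcal A \cong CE^\ast(\varLambda^{n-1}_Q; X)$ as dg-algebras over $\boldsymbol k$. Composing this quasi-isomorphism with the chain homotopy equivalence $\mathcal A \simeq \mathscr G^n_Q$ of \cref{thm:ginzburg}, obtained by the commuting sequence of edge-cancellation maps $\tau_{\sigma_1}$ in \eqref{eq:tau} that ``undo the Reidemeister II moves'', gives the claimed quasi-isomorphism $CE^\ast(\varLambda^{n-1}_Q; X) \cong \mathscr G^n_Q$.

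The main obstacle I anticipate is the second step: pinning down the standard Darboux models for the two linked $(n-1)$-disks at each $\sigma_0\in E$ and verifying that the mixed Reeb chords $g_{\sigma_0}$ and $g^\ast_{\sigma_0}$ have exactly the degrees $0$ and $2-n$ and contribute precisely the bilinear terms $-g_{\sigma_0}g^\ast_{\sigma_0}$ and $g^\ast_{\sigma_0}g_{\sigma_0}$ to $\partial a_1$ and $\partial a_2$, with the correct sign dictated by the orientation of the underlying quiver arrow (\cref{fig:edge_ori}). One must also rule out spurious higher-order terms in the differential via an action/transversality argument localizing all contributing disks to the relevant Darboux chart, in the spirit of \cref{lma:hol_curves_in_simplex_handle}. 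Once these local computations are in place, the corollary follows formally.
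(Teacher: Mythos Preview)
Your proposal is correct and follows the same approach as the paper: combine the quasi-isomorphism $\mathcal A \cong CE^\ast(\varLambda^{n-1}_Q;X)$, which holds by construction from the standard Darboux-chart models of the Legendrian pieces, with the chain homotopy equivalence $\mathcal A \simeq \mathscr G^n_Q$ of \cref{thm:ginzburg}. You have simply unpacked the phrase ``by construction'' more carefully than the paper does, but the argument is the same.
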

				\begin{proof}
					Follows immediately from \cref{thm:ginzburg} and from the fact that $\mathcal A$ is quasi-isomorphic to $CE^\ast(\varLambda^{n-1}_Q;X)$ by construction since it is defined using standard representations of Legendrian spheres and disks in Darboux charts in standard contact spheres.
				\end{proof}
\bibliographystyle{alpha}
\bibliography{simplicialdescent}
\end{document}